\documentclass[11pt, a4paper]{article}
\usepackage[utf8]{inputenc}
\usepackage[usenames,dvipsnames]{xcolor}
\usepackage{amsmath}
\usepackage{amsthm}
\usepackage{amsfonts}
\usepackage{amssymb}
\usepackage{array}
\usepackage{graphicx} 
\usepackage{color}
\usepackage[english]{babel}
\usepackage{mathrsfs}
\usepackage{graphicx}
\usepackage{dsfont}
\definecolor{orangebis}{rgb}{0.99,0.25,0.00}
\definecolor{greenbis}{rgb}{0.10,0.85,0.10}
\definecolor{bluebis}{rgb}{0.10,0.30,0.99}
\usepackage[final]{hyperref}   
\hypersetup{
    linktoc=page,
    linkcolor=red,          
    citecolor=blue,        
    filecolor=blue,      
    urlcolor=cyan,
   colorlinks=true      }     

\newcommand{\red}[1]{\textcolor{black}{#1}}
\newcommand{\blue}[1]{\textcolor{black}{#1}}

\usepackage{multicol}

\usepackage{lipsum}
\usepackage{geometry}
\author{Hugo Vanneuville\thanks{Univ. Lyon 1, Institut Camille Jordan, 69100 Villeurbanne, France, supported by the ERC grant Liko No 676999}}
\title{The annealed spectral sample of Voronoi percolation}
\date{}

\theoremstyle{plain}
\newtheorem{thm}{Theorem}[section]
\newtheorem{prop}[thm]{Proposition}
\newtheorem{lem}[thm]{Lemma}

\newtheorem{cor}[thm]{Corollary}


\newtheorem{lemma}[thm]{Lemma}
\newtheorem{claim}[thm]{Claim}

\theoremstyle{definition}
\newtheorem{defi}[thm]{Definition}

\theoremstyle{remark}
\newtheorem{rem}[thm]{Remark}

\theoremstyle{remark}

\marginparsep = -0.4 cm
\marginparwidth = 2.3 cm


\newcommand{\margin}[1]{\textcolor{magenta}{*}\marginpar{ \vskip -1cm \textcolor{magenta} {\it #1 }  }}

\renewcommand{\margin}[1]{}

\newcommand{\N}{\mathbb{N}}
\newcommand{\R}{\mathbb{R}}

\newcommand{\Z}{\mathbb{Z}}
\newcommand{\Q}{\mathbb{Q}}

\newcommand{\diam}{\text{\textup{diam}}}
\newcommand{\Pro}{\mathbb{P}}
\newcommand{\E}{\mathbb{E}}
\newcommand{\T}{\mathbb{T}}

\newcommand{\Var}{\text{\textup{Var}}}
\newcommand{\Cov}{\text{\textup{Cov}}}
\newcommand{\Prob}{\text{\textup{\textbf{P}}}}
\newcommand{\Ex}{\text{\textup{\textbf{E}}}}
\newcommand{\Piv}{\text{\textup{\textbf{Piv}}}}
\newcommand{\arm}{\text{\textup{\textbf{A}}}}
\newcommand{\cross}{\textup{\text{Cross}}}
\newcommand{\dense}{\textup{\text{Dense}}}

\newcommand{\qbc}{\textup{\text{QBC}}}
\newcommand{\gi}{\textup{\text{GI}}}
\newcommand{\gp}{\textup{\text{GP}}}

\newcommand{\un}{\mathds{1}}
\newcommand{\petito}[1]{o\mathopen{}\left(#1\right)}
\newcommand{\grandO}[1]{O\mathopen{}\left(#1\right)}

\def\T{\mathbb{T}}

\newcommand{\cond}{\, \Big| \,}
\renewcommand{\textbf}[1]{\begingroup\bfseries\mathversion{bold}#1\endgroup}
\setlength{\parindent}{0pt}

\def\diam{\mathrm{diam}}

\def\E{\mathbb{E}} 

\def \eps {\epsilon}


\def\<#1{\langle #1\rangle}

\def\bi{\begin{itemize}}  
\def\ei{\end{itemize}}
\def\bnum{\begin{enumerate}} 
\def\enum{\end{enumerate}}

\geometry{hmargin=2.5cm,vmargin={3cm,3.5cm},nohead}
\numberwithin{equation}{section}
\setcounter{tocdepth}{2}

\begin{document}

\maketitle

\abstract{In this paper, we introduce and study the annealed spectral sample of Voronoi percolation, which is a continuous and finite point process in $\R^2$ whose definition is mostly inspired by the spectral sample of Bernoulli percolation introduced in \cite{garban2010fourier} by Garban, Pete and Schramm. We show a clustering effect as well as estimates on the full lower tail of this spectral object. 

Our main motivation is the study of two models of dynamical critical Voronoi percolation in the plane. In the first model, the Voronoi tiling does not evolve in time while the colors of the cells are resampled at rate $1$. In the second model, the centers of the cells move according to (independent) long range stable Lévy processes but the colors do not evolve in time. We prove that for these two dynamical processes there exist almost surely exceptional times with an unbounded monochromatic component.


}



\tableofcontents

\section{Introduction}\label{s.intro}

\subsection{Models and main results}\label{ss.main}

In this paper, we study two models of planar \textbf{dynamical Voronoi percolation} at criticality and we prove results of existence of exceptional times with unbounded clusters. Let us first define the model of (static) planar Voronoi percolation. To this purpose, we first need a Poisson point process $\eta$ of intensity $1$ in the plane. The \textbf{Voronoi cells} of the points $x \in \eta$ are the sets $C(x)=\{u \in \R^2 \, : \, \forall x' \in \eta , \, ||x-u||_2 \leq ||x'-u||_2 \}$. It is not difficult to show that a.s. all the cells are bounded convex polygons. Let $p \in [0,1]$. Conditionally on $\eta$, we construct a coloring of the plane in black and white as follows: each Voronoi cell is colored in black with probability $p$ and in white with probability $1-p$, independently of the other cells.\footnote{\blue{Note that in this definition the points which are at the boundary of both a black cell and a white cell are colored in black \textbf{and} white. This does not change anything in the proofs and results for static Voronoi percolation but this will help us in the study of some dynamical models (we will actually use this only in Appendix \ref{a.2nd} and this will be of no importance for a wide family of dynamical processes, see this appendix for more details).}} We write $\omega \in \{-1,1\}^\eta$ for the corresponding colored point process, where $1$ means black and $-1$ means white. The distribution of $\omega$ will be denoted by $\Pro_p$ (when we study events that depend only on $\eta$, we will sometimes omit the subscript $p$). Note that, if we condition on $\eta$, then the distribution of $\omega$ is $\Prob_p^\eta := (p\delta_1+(1-p)\delta_{-1})^{\otimes \eta}$.
\medskip

Let us be a little more precise about measurability issues: we let $\Omega'$ denote the set of locally finite subsets of $\R^2$ and we equip $\Omega'$ with the $\sigma$-algebra generated by the functions $\overline{\eta} \in \Omega' \mapsto |\overline{\eta} \cap A|$ where $A$ spans the Borel subsets of the plane. Also, we let $\Omega=\cup_{\overline{\eta} \in \Omega'} \{-1,1\}^{\overline{\eta}}$ and we equip $\Omega$ with the $\sigma$-algebra generated by the functions $\overline{\omega} \in \Omega \mapsto |\overline{\omega}^{-1}(1) \cap A|$ and $\overline{\omega} \in \Omega \mapsto |\overline{\omega}^{-1}(-1) \cap A|$ where $A$ still spans the Borel subsets of the plane. The measure $\Pro_p$ is defined on this $\sigma$-algebra.
\medskip

The \textbf{critical parameter} of planar Voronoi percolation is defined as follows:
\[
p_c = \inf \{ p \in [0,1] \, : \, \Pro_p \left[ 0 \leftrightarrow \infty \right]  > 0 \} \, ,
\]
where $\{ 0 \leftrightarrow \infty \}$ is the event that there is a black path from $0$ to infinity. It has been shown by Bollob\'{a}s and Riordan~\cite{bollobas2006critical} that $p_c=1/2$ (see~\cite{duminil2017exponential,ahlberg2017noise} for more recent proofs). More precisely, if $p \leq 1/2$ then a.s.\ there is no unbounded black component while if $p>1/2$ then a.s. there exists a unique unbounded black component.\footnote{The proof of the result for $p\leq 1/2$ goes back to the work by Zvavitch~\cite{zvavitch1996critical}.}
\medskip

In this paper, we are only interested in critical Voronoi percolation, so \textbf{we fix $p=p_c=1/2$ once and for all}. We are interested in two models of dynamical Voronoi percolation. These two models are Markov processes with $\Pro_{1/2}$ as an invariant measure. In all the paper, we sample them initially according to this measure.
\medskip

The first dynamical model is defined analogously to the model of dynamical percolation from~\cite{olle1997dynamical} (in this paper, one considers a bond percolation configuration on some graph and resamples each bond at rate $1$, independently of the other bonds). We sample a Voronoi percolation model of parameter $1/2$ (i.e. we sample the variables $\eta$ and $\omega \in \{-1,1\}^\eta$ as above) and, conditionally on $\eta$, we resample the color of each cell at rate $1$, independently of the other cells. In particular, \textbf{$\eta$ does not change in time}. We obtain a dynamical process $(\omega^{froz}(t))_{t \in \R_+}$ that we call \textbf{frozen dynamical Voronoi percolation}. This is a Markov càdlàg process with values in $\Omega$ (for the metric on $\Omega$ defined in Appendix~\ref{s.simple}), so $(\omega^{froz}(t))_{t \in \R_+}$ can be seen as a random variable with values in the Skorokhod space on $\Omega$.
\medskip

In the second dynamical model, the points of $\eta$ move around but their colors do not evolve in time. Let $\mu$ be the law of a planar Lévy process\footnote{One could probably consider \red{non-Lévy processes}. But we were not interested in this question and we have studied Lévy processes to gain simplicity.} starting from $0$, sample a Voronoi percolation model of parameter $1/2$ as above, and let each point $x \in \eta$ evolve independently of the other points according to a process of law $\mu$. We obtain a dynamical process $(\omega^\mu(t))_{t \in \R_+}$ that we call \textbf{$\mu$-dynamical Voronoi percolation}. This is also a Markov càdlàg process. We denote the underlying non-colored point configuration by $(\eta^\mu(t))_{t \in \R_+}$. See Appendix~\ref{s.simple} for a more precise construction of $(\omega^\mu(t))_{t \in \R_+}$.
\medskip

Let us note that an analogous model has been studied by van den Berg, Meester and White in~\cite{van1997dynamic} for the Poisson-Boolean model, which is another continuous percolation model. However, they study the model in any dimension and away from criticality, which is very different from the present paper.
\medskip

We define the notion of \textbf{exceptional times} exactly as in the case of dynamical Bernoulli percolation (\cite{olle1997dynamical}): a time $t \in \R_+$ is exceptional if \textbf{there exists an unbounded black component} at time $t$. Note that if we fix some $t$ then a.s. it is not exceptional. The question we are interested in is whether or not there exist (random) exceptional times.\footnote{Note that the event of existence of exceptional times is measurable with respect to (the completion of) the Skorokhod $\sigma$-algebra. Moreover, Kolmogorov $0$-$1$ law implies that either a.s. there is no exceptional time or a.s. for every open non empty interval $J \subseteq \R_+$ there are infinitely many exceptional times in $J$. We refer to~\cite{olle1997dynamical} for similar observations.} Our main results are the following:
\begin{thm}\label{t.main_frozen}
Consider frozen dynamical Voronoi percolation. A.s. there exist exceptional times at which there is an unbounded black component. 
\end{thm}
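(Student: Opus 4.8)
The plan is to follow the strategy pioneered by Schramm and Steif for dynamical Bernoulli percolation, as adapted by Garban--Pete--Schramm, but using the \emph{annealed} spectral sample of Voronoi percolation in place of the usual (quenched) spectral sample. The key point is that frozen dynamical Voronoi percolation is, conditionally on $\eta$, exactly dynamical percolation on the (random) set of cells, so Fourier/spectral methods apply once we know enough about the annealed spectrum. As usual, existence of exceptional times for the full-plane event is reduced, via a standard annulus/gluing and second-moment argument, to the statement that crossing events of large squares remain noise-sensitive fast enough. Concretely, I would fix a large $R$, let $f_R = f_R(\omega)$ be the indicator of the left-right black crossing of $[0,R]^2$ (centered, so $\E[f_R]$ is $O(1)$ by the RSW estimates for Voronoi percolation), and study the quantity $\int_0^1 \E[f_R(\omega^{froz}(0)) f_R(\omega^{froz}(t))]\,dt$.

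The first main step is the spectral/Fourier identity. Conditionally on $\eta$, resampling each cell's color at rate $1$ gives $\E[f_R(\omega(0))f_R(\omega(t)) \mid \eta] = \sum_{S} e^{-t|S|}\,\widehat{f_R}(S)^2$ where the sum runs over finite subsets $S$ of the cells and $\widehat{f_R}(\cdot)$ is the (quenched) Fourier--Walsh transform. Taking expectations over $\eta$ and integrating in $t$ yields
\[
\int_0^1 \E\big[f_R(\omega^{froz}(0))\,f_R(\omega^{froz}(t))\big]\,dt \;=\; \E_\eta\Big[\,\sum_{S \ne \emptyset}\frac{1-e^{-|S|}}{|S|}\,\widehat{f_R}(S)^2\,\Big]\;+\;\widehat{f_R}(\emptyset)^2,
\]
and the event ``$f_R(\omega^{froz}(t))=1$ for some $t\in[0,1]$'' has positive probability bounded below by a second-moment ratio provided this integral is $O\big(\E[f_R]\big)$, i.e.\ provided the \emph{annealed} spectral mass of $f_R$ at small sets is negligible. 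This is exactly where the results announced in the abstract enter: the clustering effect and the lower-tail estimates on the annealed spectral sample of Voronoi percolation give, for the crossing event, a bound of the form $\E_\eta\big[\sum_{0<|S|\le k}\widehat{f_R}(S)^2\big] = O\big(\alpha_4(k^{-1/2}R\,? )\big)$ up to constants — the same type of lower-tail control on $|\setS_{f_R}|$ that, for Bernoulli percolation, drives the Schramm--Steif/Garban--Pete--Schramm argument. Using the quasi-multiplicativity and polynomial behaviour of the four-arm exponent (which is $<2$, as known for Voronoi percolation), one checks $\sum_{S\neq\emptyset} \frac{1-e^{-|S|}}{|S|}\widehat{f_R}(S)^2 = O(\E[f_R])$ uniformly in $R$, after renormalizing $f_R$ to have $\E[f_R^2]\asymp 1$.

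The second main step is to pass from crossings of a single square to the full-plane event ``$0\leftrightarrow\infty$''. Here I would use the standard quasi-multiplicativity/gluing construction: consider a dyadic sequence of annuli around the origin, and for each annulus the event of a black circuit together with a crossing connecting successive scales; a first/second-moment computation on the number of exceptional times in $[0,1]$ at which all these events hold up to scale $R$, combined with the uniform-in-$R$ spectral bound of Step~1, shows that this number has expectation bounded below and variance controlled, so with positive probability there is an exceptional time with an arm to distance $R$. Letting $R\to\infty$ along the sub-intervals and using the Kolmogorov $0$--$1$ law quoted in the excerpt upgrades ``positive probability'' to ``almost surely''. The main obstacle, and the place where essentially all the work of the paper is concentrated, is the uniform control of the annealed spectral sample near $\emptyset$: the annealed setting mixes the randomness of $\eta$ with that of the colors, so one cannot directly invoke the quenched spectral estimates of Garban--Pete--Schramm, and one must instead develop the clustering and full lower-tail bounds for the annealed spectral point process — which I expect is precisely the technical heart carried out in the body of the paper and which I would cite as a black box here.
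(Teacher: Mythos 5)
Your overall skeleton is right in spirit (condition on $\eta$, use the Fourier--Walsh diagonalisation of the colour-resampling dynamics, integrate $e^{-t|S|}$ over $t\in[0,1]$, run a second-moment argument, finish with the Kolmogorov $0$--$1$ law), and you correctly locate the technical heart in the annealed lower-tail spectral estimates. But there is a genuine gap in your reduction. You apply the spectral machinery to the \emph{square-crossing} event and then propose to pass to the full-plane event by a ``standard quasi-multiplicativity/gluing construction'' over dyadic annuli. This bridge does not work as stated. The second-moment method needs the bound
\[
\int_0^1 \E\left[ f_R(\omega(0))\, f_R(\omega(t)) \right] dt \;\leq\; C\, \E\left[ f_R \right]
\]
for $f_R$ the \textbf{one-arm} indicator $\un_{\arm_1(1,R)}$, whose expectation $\alpha_1^{an}(R)$ tends to $0$; equivalently, one needs the full lower tail of the spectral sample \emph{of the one-arm event}. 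Noise sensitivity (even quantitative) of crossing events of squares does not imply this: gluing circuits and crossings in dyadic annuli controls the probability of the one-arm event at a single time, but the two-time correlation $\E[f_R(\omega(0))f_R(\omega(t))]$ cannot be factorised over annuli without controlling, scale by scale, the joint behaviour of the two coupled configurations --- which is exactly a spectral lower-tail statement for $f_R$ itself. This is why the paper proves a separate estimate (Theorem~\ref{t.Spec_sample_f_R}): $\Pro[0<|\mathcal{S}^{an}_{f_R}|\leq r^2\alpha_4^{an}(r)]\leq C\alpha_1^{an}(r,R)$, whose proof requires annulus structures with a \emph{centered} annulus contributing $\alpha_1^{an}$ factors, and which does not follow from the crossing-event estimate (Theorem~\ref{t.Spec_sample_g_n}) by gluing. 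The paper then applies the second-moment method (Lemma~\ref{l.secondmoment}) directly to $X_R=\int_0^1 f_R(\omega(t))\,dt$ with $f_R$ the one-arm event, bounding the diagonal term $\Pro[|\mathcal{S}^{an}_{f_R}|=0]=\widetilde{\alpha}_1(R)^2/\alpha_1^{an}(R)\leq O(1)\alpha_1^{an}(R)$ via the quenched estimate Theorem~\ref{t.quenched_arm}, and the off-diagonal terms via Theorem~\ref{t.Spec_sample_f_R}.

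A secondary point: even granting the one-arm spectral estimate, the passage from ``$X_R>0$ for all $R$ with probability bounded below'' to ``there exists a time with an unbounded cluster'' is not just the $0$--$1$ law; one must show that $\bigcap_R\{T_R\neq\emptyset\}$ forces an actual exceptional time (the sets $T_R$ need not be closed a priori). For the frozen dynamics this follows the Schramm--Steif argument and is handled in the paper's Lemma~\ref{l.secondmoment} and Appendix~\ref{a.2nd}; it deserves at least a mention rather than being absorbed into ``letting $R\to\infty$''.
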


\begin{thm}\label{t.main_levy}

There exists $\alpha_0 \in ]0,+\infty[$ such that the following holds. Let $\mu$ be the law of a planar Lévy process such that there exists $\alpha \in ]0,\alpha_0]$ satisfying
\begin{equation}\label{e.alpha}
\exists c > 0,\, \forall L\in [1,+\infty[, \, \forall t \in [0,1], \, \Pro \left[ ||X_t||_2 \geq L \right] \geq c\frac{t}{L^\alpha} \, ,
\end{equation}
where $X \sim \mu$. Also, let $(\omega^\mu(t))_{t \in  \R_+}$ be a $\mu$-dynamical Voronoi percolation process. Then, a.s. there exist exceptional times $t$ at which there is an unbounded black component in $\omega^\mu(t)$.
\end{thm}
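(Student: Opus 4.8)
\emph{Proof plan for Theorem~\ref{t.main_levy}.} The plan is to run the second-moment argument of H\"aggstr\"om--Peres--Steif~\cite{olle1997dynamical}, in the Fourier-analytic form of~\cite{garban2010fourier}, the two new ingredients being the annealed spectral sample of Voronoi percolation and a comparison --- built on~\eqref{e.alpha} --- between the L\'evy dynamics and a rate-$1$ resampling of colours. Fix $R\ge 1$ and let $f_R$ be the indicator that there is a black crossing from $\partial B_1$ to $\partial B_R$; since $f_R$ a.s.\ depends on only finitely many cells, its annealed spectral sample $\mathscr{S}_{f_R}$ is a well-defined finite point process. Set
\[
Y_R \;=\; \int_0^1 f_R\bigl(\omega^\mu(t)\bigr)\,dt,\qquad\text{so that}\qquad \E[Y_R]=\Pro_{1/2}[f_R=1]=:\alpha_1(R).
\]
The goal is the uniform bound $\E[Y_R^2]\le C\,\E[Y_R]^2$. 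Granting it, $\Pro[Y_R>0]\ge 1/C$; the events $\{Y_R>0\}$ decrease in $R$, so $\Pro\bigl[\bigcap_R\{Y_R>0\}\bigr]\ge 1/C$, and a standard compactness argument along the c\`adl\`ag trajectory --- take a subsequential limit, as $R\to\infty$, of times at which $f_R=1$, and use the nestedness of the one-arm events --- produces with probability $\ge 1/C$ a single exceptional time in $[0,1]$; Kolmogorov's $0$--$1$ law (as in~\cite{olle1997dynamical}) then gives the almost sure statement.

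By stationarity $\E[Y_R^2]\le 2\int_0^1\Pro\bigl[f_R(\omega^\mu(0))=1,\,f_R(\omega^\mu(t))=1\bigr]\,dt$, so everything reduces to the two-time correlation. Using the chaos expansion of $f_R$ with respect to the Poisson process and the colours --- the very object whose law defines $\mathscr{S}_{f_R}$ --- together with the fact that the colours are carried along by the moving points, one rewrites this correlation in terms of the probability that $\mathscr{S}_{f_R}$ is left ``unperturbed'' by the motion over time $t$. The heart of the proof is then to show that, uniformly in $R$ and in the geometry of the tiling, each point of $\mathscr{S}_{f_R}$ gets perturbed --- its cell receiving a point that is fresh relative to $\omega^\mu(0)$, so that the local colouring there decouples --- at a rate bounded below by a universal constant $c>0$, whence the surviving spectral mass is at most $e^{-ct|\mathscr{S}_{f_R}|}$ up to lower-order corrections. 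This is exactly the situation of the frozen dynamics behind Theorem~\ref{t.main_frozen}, and integrating in $t$ yields
\[
\E[Y_R^2]\;\lesssim\;\E\!\left[\min\!\Bigl(1,\tfrac1{|\mathscr{S}_{f_R}|}\Bigr)\un_{\mathscr{S}_{f_R}\ne\emptyset}\right]+\alpha_1(R)^2,
\]
and the clustering and full lower-tail estimates for the annealed spectral sample --- the main technical results of the paper --- are precisely what is needed to bound the right-hand side by $C\,\alpha_1(R)^2$, closing the second moment.

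The main obstacle is the perturbation-rate estimate: turning ``each relevant cell is refreshed at rate $\gtrsim c$'' into a statement valid \emph{uniformly over Voronoi geometries}. Two features make this delicate. First, a relevant cell for a crossing event can be macroscopic, of diameter up to $\sim R$, and such a cell is disrupted only when some point travels a macroscopic distance; here one must use~\eqref{e.alpha} quantitatively (both for the cell's own centre escaping and for a far point jumping in), and matching the resulting rate against the size of the spectral sample on these rare configurations is what forces $\alpha\le\alpha_0$ --- such a positive threshold exists because $\alpha_1(R)$ and $\alpha_4(R)$ decay polynomially and the number of pivotal cells, of order $R^2\alpha_4(R)$, grows polynomially for Voronoi percolation. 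Second, because the tails are heavy, far-away points can jump directly into $B_R$, so $f_R(\omega^\mu(t))$ is not a function of the cells near $B_R$ at time $0$ alone; the contribution of these incoming points must be absorbed, using that their expected number in $B_R$ over $[0,1]$ is $O(R^{2-\alpha})$ together with a priori RSW and one-arm bounds for Voronoi percolation. Once the perturbation-rate estimate and these geometric inputs are in place, the remaining steps --- the compactness argument, the $0$--$1$ law, and the passage from the spectral estimate to the second moment --- are routine.
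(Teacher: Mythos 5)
Your high-level strategy matches the paper's: second moment method on $Y_R=\int_0^1 f_R(\omega^\mu(t))\,dt$, reduction to a two-time correlation, control via the annealed spectral sample, and use of the full lower tail together with the clustering estimate. But the central analytic step --- how the Lévy motion actually produces decay of $\E[f_R(\omega^\mu(0))f_R(\omega^\mu(t))]$ through the spectral decomposition --- is described by a mechanism that does not work for this dynamics, and the intermediate bound you write down is the one for the \emph{frozen} dynamics, not for this one.

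In the $\mu$-dynamical process the colours are never resampled; they are carried by the moving points. Expanding in the Fourier basis, all cross terms vanish and one gets exactly $\E[f_R(\omega^\mu(0))f_R(\omega^\mu(t))]=\E\bigl[\sum_{S\subseteq_f\eta(0)}\widehat{f_R^{\eta(0)}}(S)\,\widehat{f_R^{\eta(t)}}(S_t)\bigr]$, where $S_t$ is the deterministic image of $S$ under the trajectories. A ``fresh'' point arriving in a cell does not change $S_t$ and does not make $\widehat{f_R^{\eta(t)}}(S_t)$ small; the only source of decay is that $S_t$ has drifted out of the region where the spectral mass of $f_R$ concentrates. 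Consequently the per-point decorrelation rate is not a universal constant: for a spectral set of $k$ points localized in a box of side $\ell$, the probability that no point escapes that box by time $t$ is roughly $\exp(-c\,t\,k\,\ell^{-\alpha})$ by~\eqref{e.alpha}, so the rate per point is $\ell^{-\alpha}$, degrading with the spatial extent of the set. Your claimed bound $e^{-ct|\mathcal{S}|}$ and the resulting $\E[Y_R^2]\lesssim\E[\min(1,|\mathcal{S}|^{-1})\un_{\mathcal{S}\neq\emptyset}]+\alpha_1^{an}(R)^2$ would make the hypothesis $\alpha\le\alpha_0$ superfluous and would let Theorem~\ref{t.Spec_sample_f_R} alone close the argument; that cannot be right. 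The paper instead partitions by $|S|\in[2^i,2^{i+1})$ (sets $B_i$), intersects with the localization event $S\subseteq[-2^{i\beta},2^{i\beta}]^2$ (sets $A_i$), gets $\delta_1(i,t)\le\exp(-c't(2^i)^{1-\alpha\beta})$ on $A_i$, and pays $\sqrt{\delta_2(i)}$ for $B_i\setminus A_i$, which is exactly where the clustering Theorem~\ref{t.Spec_sample_f_R_bis} (via Corollary~\ref{c.f_R}) enters; one then needs $\alpha\beta<1$ with $\beta>1$ compatible with the clustering exponent, and this trade-off is what produces $\alpha_0$. Your attribution of the threshold to ``macroscopic Voronoi cells'' is a red herring (such cells have super-exponentially small probability); the obstruction is the macroscopic \emph{spread of the spectral sample}, not of individual cells. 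Separately, the ``standard compactness argument'' is genuinely not routine here: the sets $\{t: f_R(\omega^\mu(t))=1\}$ need not be closed because the adjacency structure of the tiling changes as points move, and the paper needs the graphs $\overline{G}(t)$ and a lemma about jump times of the Lévy trajectories (Appendix~\ref{a.2nd}) to conclude.
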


The \textbf{$\alpha$-stable Lévy processes} satisfy~\eqref{e.alpha}. Thus, our result applies to $\alpha$-stable processes with sufficiently large range but it does not apply if $\alpha$ is too large and in particular it does not apply to the Brownian motion. However, we believe that this result holds for any Lévy process that is not the zero process.
\medskip

We can also consider a third dynamical model by ``mixing'' the two dynamics above. We have the following result whose proof is essentially the same as Theorem~\ref{t.main_frozen}.
\begin{thm}\label{t.Brownian}
Let $\mu$ be the distribution of a planar Lévy process, consider a Voronoi percolation configuration of parameter $1/2$, and define a dynamical process \red{by letting each point move independently of the other points according to a Lévy process of law $\mu$ and by resampling the color of each point at rate $1$ independently of the other points and of the Lévy processes}. Then, there exist exceptional times with a black unbounded component.
\end{thm}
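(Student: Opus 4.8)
The plan is to reduce Theorem~\ref{t.Brownian} to the machinery already developed for Theorem~\ref{t.main_frozen}, since the ``mixed'' dynamics is, from the point of view of the relevant correlation estimates, no harder than the frozen one. Recall the general strategy for these existence-of-exceptional-times results, going back to \cite{olle1997dynamical} and refined in the spectral approach of \cite{garban2010fourier}: for $R\geq 1$ let $X_R$ be (a suitable renormalization of) the amount of time in $[0,1]$ during which the event $\{0\leftrightarrow\partial B(0,R)\}$ (or a one-arm / crossing event at scale $R$) holds; one shows $\E[X_R]$ is bounded below and $\E[X_R^2]$ is bounded above, uniformly in $R$, so that by the second moment method $\Pro[X_R>0]$ is bounded below; a limiting/monotonicity argument then produces an exceptional time with an unbounded black component. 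The second moment bound is exactly where the spectral sample enters: $\E[X_R^2]$ is controlled by an integral over $t$ of the probability that the scale-$R$ event holds both at time $0$ and at time $t$, and this ``noise stability'' quantity is estimated through the annealed spectral sample and its lower-tail / clustering estimates established earlier in the paper.

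The key steps, in order, would be: (1) For the mixed process, write down the correlation between the configuration at time $0$ and at time $t$. Conditionally on the Poisson point configuration $\eta$ and all the Lévy trajectories, the colors are resampled independently at rate $1$, so the color-correlation at time $t$ is $e^{-t}$ per cell, exactly as in the frozen model; the Lévy motion only moves the cell centers, which can only help decorrelate and is handled by the same annealed framework used for $\mu$-dynamical percolation. The point is that the mixed dynamics is, coordinatewise, ``more noisy'' than the frozen dynamics, so any upper bound on the two-time correlation valid for the frozen model applies here as well. (2) Invoke the annealed spectral sample estimates (the clustering effect and the full lower-tail bound stated in the abstract) to bound, for each relevant scale $R$, the quantity $\int_0^1 \Pro[\text{scale-}R\text{ event at }0\text{ and at }t]\,dt$ in terms of $\E[\text{scale-}R\text{ event}]$ times a logarithmic (or subpolynomial) correction, precisely as in the proof of Theorem~\ref{t.main_frozen}. (3) Run the second moment method to get $\Pro[X_R>0]\geq c>0$ uniformly in $R$, then pass to the limit $R\to\infty$: the events $\{X_R>0\}$ are (essentially) decreasing in $R$ after the standard reformulation, so $\Pro[\exists t\in[0,1]:\ 0\leftrightarrow\infty\text{ at time }t]\geq c>0$, and the $0$--$1$ law quoted in the footnote upgrades this to an almost sure statement.

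The main obstacle — or rather, the point requiring the most care — is making precise the comparison that ``the mixed dynamics decorrelates at least as fast as the frozen dynamics,'' so that the spectral-sample two-time estimate for the frozen process can be quoted verbatim. Concretely one must check that, conditionally on $\eta$ and on the trajectories, the resampling-of-colors contributes a factor $e^{-t}$ (or $e^{-t|S|}$ summed over the spectral sample $S$) and that the additional randomness coming from the moving centers can only further reduce the two-time correlation; this is essentially a monotonicity/coupling statement. Once this is granted, every analytic estimate is inherited, and the rest of the argument is a verbatim repetition of the frozen case — which is exactly why the statement asserts the proof is ``essentially the same'' as that of Theorem~\ref{t.main_frozen}. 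A minor additional point is the measurability of the exceptional-times event for this third process, which follows from the càdlàg property exactly as in the other two models.
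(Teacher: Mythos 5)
Your proposal follows essentially the same route as the paper: reduce the mixed dynamics to the frozen one by showing its two-time correlation is dominated by the frozen one, then quote the proof of Theorem~\ref{t.main_frozen} (spectral lower-tail estimate plus the second-moment lemma). The one step you leave as a ``monotonicity/coupling statement'' is exactly where the paper does its only work, and it is not a coupling argument: one writes $\E [ f_R(\omega(0)) f_R(\omega(t)) ] = \E [ \sum_{S} \widehat{f_R^{\eta(0)}}(S)\widehat{f_R^{\eta(t)}}(S_t) e^{-t|S|} ]$ and applies Cauchy--Schwarz twice (over $S$ with weight $e^{-t|S|}$, then over the expectation, using stationarity of $(\eta(t))_t$) to bound this by the frozen quantity $\E [ \sum_{S} \widehat{f_R^{\eta(0)}}(S)^2 e^{-t|S|} ]$ --- so your plan is correct, but the precise mechanism you should have in mind is this Fourier/Cauchy--Schwarz bound rather than a ``more noise decorrelates more'' monotonicity, which is not obviously valid for the signed cross-terms $\widehat{f_R^{\eta(0)}}(S)\widehat{f_R^{\eta(t)}}(S_t)$.
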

(In particular, the above holds when the Lévy process is a Brownian motion.)
\medskip

Theorem~\ref{t.main_frozen} is analogous to the results of existence of exceptional times for dynamical Bernoulli percolation on the triangular lattice $\mathcal{T}$ and the square lattice $\Z^2$ that have been proved by Schramm and Steif~\cite{schramm2010quantitative} and Garban, Pete and Schramm~\cite{garban2010fourier}, and Theorem~\ref{t.main_levy} is analogous to the result of existence of exceptional times for Bernoulli percolation evolving according to an exclusion process that has been proved in~\cite{GV}.
In the present paper, we are highly inspired by the methods from~\cite{garban2010fourier} and~\cite{GV} (in this last paper, the methods from~\cite{garban2010fourier} were also central). In \cite{garban2010fourier}, Garban, Pete and Schramm define and study the \textbf{spectral sample} of Bernoulli percolation. In the present paper, we introduce and study an analogue of the spectral sample which is a random variable with values in the finite subsets of $\R^2$ that we call the \textbf{annealed spectral sample of Voronoi percolation}. See Section~\ref{s.ann} for the definition of this object.

\subsection{Previous results on planar Voronoi percolation}

As mentionned above, Zvavitch~\cite{zvavitch1996critical} has proved that $p_c \geq 1/2$ and Bollob\'{a}s and Riordan~\cite{bollobas2006critical} have proved that this an equality. Planar Voronoi percolation has then been studied in several works (we refer to Sections \ref{ss.NS_for_voro} and \ref{ss.arm} for precise statements): i) in \cite{tassion2014crossing}, Tassion proves the annealed box-crossing property at parameter $p=1/2$, ii) in \cite{ahlberg2015quenched}, Ahlberg, Griffiths, Morris and Tassion prove a quenched version of this result and prove a quenched noise sensitivity result, iii) in \cite{ahlberg2017noise}, Ahlberg and Baldasso show noise sensitivity results where the noise consists in relocalizing a small portion of points and prove that the near-critical window of Voronoi percolation is of polynomial size, iv) in \cite{V1}, we prove the quasi-multiplicativity property for arm events and we deduce annealed scaling relations, v) in \cite{V2} we prove quantitative quenched results for Voronoi percolation and we deduce a strict inequality on the exponent of the percolation function.

We see the present paper as the third of a series of three papers whose first two are \cite{V1} and \cite{V2} and whose final goal is to study the annealed spectral sample defined in Section \ref{s.ann}. Indeed, the quasi-multiplicativity property from \cite{V1} and the quantitative quenched properties from \cite{V2} are central tools in the present paper. Let us also note that the proofs from \cite{V1,V2} highly rely on the annealed and quenched box-crossing properties from \cite{tassion2014crossing,ahlberg2015quenched}.

Let us finally point out the work \cite{benjamini1998conformal}, where Benjamini and Schramm prove an important intermediate result towards conformal invariance of planar (and also 3D) Voronoi percolation, which remains one of the main conjectures for this model.

\subsection{Noise sensitivity for Voronoi percolation}\label{ss.NS_for_voro}

The question of existence of exceptional times is intimately related to the notion of \textbf{noise sensitivity}. This notion has been \red{introduced} by Benjamini, Kalai and Schramm in~\cite{benjamini1999noise} and has then been extensively studied (see~\cite{benjamini1999noise,schramm2010quantitative,garban2010fourier} for noise sensitivity results for crossing events of Bernoulli percolation, see also~\cite{garban2014noise} for a book on the subject). Let us recall the definition of noise sensitivity: For each $n \in \N$, equip $\{-1,1\}^n$ with the uniform \red{probability} measure $\Prob_{1/2}^n$, let $\omega_n(0) \sim \Prob_{1/2}^n$, and define the dynamical process $(\omega_n(t))_{t \in \R_+}$ by resampling each coordinate at rate \red{$1$}. Given a sequence of positive integers $(m_n)_{n \in \N}$ that goes to $+\infty$, we say that a sequence of functions $h_n \, : \, \{-1,1\}^{m_n} \rightarrow \{0,1\}$ is noise sensitive if, for every $t \in ]0,+\infty[$,
\[
\Cov \left( h_n(\omega_n(0)),h_n(\omega_n(t)) \right) \underset{n \rightarrow +\infty}{\longrightarrow} 0 \, .
\]
\blue{In other words, $h_n(\omega_n(0))$ and $h_n(\omega_n(t))$ are asymptotically independent.}

\subsubsection{Previous results on noise sensitivity for Voronoi percolation}

Noise sensitivity has already been studied for continuum percolation models:
\begin{itemize}
\item In~\cite{ahlberg2014noise}, the authors prove that the Poisson-Boolean model is noise sensitive.
\item In~\cite{ahlberg2015quenched}, the authors study quenched Voronoi percolation and answer a conjecture from~\cite{benjamini1999noise} related to the notion of noise sensitivity: they prove that, asymptotically almost surely, the quenched probabilities of crossing events do not depend on $\eta$. They also prove quenched and annealed noise sensitivity results for frozen dynamical Voronoi percolation, see Theorem~\ref{t.frozenNS} below.
\item In~\cite{ahlberg2017noise}, the authors prove noise sensitivity results for dynamical models obtained by relocalizing the position of points, see Theorem~\ref{t.AB} below.
\end{itemize}

Below, we let $g_n \, : \, \Omega \rightarrow \{0,1\}$ be the event that there is a black path from left to right in the square $[0,n]^2$. Here and in all the paper, by black path we mean continuous path included in the black region of the plane (a black path is a continuous object).

Let us state the noise sensitivity results from \cite{ahlberg2015quenched} and \cite{ahlberg2017noise}.

\begin{thm}[\cite{ahlberg2015quenched}]\label{t.frozenNS}
The crossing events are noise sensitive for frozen dynamical Voronoi percolation i.e.
\[
\forall t \in ]0,+\infty[, \, \Cov \left( g_n(\omega^{froz}(0)),g_n(\omega^{froz}(t)) \right) \underset{n \rightarrow +\infty}{\longrightarrow} 0 \, .
\]
The crossing events are also a.s. \textbf{quenched noise sensitive} in the sense that
\[
\text{a.s., } \forall t \in ]0,+\infty[, \, \text{\textup{\textbf{Cov}}}^\eta \left( g_n(\omega^{froz}(0)),g_n(\omega^{froz}(t)) \right)  \underset{n \rightarrow +\infty}{\longrightarrow} 0 \, ,
\]
where $\text{\textup{\textbf{Cov}}}^\eta$ is the covariance conditioned on $\eta$. Moreover, there exists a constant $a>0$ such that these annealed and quenched noise sensitivity results also hold if $t=t_n=n^{-a}$.
\end{thm}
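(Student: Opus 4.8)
The plan is to carry out the Fourier-analytic approach to noise sensitivity conditionally on the point configuration $\eta$, and then to average over $\eta$. Fix $\eta$ and set $E_n = E_n(\eta) := \{x \in \eta : C(x) \cap [0,n]^2 \ne \emptyset\}$, which is a.s.\ finite; the color of a cell outside $E_n$ cannot influence $g_n$, so $g_n$ restricts to a Boolean function $g_n^\eta : \{-1,1\}^{E_n} \to \{0,1\}$. Since the frozen dynamics resamples each color independently at rate $1$, a single color satisfies $\E^\eta[\omega_x(0)\,\omega_x(t)] = e^{-t}$, and expanding $g_n^\eta$ in the Fourier--Walsh basis gives
\[
\text{\textup{\textbf{Cov}}}^\eta\!\big(g_n(\omega^{froz}(0)),\, g_n(\omega^{froz}(t))\big) \;=\; \sum_{\emptyset \ne S \subseteq E_n} \widehat{g_n^\eta}(S)^2\, e^{-t|S|}\, .
\]
Since $\sum_{\emptyset \ne S}\widehat{g_n^\eta}(S)^2 = q_n(\eta)\big(1-q_n(\eta)\big) \le \tfrac14$, where $q_n(\eta) := \Prob^\eta[g_n]$, this quantity is at most $\tfrac14 e^{-tk} + \sum_{0 < |S| \le k}\widehat{g_n^\eta}(S)^2$ for every $k \ge 1$. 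Hence both the fixed-$t$ quenched noise sensitivity and its refinement with $t = t_n = n^{-a}$ follow once we show that, for a.e.\ $\eta$, the quenched Fourier spectrum of $g_n^\eta$ carries asymptotically negligible mass on frequencies of size $\le k$ --- for $k$ fixed in the first case, and for $k$ a small power of $n$ in the second.

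To bound this low-frequency mass I would invoke the Schramm--Steif randomized-algorithm inequality: if $\mathcal A$ is a randomized algorithm computing $g_n^\eta$ from the colors, with quenched revealment $\delta_n(\eta) := \max_{x \in E_n}\Prob^\eta\big[\mathcal A \text{ queries the color of } C(x)\big]$, then $\sum_{0 < |S| \le k}\widehat{g_n^\eta}(S)^2 \le k^2\,\delta_n(\eta)$. Everything thus reduces to constructing, conditionally on $\eta$, an exploration algorithm for the black left--right crossing of $[0,n]^2$ whose revealment is a negative power of $n$, on an event of $\Pro$-probability at least $1 - n^{-\beta}$ for some $\beta > 1$ (so that Borel--Cantelli gives the bound eventually almost surely). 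As on the square lattice, one reveals the interface between the black cluster and the white cluster attached to the bottom side of the square, started from a point of the bottom side chosen uniformly at random; the color of a cell $C(x)$ is queried only if this interface comes near $x$, which forces an alternating two-arm event between $C(x)$ and the circle of radius $\simeq \dist\!\big(x,\text{bottom}\big)$, and reaching $x$ along the boundary costs a one-arm event. Integrating over the random starting point and plugging in the quenched box-crossing property of \cite{ahlberg2015quenched} --- which already gives $\Prob^\eta[\text{two arms between radii } r \text{ and } R] \le (r/R)^{c}$ for a universal $c > 0$ with high probability over $\eta$, hence a revealment $\le n^{-c'+o(1)}$ --- yields the desired bound; the sharp arm exponents of \cite{V2} (via the quasi-multiplicativity of \cite{V1}) would give the optimal rate. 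Choosing $k$ fixed, resp.\ $k = n^{b}$ with $0 < b < c'/2$ and splitting the sum at level $k$ (the tail $\tfrac14 e^{-n^{b-a}}$ vanishes whenever $a < b$), delivers the two quenched statements, for any $a \in (0,\, c'/2)$.

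For the annealed statement, conditioning on $\eta$ gives the identity
\[
\Cov\!\big(g_n(\omega^{froz}(0)),\, g_n(\omega^{froz}(t))\big) \;=\; \E\big[\,\text{\textup{\textbf{Cov}}}^\eta\!\big(g_n(\omega^{froz}(0)),\, g_n(\omega^{froz}(t))\big)\,\big] \;+\; \Var\big(q_n(\eta)\big)\, .
\]
The first term tends to $0$ by the quenched result combined with the uniform-in-$\eta$ revealment bound (on the rare $\eta$ where it fails, the conditional covariance is trivially $\le \tfrac14$, so dominated convergence applies, and likewise with $t = t_n$). The second term tends to $0$ because the quenched crossing probabilities concentrate --- the resolution, also due to \cite{ahlberg2015quenched}, of the conjecture of \cite{benjamini1999noise}: an Efron--Stein/martingale estimate shows that relocalizing a single Poisson point perturbs $q_n(\eta)$ by an amount that is polynomially small on average, and the annealed box-crossing of \cite{tassion2014crossing} keeps $\E[q_n(\eta)] = \tfrac12$ bounded away from $0$ and $1$.

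The main obstacle is the quenched revealment estimate. Two features make it more delicate than on $\Z^2$: the cells revealed by the exploration live on the random Voronoi graph, so "cells near a given point" is itself a random notion and the arm estimates are genuinely needed in quenched form; and showing that the revealment is not merely $o(1)$ but a negative power of $n$ with very high probability over $\eta$ --- which is exactly what the $t_n = n^{-a}$ refinement requires --- is where the quantitative quenched machinery of \cite{V1,V2} enters, beyond the box-crossing property alone.
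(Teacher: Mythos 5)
The paper does not prove this theorem: it is quoted, with attribution, from \cite{ahlberg2015quenched}, and nothing in the present article constitutes a proof of it (the closest in-paper result is the sharper Theorem~\ref{t.quant_NS_frozen}, which is proved by a completely different, spectral-sample route). So there is no internal proof to compare yours against; what you have written is a reconstruction of the argument of the cited reference, and as an outline it is the right one: condition on $\eta$, diagonalize the resampling dynamics in the Fourier--Walsh basis of $\{-1,1\}^{E_n(\eta)}$, control the low-frequency quenched spectral mass via the Schramm--Steif revealment inequality applied to an interface-exploration algorithm, and pass to the annealed statement through the law of total covariance, the extra term being $\Var\big(\Prob^\eta_{1/2}[\cross(n,n)]\big)$, which tends to $0$ by the concentration of quenched crossing probabilities.

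Two of your steps conceal most of the difficulty, and you should not present them as consequences of the box-crossing property alone. First, the quenched revealment bound requires quenched polynomial decay of one- and two-arm probabilities \emph{simultaneously} for all relevant centers and all dyadic scales, on an event of $\eta$-probability at least $1-n^{-\beta}$ with $\beta>1$; a variance bound such as Theorem~\ref{t.AGMT} for a single rectangle does not union-bound into this, and one needs multi-scale quenched RSW events of the type $\qbc^\gamma_\delta$ (Definition~\ref{d.qbc}) together with a gluing argument across scales. One must also rule out abnormally large Voronoi cells in the window, since otherwise ``the interface comes near $x$'' is not determined by nearby colors. Second, the statement $\Var\big(\Prob^\eta_{1/2}[\cross(n,n)]\big)\to 0$ is itself the main theorem of \cite{ahlberg2015quenched}; the Efron--Stein sketch you give only closes if one already has second-moment control of quenched box-pivotality, $\E\big[\Prob^\eta_{1/2}[\Piv_B(g_n)]^2\big]\le \grandO{1}\,\alpha_4^{an}(\cdot)^2$, together with $\alpha_4^{an}(n)\le Cn^{-1-\varepsilon}$ --- i.e.\ the quenched arm estimates of \cite{V2} and Corollary~\ref{c.4}, which postdate and themselves rely on \cite{ahlberg2015quenched}. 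Neither issue breaks the logic of the outline, but both must be addressed for the argument to be complete.
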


\begin{thm}[\cite{ahlberg2017noise}]\label{t.AB}
Consider the Voronoi percolation model in the bounded box $[0,n]^2$ defined like in the present paper except that $\eta$ is a set of $n^2$ independent points uniformly distributed in $[0,n]^2$ rather than a Poisson process in the plane. We still write $\omega$ for the model at parameter $1/2$ and $g_n$ for the crossing event of $[0,n]^2$. Define the two following $\varepsilon$-noises on the Voronoi model: i) resample the localization of each point of $\eta$ with probability $\varepsilon$ but do not resample the colors; ii) resample (at the same time) the localization and the color of each point of $\eta$ with probability $\varepsilon$. The crossing events are sensitive for these two noises, i.e.
\[
\forall \varepsilon \in ]0,1], \, \Cov \left( g_n(\omega),g_n(\omega^\varepsilon) \right)  \underset{n \rightarrow +\infty}{\longrightarrow} 0 \, ,
\]
where $\omega^\varepsilon$ is the $\varepsilon$-perturbation of $\omega$.
\end{thm}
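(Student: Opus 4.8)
The theorem asserts the noise sensitivity, in the Benjamini--Kalai--Schramm sense~\cite{benjamini1999noise}, of the crossing events $g_n$ for two noises acting on the finite product space $\prod_{i=1}^{n^{2}}\bigl([0,n]^{2}\times\{-1,1\}\bigr)$ on which $g_n$ is defined, the $i$-th factor carrying the position $z_i$ and colour $c_i$ of the $i$-th point. The plan is to treat both noises via the randomised-algorithm method of Schramm and Steif~\cite{schramm2010quantitative}: I would produce a randomised algorithm $A$ computing $g_n$ outside an event of probability $\le n^{-10}$ and with \emph{revealment} $\delta_A=\max_v\Pro[A\text{ queries coordinate }v]$ that is polynomially small in $n$; the Schramm--Steif inequality then yields $\sum_{0<|U|\le k}\|(g_n)_U\|^{2}\lesssim k^{2}\delta_A$ for every fixed $k$, where $(g_n)_U$ is the Efron--Stein component of $g_n$ indexed by the set of coordinates $U$. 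Since resampling each coordinate independently with probability $\varepsilon$ multiplies $\|(g_n)_U\|^{2}$ by $(1-\varepsilon)^{|U|}$ and the total weight is $\le\Var(g_n)\le1$, this gives $\Cov\bigl(g_n(\omega),g_n(\omega^\varepsilon)\bigr)\lesssim k^{2}\delta_A+(1-\varepsilon)^{k}\to0$ for noise (ii), on letting $n\to\infty$ and then $k\to\infty$.

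To build $A$ I would run a crossing exploration: from a uniformly chosen entry point on the left side of $[0,n]^{2}$ (the balancing device of~\cite{schramm2010quantitative}), reveal the black cluster attached to the left side together with its outer white interface, one Voronoi cell at a time --- examining a cell means reading its point, its colour and the positions of its Voronoi neighbours --- and declare $g_n=1$ if and only if this cluster meets the right side. Outside an event of probability $\le n^{-10}$ every cell of $[0,n]^{2}$ has diameter $\le C\sqrt{\log n}$; on that event, querying a cell near a point at distance $d$ from the already explored boundary forces a black or interface path of mesh $\le C\sqrt{\log n}$ from it to distance of order $d$, so the box-crossing property~\cite{tassion2014crossing} and the quasi-multiplicativity of arm events~\cite{V1} give $\Pro[A\text{ queries a given cell}]\le n^{-\beta}$ for some $\beta>0$, exactly as in~\cite{schramm2010quantitative,ahlberg2015quenched,ahlberg2017noise} (the box-crossing and arm estimates, first proved in the Poisson model, transfer to this binomial model by standard comparison). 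As each point lies in only $O(\log n)$ cells, the revealment of any $z_i$, and of any $(z_i,c_i)$, is $\le n^{-\beta}$ up to a logarithmic factor, which is what the previous paragraph requires.

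For noise (i) only positions are resampled, so a component $(g_n)_U$ is damped by $(1-\varepsilon)^{|U_z|}$, $U_z$ being the set of position-coordinates in $U$. Hence $\Cov\bigl(g_n(\omega),g_n(\omega^\varepsilon)\bigr)$ equals $\Var\bigl(\E[g_n\mid(c_i)_i]\bigr)$ plus $\sum_{U:\,U_z\ne\emptyset}(1-\varepsilon)^{|U_z|}\|(g_n)_U\|^{2}$. The sum is still controlled by $A$: a component with $U_z\ne\emptyset$ can only be produced once $A$ queries a position-coordinate of $U$, so running the Schramm--Steif charging argument on the position-coordinates bounds $\sum_{1\le|U_z|\le k}\|(g_n)_U\|^{2}$ by $\lesssim k^{2}n^{-\beta}$ (up to logarithms). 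It remains to prove $\Var\bigl(\E[g_n\mid(c_i)_i]\bigr)\to0$. By exchangeability of the $n^{2}$ i.i.d.\ points, $\E[g_n\mid(c_i)_i]=\phi(K)$ for some non-decreasing $\phi$, where $K=\#\{i:c_i=1\}\sim\mathrm{Bin}(n^{2},1/2)$ has fluctuations of order $n$; hence, for fixed $\Lambda$, $\Var(\phi(K))\lesssim\bigl(\phi(n^{2}/2+\Lambda n)-\phi(n^{2}/2-\Lambda n)\bigr)^{2}+e^{-c\Lambda^{2}}$. A standard monotone coupling bounds $\phi(n^{2}/2+\Lambda n)-\phi(n^{2}/2-\Lambda n)$ by $2\Lambda n$ times the probability that a uniformly chosen cell is pivotal at some intermediate density in $[1/2-\Lambda/n,1/2+\Lambda/n]$, and the near-critical estimates for Voronoi percolation bound this by $O(\alpha_4(1,n))$; so the difference is $O\bigl(\Lambda\,n\,\alpha_4(1,n)\bigr)$, which tends to $0$ since $n\,\alpha_4(1,n)\to0$. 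Letting $\Lambda\to\infty$ finishes noise (i).

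The two points where the real work lies are: (i) making the Schramm--Steif method rigorous in the continuous Voronoi setting --- crucially, the influence-based form of the Benjamini--Kalai--Schramm criterion is \emph{not} available, since hypercontractivity fails for continuous factors, which is precisely why the revealment method (which avoids it) is the right tool --- together with the construction of the exploration and its quantitative revealment bound, which rely on the box-crossing property and the quasi-multiplicativity of arm events from~\cite{tassion2014crossing,V1}; and (ii) the input $n\,\alpha_4(1,n)\to0$, equivalently that the near-critical window of planar Voronoi percolation is wider than $1/n$, which is itself a genuine near-critical estimate for this model.
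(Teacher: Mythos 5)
First, a point of comparison: the paper you are reading does not prove Theorem~\ref{t.AB} at all --- it is quoted from \cite{ahlberg2017noise} as background --- so there is no internal proof to measure your argument against. That said, your reconstruction does follow the architecture of the actual proof in that reference: a Schramm--Steif revealment bound for the Efron--Stein decomposition of $g_n$ on the product space of the $n^2$ coordinates $(z_i,c_i)$, which handles noise~(ii) and the position part of noise~(i); plus the observation that the residual term for noise~(i) is $\Var \left( \E \left[ g_n \mid (c_i)_i \right] \right)$, which by exchangeability reduces to the width of the near-critical window, i.e.\ to $n\,\alpha_4^{an}(n) \rightarrow 0$. That last input is indeed the genuinely new estimate of \cite{ahlberg2017noise}, and it does follow from Corollary~\ref{c.4} together with the polynomial decay \eqref{e.poly}. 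Your identification of the two hard points (extending the revealment method to continuous factors, and the near-critical window) is accurate.

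There is, however, one concrete gap. The exploration algorithm you describe --- ``reveal the black cluster attached to the left side together with its outer white interface, starting from a uniformly chosen entry point on the left side'' --- does not have small revealment: every cell adjacent to the left side of $[0,n]^2$ must be examined to decide whether the cluster attached to that side reaches the right side, so such a cell is queried with probability of order one no matter where on the left side you start, and $\delta_A$ is of order $1$. The quantity that must be small is the distance from a \emph{fixed} cell to the \emph{deterministically known} starting set of the exploration, not to ``the already explored boundary''. The correct balancing device is a random interior segment (e.g.\ a uniformly chosen vertical line in the middle third of the box, from which one explores all interfaces crossing it): a fixed cell is then at distance at least $d$ from the starting segment except with probability $\grandO{1}\, d/n$, and being queried from distance $d$ forces a two-arm event, giving $\delta_A \leq \min_d \left( \grandO{1}\, d/n + \grandO{1}\, \alpha_2^{an}(d) \right)$, which is polynomially small. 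A second, more minor issue: to bound $\sum_{1 \leq |U_z| \leq k} \|(g_n)_U\|^2$ you need the variant of the revealment inequality in which only the position-coordinates are charged (sets $U$ with $|U|$ large but $|U_z|$ small are not controlled by the plain inequality $\sum_{0<|U|\leq k}\|(g_n)_U\|^2 \lesssim k^2\delta_A$). This restricted version is true and follows from the same martingale argument, but it is not the statement you quoted and should be stated and justified explicitly.
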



\subsubsection{A sharp noise sensitivity result}

In the present paper, we prove a quantitative noise \red{sensitivity} result. In order to state this theorem, we need to introduce the notion of \textbf{arm events}.
\begin{defi}
Let $0 < r,R <+\infty$ such that $r \leq R$ and $j \in \N^*$. The $j$-arm event $\arm_j(r,R)$ is the event that there are $j$ paths of alternating color in the annulus $[-R,R]^2 \setminus ]-r,r[^2$ from $\partial [-r,r]^2$ to $\partial [-R,R]^2$. We write $\alpha_j^{an}(r,R)$ for the annealed probability of this event i.e.:
\[
\alpha_j^{an}(r,R)=\Pro_{1/2} \left[ \arm_j(r,R) \right] \, .
\]
Also, we write $\alpha_j^{an}(R)=\alpha_j^{an}(1,R)$. If $r > R$, we let $\alpha_j^{an}(r,R)=1$.
\end{defi}
\begin{thm}\label{t.quant_NS_frozen}
Consider frozen dynamical Voronoi percolation and let $g_n$ be the crossing event of $[0,n]^2$. The covariance
\[
\Cov \left( g_n(\omega^{froz}(0)),g_n(\omega^{froz}(t_n)) \right)
\]
goes to $0$ as $n$ goes to $+\infty$ if $t_n n^2 \alpha_4^{an}(n)$ goes to $+\infty$ while this quantity is greater than a positive constant if $t_n n^2 \alpha_4^{an}(n)$ goes to $0$.\footnote{The quantity $n^2 \alpha_4^{an}(n)$ goes to $+\infty$ as $n$ goes to $+\infty$ polynomially fast in $n$ (see Proposition~\ref{p.4}).}
\end{thm}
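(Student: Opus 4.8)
The plan is to reduce the covariance to a simple functional of the annealed spectral sample $\calS_{g_n}$ of Section~\ref{s.ann} and then feed in the lower-tail estimates for $\calS_{g_n}$, which are among the main results of the paper. Since $\eta$ does not move under the frozen dynamics, conditionally on $\eta$ the process $(\omega^{froz}(t))_t$ is exactly dynamical Bernoulli percolation on the countable set of cells of $\eta$; writing $\widehat{g_n}^\eta$ for the quenched Fourier--Walsh coefficients of $g_n$ on $\{-1,1\}^\eta$, the classical spectral identity for dynamical Bernoulli percolation gives $\text{\textup{\textbf{Cov}}}^\eta(g_n(\omega^{froz}(0)),g_n(\omega^{froz}(t)))=\sum_{\emptyset\neq S}e^{-t|S|}\widehat{g_n}^\eta(S)^2$. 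Averaging over $\eta$ and using that the one-time marginal of $\omega^{froz}(t)$ is $\Pro_{1/2}$ for every $t$, so that $\E^\eta[g_n(\omega^{froz}(t))]=\widehat{g_n}^\eta(\emptyset)$ does not depend on $t$, I obtain
\[
\Cov\big(g_n(\omega^{froz}(0)),g_n(\omega^{froz}(t))\big)=\E_\eta\Big[\sum_{\emptyset\neq S}e^{-t|S|}\widehat{g_n}^\eta(S)^2\Big]+\Var_\eta\big(\widehat{g_n}^\eta(\emptyset)\big).
\]
By the quenched concentration theorem of Ahlberg--Griffiths--Morris--Tassion~\cite{ahlberg2015quenched}, $\widehat{g_n}^\eta(\emptyset)=\E^\eta[g_n]$ converges in probability to $\Pro_{1/2}[g_n]$, hence $\Var_\eta(\widehat{g_n}^\eta(\emptyset))\to0$; this term is moreover nonnegative, so it does not hurt the lower bound. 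Finally, by the definition of $\calS_{g_n}$ (with the normalization of Section~\ref{s.ann}) one has $\E_\eta[\sum_S h(S)\widehat{g_n}^\eta(S)^2]=\Pro_{1/2}[g_n]\,\E[h(\calS_{g_n})]$ for every $h$, so the first term equals $\Pro_{1/2}[g_n]\,\E[e^{-t|\calS_{g_n}|}\un_{\calS_{g_n}\neq\emptyset}]$, where $|\calS_{g_n}|$ is the number of points of this finite point process in $\R^2$. As $\Pro_{1/2}[g_n]$ is bounded away from $0$ and $1$ by Tassion's annealed box-crossing property~\cite{tassion2014crossing}, the theorem reduces to deciding when $\E[e^{-t_n|\calS_{g_n}|}\un_{\calS_{g_n}\neq\emptyset}]$ tends to $0$.

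For this I would use two inputs on $\calS_{g_n}$. The first is soft: $\E[|\calS_{g_n}|]\lesssim n^2\alpha_4(n)$, which is essentially the annealed first-moment estimate for the number of pivotal cells of $g_n$, coming from the annealed four-arm estimates and the quasi-multiplicativity of~\cite{V1}; together with $\Pro[\calS_{g_n}\neq\emptyset]\geq c>0$, which follows from the previous step since $\Pro[\calS_{g_n}=\emptyset]=\Pro_{1/2}[g_n]^{-1}\E_\eta[(\E^\eta g_n)^2]\to\Pro_{1/2}[g_n]<1$. The second, deeper input is the full lower-tail estimate established in the paper: for all $1\leq\lambda\leq n^2\alpha_4(n)$,
\[
\Pro\big[0<|\calS_{g_n}|\leq\lambda\big]\lesssim\frac{\lambda}{n^2\alpha_4(n)}\,.
\]

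Granting these, the two regimes are short computations. If $t_nn^2\alpha_4(n)\to0$, I take $\lambda=\Lambda n^2\alpha_4(n)$ for a large constant $\Lambda$ and bound
\[
\E\big[e^{-t_n|\calS_{g_n}|}\un_{\calS_{g_n}\neq\emptyset}\big]\geq e^{-t_n\Lambda n^2\alpha_4(n)}\,\Pro\big[0<|\calS_{g_n}|\leq\Lambda n^2\alpha_4(n)\big]\geq(1-o(1))\big(\Pro[\calS_{g_n}\neq\emptyset]-C\Lambda^{-1}\big),
\]
where Markov's inequality and the first input control the last probability; taking $\Lambda$ large keeps this above a positive constant, so the covariance stays bounded below. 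If $t_nn^2\alpha_4(n)\to+\infty$, I use the layer-cake identity $\E[e^{-t_n|\calS_{g_n}|}\un_{\calS_{g_n}\neq\emptyset}]=\int_0^\infty t_ne^{-t_ns}\,\Pro[0<|\calS_{g_n}|\leq s]\,ds$, split at $s=n^2\alpha_4(n)$, use the lower-tail estimate below this value and $\Pro[\cdot]\leq1$ above it, and get $\E[e^{-t_n|\calS_{g_n}|}\un_{\calS_{g_n}\neq\emptyset}]\lesssim (t_nn^2\alpha_4(n))^{-1}+e^{-t_nn^2\alpha_4(n)}\to0$; combined with $\Var_\eta(\widehat{g_n}^\eta(\emptyset))\to0$, this gives the noise-sensitivity half of the statement.

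The genuine work, and the main obstacle, is the lower-tail estimate above: it is the analogue for the annealed spectral sample of the Garban--Pete--Schramm estimate, and I would obtain it through a multi-scale clustering analysis of $\calS_{g_n}$ --- showing that, conditionally on $\calS_{g_n}$ being nonempty and contained in a box of side $r\leq n$, its cardinality is of order $r^2\alpha_4(r)$ and concentrated (via first- and second-moment bounds for $|\calS_{g_n}\cap B|$ on sub-boxes $B$), and then summing over dyadic scales. This is where the annealed quasi-multiplicativity of~\cite{V1}, the quantitative quenched arm and pivotal estimates of~\cite{V2} and the annealed and quenched box-crossing properties of~\cite{tassion2014crossing,ahlberg2015quenched} all come in; the genuinely new point compared with Bernoulli percolation is to control the spectral sample and the environment $\eta$ simultaneously, which is precisely the role of the quenched inputs from~\cite{V2}.
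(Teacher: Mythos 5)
Your reduction of the covariance to the annealed spectral sample is exactly the paper's (Lemma~\ref{l.cov_frozen} plus Theorem~\ref{t.AGMT} to kill the $\widehat{g_n^\eta}(\emptyset)$ term), and your treatment of the regime $t_nn^2\alpha_4^{an}(n)\to 0$ is correct: combining $\E[|\mathcal{S}^{an}_{g_n}|]\leq\grandO{1}\,n^2\alpha_4^{an}(n)$ (available from the pivotal estimates of \cite{V1}), Markov's inequality, and $\Pro[\mathcal{S}^{an}_{g_n}\neq\emptyset]\to 1/2$ is a legitimate, mildly different route from the paper's Appendix~\ref{a.t_n}, which instead uses the convexity inequality $\E[\sum_S\widehat{h}(S)^2e^{-t|S|}]\geq\E[\exp(-t\sum_{x\in\eta}\Prob^\eta_{1/2}[\Piv^q_x(g_n)])]$.

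The gap is in the sensitivity regime. The ``full lower-tail estimate established in the paper'' that you invoke, namely $\Pro[0<|\mathcal{S}^{an}_{g_n}|\leq\lambda]\leq\grandO{1}\,\lambda/(n^2\alpha_4^{an}(n))$, is not Theorem~\ref{t.Spec_sample_g_n} and does not follow from it. For $\lambda=r^2\alpha_4^{an}(r)$ the theorem gives $C\bigl(\frac{n}{r}\alpha_4^{an}(r,n)\bigr)^2+\frac{C}{n}$, whereas by quasi-multiplicativity your claimed bound reads $\grandO{1}(r/n)^2\alpha_4^{an}(r,n)^{-1}$; the former is dominated by the latter only if $\alpha_4^{an}(r,n)\leq\grandO{1}(r/n)^{4/3}$, which is neither proved nor expected (the conjectural four-arm exponent is $5/4<4/3$; already on the triangular lattice the sharp lower tail is $(r/n)^{1/2+o(1)}$ while $\lambda/\E[|\mathcal{S}|]=(r/n)^{3/4+o(1)}$, so a bound linear in $\lambda$ is false there). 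Consequently your layer-cake computation yielding $\grandO{1}(t_nn^2\alpha_4^{an}(n))^{-1}$ does not go through as written. The conclusion is still true, but the deduction must be run with the actual bound: sum $\Pro[|\mathcal{S}^{an}_{g_n}|=k]e^{-kt_n}$ over dyadic blocks $k\in[r^2\alpha_4^{an}(r),(2r)^2\alpha_4^{an}(2r))$, bound each block by $e^{-t_nr^2\alpha_4^{an}(r)}\bigl(C(\frac{n}{r}\alpha_4^{an}(r,n))^2+\frac{C}{n}\bigr)$, and use Corollary~\ref{c.4} (which gives $(\frac{n}{r}\alpha_4^{an}(r,n))^2\leq\grandO{1}(r/n)^{2\delta}$ for some $\delta>0$) together with the polynomial growth of $r\mapsto r^2\alpha_4^{an}(r)$ to see that the sum tends to $0$ when $t_nn^2\alpha_4^{an}(n)\to\infty$; this is the computation of Section~8.1 of \cite{garban2010fourier} to which the paper defers. (Your closing paragraph sketches a proof of the lower-tail input in a few lines; that input is Theorem~\ref{t.Spec_sample_g_n}, i.e.\ the bulk of the paper, and both you and the paper rightly treat it as a black box here --- the problem is only that you quoted the wrong estimate for it.)
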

\blue{As shown in \cite{V1} (see for instance Section 6 therein), $n^2\alpha^{an}_4(n)$ is of the same order as the expected number of pivotal points.\footnote{\blue{For instance, $n^2\alpha^{an}_4(n)$ is of the same order as $\E_{1/2} \left[ \sum_{x \in \eta} \un_{\Piv_x^q(g_n)} \right]$; see Definition \ref{d.piv} for the definition of $\Piv_x^q(g_n)$.}} The interpretation of the above result is the same as in the case of Bernoulli percolation: \red{$g_n(\omega^{froz}(0))$ starts to decorrelate from $g_n(\omega^{froz}(t))$ when the dynamics starts to affect pivotal points.}}
\medskip

We believe that this result also holds for the $\mu$-dynamical processes where $\mu$ is the law of a (non zero) planar Lévy process and for the dynamics considered in Theorem~\ref{t.AB}, but our methods are not quantitative enough to imply it.

\subsection{Some notations}

In all the paper, we will use the following notations: (a) $\grandO{1}$ is a positive bounded function, (b) $\Omega(1)$ is a positive function bounded away from $0$ and (c) if $f$ and $g$ are two non-negative functions, then $f \asymp g$ means $\Omega(1) f \leq g \leq \grandO{1} f$. Also:
\bi 
\item[i)] We write $B(x,r)=x+[-r,r]^2$ for any $x \in \R^2$ and $r \in \R_+$.
\item[ii)] We let $A(x;r,R) = x + [-R,R]^2 \setminus ]-r,r[^2$ for any $x \in \R^2$ and $0< r \leq R < +\infty$. Moreover, we let $A(r,R)=A(0;r,R)$.
\ei
We will also use the following notation, where $(\Omega,\mathcal{F},\Pro)$ is a probability space, $\mathcal{G} \subseteq \mathcal{F}$ is a $\sigma$-field, and $A,B \in \mathcal{F}$ are such that $\Pro \left[ B \right] > 0$:
\[
\Pro \left[ A \mid B, \, \mathcal{G} \right] = \frac{\Pro \left[ A \cap B \mid \mathcal{G} \right]}{\Pro \left[ B \mid \mathcal{G} \right]} \, \un_{\lbrace \Pro [ B \mid \mathcal{G} ] > 0 \rbrace} \, .
\]
Note that, $\Pro [ \: \cdot \mid B ]$-a.s., $\Pro[ A \, | \, B, \, \mathcal{G} ]$ is the conditional probability of $A$ with respect to $\mathcal{G}$ and under $\Pro [ \: \cdot \mid B]$.

\subsection{Estimates for crossing and arm events}\label{ss.arm}

We end the introduction by stating some results on crossing and arm events that we will use throughout this paper and that are proved in~\cite{V1,V2}. As explained above, we see these two papers as the first two of a series of three papers whose final goal is to study the annealed spectral sample (the present work one is the third one). In particular, we will rely a lot on the quasi-multiplicativity property for arm events from \cite{V1} (see Proposition \ref{p.QMulti} below), on the quenched estimates for arm events from \cite{V2} (see Theorem \ref{t.quenched_arm} below), on the study of the events $\widehat{\arm}_j(r,R)$ \blue{defined in} Definition \ref{d.hat}, and on several estimates on arm events proved in these two papers. In these papers, we highly rely on the annealed and quenched box-crossing properties proved in~\cite{ahlberg2015quenched,tassion2014crossing}. Let us first state these two properties.
\begin{defi}
Given $\rho_1,\rho_2 \in ]0,+\infty[$, the crossing event $\cross(\rho_1,\rho_2)$ is the event that there is a black path in the rectangle $[0,\rho_1] \times [0,\rho_2]$ from its left side to its right side.
\end{defi}
By duality, $\Pro_{1/2} \left[\cross(n,n) \right]=1/2$ \blue{(here, we use that almost surely all the vertices of the Voronoi tiling have degree $3$)}. Tassion has proved that the crossing probabilities for other shapes of rectangles are non-degenerate i.e. he has proved the following annealed box-crossing property:
\begin{thm}[\cite{tassion2014crossing}]
For every $\rho \in ]0,+\infty[$ there exists $c=c(\rho) \in ]0,1[$ such that, for every $R \in ]0,+\infty[$,
\[
c \leq \Pro_{1/2} \left[ \cross(\rho R,R) \right] \leq 1-c \, .
\]
\end{thm}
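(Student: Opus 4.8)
\emph{Strategy.} This is Tassion's box-crossing theorem \cite{tassion2014crossing}; I sketch the RSW-type argument I would run. The Poisson process $\eta$ is invariant under all plane isometries, and at $p=1/2$ the colouring $\omega$ is invariant under the global colour swap. Composing a $90^\circ$ rotation with the colour swap, a black left--right crossing of $[0,a]\times[0,b]$ has the same probability as a black bottom--top crossing of $[0,a]\times[0,b]$ swapped to white, and planar duality for a box (the convention that boundary points carry both colours only helps here) says exactly one of ``black crosses $[0,a]\times[0,b]$ horizontally'' and ``white crosses it vertically'' occurs; these facts combine to give $\Pro_{1/2}[\cross(a,b)]+\Pro_{1/2}[\cross(b,a)]=1$, hence $\Pro_{1/2}[\cross(R,R)]=\tfrac12$ and $\Pro_{1/2}[\cross(\rho R,R)]=1-\Pro_{1/2}[\cross(R,\rho R)]$. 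A left--right crossing of $[0,R]^2$ contains a left--right crossing of $[0,\rho'R]\times[0,R]$ for $\rho'\le 1$, and $[0,R]^2\subseteq[0,R]\times[0,\rho'R]$ for $\rho'\ge1$; combined with the previous identity, monotonicity gives $\Pro_{1/2}[\cross(\rho'R,R)]\ge\tfrac12$ for $\rho'\le1$ and $\le\tfrac12$ for $\rho'\ge1$. So the whole theorem reduces to the lower bound $\Pro_{1/2}[\cross(\rho R,R)]\ge c(\rho)>0$ for $\rho>1$, uniformly in $R$; and by the standard FKG gluing of horizontal crossings of overlapping $\rho_0R\times R$ rectangles together with a vertical crossing of each overlap square (probability $\tfrac12$, by the self-duality just established), it suffices to prove it for a single fixed $\rho_0>1$, say $\rho_0=3/2$.

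\emph{The RSW step.} To pass from the $1\times1$ square to the $\tfrac32\times1$ rectangle I would condition on the lowest black horizontal crossing $\gamma$ of $S=[0,R]^2$, which is present with probability $\tfrac12$. The curve $\gamma$ is measurable with respect to the colours and Poisson points lying in $N(\gamma)\cup D(\gamma)$, where $N(\gamma)=\bigcup\{C(x):C(x)\cap\gamma\ne\emptyset\}$ is the union of the cells touched by $\gamma$ and $D(\gamma)$ is the (all-white) region strictly below $\gamma$; a spatial-Markov-type argument then shows that, conditionally on this revealed data, the configuration in the complementary region stochastically dominates a fresh Voronoi percolation for increasing colour-events, so black crossing events localised above $\gamma$ retain a positive probability. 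Working inside $[0,2R]\times[0,R]$, whose law is invariant under reflection across $\{x=R\}$, one uses the square-root trick and the FKG inequality to force $\gamma$ and the lowest crossing of the reflected square $[R,2R]\times[0,R]$ to both reach a common small square straddling the segment $\{R\}\times[0,R]$, and bridges them there by a vertical black crossing of that square (probability $\tfrac12$); this yields a black left--right crossing of $[0,2R]\times[0,R]$, a fortiori of $[0,\tfrac32R]\times[0,R]$, with a universal positive probability. Two ingredients are used throughout: (i) positive association (FKG) for increasing colour-events, which holds because conditionally on $\eta$ the colours form a product measure; and (ii) the Gaussian tail bound $\Pro[\diam C(x)\ge t]\le e^{-ct^2}$ on cell diameters, which shows that $N(\gamma)$ lies within, say, an $R^{1/2}$-neighbourhood of $\gamma$ outside an event of probability $o(1)$, so that the conditioning on $\gamma$ really does leave the reflected square and the bridging region ``almost free''.

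\emph{Main obstacle.} The hard part is exactly this RSW step, and the difficulty specific to Voronoi percolation — compared with Bernoulli percolation on $\mathcal T$ or $\Z^2$, where one has either a large symmetry group acting on a fixed lattice or a half-plane three-arm estimate to make the ``reflected lowest crossing meets the original'' mechanism go through — is the non-locality of conditioning on the point process: revealing $\gamma$ reveals $\eta$ in a whole neighbourhood of $\gamma$, not merely ``on one side of it''. Taming this requires careful bookkeeping of which cells meet $\gamma$, the cell-diameter tail estimates above, and a genuine stopping-domain/spatial-Markov decomposition of the tessellation along $\gamma$ — this is where Tassion's argument does its real work; once the interface is shown to be thin and the decomposition is in place, the gluing itself is a routine FKG manipulation. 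Uniformity in $R$ is then automatic: rescaling space by $R$ only changes the Poisson intensity from $1$ to $R^2$, the argument uses nothing about the value of the intensity beyond invariance, FKG, self-duality and the cell tails (which are only more favourable at higher intensity), and the regime of small $R$ is trivial. (A softer route to the weaker statement that crossing probabilities merely stay bounded away from $0$ and $1$ along a family of scales is Bollob\'as--Riordan's sharp-threshold argument \cite{bollobas2006critical}, combining a Friedgut--Kalai/BKKKL-type inequality with the colour-swap symmetry; it does not give the full uniform box-crossing property stated here.)
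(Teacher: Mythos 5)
The paper itself gives no proof of this statement; it is imported verbatim from \cite{tassion2014crossing}, so your sketch has to be measured against Tassion's argument. Your outer reductions are fine: the symmetry and self-duality giving $\Pro_{1/2}\left[\cross(R,R)\right]=1/2$, the monotonicity consequences, and the FKG chaining of overlapping $\rho_0 R\times R$ rectangles that reduces everything to a single aspect ratio $\rho_0>1$. The gap is in the RSW step, which is the entire content of the theorem. You propose to condition on the lowest black crossing $\gamma$ of $[0,R]^2$ and to claim that, given the revealed data $N(\gamma)\cup D(\gamma)$, the configuration above $\gamma$ stochastically dominates a fresh Voronoi percolation for increasing colour events. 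This is precisely the step that fails for Voronoi percolation and is the reason the classical Russo--Seymour--Welsh route was unavailable for this model for so long. Revealing $\gamma$ reveals the positions of every Poisson point whose cell meets $\gamma$ and, implicitly, the absence of points in sizeable portions of the unexplored region (a cell of diameter $d$ certifies an empty disc of comparable radius); the conditional law of $\eta$ above $\gamma$ is therefore not a Poisson process, the tessellation of the unexplored points is distorted by the revealed ones, and the revealed white cells along $\gamma$ protrude into the upper region. No stochastic domination of the form you assert is known, and the cell-diameter tail bound does not repair this: thinness of $N(\gamma)$ controls \emph{where} the conditioning acts, not \emph{what} the conditional law is there. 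Your closing paragraph attributes to Tassion a ``stopping-domain/spatial-Markov decomposition of the tessellation along $\gamma$''; this is a mischaracterisation --- the point of \cite{tassion2014crossing} is to avoid any such decomposition.

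Tassion's actual argument uses only invariance under reflections and $90^\circ$ rotations, the (annealed) FKG inequality, the self-duality $\Pro_{1/2}\left[\cross(R,R)\right]=1/2$, and the square-root trick. He studies where left-to-right crossings of a square land on its right side, introduces functions recording the probability that a crossing terminates in a prescribed sub-segment, and runs a multi-scale dichotomy: either at some scale crossings land near the middle of the side with non-negligible probability, in which case reflected copies are glued by FKG into a long-rectangle crossing, or at every scale they land only near the corners, which is shown to be self-contradictory. No exploration of interfaces and no spatial Markov property enter anywhere. So your proposal identifies the right target and the right auxiliary reductions, but the mechanism you supply for the key step would not go through; and, as you note yourself, the Bollob\'as--Riordan sharp-threshold fallback does not yield the uniform box-crossing property either.
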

Ahlberg, Griffiths, Morris and Tassion have proved that the quenched crossing probabilities asymptotically do not depend on the environment $\eta$:
\begin{thm}[\cite{ahlberg2015quenched}]\label{t.AGMT}
There exists $\varepsilon > 0$ such that the following holds: For every $\rho \in ]0,+\infty[$ there exists a constant $C=C(\rho) <+\infty$ such that, for every $R \in ]0,+\infty[$,
\[
\Var \left( \Prob^\eta_{1/2} \left[ \cross(\rho R,R) \right] \right) \leq C R^{-\varepsilon} \, .
\]
\end{thm}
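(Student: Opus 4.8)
The plan is to derive this from a stronger, self-improving statement proved by induction on the scale: for a fixed finite family of \emph{local observables} --- the crossing events $\cross(\rho' r, r)$ with $\rho'$ ranging over a fixed finite set, together with the $j$-arm events across a single dyadic annulus at scale $r$ and their half-plane and quarter-plane analogues, for all $j$ below some absolute bound --- one has $\Var(\Prob^\eta_{1/2}[\mathcal{E}]) \le C\, r^{-\varepsilon}$ for absolute constants $C,\varepsilon>0$, where $r$ is the scale of the observable $\mathcal{E}$. Theorem~\ref{t.AGMT} is then the case $\mathcal{E}=\cross(\rho R,R)$; the arm-event instances of the statement are of the same nature as Theorem~\ref{t.quenched_arm}.

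For the inductive step at scale $R$ I would start from the $L^2$ Poincar\'e (Efron--Stein) inequality for Poisson functionals: with $f(\eta)=\Prob^\eta_{1/2}[\cross(\rho R,R)]$,
\[
\Var(f)\;\le\;\int_{\R^2}\E\!\left[\bigl(f(\eta\cup\{x\})-f(\eta)\bigr)^2\right]dx .
\]
Using the exponential tail on Voronoi cell diameters, adding a point at $x$ alters the colouring of the plane only inside a region of size $O(\log R)$ around $x$, off an event of probability $R^{-100}$ --- and points $x$ far from $[0,\rho R]^2$ contribute negligibly, since only an exponentially large cell could reach the box. Hence, up to negligible terms, $|f(\eta\cup\{x\})-f(\eta)|\le \pi_x(\eta):=\Prob^\eta_{1/2}\bigl[B(x,O(\log R))\text{ is pivotal for }\cross(\rho R,R)\bigr]$, which vanishes unless $x$ lies within $O(\log R)$ of the box; by quasi-multiplicativity (Proposition~\ref{p.QMulti}) and standard arm manipulations, its annealed mean $p_x:=\E[\pi_x]$ is of order $\alpha_4^{an}(\dist(x,\partial[0,\rho R]^2))$ in the bulk, with the appropriate half-plane (resp.\ quarter-plane) correction up to scale $R$ near an edge (resp.\ corner).

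The heart of the matter is to upgrade this to a bound on the \emph{second} moment, $\E[\pi_x^2]\le C\, p_x^2$, with an absolute constant. The pivotality of $B(x,O(\log R))$ is contained in the intersection, over the dyadic annuli $A_j=A(x;2^j,2^{j+1})$ around $x$, of the corresponding unmatched arm events (four arms below scale $\dist(x,\partial)$, three half-plane arms above it); after shrinking each annulus slightly to absorb the $O(\log R)$ cell fuzz, these events depend on $\eta$ in pairwise disjoint annuli, so --- the colouring being conditionally independent across cells given $\eta$ --- they are conditionally independent given $\eta$. Writing $q_j$ for the $j$-th of these quenched probabilities, we get $\pi_x\le\prod_j q_j$ with the $q_j$ \emph{independent} random variables, whence $\E[\pi_x^2]\le\prod_j\E[q_j^2]$. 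For the $O(1)$ largest scales, $\E[q_j^2]\le\E[q_j]\le C\bar q_j^{\,2}$ because $\bar q_j:=\E[q_j]\asymp 1$ by the box-crossing property~\cite{tassion2014crossing}; for the intermediate scales the induction hypothesis gives $\E[q_j^2]=\bar q_j^{\,2}+\Var(q_j)\le\bar q_j^{\,2}(1+C\,2^{-j\varepsilon})$, again using $\bar q_j\asymp 1$. The product of the correction factors converges, and the $O(\log C)$ smallest scales contribute only an absolute constant each (there $\E[q_j^2]\le 1\asymp\bar q_j^{\,2}$), so $\prod_j\E[q_j^2]$ is bounded by $\prod_j\bar q_j^{\,2}$ times a factor depending only polynomially on $C$; since $\prod_j\bar q_j\asymp p_x$ (quasi-multiplicativity once more), this gives $\E[\pi_x^2]\le C'\,p_x^2$ with $C'$ polynomial in $C$.

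Plugging this back in yields $\Var(f)\le C'\int_{\R^2}p_x^2\,dx+(\text{negligible})$. Because the annealed four-arm exponent and the half-plane three-arm exponent are both strictly larger than $1$ --- available from the arm estimates of~\cite{V1,V2}, ultimately from box-crossing --- the bulk integral $\int(\alpha_4^{an}(\dist(x,\partial)))^2\,dx$ and the edge and corner contributions are each $O(R^{-c})$ for some absolute $c>0$; taking $\varepsilon<c$, and using the trivial bound $\Var\le 1/4$ for $R$ below an absolute threshold, closes the induction with a single pair $(C,\varepsilon)$. The main obstacle is exactly this last point: getting $\E[\pi_x^2]\lesssim(\E[\pi_x])^2$ with the constants controlled well enough for the induction to close, which is what forces both the multi-scale decoupling and the systematic use of RSW (so that each single-annulus arm probability stays bounded away from $0$); the remaining genuinely model-specific inputs are the exponential control on cell sizes and the fact that the relevant arm exponents exceed~$1$.
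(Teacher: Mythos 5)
There is a genuine gap, and it sits at the single step your whole argument pivots on. Note first that Theorem~\ref{t.AGMT} is not proved in this paper: it is imported from \cite{ahlberg2015quenched}, where the argument is of a completely different nature (a two-stage revelation of the Poisson process combined with a Schramm--Steif randomized-algorithm/revealment bound), precisely because the quenched quasi-multiplicative structure your plan presupposes was not available at the time.

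The fatal step is the closing comparison of your second-moment paragraph: ``$\prod_j\bar q_j\asymp p_x$ (quasi-multiplicativity once more)''. Quasi-multiplicativity (Proposition~\ref{p.QMulti}) compares two adjacent scales up to a multiplicative constant $C>1$; to relate the product of the $\Theta(\log R)$ single-dyadic-annulus probabilities to the global pivotal probability you must chain it $\Theta(\log R)$ times, which yields only $\prod_j\bar q_j\le C^{\log_2 R}\,p_x=R^{\log_2 C}\,p_x$. The easy direction $p_x\le\prod_j\bar q_j$ holds with constant $1$ (pivotality implies each independent single-annulus event), but it is the reverse inequality you need, and the polynomial loss there is intrinsic: each gluing of arms across consecutive annuli genuinely costs a constant factor, which is why Lemma~\ref{l.key_struct} carries a $\grandO{1}^{\,l}$ factor for $l$ annuli and why the combinatorial step of \cite{garban2010fourier} is organised so that only $O(\log^2 k)$ such losses are ever incurred. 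With the correct inequality your conclusion reads $\E[\pi_x^2]\le R^{O(1)}p_x^2$, which is useless in the Efron--Stein bound. What that paragraph is really asserting is $\widetilde{\alpha}_4(r,R)\le C\alpha_4^{an}(r,R)$, i.e.\ Theorem~\ref{t.quenched_arm}; in the literature the logical order is the opposite of yours --- Theorem~\ref{t.AGMT} is proved first (by revealment) and then used in \cite{V2} as an input to obtain the quenched arm estimates. The outer shell of your proposal (the Poincar\'e inequality for Poisson functionals, the localisation of the add-one cost to a pivotal probability, and the fact that $\int p_x^2\,dx=O(R^{-c})$ because the relevant arm exponents exceed $1$) is sound; the missing content is exactly a proof of $\E[\pi_x^2]\le C(\E[\pi_x])^2$ that does not factorise over logarithmically many scales.
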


\paragraph{The quasi-multiplicativity property.}

Let us now focus on the arm events. In~\cite{V1}, we have proved that the arm events decay polynomially fast:
\begin{equation}\label{e.poly}
\forall j \in \N^*, \, \exists C=C(j) \in [1,+\infty[, \, \forall 1 \leq r \leq R, \, \frac{1}{C}\left(\frac{r}{R}\right)^C \leq \alpha_j^{an}(r,R) \leq C \left(\frac{r}{R}\right)^{1/C} \, .
\end{equation}
Moreover, we have proved that the quantities $\alpha_j^{an}(r,R)$ satisfy a quasi-multiplicativity property:
\begin{prop}[Proposition 1.6 of~\cite{V1}]\label{p.QMulti}
For every $j \in \N^*$, there exists $C=C(j) \in [1,+\infty[$ such that, for every $r_1,r_2,r_3 \in [1,+\infty[$ satisfying $r_1 \leq r_2 \leq r_3$, we have:
\[
\frac{1}{C} \alpha_j^{an}(r_1,r_3) \leq \alpha_j^{an}(r_1,r_2) \alpha_j^{an}(r_2,r_3) \leq  C \alpha_j^{an}(r_1,r_3) \, .
\]
\end{prop}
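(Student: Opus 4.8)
The plan is to adapt the classical scheme for quasi--multiplicativity --- arm separation followed by gluing --- while coping with the two features that set the annealed Voronoi measure apart from Bernoulli percolation: it does not factorize over disjoint regions, and quenched crossing probabilities are only known to concentrate at a polynomial rate (Theorem~\ref{t.AGMT}). Iterating the inequality over a dyadic sequence of scales, it suffices to treat three radii $1\le r_1\le r_2\le r_3$. First I would establish, as a self--contained lemma, that changing any radius by a bounded factor changes $\alpha_j^{an}$ by at most a bounded factor (a single RSW gluing, using Tassion's box--crossing property and, conditionally on $\eta$, the FKG inequality), together with the uniform lower bound $\alpha_j^{an}(\rho,4\rho)=\Omega(1)$ for fixed $j$; these let me assume $4r_1\le r_2$ and $4r_2\le r_3$, the remaining cases being immediate from $\arm_j(r_1,r_3)\subseteq\arm_j(r_1,r_2)\cap\arm_j(r_2,r_3)$. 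Two devices will stand in for spatial independence. The first is the \emph{no--large--cell event} $G_R$ that every Voronoi cell meeting $B(0,R)$ has diameter at most $R^{1/10}$: a Poisson computation gives $\Pro[G_R^c]\le\grandO{1}\,R^2\exp(-\Omega(1)R^{1/5})$, which by~\eqref{e.poly} is $\petito{\alpha_j^{an}(r,R)}$ for every relevant $r\le R$, and on $G_R$ the colored configuration inside $B(0,R)$ is a function of $\eta$ restricted to an $R^{1/10}$--neighbourhood of $B(0,R)$. The second is that, conditionally on $\eta$, the colors of distinct cells are independent and $\eta$ restricted to disjoint Borel sets is independent; every ``decoupling across a circle'' below rests on these two facts. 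Throughout, $\Prob^\eta$ denotes the quenched law (conditional on $\eta$).

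\textbf{The bound $\alpha_j^{an}(r_1,r_3)\le C\,\alpha_j^{an}(r_1,r_2)\,\alpha_j^{an}(r_2,r_3)$.} In the Bernoulli setting this is the trivial half; here it is exactly a decoupling. Set $A_1:=\arm_j(r_1,r_2/4)$, $A_2:=\arm_j(4r_2,r_3)$, so $\arm_j(r_1,r_3)\subseteq A_1\cap A_2$, and let $G:=G_{2r_2}\cap G'$ where $G'$ is the no--large--cell event for the cells meeting $A(4r_2,r_3)$. On $G$, the event $A_1$ is determined by $\eta$ inside, and $A_2$ by $\eta$ outside, a ball of radius $\asymp r_2$; hence $A_1$ and $A_2$ are conditionally independent given $\eta$, and $\Prob^\eta[A_1]\un_{G}$ and $\Prob^\eta[A_2]\un_{G}$ are independent functions of $\eta$, each dominated by the corresponding unconditional quenched probability. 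Integrating,
\[
\alpha_j^{an}(r_1,r_3)\ \le\ \Pro[G^c]\ +\ \E\!\big[\Prob^\eta[A_1]\,\Prob^\eta[A_2]\,\un_{G}\big]\ \le\ \Pro[G^c]\ +\ \alpha_j^{an}(r_1,r_2/4)\,\alpha_j^{an}(4r_2,r_3),
\]
and the claim follows from $\Pro[G^c]=\petito{\alpha_j^{an}(r_1,r_3)}$ and the stability under bounded scale changes.

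\textbf{The bound $\alpha_j^{an}(r_1,r_2)\,\alpha_j^{an}(r_2,r_3)\le C\,\alpha_j^{an}(r_1,r_3)$.} This is the substantive half. \emph{Arm separation:} conditionally on $\arm_j(r_1,r_2)$, with probability $\Omega(1)$ uniformly in $r_1\le r_2$, the $j$ arms can be realized so that they reach $\partial B(0,r_1)$ (resp.\ $\partial B(0,r_2)$) through $j$ sub--arcs that are pairwise and corner--wise at distance $\Omega(1)r_1$ (resp.\ $\Omega(1)r_2$), each landing being freely extendable with $\Omega(1)$ conditional probability into $A(r_1/2,r_1)$ (resp.\ $A(r_2,2r_2)$); this is the usual exploration--plus--RSW argument, the only continuum change being to run it on the no--large--cell event near the two boundary squares so that ``the $i$-th arm enters a prescribed arc'' is a local event. \emph{Gluing:} given well--separated extended $j$--arm configurations from $r_1$ to $r_2/2$ and from $2r_2$ to $r_3$ placed compatibly, one connects the $i$-th inner arm to the $i$-th outer arm in $A(r_2/2,2r_2)$ with a number of RSW crossings of prescribed colors depending only on $j$; for fixed $j$ this succeeds with probability $\Omega(1)$. \emph{Assembly:} conditioning on $\eta$, on the good event $G$ (no large cell near the three scales) the arm event $A_1$ from $r_1$ to $r_2/2$, the gluing event in $A(r_2/2,2r_2)$, and the arm event $A_2$ from $2r_2$ to $r_3$ involve disjoint families of cells, hence are conditionally independent given $\eta$, and
\[
\alpha_j^{an}(r_1,r_3)\ \ge\ \E\!\big[\Prob^\eta[A_1]\,\Prob^\eta[\mathrm{glue}]\,\Prob^\eta[A_2]\,\un_{G}\big].
\]
Here $\Prob^\eta[A_1]$, $\Prob^\eta[A_2]$ are governed by $\eta$ inside a ball of radius $\asymp r_2/2$ and outside a ball of radius $\asymp 2r_2$ (disjoint up to thin overlaps absorbed by $G$), while the event $\{\Prob^\eta[\mathrm{glue}]<c\}$, for a suitable $c>0$, is governed by $\eta$ near $\partial B(0,r_2)$ and has probability $\petito{1}$ as $r_2\to\infty$ by Theorem~\ref{t.AGMT}; since that bad event is (essentially) independent of $A_1,A_2$, discarding it costs only a $(1-\petito{1})$ factor (the case of bounded $r_2$ being trivial). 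Using arm separation once more to get $\E[\Prob^\eta[A_1]\un_{G}]\ge\Omega(1)\alpha_j^{an}(r_1,r_2)$ and likewise for $A_2$ concludes.

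\textbf{Main obstacle.} The real work is in carrying out the arm--separation lemma in the continuum --- already delicate on the lattice --- simultaneously with the localization needed for the decouplings: the arm events must be localized finely enough both to run the separation/exploration argument (which needs ``the arm lands in this arc'' to be a local, nondegenerate event, hence the no--large--cell event) and to keep them independent of the gluing's bad set, so that the only price paid is the \emph{relative} $(1-\petito{1})$ loss coming from the polynomial quenched concentration of Theorem~\ref{t.AGMT}. An \emph{additive} error at that step would be fatal, since $\alpha_j^{an}(r_1,r_2)\,\alpha_j^{an}(r_2,r_3)$ can be far smaller than the error rate in Theorem~\ref{t.AGMT}. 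Everything else is RSW bookkeeping, uniform in the three radii because each gluing involves only $\grandO{1}$ crossings for fixed $j$.
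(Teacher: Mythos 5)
Your overall strategy is the right one, and it matches the proof given in the companion paper \cite{V1} (the present paper only quotes the proposition): the gluing is performed at the \emph{quenched} level, locality is restored by high-probability events on the point process (your no-large-cell event plays the role of the $\dense$ events, and your ``$\{\Prob^\eta[\mathrm{glue}]<c\}$ has probability $\petito{1}$'' is exactly the $\qbc$ events built from Theorem~\ref{t.AGMT}), and the crucial point you single out --- that the failure of the quenched box-crossing property must enter \emph{multiplicatively}, via independence from the arm events, and never additively --- is indeed the heart of the matter. So the architecture is sound.

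There is, however, a genuine gap in your treatment of bounded scales. You reduce to $4r_1\le r_2$, $4r_2\le r_3$ by ``a single RSW gluing'' and later declare ``the case of bounded $r_2$ trivial''. It is not: since the proposition is stated for all $r_1\ge 1$, the hard direction with $r_2\le K$ forces you to prove $\alpha_j^{an}(K,r_3)\le C(K)\,\alpha_j^{an}(1,r_3)$, i.e.\ to extend $j$ arms inward from a bounded scale down to scale $1$ at bounded multiplicative cost. At scale $\grandO{1}$ every device you rely on degenerates: the no-large-cell event, the density events and the quenched box-crossing events all have probability bounded \emph{away} from $1$ rather than $1-\petito{1}$, and conditioning on the arm event can in principle bias $\eta$ towards exactly these bad configurations, so the ``independent bad event of probability $\petito{1}$'' mechanism that saves you at large scales is unavailable. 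This extension to scale $1$ is precisely the step flagged in the present paper (footnote to Lemma~\ref{l.fin2}) as ``very technical for Voronoi percolation'', and it requires a separate argument in \cite{V1}; your proposal contains no idea for it. Relatedly, the arm-separation lemma itself --- which you correctly identify as the main obstacle --- is only gestured at: in the continuum one needs the well-separated-interface events ($\gi$ in Definition~\ref{d.gi}) together with $\dense$ and $\qbc$ at \emph{every} dyadic scale between $r_1$ and $r_2$, with the failure probability at scale $\rho$ controlled conditionally on the arm event up to scale $\rho$; this again rests on measurability of the bad events with respect to $\eta$ in thin annuli disjoint from where the arms live, and is a substantial piece of \cite{V1} (Lemmas~7.4--7.9 there) rather than ``the usual exploration-plus-RSW argument'' with one cosmetic change.
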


\paragraph{Quenched estimates on arm events.}

In~\cite{V2} we have studied quenched arm events (by following a strategy from~\cite{ahlberg2015quenched}) and we have roughly proved that with high probability the quenched probabilities do not depend on the environment $\eta$ (up to a constant). In particular, we have proved the following result, where we use the notation
\[
\widetilde{\alpha}_j(r,R) = \sqrt{\E \left[ \Prob_{1/2}^\eta \left[ \arm_j(r,R) \right]^2 \right]} \, .
\]
(We also write $\widetilde{\alpha}_j(R)=\widetilde{\alpha}_j(1,R)$.)
\begin{thm}[Theorem~1.4 of~\cite{V2}]\label{t.quenched_arm}
For every $j \in \N^*$, there exists $C=C(j)<+\infty$ such that, for every $1 \leq r \leq R  < +\infty$,
\[
\alpha_j^{an}(r,R) \leq \widetilde{\alpha}_j(r,R) \leq C \alpha_j^{an}(r,R) \, .
\]
\end{thm}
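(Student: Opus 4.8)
The left-hand inequality is just Cauchy--Schwarz: since $\Prob_{1/2}^\eta[\arm_j(r,R)]\ge 0$ and $\alpha_j^{an}(r,R)=\E\!\left[\Prob_{1/2}^\eta[\arm_j(r,R)]\right]$ (tower property), we get $\alpha_j^{an}(r,R)\le\E\!\left[\Prob_{1/2}^\eta[\arm_j(r,R)]^2\right]^{1/2}=\widetilde\alpha_j(r,R)$. The content is the right-hand inequality, and it is convenient to restate it as a variance bound: since
\[
\widetilde\alpha_j(r,R)^2=\Var\!\big(\Prob_{1/2}^\eta[\arm_j(r,R)]\big)+\alpha_j^{an}(r,R)^2 ,
\]
it is equivalent to prove $\Var\!\big(\Prob_{1/2}^\eta[\arm_j(r,R)]\big)\le C\,\alpha_j^{an}(r,R)^2$; equivalently, if $\omega^1,\omega^2$ are two colourings sampled independently given a common environment $\eta$, then the ``shared-environment'' probability that $\arm_j(r,R)$ holds for both is at most $C$ times its value when the two environments are taken independent, the latter being exactly $\alpha_j^{an}(r,R)^2$.

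I would follow the strategy of Ahlberg--Griffiths--Morris--Tassion used to prove the crossing version, Theorem~\ref{t.AGMT}: bound $\Var\!\big(\Prob_{1/2}^\eta[\arm_j(r,R)]\big)$ by an Efron--Stein / martingale decomposition in which the environment $\eta$ is revealed region by region. Resampling the Poisson points inside a small region $Q$ located at scale $\rho\in[r,R]$ can only change $\Prob_{1/2}^\eta[\arm_j(r,R)]$ through cells near $Q$, and conditionally on $\eta$ this change is controlled by the quenched probability that $Q$ is ``pivotal'' for $\arm_j(r,R)$. Because that event splits into an inner piece (arms from $\partial[-r,r]^2$ inwards to $Q$), a local piece around $Q$, and an outer piece (arms out to $\partial[-R,R]^2$) living in disjoint regions of space, the quenched pivotality probability factorises given $\eta$ — here one needs the quenched arm-separation estimates developed in~\cite{V1,V2} — and, the three regions being disjoint, the resulting factors are independent under the law of $\eta$. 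This turns the Efron--Stein sum into a sum over scales $\rho$ and regions $Q$ of products of $\widetilde\alpha$-quantities at \emph{strictly smaller} multiplicative range than $R/r$; one can therefore run the estimate as an induction on $R/r$ (and on $j$), invoking the induction hypothesis to replace each $\widetilde\alpha$-factor by the corresponding $\alpha^{an}$-factor, after which quasi-multiplicativity (Proposition~\ref{p.QMulti}) recombines the inner and outer factors into $\alpha_j^{an}(r,R)$. The input of Theorem~\ref{t.AGMT} enters to control the quenched behaviour at intermediate ``bulk'' scales, where self-averaging of the quenched arm probabilities is what prevents a loss.

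The main obstacle — and what makes this a genuinely global argument rather than a one-scale estimate iterated — is to keep the comparison constant \emph{uniform in} $r$ and $R$. A naive telescoping fails: one does have $\widetilde\alpha_j(r,R)\le\widetilde\alpha_j(r,\rho)\,\widetilde\alpha_j(\rho,R)$ (again by spatial independence of the environment), but since $\widetilde\alpha_j(a,b)$ is only bounded by, not equal to, a constant multiple of $\alpha_j^{an}(a,b)$ at bounded modulus, iterating over the $\asymp\log(R/r)$ dyadic scales would multiply a constant $>1$ once per scale. So the whole weight of the proof is in organising the Efron--Stein sum so that the intermediate-scale contributions are summable against a single geometric series times $\alpha_j^{an}(r,R)^2$, using quasi-multiplicativity and the quenched crossing and arm estimates of~\cite{tassion2014crossing,ahlberg2015quenched,V1,V2}; the induction on $R/r$ then closes with a constant depending only on $j$. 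Two further technical points, specific to the Voronoi setting, must be handled along the way: the absence of a clean spatial Markov property — a Voronoi cell can straddle several of the regions used in the decomposition, which one controls via an ``all cells are small'' event holding with high probability — and the need to discretise the Poisson process at a suitable mesh before applying the box-crossing estimates.
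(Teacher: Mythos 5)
First, a point of reference: the paper does not prove this statement — it is imported verbatim from \cite{V2} (Theorem~1.4 there) — so there is no in-text proof to compare against. Your first inequality (Jensen) and the reformulation of the second one as the variance bound $\Var\big(\Prob_{1/2}^\eta[\arm_j(r,R)]\big)\le (C^2-1)\,\alpha_j^{an}(r,R)^2$ are correct, and you rightly identify the quenched separation-of-arms and quasi-multiplicativity inputs. But the route you propose for the hard inequality — Efron--Stein over environment boxes plus an induction on $R/r$ — has a gap that your diagnosis (``constants multiply once per dyadic scale'') does not capture, and your proposed fix (making the scale-sum geometric) does not repair.

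The obstruction is that the Efron--Stein right-hand side is itself of order $\widetilde{\alpha}_j(r,R)^2$, with a prefactor that cannot be pushed below $1$. Concretely, for $j=1$ and $r=1$ the single term coming from the unit box $Q_0$ at the origin already satisfies $\E\big[\Prob_{1/2}^\eta[\Piv_{Q_0}]^2\big]\asymp\widetilde{\alpha}_1(2,R)^2$, i.e.\ it is comparable to the quantity being bounded. More generally, after factorising the quenched pivotality of a box at distance $\rho$ into inner, local and outer pieces, the sum is $\asymp\sum_\rho \rho^2\,\widetilde{\alpha}_j(r,\rho)^2\,\widetilde{\alpha}_4(1,\rho)^2\,\widetilde{\alpha}_j(\rho,R)^2$; feeding in the induction hypothesis on the two $j$-arm factors produces $C^4$, and recombining by quasi-multiplicativity yields a bound $K\,C^4\,\alpha_j^{an}(r,R)^2$ with $K\ge 1$ (the $\rho\asymp r$ terms alone contribute a constant $\ge1$ to $\sum_\rho\rho^2\alpha_4^{an}(\rho)^2$). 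Closing the induction would require $K\,C^4\le C^2-1$, which fails for every admissible $C\ge1$; using the hypothesis on only one factor still leaves $K\,C^2\le C^2-1$ with $K\ge1$, equally impossible. There is also a second circularity you do not address: the local factor is a $4$-arm quantity at range $[1,\rho]$ with $\rho$ up to $R$, so the $j=1$ case needs the full $j=4$ case as input, and the latter cannot be obtained by an induction ``on $j$'' from below. This is precisely why \cite{ahlberg2015quenched} do not argue by Efron--Stein. Their proof of Theorem~\ref{t.AGMT} — and the proof of the present statement in \cite{V2}, which adapts it to arm events — is spectral: one encodes environment and colours as independent bits, notes that $\Var\big(\Prob_{1/2}^\eta[A]\big)$ is the Fourier--Walsh mass of $\un_A$ carried by nonempty sets of \emph{position} bits containing no colour bit, transfers that mass to sets containing colour bits (the key point being that knowing a point's position without its colour gives almost no information about $A$), and then controls the colour spectrum by a Schramm--Steif randomized-algorithm bound, made quantitative relative to $\alpha_j^{an}(r,R)^2$ via the quasi-multiplicativity of \cite{V1}. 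None of these ingredients appear in your sketch, so as written the proposal does not yield the theorem.
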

In Subsections~\ref{ss.why} and~\ref{ss.preli}, we explain why the quenched estimate Theorem~\ref{t.quenched_arm} is central in the present paper (see also the proofs - written in Subsection~\ref{ss.excep} - of Theorems~\ref{t.main_frozen} and~\ref{t.main_levy}).

\paragraph{A spatial independence result for arm events.}

In~\cite{V1}, we have studied the events $\widehat{\arm}_j(r,R)$ that are useful to apply spatial independence arguments:
\begin{defi}\label{d.hat}
If $j \in \N^*$ and $1 \leq r \leq R < +\infty$, we let\footnote{In  Section~\ref{s.ann}, we define spectral objects $\widehat{h}(S)$ where $h$ is a function. The hat symbol in $\widehat{\arm}_j(r,R)$ is not at all a spectral notation, but we have chosen to keep this notation (that was used in~\cite{V1,V2}) to facilitate references. We hope that it will not confuse the reader.}
\[
\widehat{\arm}_j(r,R) = \left\lbrace \Pro_{1/2} \left[ \arm_j(r,R) \cond \omega \cap A(r,R) \right] > 0 \right\rbrace \, .
\]
(Remember that $A(r,R)$ is the annulus $[-R,R]^2 \setminus ]-r,r[^2$.)
\end{defi}
Note that the events $\widehat{\arm}_j(r,R)$ are measurable with respect to $\omega \cap A(r,R)$ while the events $\arm_j(r,R)$ are not. We have the following estimate:
\begin{prop}[Propositions~2.4 and~D.8 of~\cite{V1} together with Theorem~1.4 of \cite{V2}]\label{p.hat}
For every $j \in \N^*$, there exists $C=C(j) \in [1,+\infty[$ such that, for every $1 \leq r \leq R < +\infty$,
\[
\alpha^{an}_j(r,R) \leq \Pro_{1/2} \left[ \widehat{\arm}_j(r,R) \right] \leq \sqrt{ \E \left[ \Prob_{1/2}^\eta \left[ \widehat{\arm}_j(r,R) \right]^2 \right]} \leq C \alpha_j^{an}(r,R) \, .
\]
\end{prop}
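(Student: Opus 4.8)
\emph{Proof strategy.} The plan is to handle the three inequalities separately: the first two are soft, and the last one carries all the weight. For the first inequality $\alpha_j^{an}(r,R)\le\Pro_{1/2}[\widehat{\arm}_j(r,R)]$, I would argue directly from the definition of $\widehat{\arm}_j(r,R)$ as a positivity event. Since $\Pro_{1/2}[\arm_j(r,R)\mid \omega\cap A(r,R)]$ is a version of the conditional expectation $\E_{1/2}[\un_{\arm_j(r,R)}\mid \sigma(\omega\cap A(r,R))]$ and $\widehat{\arm}_j(r,R)$ is $\sigma(\omega\cap A(r,R))$-measurable, integrating $\un_{\arm_j(r,R)}$ against $\un_{\widehat{\arm}_j(r,R)^c}$ gives $0$; hence $\arm_j(r,R)\subseteq\widehat{\arm}_j(r,R)$ up to a $\Pro_{1/2}$-null set, and integrating yields the claim. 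For the second inequality, I would disintegrate over the environment, $\Pro_{1/2}[\widehat{\arm}_j(r,R)]=\E\big[\Prob_{1/2}^\eta[\widehat{\arm}_j(r,R)]\big]$, and apply Cauchy--Schwarz (equivalently Jensen for $x\mapsto x^2$): $\E[X]\le\sqrt{\E[X^2]}$.

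\emph{The third inequality.} This is where the quasi-multiplicativity property and the quenched arm estimate enter. First I would reduce to $R\ge 4r$, since for $R<4r$ one has $\alpha_j^{an}(r,R)\asymp 1$ by~\eqref{e.poly} while the left-hand side is at most $1$, so the inequality holds after enlarging $C$. For $R\ge 4r$, the core is a \emph{local-repair} estimate for the events $\widehat{\arm}_j$ of the type established in~\cite{V1} (Propositions~2.4 and~D.8): conditionally on the colored configuration in $A(r,R)$ belonging to $\widehat{\arm}_j(r,R)$, one should be able, by resampling the configuration only in bounded neighbourhoods of the two boundary circles $\partial B(0,2r)$ and $\partial B(0,R/2)$, to produce the \emph{genuine} event $\arm_j(2r,R/2)$ with conditional probability bounded below by an absolute constant, each repair costing $\Omega(1)$ by the annealed box-crossing property of~\cite{tassion2014crossing} (Russo--Seymour--Welsh gluing at the two circles). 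Granting a comparison of the form $\Prob_{1/2}^\eta[\widehat{\arm}_j(r,R)]\le\grandO{1}\,\Prob_{1/2}^\eta[\arm_j(2r,R/2)]$ valid after the $L^2$-in-$\eta$ average, one squares, takes $\E$ and a square root to get $\sqrt{\E[\Prob_{1/2}^\eta[\widehat{\arm}_j(r,R)]^2]}\le\grandO{1}\,\widetilde{\alpha}_j(2r,R/2)$; then the quenched arm estimate Theorem~\ref{t.quenched_arm} gives $\widetilde{\alpha}_j(2r,R/2)\le C\,\alpha_j^{an}(2r,R/2)$, and finally the quasi-multiplicativity Proposition~\ref{p.QMulti} together with $\alpha_j^{an}(r,2r),\alpha_j^{an}(R/2,R)\asymp 1$ gives $\alpha_j^{an}(2r,R/2)\asymp\alpha_j^{an}(r,R)$, closing the chain.

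\emph{Main obstacle.} The delicate point is the local-repair step. Because $\widehat{\arm}_j(r,R)$ is defined through a completion of the configuration \emph{outside} the annulus, the repair genuinely modifies the point process $\eta$ near the two circles rather than merely recoloring, and a naive pointwise-in-$\eta$ comparison can fail (for atypical $\eta$ there could be many small cells straddling a boundary circle whose centers lie outside the annulus). The way around this is to keep the modification supported in a bounded region, so that the resulting arm event at scale $(2r,R/2)$ is still governed by the environment in a slightly fattened annulus, and then to absorb the effect of the modification only in the averaged ($L^2$-in-$\eta$) sense — which is exactly what forces the use of the quenched machinery of~\cite{V2} on top of the spatial-independence formalism for $\widehat{\arm}_j$ from~\cite{V1}. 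This interface between a purely local gluing argument and the quenched arm estimates is, I expect, the only substantial step; everything else is bookkeeping with the box-crossing property, \eqref{e.poly}, and quasi-multiplicativity.
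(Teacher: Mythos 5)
The paper does not prove Proposition~\ref{p.hat} here: it is imported verbatim from Propositions~2.4 and~D.8 of~\cite{V1} together with Theorem~1.4 of~\cite{V2}, so there is no in-paper argument to compare line by line. Your decomposition is nevertheless the right one, and the first two inequalities are handled correctly (the inclusion $\arm_j(r,R)\subseteq\widehat{\arm}_j(r,R)$ up to $\Pro_{1/2}$-null sets, then Cauchy--Schwarz in $\eta$). For the third inequality your chain --- compare $\widehat{\arm}_j(r,R)$ to a genuine arm event at slightly shrunken scales, square, invoke Theorem~\ref{t.quenched_arm}, then quasi-multiplicativity and~\eqref{e.poly} --- is exactly the intended route, and it is the mechanism the present paper uses repeatedly (see paragraph~B of Appendix~D, where $\widehat{\arm}_4(r_1,r_2)\subseteq\arm_4(2r_1,r_2/2)$ is obtained on a density event, and the first lines of the proof of Lemma~\ref{l.fin3}). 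One correction, though: no ``repair'' and no RSW gluing is needed for this direction. On the event $\dense$ that $\eta$ is sufficiently dense in $A(r,4r)\cup A(R/4,R)$ --- an event measurable with respect to $\eta\cap A(r,R)$ whose complement has probability $\grandO{1}\exp(-\Omega(1)r^2)$ --- every Voronoi cell meeting $A(2r,R/2)$ has its center in $A(r,R)$, so the colouring of $A(2r,R/2)$ is determined by $\omega\cap A(r,R)$ and one gets the deterministic inclusion $\widehat{\arm}_j(r,R)\cap\dense\subseteq\arm_j(2r,R/2)$. Hence $\Prob_{1/2}^\eta\left[\widehat{\arm}_j(r,R)\right]\leq\Prob_{1/2}^\eta\left[\arm_j(2r,R/2)\right]+\un_{\neg\dense}$ pointwise in $\eta$; squaring and averaging gives $\E\left[\Prob_{1/2}^\eta\left[\widehat{\arm}_j(r,R)\right]^2\right]\leq 2\widetilde{\alpha}_j(2r,R/2)^2+2\Pro\left[\neg\dense\right]$, and the second term is absorbed because $\alpha_j^{an}$ decays only polynomially. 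By contrast, the gluing step as you phrase it --- producing the genuine arm event with conditional probability $\Omega(1)$ via the \emph{annealed} box-crossing property --- is precisely the kind of statement that does not transfer pointwise in $\eta$ and would not survive the $L^2$-in-$\eta$ averaging without the quenched box-crossing machinery; it is fortunate that it is not needed here.
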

In the present work, we will use a lot the events $ \widehat{\arm}_j(r,R)$, see in particular Subsection~\ref{ss.findelathese} where we prove a quasi-multiplicativity property for coupled Voronoi configurations by following~\cite{garban2010fourier} and~\cite{V1}. This subsection is the most technical part of the paper and the study of events analogue to $\widehat{\arm}_j(r,R)$ (and the proof of an analogue of Proposition~\ref{p.hat} for coupled Voronoi configurations, see for instance Lemma \ref{l.fin4}) will be crucial in order to overcome the problems resulting from spatial dependencies.

\paragraph{Other estimates on arm events.}

Let us end this section by stating estimates for some specific values of $j$. We have the following estimates on the $4$-arm event:
\begin{prop}[Propositions~6.1 and~7.2 of~\cite{V2}]\label{p.4}
For every $\varepsilon > 0$, there exists $C=C(\varepsilon)<+\infty$ such that, for every $1 \leq r  \leq R < +\infty$,
\[
\alpha_4^{an}(r,R) \leq C \sqrt{\alpha_2^{an}(r,R)}\left( \frac{r}{R} \right)^{1-\varepsilon} \, .
\]
Moreover, there exists $c>0$ such that, for every $1 \leq r  \leq R < +\infty$,
\[
\alpha_4^{an}(r,R) \geq c \left( \frac{r}{R} \right)^{2-c} \frac{1}{\alpha_1^{an}(r,R)} \, .
\]
\end{prop}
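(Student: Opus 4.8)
Both bounds will be established by different routes, so I treat them separately. Throughout I will use freely quasi-multiplicativity (Proposition~\ref{p.QMulti}), the polynomial decay~\eqref{e.poly}, the quenched arm estimate (Theorem~\ref{t.quenched_arm}) and Proposition~\ref{p.hat}, as well as the ``universal'' two-dimensional five-arm estimate $\alpha_5^{an}(r,R)\asymp (r/R)^2$ (RSW/FKG gluing for the lower bound, a Menger-type disjointness argument for the upper bound). In both statements the ratio $R/r$ may be assumed large: when $R/r$ is bounded, \eqref{e.poly} and RSW make the right-hand sides $\asymp 1$, while $\alpha_4^{an}\le 1$, so there is nothing to prove.

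\textbf{The lower bound.} The plan is to prove the ``sub-multiplicativity with a polynomial gain''
\[
\alpha_5^{an}(r,R) \;\le\; C\,(r/R)^{c}\,\alpha_4^{an}(r,R)\,\alpha_1^{an}(r,R),
\]
which, combined with $\alpha_5^{an}(r,R)\asymp (r/R)^2$, rearranges to the claim. By quasi-multiplicativity it is enough to prove the one-scale inequality $\alpha_5^{an}(\rho,2\rho)\le (1-\epsilon_0)\,\alpha_4^{an}(\rho,2\rho)\,\alpha_1^{an}(\rho,2\rho)$ for a universal $\epsilon_0>0$ and then multiply over the $\asymp\log_2(R/r)$ dyadic sub-annuli (the factor $(1-\epsilon_0)^{\log_2(R/r)}$ is exactly $(r/R)^{c}$). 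The non-strict version $\alpha_5\le\alpha_4\alpha_1$ is a disjoint-occurrence (Reimer/BKR-type) statement: in $A(\rho,2\rho)$ a five-arm configuration of alternating colours witnesses, on disjoint sets of Voronoi cells, a four-arm event and a colour-compatible one-arm event. The strict gain $\epsilon_0$ is obtained by exhibiting, with probability bounded below, configurations that realise the four-arm event but whose ``spare'' region carries no extra arm — an RSW/FKG construction at a single scale, so no polynomial factor is lost there. The technical point is that the Voronoi colouring near a region depends on points outside it, so the disjoint-occurrence step must be run on the $\widehat{\arm}$-type events of Definition~\ref{d.hat}, passing between these and $\alpha_j^{an}$ via Proposition~\ref{p.hat} and Theorem~\ref{t.quenched_arm}.

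\textbf{The upper bound.} Here the idea is that a four-arm crossing of $A(r,R)$ forces, besides a two-arm crossing, an additional ``half-plane-type'' arm event that is genuinely costly. One decomposes the four-arm event — up to a union bound over the cyclic order of the four landing points and over scales — into the intersection of a two-arm event in $A(r,R)$ and, in the strip squeezed between two same-coloured arms, an event comparable to a three-arm event in a half-annulus; the latter has probability $\le C(r/R)^{2}$ by the universal half-plane three-arm estimate, so even the crude bound $(r/R)^{1-\varepsilon}$ leaves ample room. To convert this geometric picture into the stated inequality one works conditionally on $\eta$ and on the two interfaces realising the two-arm event, and then integrates over $\eta$; a Cauchy--Schwarz step at this stage — together with Theorem~\ref{t.quenched_arm} and Proposition~\ref{p.hat}, which identify $\sqrt{\E[\Prob^\eta_{1/2}[\arm_2(r,R)]^2]}$ with $\alpha_2^{an}(r,R)$ up to constants — is what produces $\sqrt{\alpha_2^{an}(r,R)}$ rather than $\alpha_2^{an}(r,R)$, the square-root form being the price paid for the spatial dependence of the Voronoi model.

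\textbf{Main obstacle.} In both parts the genuine difficulty is not the combinatorics of arms but the fact that the Voronoi colouring is not a product measure over any partition of the plane into disjoint regions: the clean spatial-independence tools available for Bernoulli percolation (Reimer's inequality, independence of arm events in disjoint annuli) must be replaced by their $\widehat{\arm}$-counterparts and by the quenched-to-annealed comparisons of Theorem~\ref{t.quenched_arm} and Proposition~\ref{p.hat}. Making this robust — in particular ensuring that the one-scale strict gain $\epsilon_0$ in the lower bound survives the dependencies, and that the conditioning on interfaces in the upper bound interacts correctly with the quenched estimates — is, I expect, where the bulk of the work lies.
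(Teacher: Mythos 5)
First, a remark on the comparison you were asked to make: Proposition~\ref{p.4} is not proved in this paper at all --- it is imported verbatim from Propositions~6.1 and~7.2 of~\cite{V2} --- so there is no in-paper proof to measure your sketch against; I can only assess the sketch on its own terms, and it has two genuine problems.

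For the lower bound, the step ``prove the one-scale inequality $\alpha_5^{an}(\rho,2\rho)\le(1-\epsilon_0)\,\alpha_4^{an}(\rho,2\rho)\,\alpha_1^{an}(\rho,2\rho)$ and then multiply over the dyadic sub-annuli'' does not deliver $\alpha_5^{an}(r,R)\le C(r/R)^c\alpha_4^{an}(r,R)\alpha_1^{an}(r,R)$. To convert products over sub-annuli into quantities at the scales $(r,R)$ you must invoke quasi-multiplicativity once per factor and per scale, and each invocation costs a multiplicative constant; over $\log_2(R/r)$ scales this is a factor $C^{\log_2(R/r)}=(R/r)^{\log_2 C}$, which can completely absorb the gain $(1-\epsilon_0)^{\log_2(R/r)}$. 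A polynomial gain of this type can only be obtained by a \emph{conditional} single-scale estimate (a gain at scale $2\rho$ conditionally on the arm configuration up to scale $\rho$), which forces you to re-run the separation-of-arms machinery behind Proposition~\ref{p.QMulti} rather than use its statement as a black box; on top of this, Reimer's inequality is not available for the annealed measure, and replacing it by $\widehat{\arm}$-events is not a cosmetic adjustment.

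For the upper bound, the proposed decomposition is quantitatively impossible. If the four-arm event really implied a two-arm event together with (on an essentially disjoint region) a half-plane three-arm-type event of probability at most $C(r/R)^2$, one would get $\alpha_4^{an}(r,R)\le C\,\alpha_2^{an}(r,R)(r/R)^2$; combined with the second half of the very proposition being proved (which gives $\alpha_4^{an}(r,R)\ge c(r/R)^{2-c}$ since $\alpha_1^{an}\le 1$), this forces $\alpha_2^{an}(r,R)\ge c'(R/r)^{c}$, which is absurd for $R/r$ large. The correct heuristic is that the ``extra'' two arms cost a half-plane \emph{two}-arm factor $\asymp r/R$, not a three-arm factor $\asymp(r/R)^2$. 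Separately, your proposed mechanism for the square root does not produce one: Cauchy--Schwarz in $\eta$ yields $\widetilde{\alpha}_2(r,R)=\sqrt{\E[\Prob^\eta_{1/2}[\arm_2(r,R)]^2]}$, which by Theorem~\ref{t.quenched_arm} is $\asymp\alpha_2^{an}(r,R)$, not $\asymp\sqrt{\alpha_2^{an}(r,R)}$; so the specific shape of the stated bound, which is the whole point of the first inequality, is left unexplained.
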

By the (annealed) FKG property (see Lemma~14 of Chapter~8 of~\cite{bollobas2006critical}), the first part of Proposition~\ref{p.4} implies the following property.
\begin{cor}\label{c.4}
For every $\varepsilon > 0$, there exists $C=C(\varepsilon)<+\infty$ such that, for every $1 \leq r  \leq R < +\infty$:
\[
\alpha_4^{an}(r,R) \leq C \alpha_1^{an}(r,R) \left( \frac{r}{R} \right)^{1-\varepsilon} \, .
\]
\end{cor}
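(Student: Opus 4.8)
The plan is to derive the corollary directly from the first inequality of Proposition~\ref{p.4}, namely $\alpha_4^{an}(r,R) \le C\sqrt{\alpha_2^{an}(r,R)}\,(r/R)^{1-\varepsilon}$, by combining it with the bound $\alpha_2^{an}(r,R) \le \alpha_1^{an}(r,R)^2$, which I would obtain from the annealed FKG property. Granting the latter, fix $\varepsilon>0$, let $C=C(\varepsilon)$ be the constant furnished by Proposition~\ref{p.4}, and estimate
\[
\alpha_4^{an}(r,R) \le C\,\sqrt{\alpha_2^{an}(r,R)}\,\left(\frac{r}{R}\right)^{1-\varepsilon} \le C\,\alpha_1^{an}(r,R)\left(\frac{r}{R}\right)^{1-\varepsilon} ,
\]
which is precisely the claimed inequality.

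It thus remains to prove $\alpha_2^{an}(r,R) \le \alpha_1^{an}(r,R)^2$. Let $\calB$ be the event that there is a black crossing of the annulus $A(r,R)$ from $\partial[-r,r]^2$ to $\partial[-R,R]^2$, and let $\calW$ be the event that there is a white crossing of the same annulus. First, the polychromatic $2$-arm event coincides with $\calB\cap\calW$: a black crossing and a white crossing automatically use disjoint families of Voronoi cells (they can meet only along cell boundaries), so the requirement in the definition of $\arm_2(r,R)$ that the two arms be disjoint carries no additional content. Second, $\calB$ is an increasing event and $\calW$ a decreasing event for the natural monotone structure on colored Voronoi configurations: recoloring a cell from white to black (equivalently, adding a black point or removing a white one) can only help create black crossings and can only destroy white ones. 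Consequently $\calW^c$ is increasing, and the annealed FKG inequality (Lemma~14 of Chapter~8 of~\cite{bollobas2006critical}), applied to the two increasing events $\calB$ and $\calW^c$ and then rearranged, gives $\Pro_{1/2}[\calB\cap\calW] \le \Pro_{1/2}[\calB]\,\Pro_{1/2}[\calW]$. Finally, the color symmetry of $\Pro_{1/2}$ at $p=1/2$ interchanges $\calB$ and $\calW$, so $\Pro_{1/2}[\calB]=\Pro_{1/2}[\calW]=\alpha_1^{an}(r,R)$, whence $\alpha_2^{an}(r,R)=\Pro_{1/2}[\calB\cap\calW]\le\alpha_1^{an}(r,R)^2$.

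There is no genuine difficulty here: the corollary is a soft consequence of Proposition~\ref{p.4} together with positive association. The two points deserving a little attention are the identification of $\arm_2(r,R)$ with $\calB\cap\calW$ (and the monotonicity of $\calB$ and $\calW$), and the fact that one must use the \emph{annealed} FKG inequality for colored Voronoi percolation rather than averaging the quenched Harris--FKG inequality over $\eta$; the latter route would require $\Prob_{1/2}^\eta[\calB]$ and $\Prob_{1/2}^\eta[\calW]$ to be negatively correlated as functions of $\eta$, which is not available, whereas the annealed statement is exactly what the cited result of Bollob\'{a}s and Riordan provides.
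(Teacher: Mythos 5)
Your proposal is correct and is essentially the paper's own argument: the corollary is stated there as a direct consequence of the first inequality of Proposition~\ref{p.4} via the annealed FKG property, which is exactly the bound $\alpha_2^{an}(r,R)\leq\alpha_1^{an}(r,R)^2$ you derive from $\Pro_{1/2}[\calB\cap\calW]\leq\Pro_{1/2}[\calB]\,\Pro_{1/2}[\calW]$ and color symmetry at $p=1/2$.
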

We also have the following estimates on the ``universal'' arm events:
\begin{prop}[Proposition~2.7 of~\cite{V1}]\label{p.univ}
For every $j \in \N^*$, let $\arm_j^+(r,R)$ denote the $j$-arm event in the half-plane\footnote{The definition is the same as $\arm_j(r,R)$ except that we ask that the paths are included in the half-plane.} and let $\alpha_j^{an,+}(r,R) = \Pro_{1/2} \left[ \arm_j^+(r,R) \right]$. We have:
\[
\alpha_2^{an,+}(r,R) \asymp r/R \text{ and } \alpha_3^{an,+}(r,R) \asymp \alpha_5^{an}(r,R) \asymp (r/R)^2 \, .
\]
\end{prop}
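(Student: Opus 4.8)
The plan is to reduce everything to the two box-crossing properties (Tassion's annealed RSW and the AGMT quenched concentration, Theorems stated above) and then run the classical ``universal arm exponent'' arguments of Bernoulli percolation, being careful only about the annealed versus quenched distinction. Recall that the universal exponents $2$ (half-plane $3$-arm), $2$ (full-plane $5$-arm) and $1$ (half-plane $2$-arm) are ``universal'' precisely because the gluing constructions use only RSW-type crossings in finitely many fixed-shape rectangles, never any model-specific input; so the only thing to check is that annealed RSW suffices to carry out each step.

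First I would establish $\alpha_2^{+,an}(r,R) \asymp r/R$. The upper bound $\alpha_2^{+,an}(r,R) \le \grandO{1}\, r/R$ follows from a standard chaining argument: split the annulus $A(r,R)$ into $\grandO{\log(R/r)}$ dyadic sub-annuli, and in each one the presence of a black arm and a white arm crossing the sub-annulus inside the half-plane forces a ``four-arm-in-a-half-plane''-type obstruction; by annealed RSW applied to a bounded number of rectangles of bounded aspect ratio, the probability of a black arm \emph{and} a white arm crossing a fixed half-annulus of the form $A(2^k, 2^{k+1}) \cap \half$ is at most $1-\Omega(1)$. Unfortunately that only gives polynomial decay, not the sharp exponent $1$, so instead I would use the martingale/second-moment trick: cut the interval $[r,R]$ into the two ``halves'' $[r,\sqrt{rR}]$ and $[\sqrt{rR},R]$, and prove the stronger statement that, conditionally on the two arms reaching radius $\rho$ in the half-plane, the probability to continue to $2\rho$ is $\le 1/2 + o(1)$ \emph{but more importantly} that the ``innermost arm'' reaches a macroscopic fraction of the whole annulus with probability $\asymp$ the length ratio — this is the classical argument that a two-arm event in a half-plane is essentially equivalent to ``a single designated interface crossing the annulus'', whose probability is $\asymp r/R$ by RSW (one black crossing in a half-annulus of aspect ratio $R/r$ split into $\log_2(R/r)$ dyadic pieces, using independence \emph{in the Bernoulli case} but here using the spatial-independence events $\widehat{\arm}$ of Definition~\ref{d.hat} together with Proposition~\ref{p.hat} to control the error terms coming from the Poisson point process). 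The lower bound $\alpha_2^{+,an}(r,R) \ge \Omega(1)\, r/R$ is the easier direction: explicitly build, with the FKG inequality and RSW, one black horizontal crossing and one white horizontal crossing of a long thin half-rectangle $[-R,R]\times[0,\delta R]$ stacked with successively longer rectangles — the probability being $\prod \Omega(1)$ over $\grandO{\log(R/r)}$ scales is only polynomial again, so one instead uses the standard ``quad-crossing'' construction: a single $r\times R$ half-annulus crossing by a black arm has probability $\asymp r/R$ directly by summing over the $\asymp R/r$ translates of a fixed $\Omega(1)$-probability crossing box and using FKG/BK-type bounds, and the white arm is then forced alongside it at cost $\Omega(1)$ by RSW. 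Throughout, the passage from annealed to the actual Voronoi statements is handled exactly as in \cite{V1}: replace ``independent events in disjoint boxes'' by the events $\widehat{\arm}_j$, whose annealed and root-mean-square-quenched probabilities are comparable to $\alpha_j^{an}$ by Proposition~\ref{p.hat}.

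Next, the half-plane $3$-arm and full-plane $5$-arm estimates $\alpha_3^{an,+}(r,R) \asymp \alpha_5^{an}(r,R) \asymp (r/R)^2$. Here I would prove $\alpha_3^{an,+} \le \grandO{1}(r/R)^2$ by observing that a half-plane $3$-arm event contains, in each of the $\grandO{\log(R/r)}$ dyadic sub-annuli, \emph{two disjoint} half-plane two-arm events separated by an interface — more efficiently, use the ``$k$ arms from $j$ arms'' separation/BK-type bound: $\alpha_3^{+} \le \grandO{1}\,(\alpha_2^{+})^2$ up to the usual logarithmic corrections which vanish because $\alpha_2^+ \asymp r/R$ is already at the ``deterministic'' exponent, giving $(r/R)^2$; the matching lower bound comes from gluing (FKG plus RSW) three arms of prescribed colors $\circ\bullet\circ$ in a half-annulus, where one pays $(r/R)$ for one designated arm to cross and then $(r/R)$ again, conditionally, for a second forced arm — this is the standard half-plane construction and uses only bounded-shape RSW. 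The comparison $\alpha_3^{an,+} \asymp \alpha_5^{an}$ is then the classical fact that a full-plane $5$-arm configuration, after conditioning on the position of the ``lowest'' of the five arms, looks like a half-plane $3$-arm configuration above a curve; one direction is a gluing (put a half-plane-$3$-arm picture above and a half-plane-$2$-arm picture below, FKG them together, then $(r/R)^2 \cdot (r/R) \asymp$ wrong — so one instead uses that $\alpha_5 \le \alpha_4 \cdot \Omega(1)$ is false too; the correct statement is $\alpha_5^{an} \asymp \alpha_3^{an,+}$ via the RSW gluing that a $5$-arm event with colors $\bullet\circ\bullet\circ\bullet$ restricted above and below a horizontal line gives a $3$-arm half-plane event up top, and conditionally a $3$-arm half-plane event below, but the two half-planes share the innermost/outermost circuits so the exponent is $\max$, not sum). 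Concretely I would cite the standard argument: $\alpha_5^{an}(r,R) \asymp \alpha_3^{an,+}(r,R)$ because both count ``the number of alternating interfaces hitting every scale is as large as topologically possible,'' and the equivalence is a finite sequence of RSW gluings at each scale, transferred to Voronoi via the $\widehat{\arm}$ machinery of \cite{V1}.

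The main obstacle I anticipate is \emph{not} the combinatorics — those are verbatim the Bernoulli arguments — but keeping the annealed/quenched bookkeeping honest: every ``disjoint boxes $\Rightarrow$ independence'' step of the classical proof must be replaced by an estimate on $\widehat{\arm}_j$-type events, and one must verify that the product of $\grandO{\log(R/r)}$ error factors from Proposition~\ref{p.hat} does not degrade the sharp power of $r/R$. The saving grace is that $1$ and $2$ are ``extremal'' exponents: for the half-plane $2$-arm the event is essentially monotone (one black crossing) so FKG alone controls it with no logarithmic loss, and for the $3$-arm-half-plane and $5$-arm-full-plane events the quasi-multiplicativity Proposition~\ref{p.QMulti} lets one multiply scale-by-scale estimates without accumulating constants. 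So the real work is: (i) the sharp upper bound $\alpha_2^{+,an}(r,R) \le \grandO{1} r/R$ via the second-moment ``designated interface'' argument adapted to the Poissonian setting, and (ii) checking that the RSW gluings for the $3$- and $5$-arm lower bounds only ever involve a bounded number of bounded-aspect-ratio half-rectangles so that Tassion's annealed RSW and FKG close the estimates. I would write (i) in detail and treat (ii) and the $\alpha_3^{+}\asymp\alpha_5$ comparison as ``identical to the Bernoulli case, see \cite{garban2010fourier}, using \cite{tassion2014crossing} and the $\widehat{\arm}$ estimates of \cite{V1} in place of independence.''
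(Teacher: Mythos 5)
First, a remark on the ground truth: the paper does not prove this proposition at all — it is imported verbatim as Proposition~2.7 of \cite{V1} — so there is no in-paper argument to compare against, and your sketch has to be judged against the standard ``universal exponent'' proofs that \cite{V1} adapts to the Voronoi setting.

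Your high-level framing is reasonable (the exponents $1$ and $2$ are universal, and the Voronoi-specific work is to replace independence in disjoint regions by the $\widehat{\arm}$ / $\dense$ / $\qbc$ machinery), but the combinatorial core that actually produces the exponents is missing, and several of the substitutes you propose are false. (i) There is no bound of the form $\alpha_3^{an,+}(r,R) \leq \grandO{1}\,\alpha_2^{an,+}(r,R)^2$: even for Bernoulli percolation, Reimer's inequality would only give $\alpha_3^+ \leq \alpha_2^+\,\alpha_1^+$, whose exponent is $1+1/3\neq 2$, and no BK/Reimer inequality is available for Voronoi percolation in any case. (ii) The assertion that ``a single designated interface crossing the half-annulus has probability $\asymp r/R$ by RSW'' is wrong: a single arm in the half-plane has a non-universal exponent ($1/3$ on the triangular lattice), and RSW by itself never identifies a specific power of $r/R$. (iii) Your attempted gluings relating $\alpha_5^{an}$ to $\alpha_3^{an,+}$ do not work, as you yourself observe mid-sentence. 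The idea that actually pins down these exponents is a first-moment counting argument: over the $\asymp R/r$ boundary $r$-segments for $\alpha_2^{an,+}$, and over the $\asymp (R/r)^2$ bulk $r$-boxes for $\alpha_3^{an,+}$ and $\alpha_5^{an}$, one constructs for each segment/box an anchored event (base point of the rightmost crossing; unique contact/pivotal point of the lowest black crossing against the leftmost white crossing, etc.) which on the one hand has probability comparable to the arm event by RSW and quasi-multiplicativity, and on the other hand is pairwise disjoint from its translates, so that the probabilities sum to at most $1$; the matching lower bound comes from an RSW construction showing that at least one of these anchored events occurs with probability $\Omega(1)$. Without this disjointness/counting step, every route in your sketch yields only polynomial bounds with unspecified exponents. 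The annealed/quenched bookkeeping you emphasize is indeed necessary, but it is the easier half of the proof.
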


Let us finally note that~\eqref{e.poly}, the quasi-multiplicativity property Proposition \ref{p.QMulti}, Theorem~\ref{t.quenched_arm} and Proposition~\ref{p.hat} also apply to arm events in the half-plane (and the proofs are exactly the same).

\subsection{Organization of the paper}

\blue{The paper is organized as follows:
\begin{itemize}
\item In Section \ref{s.ann}, we introduce the annealed spectral sample, which is the main object of the paper. Moreover, we state the main spectral estimates of the paper (i.e.\ we state a clustering property as well as estimates on the full lower tail of the annealed spectral sample) and we use these estimates to prove the theorems stated in the introduction: the existence of exceptional times (Theorems \ref{t.main_frozen}, \ref{t.main_levy} and \ref{t.Brownian}) and the quantitative noise sensitivity result (Theorem \ref{t.quant_NS_frozen}).
\item In Section \ref{s.spectral}, we prove the spectral estimates stated in Section \ref{s.ann}. In this section, we are highly inspired by the methods from \cite{garban2010fourier} (and by the extension of these methods from \cite{GV}) and we use the results from \cite{V1,V2} to extend these methods to Voronoi percolation. In particular, we prove a quasi-multiplicativity property for the annealed spectral sample in Subsection \ref{ss.findelathese}. The notions of annealed and quenched pivotal events (Definitions \ref{d.piv} and \ref{d.piv_hat}) play a central role in Section \ref{s.spectral}.
\item The paper includes five appendices. In one of them - Appendix \ref{s.4hp} - we prove a result that we use in Section \ref{s.spectral} about the $4$-arm event conditioned on the configuration in a half-plane.
\end{itemize}}

\paragraph{Acknowledgments:} I would like to thank Christophe Garban for many helpful discussions and for his comments on earlier versions of the manuscript and Avelio Sepúlveda for help with the theory of Lévy processes. I would also like to thank Vincent Tassion for fruitful discussions. Finally, I am extremely grateful to an anonymous referee for his or her careful reading and many very helpful comments. This work was supported by the ERC grant Liko No 676999.

\section{The annealed spectal sample of Voronoi percolation}\label{s.ann}

\subsection{Definition of the annealed spectral sample}\label{ss.annealed}

In order to define the annealed spectral sample, we need to recall the definition of the spectral sample of Boolean functions from~\cite{garban2010fourier}. To this purpose, we first need to recall what is the Fourier decomposition of Boolean functions. Consider a countable set $E$ and equip the set $\Omega_E:=\{ -1,1 \}^E$ with the product ``uniform'' measure $\Prob^E_{1/2} = \left( \frac{\delta_1+\delta_{-1}}{2} \right)^{\otimes E}$. For every $S$ finite subset of $E$, define the following function on $\Omega_E$:
\begin{equation}\label{e.fourier_function}
\chi_S^E \, : \, \omega_E \in \Omega_E \mapsto \prod_{i \in S} \omega_E(i) \, .
\end{equation}
The functions $\chi_S^E$ form an orthonormal set of $L^2(\Omega_E,\Prob^E_{1/2})$. If $h$ is a function from $\Omega_E$ to $\R$ that depends on only finitely many coordinates, we can decompose $h$ on this orthonormal set:
\[
h = \sum_{S \text{ finite subset of } E} \widehat{h}(S) \chi_S^E \, ,
\]
where $\widehat{h}(S)=\Ex^E_{1/2} \left[ h \chi_S \right]$ (in particular, $\widehat{h}(S)=0$ if there exists $i \in S$ such that $h$ does not depend on the value of the coordinate $i$). The coefficients $\widehat{h}(S)$ are called the Fourier coefficients of $h$. The idea to use this decomposition in order to prove noise sensitivity results goes back to~\cite{benjamini1999noise}. Let $\omega_E(0)\sim \Prob_{1/2}^E$ and define the dynamical process $(\omega_E(t))_{t \in \R_+}$ by resampling each coordinate at rate $1$. The Fourier basis diagonalises this dynamics:
\[
\forall S,S' \text{ finite subsets of } E, \, \E \left[ \chi_S^E(\omega_E(0)) \chi_{S'}^E(\omega_E(t)) \right] = \un_{S=S'}e^{-t|S|} \, .
\]
As a result,
\begin{equation}\label{e.cov_Fourier}
\Cov \left( h(\omega_E), h(\omega_E(t)) \right) = \sum_{\emptyset \neq S \text{ finite subset of }E} \widehat{h}(S)^2 e^{-t|S|} \, .
\end{equation}
In~\cite{garban2010fourier}, the authors introduce a geometrical object: the spectral sample.
\begin{defi}[\cite{garban2010fourier}]\label{d.spec}
Let $h$ be a measurable function from $\Omega_E$ to $\R$ that is not the zero function and that depends only on a finite subset $F \subset E$. The spectral sample of $h$ is a random variable with values in the subsets of $F$ whose law $\widehat{\Pro}_h$ is given by
\[
\forall S \subseteq F, \, \widehat{\Pro}_h \left[ \{ S \} \right] = \frac{\widehat{h}(S)^2}{\sum_{S' \subseteq F} \widehat{h}(S')^2} = \frac{\widehat{h}(S)^2}{\Ex^E_{1/2} \left[ h^2 \right]} \, .
\]
Moreover, the un-normalized spectral measure $\widehat{\Q}_h$ is defined by $\widehat{\Q}_h \left[ \{ S \} \right] = \widehat{h}(S)^2$.
\end{defi}
With this notion, proving noise sensitivity of a Boolean function (at least for non-degenerate functions) is equivalent to proving that the cardinality of the spectral sample is \textbf{large or empty with high probability}.
\medskip

Let us now go back to the model of Voronoi percolation and introduce an annealed version of the spectral sample. Remember the definition of the sets $\Omega'$ and $\Omega$ from Subsection~\ref{ss.main}. We need three other notations: (a) for every measurable function $h$ from $\Omega$ to $\R$  and for every $\overline{\eta} \in \Omega'$, we write $h^{\overline{\eta}}$ for the restriction of $h$ to $\Omega_{\overline{\eta}} = \{ -1, 1 \}^{\overline{\eta}}$; (b) we write $S \subseteq_f E$ if $S$ is a finite subset of $E$, (c) we let $\mathcal{F}'$ be the (classical) $\sigma$-algebra on $\Omega'$ defined in Subsection~\ref{ss.main}.

\begin{defi}[The annealed spectral sample]\label{d.annealed_spectral_sample}
Let $h$ be a bounded measurable function from $\Omega $ to $\R$ which is not the zero function and assume that a.s. $h^\eta$ depends on finitely many points of $\eta$. An \textbf{annealed spectral sample} of $h$ is a random variable $\mathcal{S}^{an}_h$ with values in $\Omega'$ whose distribution $\widehat{\Pro}_h^{an}$ is defined by
\[
\forall A \in \mathcal{F}', \, \widehat{\Pro}_h^{an} \left[ A \right] = \frac{\E \left[ \sum_{S \subseteq_f \eta, \, S \in A} \widehat{h^\eta}(S)^2 \right]}{\E \left[ \sum_{S \subseteq_f \eta} \widehat{h^\eta}(S)^2 \right]} = \frac{\E \left[ \sum_{S \subseteq_f \eta, \, S \in A} \widehat{h^\eta}(S)^2 \right]}{\E \left[ h^2 \right]} \, ,
\]
where the coefficients $(\widehat{h^\eta}(S))_{S \subseteq_f \eta}$ are the Fourier coefficients of $h^\eta$. Also, we define the un-normalized measure $\widehat{\Q}_h^{an}$ on $\Omega'$ as follows:
\[
\widehat{\Q}_h^{an} \left[ A \right] = \E \left[ \sum_{S \subseteq_f \eta, \, S \in A} \widehat{h^\eta}(S)^2 \right] \, ,
\]
i.e.
\[
\widehat{\Q}_h^{an} \left[ A \right] = \E \left[ \widehat{\Q}_{h^\eta} \left[ A \cap \Omega_\eta \right] \right]
\]
where $\widehat{\Q}_{h^\eta}$ is the measure on the finite subsets of $\Omega_\eta$ from Definition~\ref{d.spec}.
\end{defi}

\blue{(See Appendix \ref{a.meas} for the proof that $\sum_{S \subseteq_f \eta, \, S \in A} \widehat{h^\eta}(S)^2$ is a measurable quantity.)} Note that the annealed spectral sample is a continuous point process \red{which is a.s. finite}. The following is the analogue of~\eqref{e.cov_Fourier}.
\begin{lem}\label{l.cov_frozen}
Take $h$ as in Definition~\ref{d.annealed_spectral_sample} and let $(\omega^{froz}(t))_{t \geq 0}$ be a frozen dynamical Voronoi percolation. Then, for all $t \geq 0$,
\[
\E \left[ h(\omega^{froz}(0)) h(\omega^{froz}(t)) \right] = \sum_{k \in \N} \widehat{\Q}_{h}^{an} \left[ |S| = k \right] e^{-kt} \, .
\]
\end{lem}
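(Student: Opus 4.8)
The plan is to reduce the annealed identity to the quenched (fixed-$\eta$) Fourier identity~\eqref{e.cov_Fourier}, and then average over $\eta$. First I would condition on the (static) point process $\eta$, which does not evolve in time for the frozen dynamics. Given $\eta$, the colored configuration $\omega^{froz}(0)$ has law $\Prob_{1/2}^\eta$ on $\Omega_\eta = \{-1,1\}^\eta$, and the dynamics is exactly the rate-$1$ independent resampling of each coordinate indexed by a point of $\eta$. Since $h^\eta$ depends on only finitely many coordinates a.s., we are precisely in the setting of the discussion preceding~\eqref{e.cov_Fourier} with $E = \eta$ (a countable set, once $\eta$ is fixed). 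Hence, conditionally on $\eta$,
\[
\E \left[ h(\omega^{froz}(0)) h(\omega^{froz}(t)) \cond \eta \right] = \sum_{S \subseteq_f \eta} \widehat{h^\eta}(S)^2 e^{-t|S|} \, ,
\]
where the $S = \emptyset$ term contributes $\widehat{h^\eta}(\emptyset)^2 = \Ex_{1/2}^\eta[h]^2$ and is included here (the version in~\eqref{e.cov_Fourier} was written for the covariance, so it excluded $S = \emptyset$; for the product we keep it, consistently with the statement, which sums over all $k \in \N$).

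Next I would take expectations over $\eta$. By Fubini/Tonelli — justified because $h$ is bounded, so $\sum_{S} \widehat{h^\eta}(S)^2 = \Ex_{1/2}^\eta[(h^\eta)^2] \leq \|h\|_\infty^2$ is uniformly bounded and all terms are nonnegative — we get
\[
\E \left[ h(\omega^{froz}(0)) h(\omega^{froz}(t)) \right] = \E \left[ \sum_{S \subseteq_f \eta} \widehat{h^\eta}(S)^2 e^{-t|S|} \right] = \sum_{k \in \N} e^{-kt} \, \E \left[ \sum_{S \subseteq_f \eta, \, |S| = k} \widehat{h^\eta}(S)^2 \right] .
\]
By the definition of $\widehat{\Q}_h^{an}$ (Definition~\ref{d.annealed_spectral_sample}), the inner expectation is exactly $\widehat{\Q}_h^{an}[\,|S| = k\,]$, which yields the claimed formula.

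The only genuinely delicate point is the first step: one must check that the frozen dynamics, conditioned on $\eta$, really is the independent rate-$1$ resampling dynamics on the coordinate set $\eta$ — this is essentially the definition of the frozen process, but it requires the measurability setup from Subsection~\ref{ss.main} and the càdlàg construction referenced in Appendix~\ref{s.simple} to make precise that $\E[h(\omega^{froz}(0))h(\omega^{froz}(t)) \mid \eta]$ is a well-defined $\mathcal{F}'$-measurable function of $\eta$ to which the discrete Fourier identity applies verbatim. Everything else is a routine interchange of expectation and sum, using boundedness of $h$ and nonnegativity of Fourier weights. I would also remark that this is literally Lemma 2.3 of~\cite{V2}, so in the paper one might simply cite it; the argument above is the short self-contained proof.
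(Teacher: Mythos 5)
Your proof is correct and is essentially the argument the paper intends (the paper itself only cites Lemma 2.3 of \cite{V2} for this statement, and the same computation appears implicitly in the proofs of Lemma~\ref{l.cov_moving} and Theorem~\ref{t.Brownian}): condition on $\eta$, apply the discrete Fourier diagonalization of the rate-$1$ resampling dynamics on the coordinate set $\eta$, keep the $S=\emptyset$ term since the claim is about the product rather than the covariance, and conclude by Tonelli using nonnegativity and Parseval. No gaps.
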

\begin{proof}
Let $\eta$ be the underlying point configuration. If $S \subseteq_f \eta$, we write $\chi_S^\eta$ for the Fourier function defined on $\Omega_\eta$ as in~\eqref{e.fourier_function}. We have
\begin{eqnarray*}
\E \left[ h(\omega^{froz}(0)) h(\omega^{froz}(t)) \right] & = & \E \left[ \left( \sum_{S \subseteq_f \eta} \widehat{h^\eta}(S) \chi_S^\eta(\omega^{froz}(0)) \right) \times \left( \sum_{S \subseteq_f \eta} \widehat{h^\eta}(S) \chi_S^\eta(\omega^{froz}(t)) \right) \right]\\
& = & \E \left[ \sum_{S,S' \subseteq_f \eta} \widehat{h^\eta}(S) \widehat{h^\eta}(S') \chi_S^\eta(\omega^{froz}(0)) \chi_{S'}^\eta(\omega^{froz}(t)) \right]\\
& = & \E \left[ \E \left[ \sum_{S,S' \subseteq_f \eta} \widehat{h^\eta}(S) \widehat{h^\eta}(S') \chi_S^\eta(\omega^{froz}(0)) \chi_{S'}^\eta(\omega^{froz}(t)) \cond \eta \right] \right]\\
& = & \E \left[ \sum_{S \subseteq_f \eta} \widehat{h^\eta}(S)^2 e^{-t|S|} \right]\\
& = & \sum_{k \in \N} \widehat{\Q}_{h}^{an} \left[ |S| = k \right] e^{-kt} \, .
\end{eqnarray*}
\end{proof}

Remember that in this paper we study two dynamical processes. Lemma \ref{l.cov_frozen} links the annealed spectral sample to the frozen dynamics. We now explain how we can use the annealed spectral sample in order to study the $\mu$-dynamical Voronoi percolation processes, where $\mu$ is the distribution of a Lévy process. The formula from Lemma~\ref{l.cov_frozen} comes from the fact that the Fourier basis diagonalises the dynamics defined by resampling the values of the bits independently from each other. We do not have such a property for the $\mu$-dynamical process. The same kind of difficulty arose in the study of exclusion dynamics, see~\cite{broman2013exclusion,GV}. The following lemma is inspired by Lemma~7.1 of~\cite{broman2013exclusion} and by Lemma~4.1 of~\cite{GV}. Remember that, if $\mu$ is the distribution of a planar Lévy process, we let $(\omega^\mu(t))_{t \in \R_+}$ be a $\mu$-dynamical Voronoi percolation process and we let $(\eta^\mu(t))_{t \in \R_+}$ be the underlying (non-colored) point process. \blue{For any deterministic finite subset of the plane $s(0)$, we define a (càdlàg) process $(s^{\mu}(t))_{t \in \R}$ in $\Omega'$ by letting each point of $s(0)$ move independently according to a Lévy process of law $\mu$. Note that the time set is $\R$; $(s^{\mu}(-t))_{t \geq 0}$ is the time reversal process of a process of law $\mu$ and $(s^{\mu}(-t))_{t \geq 0} \perp \!\!\! \perp (s^{\mu}(t))_{t \geq 0}$.}

\begin{lem}\label{l.cov_moving}
\blue{Take $h$ as in Definition~\ref{d.annealed_spectral_sample}, let $\mu$ be the distribution of a planar Lévy process and let $\mathcal{F}'$ be the (classical) $\sigma$-algebra defined on $\Omega'$ in Subsection~\ref{ss.main}. Let $(I_i)_{i \geq 1}$ be a partition of $\N=\{0,1,2,\cdots\}$ and, for each $i \in \{1,2,\cdots\}$, let $B_i = \{ \overline{\eta} \in \Omega' \, : \, |\overline{\eta}| \in I_i\} \in \mathcal{F}'$. Moreover, let $A_1,A_2,\cdots \in \mathcal{F}'$ \red{such that} $A_i \subseteq B_i$ for every $i \geq 1$. Also, for each $i \geq 1$ and each $t \geq 0$, let $\delta_1(i,t), \delta_2(i) > 0$ be such that the three following properties hold:
\begin{itemize}
\item[i)] $\underset{s(0)\in A_i}{\max} \, \Pro \left[ s^{\mu}(t) \in A_i \right] \leq \delta_1(i,t) \,$,
\item[ii)] $\underset{s(0)\in A_i}{\max} \, \Pro \left[ s^{\mu}(-t) \in A_i \right] \leq \delta_1(i,t) \,$,
\item[iii)] $\widehat{\Q}_h^{an} \left[ A_i \right] \geq (1- \delta_2(i))\widehat{\Q}_h^{an} \left[ B_i \right] \,$.
\end{itemize}
Then, for every $t \geq 0$,
\[
\E \left[ h(\omega^\mu(0)) h(\omega^\mu(t)) \right] \leq \E \left[ \Ex_{1/2}^\eta \left[ h \right]^2  \right] + \sum_{i=1}^{+\infty} \widehat{\Q}_h^{an} \left[ B_i \right] (\delta_1(i,t)+ 2 \sqrt{\delta_2(i)}) \, .
\]}
\end{lem}
\begin{proof}
\blue{We refer to Appendix \ref{a.meas} for all the measurability issues in this proof.} To simplify the notations, we write $\eta(t)=\eta^\mu(t)$. \blue{For every $S \subseteq \eta(0)$, we let $S_t$ be the corresponding subset of $\eta(t)$ (this set is a.s. well defined because there is a.s. no collusion of points in the process and no two particles jump at the same time, see Appendix \ref{a.meas} for more details). Note that, for every $i$, every $S \subseteq \eta(0)$ and every $t \geq 0$, $S \in B_i$ if and only if $S_t \in B_i$.} The quantity $\E \left[ h(\omega^\mu(0)) h(\omega^\mu(t)) \right]$ equals
\begin{align*}
& \E \left[ \left( \sum_{S \subseteq_f \eta(0)} \widehat{h^{\eta(0)}}(S) \chi_S^{\eta(0)}(\omega^\mu(0)) \right) \times \left( \sum_{S \subseteq_f \eta(t)} \widehat{h^{\eta(t)}}(S) \chi_S^{\eta(t)}(\omega^\mu(t)) \right) \right]\\
& = \E \left[ \E \left[ \sum_{S \subseteq_f \eta(0), S' \subseteq_f \eta(t)} \widehat{h^{\eta(0)}}(S)\widehat{h^{\eta(t)}}(S') \chi_S^{\eta(0)}(\omega^\mu(0)) \chi_{S'}^{\eta(t)}(\omega^\mu(t)) \cond (\eta(s))_{s \geq 0} \right] \right]\\
& = \E \left[ \sum_{S \subseteq_f \eta(0), S' \subseteq_f \eta(t)} \widehat{h^{\eta(0)}}(S)\widehat{h^{\eta(t)}}(S') \E \left[ \chi_S^{\eta(0)}(\omega^\mu(0)) \chi_{S'}^{\eta(t)}(\omega^\mu(t)) \cond (\eta(s))_{s \geq 0} \right] \right]\\
& = \E \left[ \sum_{S \subseteq_f \eta(0), S' \subseteq_f \eta(t)} \widehat{h^{\eta(0)}}(S) \widehat{h^{\eta(t)}}(S') \un_{S'=S_t } \right]\\
& = \E \left[ \sum_{S \subseteq_f \eta(0)} \widehat{h^{\eta(0)}}(S) \widehat{h^{\eta(t)}}(S_t) \right]\\
& = \E \left[ \widehat{h^{\eta(0)}}(\emptyset) \widehat{h^{\eta(t)}}(\emptyset) + \sum_{i=1}^{+\infty} \sum_{S \subseteq_f \eta(0), \, S \in B_i} \widehat{h^{\eta(0)}}(S) \widehat{h^{\eta(t)}}(S_t) \right]\\
& = \E \left[ \widehat{h^{\eta(0)}}(\emptyset)  \widehat{h^{\eta(t)}}(\emptyset) \right] + \sum_{i=1}^{+\infty} \E \left[ \sum_{S \subseteq_f \eta(0), \, S \in B_i} \widehat{h^{\eta(0)}}(S) \widehat{h^{\eta(t)}}(S_t) \right].
\end{align*}
The last equality follows from dominated convergence \blue{(indeed, by the Cauchy-Schwarz inequality, for any $B \in \mathcal{F}'$ we have $\left| \sum_{S \subseteq_f \eta(0), \, S \in B} \widehat{h^{\eta(0)}}(S) \widehat{h^{\eta(t)}}(S_t) \right| \leq \Ex^\eta [ h^2 ] \leq \parallel h \parallel_\infty^2$; recall that $h$ is bounded)}.

Since by the Cauchy-Schwarz inequality we have $\E \left[ \widehat{h^{\eta(0)}}(\emptyset)  \widehat{h^{\eta(t)}}(\emptyset) \right] \leq \E \left[ \Ex_{1/2}^\eta \left[ h \right]^2 \right]$, it is now sufficient to prove that, for every $i \geq 1$,
\begin{equation}\label{e.only_for_i}
\E \left[ \sum_{S \subseteq_f \eta(0), \, S \in B_i} \widehat{h^{\eta(0)}}(S) \widehat{h^{\eta(t)}}(S_t) \right] \leq \widehat{\Q}_h^{an} \left[ B_i \right] (\delta_1(i,t)+ 2 \sqrt{\delta_2(i)}) \, .
\end{equation}
Let us divide the above sum into three sums:
\begin{multline*}
\E \left[ \sum_{S \subseteq_f \eta(0), S \in B_i} \widehat{h^{\eta(0)}}(S) \widehat{h^{\eta(t)}}(S_t) \right] = \E \left[\sum_{S \subseteq_f \eta(0), \, S \in A_i} \widehat{h^{\eta(0)}}(S) \widehat{h^{\eta(t)}}(S_t) \un_{S_t \in A_i} \right]\\
+ \E \left[ \sum_{S \subseteq_f \eta(0), \, S \in A_i} \widehat{h^{\eta(0)}}(S) \widehat{h^{\eta(t)}}(S_t) \un_{S_t \notin A_i} \right]+ \E \left[ \sum_{S \subseteq_f \eta(0), \, S \in B_i \setminus A_i} \widehat{h^{\eta(0)}}(S) \widehat{h^{\eta(t)}}(S_t) \right] \, .
\end{multline*}
Let us write $\Sigma_1$, $\Sigma_2$ and $\Sigma_3$ for the three terms of the right-hand-side of the above equality and let us first deal with $\Sigma_1$. By the Cauchy-Schwarz inequality (applied to the counting measure and then to $\E$), we have
\begin{align*}
\Sigma_1 & \leq \E \left[ \left( \sum_{S \subseteq_f \eta(0) , \, S \in A_i} \widehat{h^{\eta(0)}}(S)^2 \un_{S_t \in A_i} \right)^{1/2} \left( \sum_{S \subseteq_f \eta(0), \, S \in A_i} \widehat{h^{\eta(t)}}(S_t)^2 \un_{S_t \in A_i} \right)^{1/2} \right]\\
& \leq \left( \E \left[ \sum_{S \subseteq_f \eta(0), \, S \in A_i} \widehat{h^{\eta(0)}}(S)^2 \un_{S_t \in A_i} \right] \E \left[ \sum_{S \subseteq_f \eta(0), \, S \in A_i} \widehat{h^{\eta(t)}}(S_t)^2 \un_{S_t \in A_i} \right] \right)^{1/2}.
\end{align*}
\blue{We have
\begin{align*}
\E \left[ \sum_{S \subseteq_f \eta(0), \, S \in A_i} \widehat{h^{\eta(0)}}(S)^2 \un_{S_t \in A_i} \right] & = \E \left[ \sum_{S \subseteq_f \eta(0), \, S \in A_i} \widehat{h^{\eta(0)}}(S)^2 \Pro \left[ S_t \in A_i \cond \eta(0) \right] \right]\\
& \leq \delta_1(i,t) \widehat{\Q}_h^{an} \left[ A_i \right]
\end{align*}
by i). Given a set $S' \subseteq \eta(t)$, let $S'_0$ be the corresponding subset of $\eta(0)$. We have
\begin{align*}
\E \left[ \sum_{S \subseteq_f \eta(0), \, S \in A_i} \widehat{h^{\eta(t)}}(S_t)^2 \un_{S_t \in A_i} \right] & = \E \left[ \sum_{S' \subseteq_f \eta(t), \, S' \in A_i} \widehat{h^{\eta(t)}}(S')^2 \un_{S_0' \in A_i} \right]\\ & = \E \left[ \sum_{S' \subseteq_f \eta(t), \, S' \in A_i} \widehat{h^{\eta(t)}}(S')^2 \Pro \left[ S_0' \in A_i \cond \eta(t) \right] \right]\\
& \leq \delta_1(i,t) \widehat{\Q}_h^{an} \left[ A_i \right]
\end{align*}
by ii). As a result,
\[
\Sigma_1 \leq \sqrt{ \left( \delta_1(i,t) \, \widehat{\Q}_h^{an} \left[ A_i \right] \right)^2}  \leq \delta_1(i,t) \, \widehat{\Q}_h^{an} \left[ B_i \right] \, .
\]
}Let us now deal with the terms $\Sigma_2$ and $\Sigma_3$. By applying the Cauchy-Schwarz inequality once again, we obtain that
\begin{align*}
\Sigma_2 & \leq \left( \E \left[ \sum_{S \subseteq_f \eta(0), \, S \in A_i} \widehat{h^{\eta(0)}}(S)^2 \right] \, \E \left[ \sum_{S \subseteq_f \eta(0), \, S \in A_i} \widehat{h^{\eta(t)}}(S_t)^2 \un_{S_t \notin A_i} \right] \right)^{1/2}\\
& \leq \left( \widehat{\Q}_h^{an} \left[ A_i \right] \, \widehat{\Q}_{h}^{an} \left[ B_i \setminus A_i \right] \right)^{1/2} \leq \sqrt{\delta_2(i)} \, \widehat{\Q}_h^{an} \left[ B_i \right] \, .
\end{align*}
\blue{(In the second inequality, we have used that $S \in A_i \Rightarrow S \in B_i \Rightarrow S_t \in B_i$.)} By the same calculations, we prove that $\Sigma_3 \leq \sqrt{\delta_2(i)} \, \widehat{\Q}_h^{an} \left[ B_i \right]$, which implies~\eqref{e.only_for_i} and ends the proof.
\end{proof}

\subsection{Why don't we study a quenched spectral sample?}\label{ss.why}

Why do we need to define an \textbf{annealed} spectral sample? At first sight, it may seem to be easier to work with a quenched spectral sample, which could be defined exactly as in~\cite{garban2010fourier}. However, the quenched model is not translation invariant (which is very important in~\cite{garban2010fourier}) and we do not have quantitative enough quenched quasi-multiplicativity properties to be able to follow the strategy of Garban, Pete and Schramm at the quenched level. This is why we have chosen to introduce an annealed analogue of the spectral sample. It is important to keep in mind that this is a \textbf{continuous} point process. In particular, we will study events of the kind:
\[
\{ \mathcal{S}_h^{an} \cap B  \neq \emptyset = \mathcal{S}_h^{an} \cap W \} \, ,
\]
where $B$ and $W$ are two Borel subsets of the plane. Events of the kind
\[
\{ \mathcal{S}_h^{an} \cap B'  \neq \emptyset = \mathcal{S}_h^{an} \cap W' \} \, ,
\]
where $B'$ and $W'$ are subsets of $\eta$ (that seem at first sight to be the natural analogues of the events studied in~\cite{garban2010fourier}) would not make any sense since we have not coupled the spectral sample with $\eta$.
\medskip

However, we will also need to work at the quenched level to apply discrete Fourier techniques. Remember that the un-normalized measure of the spectral sample is
\[
\widehat{\Q}^{an}_h \left[ \cdot \right] = \E \left[ \widehat{\Q}_{h^\eta} \left[ \cdot \right] \right] \, .
\]
The strategy will consist in applying discrete Fourier results to $\widehat{\Q}_{h^\eta}$ and then deducing results for $\widehat{\Q}^{an}_h$. The technical difficulties will come from the \textbf{multiple passages from quenched to annealed measures}. To overcome these difficulties, the key result will be Theorem~\ref{t.quenched_arm}. Let us recall that this theorem says that
\[
\widetilde{\alpha}_j(r,R) := \sqrt{\E \left[ \Prob_{1/2}^\eta \left[ \arm_j(r,R) \right]^2 \right]} \asymp \alpha_j^{an}(r,R) \, .
\]
In other words,
\[
\Var \left( \Prob_{1/2}^\eta \left[ \arm_j(r,R) \right] \right) \leq \grandO{1} \Pro_{1/2} \left[ \arm_j(r,R) \right]^2 = \grandO{1} \alpha_j^{an}(r,R)^2 \, .
\]
Therefore, this theorem roughly means that the arm events do not depend too much on the environment. Thanks to this result, we will not lose too much each time we go from quenched to annealed measures. For a more precise explanation of the importance of Theorem~\ref{t.quenched_arm} in the present work, we refer to Subsection~\ref{ss.preli}.

\subsection{Main results on the annealed spectral sample}

Let us now state the main results on the annealed spectral sample, which are the analogues of results from~\cite{garban2010fourier,GV} and which will enable us to prove that there exist exceptional times. As explained in \cite{garban2010fourier}, the expected size of the spectral sample of the crossing event for Bernoulli percolation is $n^2\alpha_4(n)$ since the spectral sample has the same one-dimensional marginal as the pivotal set. By the same argument together with the arm event estimates from \cite{V1,V2} (see Subsection \ref{ss.arm}), one can obtain that the expected size of the annealed spectral sample of the crossing event of Voronoi percolation is $n^2\alpha_4^{an}(n)$. As a result, the following theorem can be seen as an estimate on the full lower tail of the annealed spectral sample of Voronoi percolation.
\begin{thm}\label{t.Spec_sample_g_n}
For every $n \in \N^*$, let $g_n$ denote the indicator function of the crossing event of $[0,n]^2$. There exists $C<+\infty$ such that for every $r \in [1,n]$ we have
\[
\Pro \left[0<| \mathcal{S}^{an}_{g_n}| < r^2\alpha_4^{an}(r) \right] \leq C \left( \frac{n}{r} \alpha_4^{an}(r,n) \right)^2 + \frac{C}{n} \, . 
\]
\end{thm}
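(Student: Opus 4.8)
The plan is to follow closely the strategy of Garban--Pete--Schramm for the lower tail of the spectral sample of Bernoulli percolation (their ``first moment method on the truncated spectral sample''), adapting it to the annealed continuous setting using the quenched arm estimates of Theorem~\ref{t.quenched_arm} and the quasi-multiplicativity of Proposition~\ref{p.QMulti} each time one passes between quenched and annealed quantities. The basic dichotomy is: either the annealed spectral sample $\mathcal{S}^{an}_{g_n}$ is empty, or — conditioned on being nonempty and small — it must concentrate in a small region, and one controls the probability of that concentration via a second-moment / first-moment estimate on $|\mathcal{S}^{an}_{g_n}\cap B(x,r)|$ for boxes $B(x,r)$ tiling $[0,n]^2$. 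More precisely, I would fix the scale $r\in[1,n]$, tile $[0,n]^2$ by $\grandO{(n/r)^2}$ boxes of side $r$, and write $\{0<|\mathcal{S}^{an}_{g_n}|<r^2\alpha_4^{an}(r)\}$ as a union over which box $B$ contains a ``nontrivial but small'' piece of the sample.

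First I would record the needed first- and second-moment bounds for the restricted spectral mass. The one-dimensional marginal identity (the analogue of $\widehat{\Q}^{an}_{g_n}[x\in\mathcal{S}^{an}]=\Pro[x\in\mathrm{Piv}]$, which here becomes an integral over $\R^2$ against $\asymp\alpha_4^{an}(\cdot)$ densities) gives $\widehat{\Q}^{an}_{g_n}[\mathcal{S}^{an}\cap B(x,r)\neq\emptyset]\lesssim r^2\alpha_4^{an}(r)\alpha_4^{an}(r,n)$ away from the boundary (and a boundary term using $\alpha^{+}$ estimates from Proposition~\ref{p.univ}). The second step, which is the genuinely new one, is the annealed analogue of the GPS ``second moment'' estimate $\widehat{\Q}[\mathcal{S}\cap B\neq\emptyset,\ \mathcal{S}\cap B'\neq\emptyset]\lesssim$ the product of one-box probabilities times a factor — here one must expand $\widehat{\Q}^{an}_{g_n}=\E[\widehat{\Q}_{g_n^\eta}]$, apply the discrete Fourier ``localization'' inequality of \cite{garban2010fourier} at the quenched level to bound $\widehat{\Q}_{g_n^\eta}[\mathcal{S}\cap B\neq\emptyset\neq\mathcal{S}\cap B']$ in terms of quenched arm probabilities, and then take the expectation over $\eta$, using Theorem~\ref{t.quenched_arm} (in the form $\widetilde\alpha_j\asymp\alpha_j^{an}$, i.e. the arm events barely fluctuate in $\eta$) together with Proposition~\ref{p.hat} to turn products of quenched arm probabilities into products of annealed ones. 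This is where the multiple quenched-to-annealed passages occur and where the variance bound of Theorem~\ref{t.quenched_arm} is essential.

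Given these moment bounds, the argument closes as in GPS: on the event $\{0<|\mathcal{S}^{an}_{g_n}|<r^2\alpha_4^{an}(r)\}$, either the sample meets some box $B$ but stays ``diffuse'' inside it (mass $<r^2\alpha_4^{an}(r)$), or it is concentrated in a single box. In the first case a Paley--Zygmund / second-moment argument inside $B$ bounds the contribution by $\asymp (\,(n/r)\alpha_4^{an}(r,n)\,)^2$ after summing over the $\grandO{(n/r)^2}$ boxes and using quasi-multiplicativity $\alpha_4^{an}(r)\alpha_4^{an}(r,n)\asymp\alpha_4^{an}(n)$; in the second (``concentrated'') case one iterates the same estimate at a smaller scale, or bounds it directly, producing the residual $C/n$ term (this extra additive term, absent from the cleanest GPS statement, comes from the boundary effects of the square $[0,n]^2$ and from a crude bound at the smallest scale $r=1$). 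The main obstacle I expect is precisely the quenched second-moment estimate for two well-separated boxes: one needs an annealed analogue of the GPS clustering inequality, and making the quenched Fourier localization argument survive the expectation over $\eta$ — i.e. controlling the correlation between the $\eta$-dependence of $\widehat{\Q}_{g_n^\eta}$ restricted to $B$ and to $B'$ — is delicate and will rely heavily on the coupled/quenched quasi-multiplicativity developed elsewhere in the paper (the analogue of Proposition~\ref{p.hat} for coupled configurations) rather than on anything routine.
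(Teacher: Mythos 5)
Your roadmap is essentially the one the paper follows: the GPS three-step machinery (combinatorial annulus-structure bound, conditional first/second moment estimate, abstract large-deviation lemma), with Theorem~\ref{t.quenched_arm} used at every quenched-to-annealed passage and with the quasi-multiplicativity for \emph{coupled} configurations (the analogue of Proposition~\ref{p.hat} for the pair $\omega',\omega''$) correctly singled out as the genuinely new and hardest ingredient. Two points in your write-up are imprecise enough to be worth flagging. First, the closing combination is not a ``union over which box contains the nontrivial but small piece'' followed by a per-box Paley--Zygmund argument: that framing only handles the case where the sample is concentrated in a single $r$-box, whereas the difficult scenario is a sample spread over many $r$-boxes with very few points in each. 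The actual closure introduces the independent random sparse set $\mathcal{Z}_r$ (each unit box kept with probability $(r^2\alpha_4^{an}(r))^{-1}$), notes that $\{0<|\mathcal{S}^{an}|<r^2\alpha_4^{an}(r)\}$ forces $\mathcal{S}^{an}\cap\mathcal{Z}_r=\emptyset$ with conditional probability bounded below, and then combines the conditional estimate of Step~2 with the abstract large-deviation lemma to reduce to $\E[\exp(-a|\mathcal{S}^{an}(r)|)]$, which is summable against the combinatorial bound $\Pro[|\mathcal{S}^{an}(r)|=k]\leq 2^{\theta\log^2(k+2)}((n/r)\alpha_4^{an}(r,n))^2$ uniformly in $k$. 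Second, the additive $C/n$ term does not come from boundary effects inside $[0,n]^2$ or from a crude bound at scale $r=1$; it is exactly the probability that $\mathcal{S}^{an}_{g_n}$ charges the complement of $[0,n]^2$, bounded by summing annealed pivotal probabilities of unit boxes outside the square (via the half-plane $3$-arm exponent), after which the whole GPS argument is run on the event $\{\mathcal{S}^{an}_{g_n}\subseteq[0,n]^2\}$. (Also, your displayed first-moment bound conflates the probability that the sample meets a box of side $r$, which is $\lesssim\alpha_4^{an}(r,n)$, with the expected cardinality of the intersection, which is $\asymp r^2\alpha_4^{an}(r)\alpha_4^{an}(r,n)$.)
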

\begin{rem}
The term $\frac{C}{n}$ is not present in the analogous result from~\cite{garban2010fourier}. We will see that this term comes from the contribution of the boxes outside of $[0,n]^2$. Actually, since it is conjectured that $\left( \frac{n}{r} \alpha_4^{an}(r,n) \right)^2 = (r/n)^{1/2+\petito{1}}$, the result should be true without this term.
\end{rem}
It is not difficult to deduce Theorem~\ref{t.quant_NS_frozen} from Theorem~\ref{t.Spec_sample_g_n}. As one can see below, the quenched estimate Theorem~\ref{t.AGMT} by Ahlberg, Griffiths, Morris and Tassion is also a key result of the proof of Theorem \ref{t.quant_NS_frozen}.
\begin{proof}[Proof of Theorem~\ref{t.quant_NS_frozen} using Theorem~\ref{t.Spec_sample_g_n}]
The proof in the case $t_n n^2 \alpha_4^{an}(n) \ll 1$ does not rely on spectral estimates and is the purpose of Appendix~\ref{a.t_n} so let us only consider the case where $t_n n^2\alpha_4(n)$ goes to $+\infty$. Remember that the renormalisation constant in the definition of the distribution $\widehat{\Pro}_{g_n}^{an}$ of $\mathcal{S}_{g_n}^{an}$ is $\E_{1/2} \left[g_n \right]= \Pro_{1/2} \left[\cross(n,n) \right](=1/2)$. Hence, Lemma~\ref{l.cov_frozen} implies that
\begin{multline*}
\Cov \left( g_n(\omega^{foz}(0)),g_n(\omega^{froz}(t_n)) \right) = \sum_{k \geq 1} \Pro \left[ | \mathcal{S}^{an}_{g_n}| = k\right]  \Pro_{1/2} \left[\cross(n,n) \right] e^{-kt_n}\\
+ \Pro \left[ | \mathcal{S}^{an}_{g_n}| = 0 \right]  \Pro_{1/2} \left[\cross(n,n) \right] -  \Pro_{1/2} \left[\cross(n,n) \right]^2 \, .
\end{multline*}
Since $\Pro \left[ | \mathcal{S}^{an}_{g_n}| = 0 \right] \Pro_{1/2} \left[\cross(n,n) \right] = \E \left[ \Prob_{1/2}^\eta \left[ \cross(n,n) \right]^2 \right]$, the quenched estimate Theorem~\ref{t.AGMT} implies that
\[
\Pro \left[ | \mathcal{S}^{an}_{g_n}| = 0 \right]  \Pro_{1/2} \left[\cross(n,n) \right] -  \Pro_{1/2} \left[\cross(n,n) \right]^2
\]
goes to $0$ as $n$ goes to $+\infty$. It is thus sufficient to prove that
\begin{equation}\label{e.QUANT_DANS_PR}
\sum_{k \geq 1} \Pro \left[ | \mathcal{S}^{an}_{g_n}| = k\right]  \Pro_{1/2} \left[\cross(n,n) \right] e^{-kt_n} \left( = \frac{1}{2}\sum_{k \geq 1} \Pro \left[ | \mathcal{S}^{an}_{g_n}| = k\right]  e^{-kt_n} \right)
\end{equation}
goes to $0$ as $n$ goes to $+\infty$. This is actually a direct consequence of Theorem~\ref{t.Spec_sample_g_n}. For details, we refer to Section~8.1 of~\cite{garban2010fourier} where Garban, Pete and Schramm prove the analogue of~\eqref{e.QUANT_DANS_PR} for Bernoulli percolation by using the analogue of Theorem~\ref{t.Spec_sample_g_n}. In the said article, the authors use the following properties of the probabilities of arm events of Bernoulli percolation $\alpha_j(\cdot,\cdot)$: i) they decay polynomially fast, ii) they satisfy the quasi-multiplicativity property, iii) $\Omega(1) (n/m)^{2-\Omega(1)} \leq \alpha_4(n,m)\leq \grandO{1} (n/m)^{1+\Omega(1)}$ if $1 \leq n \leq m < +\infty$. All these properties also hold for the quantities $\alpha_j^{an}(\cdot,\cdot)$ (see Subsection~\ref{ss.arm}).
\end{proof}

Let us now state the main results about the annealed spectral sample of the $1$-arm event. Below, we let $f_R$ denote the indicator function of the $1$-arm event i.e. $f_R=\un_{\arm_1(1,R)}$.

\begin{thm}\label{t.Spec_sample_f_R}
There exists $C<+\infty$ such that, if $R \in [1,+\infty[$ and $r \in [1,R]$, then
\[
\Pro \left[0<| \mathcal{S}^{an}_{f_R}| \leq r^2\alpha_4^{an}(r) \right] \leq C \alpha_1^{an}(r,R) \, . 
\]
\end{thm}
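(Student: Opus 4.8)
The plan is to follow the first-moment / clustering strategy of Garban--Pete--Schramm for the analogous statement about the $1$-arm event in Bernoulli percolation (Section~4 of~\cite{garban2010fourier}), replacing each use of the discrete arm estimates and quasi-multiplicativity by their annealed counterparts from Subsection~\ref{ss.arm}, and being careful about the passage from quenched to annealed measures via Theorem~\ref{t.quenched_arm} and Proposition~\ref{p.hat}. The key object is the $\widehat{\Q}$-measure of the event $\{0 < |\mathcal{S}^{an}_{f_R}| \leq r^2 \alpha_4^{an}(r)\}$. First I would recall that $\widehat{\Q}^{an}_{f_R}$ has total mass $\E[f_R^2]=\alpha_1^{an}(R)$, so Theorem~\ref{t.Spec_sample_f_R} is equivalent to proving $\widehat{\Q}^{an}_{f_R}[\, 0<|S|\le r^2\alpha_4^{an}(r)\,] \leq C\,\alpha_1^{an}(r,R)\,\alpha_1^{an}(R) = C\,\alpha_1^{an}(r)\,\alpha_1^{an}(r,R)^{?}$; more precisely, using quasi-multiplicativity (Proposition~\ref{p.QMulti}), $\alpha_1^{an}(r,R)\,\alpha_1^{an}(R) \asymp \alpha_1^{an}(r,R)^2\,\alpha_1^{an}(r)$, so the target bound is a statement about how little spectral mass sits on small-but-nonempty sets.

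The main steps I would carry out: (1) Split the plane into a dyadic family of scales and, for a spectral set $S$ of small cardinality contained in a window, control the probability that $\mathcal{S}^{an}_{f_R}\cap \Lambda \ne \emptyset$ with $\mathcal{S}^{an}_{f_R}$ confined to a box $B(x,\rho)$, using the annealed analogue of the ``$\mathcal{S}$ localized in a box of radius $\rho$ around $x$ costs $\alpha_4^{an}(\rho)\cdot(\text{arm to reach }x)\cdot(\text{arms from }\partial B(x,\rho)\text{ to distance }R)$'' estimate. This localization estimate is where I would invoke $\widehat{\arm}_j$ and Proposition~\ref{p.hat} (and, for coupled configurations, the analogue proved later in the paper) to get genuine spatial independence between the event inside $B(x,\rho)$ and the arm events outside. (2) Sum the contributions of the boxes: the mass of $\{\mathcal{S}^{an}_{f_R}\subseteq B(x,\rho),\ \mathcal{S}^{an}_{f_R}\ne\emptyset\}$ is bounded, up to constants, by $\rho^2 \alpha_4^{an}(\rho)\,\alpha_1^{an}(\|x\|)\,\alpha_1^{an}(\rho,R)$ when $\rho \le r$ and $\|x\|\lesssim R$, and by translation invariance of the annealed model one sums over $x$ at scale $\asymp \rho$, picking up a factor $\asymp (R/\rho)^2$; combined with Corollary~\ref{c.4} (which gives $\alpha_4^{an}(\rho,R)\le C\alpha_1^{an}(\rho,R)(\rho/R)^{1-\varepsilon}$) and quasi-multiplicativity this telescopes to $O(\alpha_1^{an}(r,R)\,\alpha_1^{an}(R))$ after summing the geometric-type series over dyadic $\rho\le r$. (3) Handle the boundary/outer contributions (sets reaching beyond distance $R$) separately, which is cheap because $f_R$ depends only on the configuration near $B(0,R)$.

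The hard part will be Step (1): establishing the annealed localization estimate with honest spatial-independence input. In the discrete setting one uses that the spectral sample's restriction to an annulus is dominated by a $4$-arm-type event together with independence across disjoint annuli; in the Voronoi setting the cells create long-range dependence, so one cannot directly assert that $\{\mathcal{S}^{an}_{f_R}\subseteq B(x,\rho)\}$ and the arm events in $A(\rho,R)$ are independent. The resolution — following the philosophy of Subsection~\ref{ss.findelathese} — is to work with the $\widehat{\cdot}$-versions of all arm events so that one conditions on $\omega$ restricted to disjoint annuli, and to use Theorem~\ref{t.quenched_arm}/Proposition~\ref{p.hat} to argue that replacing annealed arm probabilities by the relevant quenched $L^2$ norms costs only a constant. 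The bookkeeping of these quenched-to-annealed passages across all dyadic scales simultaneously, without the constants compounding, is the genuinely delicate point; everything else is a routine geometric sum once the localization estimate is in hand.
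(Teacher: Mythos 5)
Your proposal reconstructs only the first of the three steps of the Garban--Pete--Schramm scheme, and that step alone cannot yield the theorem. What you describe in (1)--(3) is the annulus-structure / first-moment bound (Section~4 of~\cite{garban2010fourier}; Proposition~\ref{p.GPScomb2} and Lemma~\ref{l.key_struct} in the present paper), which controls $\Pro \left[ |\mathcal{S}^{an}_{f_R}(r)|=k, \, \mathcal{S}^{an}_{f_R}\subseteq[-R,R]^2 \right] \leq 2^{\theta\log^2(k+2)}\alpha_1^{an}(r,R)$ for each fixed number $k$ of occupied $r$-boxes (your localization-plus-summation argument essentially produces the case $k=1$, i.e.\ a bound on the probability that $\mathcal{S}^{an}_{f_R}$ has small diameter, which is not the same event). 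The theorem concerns $\{0<|\mathcal{S}^{an}_{f_R}|\leq r^2\alpha_4^{an}(r)\}$, where the cardinality threshold is polynomially large in $r$: a spectral set of at most $r^2\alpha_4^{an}(r)$ points can occupy polynomially many $r$-boxes, and summing the first-moment bound over all such $k$ produces a factor of order $r^2\alpha_4^{an}(r)\cdot 2^{\theta\log^2(r^2\alpha_4^{an}(r))}$, which destroys the estimate. Your closing claim that ``everything else is a routine geometric sum once the localization estimate is in hand'' is therefore exactly where the argument fails.

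The missing ingredients are Steps~2 and~3 of the paper's proof: (a) Proposition~\ref{p.QM_spec2}, stating that conditionally on $\mathcal{S}^{an}_{f_R}\cap B\neq\emptyset=\mathcal{S}^{an}_{f_R}\cap W$ for a box $B$ of scale $r$, an independent random sparse set $\mathcal{Z}_r$ of density $(r^2\alpha_4^{an}(r))^{-1}$ hits $\mathcal{S}^{an}_{f_R}\cap B'$ with probability bounded below --- proved via the second- and first-moment estimates Lemmas~\ref{l.second_moment_spec} and~\ref{l.first_moment_spec}, the latter requiring the quasi-multiplicativity property for coupled Voronoi configurations of Subsection~\ref{ss.findelathese}, by far the most technical part of the paper; and (b) the large-deviation lemma (Proposition~6.1 of~\cite{garban2010fourier}), which converts (a) into the statement that $\Pro \left[ |\mathcal{S}^{an}_{f_R}|<r^2\alpha_4^{an}(r) \cond |\mathcal{S}^{an}_{f_R}(r)|=k \right]$ decays fast enough in $k$ to absorb the combinatorial factor $2^{\theta\log^2(k+2)}$. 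You invoke Subsection~\ref{ss.findelathese} only as a device for making the localization estimate spatially independent, not for the conditional clustering step where it is actually indispensable. (A minor point: your opening reformulation has a stray exponent; it should read $\widehat{\Q}^{an}_{f_R}\left[0<|S|\leq r^2\alpha_4^{an}(r)\right]\leq C\,\alpha_1^{an}(r,R)\,\alpha_1^{an}(R)$, the normalization being $\E_{1/2}\left[f_R\right]=\alpha_1^{an}(R)$.)
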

In Subsection~\ref{ss.excep}, we explain how to deduce Theorem~\ref{t.main_frozen} from Theorem~\ref{t.Spec_sample_f_R}.

\medskip

The following result shows a \textbf{clustering effect} for the annealed spectral sample. In this theorem, we estimate the probability that a small spectral mass is far from the origin (see Subsection 2.5 of \cite{GV} for a discussion about the analogous result in Bernoulli percolation).
\begin{thm}\label{t.Spec_sample_f_R_bis}
There exists $\varepsilon_0>0$ and $C<+\infty$ such that, for every $1 \leq r \leq r_0 \leq R/2<+\infty$,
\[
\Pro \left[ 0<|\mathcal{S}^{an}_{f_R} \setminus [-r_0,r_0]^2 | < r^2\alpha_4^{an}(r) \right] \leq C \alpha_1^{an}(r_0,R) \left( \frac{r_0}{r} \right)^{1-\varepsilon_0} \alpha^{an}_4(r,r_0) \, .
\]
\end{thm}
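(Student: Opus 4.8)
The plan is to follow the strategy of Garban–Pete–Schramm (and its adaptation in~\cite{GV}) for the clustering estimate of the spectral sample, working with the annealed object and using Theorem~\ref{t.quenched_arm} each time a quenched-to-annealed passage is needed. Let $\mathcal{S} = \mathcal{S}^{an}_{f_R}$. We want to bound the probability that $\mathcal{S}$ avoids the box $[-r_0,r_0]^2$, is non-empty outside it, and has small total size outside it. The first reduction is a ``first moment / box-counting'' argument: cover $\R^2$ (or the relevant annulus $A(r_0,R)$, since the $1$-arm event $f_R$ lives in $[-R,R]^2$, and spectral mass concentrates there up to the $O(1/n)$-type tail already handled in Theorem~\ref{t.Spec_sample_g_n}) by a grid of boxes of side $r$; by the one-dimensional marginal estimate $\widehat{\Q}^{an}_{f_R}[x \in \mathcal{S}] \asymp \alpha_1^{an}(|x|,R)\,\alpha_4^{an}(1,|x|)$ together with quasi-multiplicativity, one controls $\widehat{\Q}^{an}_{f_R}[\mathcal{S}\cap Q \ne \emptyset]$ for each such box $Q$ at distance $\asymp \rho$ from the origin in terms of $\alpha_1^{an}(\rho,R)$ and $\alpha_4^{an}(r,\rho)$, summed over the $O((\rho/r)^2)$ boxes in the annulus of radius $\rho$.

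The core of the argument is the second-moment input, which is where the estimate $r^2\alpha_4^{an}(r)$ (rather than just non-emptiness) enters. Following~\cite{garban2010fourier}, one conditions on the event that the smallest box $Q$ of the grid meeting $\mathcal{S}$ and lying outside $[-r_0,r_0]^2$ is a prescribed box at distance $\asymp \rho \ge r_0$, and then estimates, via a second-moment computation, the probability that $|\mathcal{S}\cap Q'| < r^2\alpha_4^{an}(r)$ for the relevant local window $Q'$. This requires the annealed analogue of the ``spectral sample restricted to a box behaves like the spectral sample of a local crossing event'' phenomenon, i.e.\ that $\widehat{\Q}^{an}_{f_R}[\mathcal{S}\subseteq Q]$ and the conditional second moment $\mathbb{E}[|\mathcal{S}\cap Q'|^2 \mid \mathcal{S}\cap Q' \ne\emptyset, \mathcal{S}\subseteq Q]$ are comparable (up to constants and up to the arm-event factors $\alpha_4^{an}$) to their Bernoulli counterparts. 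Here one uses the quasi-multiplicativity property (Proposition~\ref{p.QMulti}), the $4$-arm estimates (Proposition~\ref{p.4}, Corollary~\ref{c.4}), and — crucially — the quenched arm estimates Theorem~\ref{t.quenched_arm} and Proposition~\ref{p.hat}, so that the spatial dependence of the Voronoi environment costs only a multiplicative constant. The separation-of-scales structure $r \le r_0 \le R/2$ then lets one factor the bound as $\alpha_1^{an}(r_0,R)$ (the cost of getting one arm out to distance $R$ from scale $r_0$) times $(r_0/r)^{1-\varepsilon_0}\alpha_4^{an}(r,r_0)$ (the local clustering cost at scales between $r$ and $r_0$), with the $\varepsilon_0$-loss coming from summing the $4$-arm-type contributions over dyadic scales via Corollary~\ref{c.4} in order to turn a sum of $\alpha_4^{an}(r,\rho)$-terms weighted by box counts into a single power.

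I expect the main obstacle to be the second-moment estimate in the presence of spatial dependence: in the Bernoulli setting the local spectral mass in a box is handled by a clean second-moment computation using independence of the configuration across disjoint annuli, whereas here the Voronoi tiling couples nearby regions. The way around this is exactly the program announced in Subsection~\ref{ss.findelathese}: replace the arm events $\arm_j$ by the ``insulated'' events $\widehat{\arm}_j$ of Definition~\ref{d.hat}, which are measurable with respect to $\omega$ restricted to the annulus, prove a quasi-multiplicativity statement for coupled/localized configurations (the analogue of Proposition~\ref{p.hat}, of the type of Lemma~\ref{l.fin4}), and thereby recover enough conditional independence between scales to run the second-moment argument. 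Once that machinery is in place, the remaining steps — the first-moment box count, the dyadic summation with the $\varepsilon_0$-loss, and the bookkeeping of the factor $\alpha_1^{an}(r_0,R)$ — are routine adaptations of Section~4 of~\cite{GV} and the corresponding part of~\cite{garban2010fourier}.
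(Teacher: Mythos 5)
Your plan follows the same route as the paper: the GPS three\nobreakdash-step scheme (annulus\nobreakdash-structure first moment, local second moment via the coupled\nobreakdash-configuration quasi\nobreakdash-multiplicativity of Subsection~\ref{ss.findelathese}, then the large\nobreakdash-deviation lemma), with Theorem~\ref{t.quenched_arm} and Proposition~\ref{p.hat} absorbing each quenched\nobreakdash-to\nobreakdash-annealed passage, and with the clustering bound factored as $\alpha_1^{an}(r_0,R)$ times a local cost between scales $r$ and $r_0$. That is exactly the architecture of the proof of Theorem~\ref{t.Spec_sample_f_R_bis} via Proposition~\ref{p.GVcomb} and Steps~0--3 of Subsection~\ref{ss.proofs_spec}.

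There is, however, one concrete gap, and it concerns precisely the ingredient that distinguishes this theorem from Theorems~\ref{t.Spec_sample_g_n} and~\ref{t.Spec_sample_f_R}. In the combinatorial step one must use \emph{$r_0$-decorated} annulus structures: some annuli are centered on $\partial[-r_0,r_0]^2$, and for these the spectral sample is only required to avoid $A_i\setminus[-r_0,r_0]^2$, so it may freely intersect the inner half $A_i\cap[-r_0,r_0]^2$. Consequently the contribution of such an annulus is not the square $h(A_i)^2$ of a $4$-arm probability but a conditional second moment of the form $\E\bigl[\Prob_{1/2}^\eta\bigl[\Piv_{B_i}^{A_i}(f_R)\cond \omega\cap H_i\bigr]^2\bigr]$, where $H_i$ is a half-plane. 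To make the dyadic sum over the $\asymp r_0/\rho$ boundary positions and over scales $\rho\in[r,r_0]$ close up with the exponent $1-\varepsilon_0$, one needs this conditional second moment to beat the plain first moment by a power, i.e.
\[
\E \left[ \Prob_{1/2}^\eta \left[ \arm_4(\rho_1,\rho_2) \cond \omega \cap H \right]^2 \right] \leq \grandO{1} \left( \frac{\rho_1}{\rho_2} \right)^{\varepsilon_2} \alpha_4^{an}(\rho_1,\rho_2) \, ,
\]
which is Lemma~\ref{l.half1} and occupies all of the last appendix: it requires comparing, at the \emph{quenched} level, the $4$-arm event conditioned on a half-plane with the $3$-arm event in the half-plane (the conditioning argument ``reveal the lowest crossings above the boundary'' fails at the annealed level), together with the quenched quasi-multiplicativity of~\cite{V2}. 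Your plan does not provide this estimate; attributing the $\varepsilon_0$-gain to Corollary~\ref{c.4} and to dyadic summation does not work, since Corollary~\ref{c.4} compares $\alpha_4^{an}$ to $\alpha_1^{an}$ and gives no improvement of the half-plane-conditioned second moment over $\alpha_4^{an}$ itself; without that improvement the boundary sum yields only $(r_0/r)\,\alpha_4^{an}(r,r_0)$ (or worse), not the stated $(r_0/r)^{1-\varepsilon_0}\alpha_4^{an}(r,r_0)$, and the strict gain is needed downstream in the proof of Theorem~\ref{t.main_levy}.
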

In Subsection~\ref{ss.excep}, we explain how to deduce Theorem~\ref{t.main_levy} from Theorem~\ref{t.Spec_sample_f_R_bis}. We will actually rely on the following corollary of Theorem~\ref{t.Spec_sample_f_R_bis}:
\begin{cor}\label{c.f_R}
Let $\varepsilon_0$ be the constant of Theorem~\ref{t.Spec_sample_f_R_bis}. There exists a constant $C<+\infty$ such that, for all $1 \leq r \leq r_0 < +\infty$ and all $R \in [1,+\infty[$,
\[
\Pro \left[ |\mathcal{S}^{an}_{f_R}| < r^2 \alpha_4^{an}(r), \, \mathcal{S}^{an}_{f_R} \nsubseteq [-r_0,r_0]^2 \right] \leq C \, \alpha_1^{an}(r_0,R) \left( \frac{r_0}{r} \right)^{1-\varepsilon_0} \alpha_4^{an}(r,r_0) \, .
\]
\end{cor}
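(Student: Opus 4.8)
The plan is to deduce the corollary from Theorem~\ref{t.Spec_sample_f_R_bis} by a summation over dyadic scales, exactly in the spirit of the way one passes from ``small mass far from the origin'' estimates to ``small total mass, not contained in a box'' estimates in~\cite{garban2010fourier} and~\cite{GV}. The event in the corollary, $\{|\mathcal{S}^{an}_{f_R}| < r^2\alpha_4^{an}(r),\ \mathcal{S}^{an}_{f_R} \nsubseteq [-r_0,r_0]^2\}$, is contained (up to the trivial case $\mathcal{S}^{an}_{f_R}=\emptyset$, which is excluded by the strict inequality in the relevant events or contributes $0$) in the union over dyadic radii $\rho = 2^k r_0$, $k\geq 0$, of the events $\{\mathcal{S}^{an}_{f_R}\cap ([-2\rho,2\rho]^2\setminus[-\rho,\rho]^2)\neq\emptyset,\ |\mathcal{S}^{an}_{f_R}\setminus[-\rho,\rho]^2| < r^2\alpha_4^{an}(r)\}$: indeed if $\mathcal{S}^{an}_{f_R}$ has a point outside $[-r_0,r_0]^2$, let $\rho$ be the largest dyadic multiple of $r_0$ such that $\mathcal{S}^{an}_{f_R}$ meets the complement of $[-\rho,\rho]^2$; then it meets the annulus between scales $\rho$ and $2\rho$, and the mass outside $[-\rho,\rho]^2$ is bounded by the total mass, hence $< r^2\alpha_4^{an}(r)$. (One should be slightly careful when $r > \rho$: then $r^2\alpha_4^{an}(r) \geq \rho^2\alpha_4^{an}(\rho)$ and the ``small mass outside a box of radius $\rho$'' event is not directly the hypothesis of Theorem~\ref{t.Spec_sample_f_R_bis}; but for those finitely many small scales one can bound the probability crudely by $\Pro[\mathcal{S}^{an}_{f_R}\cap([-2\rho,2\rho]^2\setminus[-\rho,\rho]^2)\neq\emptyset]$, which is itself $\grandO{1}\alpha_1^{an}(\rho,R)$ by the first-moment/pivotal-set identification together with Proposition~\ref{p.QMulti}, and this is absorbed in the final bound.)

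For the main range $\rho \geq r$ (more precisely $r \leq r_0 \leq \rho$), I would apply Theorem~\ref{t.Spec_sample_f_R_bis} with $r_0$ replaced by $\rho$ and with the same $r$, but first I need the stronger statement that the spectral mass is small \emph{outside $[-\rho,\rho]^2$} and simultaneously the sample reaches the annulus at scale $\rho$; Theorem~\ref{t.Spec_sample_f_R_bis} as stated already controls $\Pro[0<|\mathcal{S}^{an}_{f_R}\setminus[-\rho,\rho]^2| < r^2\alpha_4^{an}(r)]$ by $C\alpha_1^{an}(\rho,R)(\rho/r)^{1-\varepsilon_0}\alpha_4^{an}(r,\rho)$. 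Summing this over the dyadic scales $\rho = 2^k r_0$, $k \geq 0$, gives a bound
\[
C\sum_{k\geq 0} \alpha_1^{an}(2^k r_0, R)\left(\frac{2^k r_0}{r}\right)^{1-\varepsilon_0}\alpha_4^{an}(r, 2^k r_0)\, .
\]
Using quasi-multiplicativity (Proposition~\ref{p.QMulti}) to split $\alpha_4^{an}(r,2^k r_0) \asymp \alpha_4^{an}(r,r_0)\alpha_4^{an}(r_0,2^k r_0)$ and $\alpha_1^{an}(2^k r_0,R)\leq \grandO{1}\alpha_1^{an}(r_0,R)/\alpha_1^{an}(r_0,2^k r_0)$, the general term becomes $\asymp \alpha_1^{an}(r_0,R)(r_0/r)^{1-\varepsilon_0}\alpha_4^{an}(r,r_0)\cdot 2^{k(1-\varepsilon_0)}\alpha_4^{an}(r_0,2^k r_0)/\alpha_1^{an}(r_0,2^k r_0)$. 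So it remains to see that $\sum_{k\geq 0} 2^{k(1-\varepsilon_0)}\alpha_4^{an}(r_0,2^k r_0)/\alpha_1^{an}(r_0,2^k r_0)$ converges, uniformly in $r_0$. By Corollary~\ref{c.4}, $\alpha_4^{an}(r_0,\rho) \leq C(\varepsilon)\,\alpha_1^{an}(r_0,\rho)(r_0/\rho)^{1-\varepsilon}$, so with $\rho = 2^k r_0$ the ratio $\alpha_4^{an}(r_0,2^k r_0)/\alpha_1^{an}(r_0,2^k r_0) \leq C(\varepsilon)\,2^{-k(1-\varepsilon)}$, hence the term is $\leq C(\varepsilon)\,2^{-k(\varepsilon - \varepsilon_0)}$. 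Choosing $\varepsilon = \varepsilon_0/2$ (say), the geometric series converges and the whole sum is $\grandO{1}$, which yields the claimed bound $C\,\alpha_1^{an}(r_0,R)(r_0/r)^{1-\varepsilon_0}\alpha_4^{an}(r,r_0)$.

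The main obstacle — and the only genuinely delicate point — is the bookkeeping at the boundary between the regime where Theorem~\ref{t.Spec_sample_f_R_bis} applies verbatim and the ``small'' scales $\rho < r$ (and similarly ensuring the decomposition over dyadic annuli is airtight, including the largest scales near $R$ where $\alpha_1^{an}(\rho,R)$ is no longer small). I expect these to be handled by the crude first-moment bound $\Pro[\mathcal{S}^{an}_{f_R}\cap A(\rho,2\rho)\neq\emptyset] = \grandO{1}\,(2\rho)^2\alpha_4^{an}(2\rho,R)\,\alpha_1^{an}(2\rho) \leq \grandO{1}\,\alpha_1^{an}(\rho,R)$ (using the pivotal-set/spectral one-dimensional marginal identification and quasi-multiplicativity, just as in Lemma~4.3 and Section~4 of~\cite{garban2010fourier}), together with the fact that $(\rho/r)^{1-\varepsilon_0}\alpha_4^{an}(r,\rho) \asymp 1$ when $r \leq \rho \leq \grandO{1}r$, so those finitely-many-per-dyadic-block contributions are each $\grandO{1}\alpha_1^{an}(r_0,R)(r_0/r)^{1-\varepsilon_0}\alpha_4^{an}(r,r_0)$ and cost only a constant factor. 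Apart from this, the argument is a routine dyadic summation powered by quasi-multiplicativity and Corollary~\ref{c.4}.
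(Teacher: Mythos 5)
The heart of your argument is much more complicated than it needs to be, and the complication is precisely where a genuine gap appears. For $r_0\le R/2$ the paper's proof is a one-line inclusion: since $|\mathcal{S}^{an}_{f_R}\setminus[-r_0,r_0]^2|\le|\mathcal{S}^{an}_{f_R}|$ and $\mathcal{S}^{an}_{f_R}\nsubseteq[-r_0,r_0]^2$ forces that cardinality to be positive, the event of the corollary is contained in $\{0<|\mathcal{S}^{an}_{f_R}\setminus[-r_0,r_0]^2|<r^2\alpha_4^{an}(r)\}$, so Theorem~\ref{t.Spec_sample_f_R_bis} applies verbatim with the \emph{same} $r_0$ --- no dyadic decomposition, no summation, no use of Corollary~\ref{c.4}. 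Your dyadic sum over $\rho=2^kr_0$ reproduces the same bound up to constants in the range where the theorem applies, so it is not wrong there, just unnecessary (and your worry about scales $\rho<r$ is vacuous, since $\rho\ge r_0\ge r$).

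The gap is at the large scales. Theorem~\ref{t.Spec_sample_f_R_bis} requires $\rho\le R/2$, and your dyadic scales $2^kr_0$ inevitably exceed $R/2$, and indeed $R$. For $\rho>R$ the crude bound you invoke is vacuous: by the paper's convention $\alpha_1^{an}(\rho,R)=1$ when $\rho>R$, so a bound of the form $\Pro\left[\mathcal{S}^{an}_{f_R}\cap A(\rho,2\rho)\ne\emptyset\right]\le\grandO{1}\,\alpha_1^{an}(\rho,R)$ gives $\grandO{1}$ per scale and your series diverges. No arm-event estimate can rescue this, because arm events carry no information beyond scale $R$. What is actually needed --- and what the paper uses to treat $r_0\ge 2R$ --- is the locality of $f_R$: if $f_R^\eta$ depends on a point outside $[-\rho,\rho]^2$ with $\rho\ge 2R$, some Voronoi cell must cross from $B(0,R)$ to $\partial B(0,\rho)$, an event of probability $\grandO{1}\exp\left(-\Omega(1)\rho^2\right)$, which beats the merely polynomial normalization $\E_{1/2}\left[f_R\right]=\alpha_1^{an}(R)$. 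Without this ingredient your proof does not close, neither for the far dyadic scales inside your decomposition nor for the corollary's own regime $r_0\ge 2R$, which your write-up does not address; the intermediate case $r_0\in[R/2,2R]$ is then settled by monotonicity in $r_0$ together with quasi-multiplicativity.
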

\begin{proof}[Proof of Corollary~\ref{c.f_R} using Theorem~\ref{t.Spec_sample_f_R_bis}]
If $r_0 \geq 2R$ we have:
\begin{eqnarray*}
\Pro \left[ \mathcal{S}^{an}_{f_R} \nsubseteq [-r_0,r_0]^2 \right] & = & \frac{\E \left[ \widehat{\Q}_{f_R^\eta} \left[ S \nsubseteq [-r_0,r_0]^2 \right] \right]}{\E_{1/2} \left[ f_R \right]}\\
& \leq & \frac{\Pro \left[ f_R^\eta \text{ depends on some points outside of } [-r_0,r_0]^2 \right]}{\E_{1/2} \left[ f_R \right]} \, .
\end{eqnarray*}
The result now follows from the fact that $\E_{1/2} \left[ f_R \right]=\alpha_1^{an}(R)$ decays polynomially fast in $R \leq r_0$ while $\Pro \left[ f_R^\eta \text{ depends on some points outside of } [r_0,r_0]^2 \right]$ decays super-exponentially fast in $r_0$. Indeed, if $f_R^\eta$ depends on some points outside of $[-r_0,r_0]^2$ then there is a Voronoi cell that intersects both $B(0,R) \subseteq B(0,r_0/2)$ and $\partial B(0,r_0)$, which has probability less than $\grandO{1} \exp \left( -\Omega(1) r_0^2 \right)$ by simple properties of Poisson point processes.\\

If $r_0 \leq R/2$, then the result is a direct consequence of Theorem~\ref{t.Spec_sample_f_R_bis} since
\[
\{ |\mathcal{S}^{an}_{f_R}| < r^2 \alpha_4^{an}(r), \, \mathcal{S}^{an}_{f_R} \nsubseteq [-r_0,r_0]^2 \} \subseteq \{ 0<|\mathcal{S}^{an}_{f_R} \setminus [-r_0,r_0]^2 | < r^2\alpha_4^{an}(r) \} \, .
\]
If $r_0 \in [R/2,2R]$, then this is direct consequence of the quasi-multiplicativity property (and of~\eqref{e.poly}) and of the result for $r_0=R/2$ since
\[
\{ |\mathcal{S}^{an}_{f_R}| < r^2 \alpha_4^{an}(r), \, \mathcal{S}^{an}_{f_R} \nsubseteq [-r_0,r_0]^2 \} \subseteq \{ |\mathcal{S}^{an}_{f_R}| < r^2 \alpha_4^{an}(r), \, \mathcal{S}^{an}_{f_R} \nsubseteq [-R/2,R/2]^2 \} \, .
\]
This ends the proof of the corollary.
\end{proof}

\subsection{Proofs of existence of exceptional times}\label{ss.excep}

In this subsection, we prove the results of existence of exceptional times by using Theorems~\ref{t.Spec_sample_f_R} and~\ref{t.Spec_sample_f_R_bis}. \textbf{In this section, we assume that the reader has read Section~8 of~\cite{garban2010fourier} and Section~4 of~\cite{GV}}\footnote{In Section $4$ of \cite{GV}, the reader does not have to read the proofs of Lemmas 4.1 and 4.4 and can stop just before the paragraph ``The constant $\alpha_0=217/816$''.} where results of existence of exceptional times are proved by using analogues of Theorems~\ref{t.Spec_sample_f_R} and~\ref{t.Spec_sample_f_R_bis}.
\medskip

We start with the following lemma that takes its roots in~\cite{olle1997dynamical}. Let $\mu$ be the distribution of a planar Lévy process, and let $\left( \omega(t) \right)_{t \geq 0}$ be either a frozen dynamical Voronoi percolation process or a $\mu$-dynamical Voronoi percolation process. Remember that $f_R = \un_{\arm_1(1,R)}$ and let
\[
X_R = \int_0^1 f_R \left( \omega(t) \right) dt \, .
\]
\begin{lem}\label{l.secondmoment}
Assume that there exists a constant $C < + \infty$ such that for all $R \in [1,+\infty[$ we have:
\[
\E \left[ X_R^2 \right] \leq C \E \left[ X_R \right]^2.
\]
Then \textit{a.s.} there are exceptional times at which there is an unbounded black component.
\end{lem}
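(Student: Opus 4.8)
The plan is to follow the classical second-moment/Paley--Zygmund strategy from~\cite{olle1997dynamical} (and used in~\cite{garban2010fourier,GV}), combined with a scaling/compactness argument to pass from ``arm to distance $R$'' to ``arm to infinity''. First I would record the first-moment computation: by Fubini and stationarity of the dynamics, $\E[X_R] = \int_0^1 \Pro[f_R(\omega(t))=1]\,dt = \alpha_1^{an}(R)$, since each fixed time slice is distributed as $\Pro_{1/2}$. Then the hypothesis $\E[X_R^2]\le C\,\E[X_R]^2$ together with the Paley--Zygmund inequality gives a uniform lower bound
\[
\Pro\left[ X_R > 0 \right] \ge \frac{\E[X_R]^2}{\E[X_R^2]} \ge \frac{1}{C} > 0 ,
\]
uniformly in $R\in[1,+\infty[$. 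The event $\{X_R>0\}$ is exactly the event that there exists a time $t\in[0,1]$ at which $\arm_1(1,R)$ occurs in $\omega(t)$, i.e. at which there is a black path from $B(0,1)$ to $\partial B(0,R)$.

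Next I would upgrade this to the existence of a time with an \emph{unbounded} black cluster. Define, for each $R$, the (closed) set $\mathcal{T}_R=\{t\in[0,1] : \arm_1(1,R) \text{ holds in }\omega(t)\}\subseteq[0,1]$. One checks that $\mathcal{T}_R$ is a.s. closed: this uses the càdlàg nature of the dynamics on $\Omega$ (in the metric of Appendix~\ref{s.simple}) together with the fact that, for fixed $R$, the occurrence of $\arm_1(1,R)$ is, outside a null set, stable under small perturbations of the configuration inside the bounded region $B(0,R)$ — more precisely, at a.e. time the arm event is ``open'' in the sense that it is not at the boundary of its occurrence. The sets $\mathcal{T}_R$ are nested decreasing in $R$, nonempty with probability $\ge 1/C$ each, hence by compactness $\mathcal{T}_\infty := \bigcap_{R\ge 1}\mathcal{T}_R$ is nonempty with probability $\ge 1/C$ (the finite-intersection property of nested nonempty closed subsets of the compact interval $[0,1]$). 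A time $t\in\mathcal{T}_\infty$ is, by definition, a time at which there is a black path from $B(0,1)$ to $\partial B(0,R)$ for \emph{every} $R$, and a standard compactness argument in the plane (König's lemma / the cells are locally finite and the crossing clusters are nested) shows that this forces an unbounded black component at time $t$. Hence with probability $\ge 1/C$ there exists an exceptional time in $[0,1]$.

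Finally I would use the $0$--$1$ law quoted in the introduction (Kolmogorov's law applied to the dynamical process, see~\cite{olle1997dynamical} and the footnote in Subsection~\ref{ss.main}): the event that there exists an exceptional time has probability $0$ or $1$; since we have just shown it has probability $\ge 1/C>0$, it has probability $1$. This yields the a.s. existence of exceptional times with an unbounded black component. The main obstacle is not the Paley--Zygmund step (which is immediate from the hypothesis) but the topological bookkeeping in the middle paragraph: verifying that $\mathcal{T}_R$ is closed for the relevant dynamics and that the nested intersection indeed produces a genuine unbounded cluster rather than, say, a sequence of crossings whose limit ``escapes to the boundary''. Both points are handled exactly as in~\cite{olle1997dynamical,garban2010fourier}, using the local finiteness of the Voronoi tiling and the càdlàg regularity of $(\omega(t))_{t\ge0}$, and I would simply refer to those arguments rather than reproduce them.
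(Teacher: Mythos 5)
Your first and last steps match the paper's proof exactly: the Fubini computation $\E[X_R]=\alpha_1^{an}(R)$, the Paley--Zygmund bound $\Pro[X_R>0]\ge 1/C$ uniformly in $R$ (together with monotonicity of $R\mapsto X_R$ to pass to the intersection over all $R$), and the Kolmogorov $0$--$1$ law. The gap is in the middle step. You assert that $\mathcal{T}_R$ is a.s.\ closed and then invoke the finite-intersection property of nested compact sets. But $\mathcal{T}_R$ is \emph{not} closed in general: for the frozen dynamics a cell can be resampled at time $t$ so that the arm event holds on a half-open interval $[a,t[$ only, and for the $\mu$-dynamical processes the adjacency structure of the tessellation changes as the points move, so the arm event can fail precisely at a limit of times where it holds. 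The paper states this obstruction explicitly (``if the sets $T_R$ were a.s.\ closed \dots the result would have been obvious'') and the entire content of its Lemma~\ref{l.for_every_R_TR} is the workaround: one works with the closures $\overline{T_R}$, whose intersection is nonempty by compactness, and must then show that every $t\in\bigcap_R\overline{T_R}$ genuinely carries an unbounded cluster.

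For the frozen dynamics this reduction is indeed ``exactly as in'' \cite{olle1997dynamical,schramm2010quantitative}, because in a bounded window only finitely many colour switches occur in bounded time. But for a general Lévy dynamics you cannot simply defer to those references, since the underlying point process itself evolves and cell adjacencies may change on a set of times that is not locally finite. The paper handles this with a construction that has no analogue in the Bernoulli setting: it introduces the graph $\overline{G}(t)$ whose edges are limits of adjacencies $G(t_n)$ along $t_n\to t$, shows that $\bigcap_R\overline{T_R}$ is contained in the set of times with an infinite black path in $\overline{G}(t)$, and then proves (Lemma~\ref{l.jump}) that a.s.\ for every $t$ the graphs $G(t)$ and $\overline{G}(t)$ have the same infinite black components. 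That last step itself requires a geometric claim (discrepancies between $G(t)$ and $\overline{G}(t)$ can only occur at discontinuity times of one of the Lévy trajectories, via the empty-disc characterisation of cell adjacency) plus a re-randomisation argument at each jump time of size $\ge\varepsilon$ to rule out an infinite component appearing exactly at such a time. Your proposal contains none of this, and the phrase ``at a.e.\ time the arm event is open'' does not substitute for it; so the proof is incomplete as written for the $\mu$-dynamical case, which is the case Theorem~\ref{t.main_levy} actually needs.
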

Lemma~\ref{l.secondmoment} is proved in Appendix~\ref{a.2nd}. Let us now use this lemma to show that Theorems~\ref{t.Spec_sample_f_R} (resp. Theorem~\ref{t.Spec_sample_f_R_bis}) implies Theorem~\ref{t.main_frozen} (resp. Theorem~\ref{t.main_levy}). To this purpose, let us study $\E \left[ X_R \right]$ and $\E \left[ X_R^2 \right]$. First note that:
\begin{equation}\label{e.avant_fub}
\E \left[ X_R \right] = \alpha_1^{an}(R) \, .
\end{equation}
and (by Fubini):
\begin{eqnarray}
\E \left[ X_R^2 \right] & = & \int_0^1 \int_0^1 \E \left[ f_R(\omega(s)) f_R(\omega(t)) \right] ds dt \nonumber\\
& \leq & 2 \int_0^1 \E \left[ f_R(\omega(0)) f_R(\omega(t)) \right] dt \, .\label{e.fub}
\end{eqnarray}
\begin{proof}[Proof of Theorem~\ref{t.main_frozen} using Theorem~\ref{t.Spec_sample_f_R}]
By Lemmas~\ref{l.cov_frozen} and~\ref{l.secondmoment} (and by~\eqref{e.avant_fub} and~\eqref{e.fub}), it is sufficient to show that
\[
\Pro \left[ |\mathcal{S}^{an}_{f_R}|=0 \right]  + \int_0^1 \sum_{k \geq 1} \Pro \left[ |\mathcal{S}^{an}_{f_R}|=k \right] e^{-kt} dt \leq \grandO{1} \alpha_1^{an}(R) \, .
\]
(Indeed, the renormalisation constant in the distribution $\widehat{\Pro}^{an}_{f_R}$ of $\mathcal{S}_{f_R}^{an}$ is $\E_{1/2} \left[ f_R \right] = \alpha_1^{an}(R)$, this is why there is $\alpha_1^{an}(R)$ instead of $\alpha_1^{an}(R)^2$ on the right-hand-side.)\\

Let us first explain why $\Pro \left[ |\mathcal{S}^{an}_{f_R}|=0 \right] \leq \grandO{1} \alpha_1^{an}(R)$. To this purpose, note that:
\[
\Pro \left[ |\mathcal{S}^{an}_{f_R}|=0 \right] = \frac{\E \left[ \widehat{f_R^\eta}(\emptyset)^2 \right]}{\alpha_1^{an}(R)} = \frac{\widetilde{\alpha}_1(R)^2}{\alpha_1^{an}(R)} \, ,
\]
where the notation $\widetilde{\alpha}_1(R)$ comes from~Subsection~\ref{ss.arm}. The fact that the above is at most of the order of $\alpha^{an}_1(R)$ is given by Theorem~\ref{t.quenched_arm}.
\medskip

It is thus sufficient to prove that
\[
\int_0^1 \sum_{k \geq 1} \Pro \left[ |\mathcal{S}^{an}_{f_R}|=k \right] e^{-kt} dt \leq \grandO{1} \alpha_1^{an}(R) \, ,
\]
which is a consequence of Theorem~\ref{t.Spec_sample_f_R}. The proof of the analogous estimate (by using the analogue of Theorem~\ref{t.Spec_sample_f_R}) for Bernoulli percolation on $\Z^2$ is written in Section~9 of~\cite{garban2010fourier} (see also Section~6 of Chapter~XI of~\cite{garban2014noise}). In the said article, the authors use the following properties of the probabilities of arm events of Bernoulli percolation $\alpha_j(\cdot,\cdot)$: i) they decay polynomially fast, ii) they satisfy the quasi-multiplicativity property, iii) $\alpha_1(r,R)^{-1} (r/R)^{2-\Omega(1)} \leq \grandO{1} \alpha_4(r,R)$. All these properties hold for the quantities $\alpha_j^{an}(\cdot,\cdot)$ (see Subsection~\ref{ss.arm}), so the proof adapts readily.
\end{proof}

\begin{proof}[Proof of Theorem~\ref{t.Brownian} using Theorem~\ref{t.Spec_sample_f_R}]
If we follow the proof of Lemma~\ref{l.cov_frozen} and the beginning of the proof of Lemma~\ref{l.cov_moving} we obtain that, if $(\omega(t))_{t \in \R_+}$ is the process from Theorem~\ref{t.Brownian}, then
\[
\E \left[ f_R(\omega(0)) f_R(\omega(t)) \right] = \E \left[ \sum_{S \subseteq_f \eta(0)} \widehat{f_R^{\eta(0)}}(S)\widehat{f_R^{\eta(t)}}(S_t) e^{-t|S|} \right] \, .
\]
(where for every $S \subseteq \eta(0)$, $S_t$ is the corresponding subset of $\eta(t)$). By the Cauchy-Schwarz inequality (applied twice), this is less than or equal to
\begin{multline*}
\E \left[ \sqrt{\sum_{S \subseteq_f \eta(0)} \widehat{f_R^{\eta(0)}}(S)^2 e^{-t|S|}} \sqrt{\sum_{S \subseteq_f \eta(0)} \widehat{f_R^{\eta(t)}}(S_t)^2 e^{-t|S|}} \right]\\
\leq \E \left[ \sum_{S \subseteq_f \eta(0)} \widehat{f_R^{\eta(0)}}(S)^2 e^{-t|S|}\right] = \E \left[ f_R(\omega^{froz}(0)) f_R(\omega^{froz}(t)) \right] \, ,
\end{multline*}
where $(\omega^{froz}(t))_{t \in \R_+}$ is the frozen dynamical process. As a result, the proof of Theorem~\ref{t.Brownian} is the same as the proof of Theorem~\ref{t.main_frozen} provided that Lemma~\ref{l.secondmoment} also applies in the case of $(\omega(t))_{t \in \R_+}$. It indeed does, and the proof is exactly the same as for the $\mu$-dynamical processes (see Appendix~\ref{a.2nd}).
\end{proof}

\begin{proof}[Proof of Theorem~\ref{t.main_levy} using using Theorem~\ref{t.Spec_sample_f_R} and Corollary~\ref{c.f_R}]
For every $i \in \N^*$, let $A_i,B_i \subseteq \Omega'$ be defined as follows, where $\beta \in ]1,+\infty[$ will be chosen later:
\[
B_i = \{ \overline{\eta} \in \Omega' \, : \, |\overline{\eta}| \in [2^i,2^{i+1}-1] \}
\]
and
\[
A_i = \{ \overline{\eta} \in B_i \, : \, \overline{\eta} \subseteq [-2^{i\beta},2^{i\beta}]^2 \} \subseteq B_i \, .
\]
\red{As above Lemma \ref{l.cov_moving}, for any deterministic finite subset of the plane $s(0)$, we define a process $(s^{\mu}(t))_{t \in \R}$ by letting each point of $s(0)$ move independently according to a Lévy process of law $\mu$. Let $\delta_1(i,t)$ and $\delta_2(i)$ be defined by
\[
\delta_1(i,t) = \max \left\lbrace  \underset{s(0)\in A_i}{\max} \, \Pro \left[ s^{\mu}(t) \in A_i \right] , \underset{s(0)\in A_i}{\max} \, \Pro \left[ s^{\mu}(-t) \in A_i \right] \right\rbrace 
\]}
and
\[
\delta_2(i) = 1 - \frac{\widehat{\Pro}^{an}_{f_R}\left[A_i\right]}{ \widehat{\Pro}^{an}_{f_R}\left[B_i\right]} \, .
\]
By Lemmas~\ref{l.cov_moving} and~\ref{l.secondmoment}, it is sufficient to prove that:
\begin{equation}
\int_0^1 \left( \frac{\E \left[ \Ex_{1/2}^\eta \left[ f_R \right]^2 \right]}{\alpha_1^{an}(R)} + \sum_{i \geq 1} \widehat{\Pro}^{an}_{f_R} \left[ B_i \right] (\delta_1(i,t) + 2\sqrt{\delta_2(i)} ) \right) dt  \leq \grandO{1} \alpha_1^{an}(R) \, .
\end{equation}
By Theorem~\ref{t.AGMT}, we have $\E \left[ \Ex_{1/2}^\eta \left[ f_R \right]^2 \right] = \widetilde{\alpha}_1(R)^2 \leq \grandO{1} \alpha^{an}_1(R)^2$ so it only remains to estimate $\int_0^1 \sum_{i \geq 1} \widehat{\Pro}^{an}_{f_R} \left[ B_i \right] (\delta_1(i,t) + 2\sqrt{\delta_2(i)} ) dt $. Let $X \sim \mu$. Remember that we have assumed that, for each $L\in [1,+\infty[$ and each $t \in [0,1]$, $\Pro \left[ ||X_t||_2 \geq L \right] \geq c t L^{-\alpha}$ for some $c>0$. This implies that
\begin{equation}\label{e.easypourlevy}
\forall i \geq 1, \, \forall t \in [0,1], \, \delta_1(i,t) \leq \exp(-c't (2^i)^{1-\alpha\beta} )
\end{equation}
for some $c'>0$. It thus remains to prove that, if $\alpha$ is sufficiently small, then we can choose $\beta \in ]1,+\infty[$ so that we both have
\begin{equation}\label{e.fin_1}
\int_0^1  \sum_{i \geq 1} \widehat{\Pro}^{an} \left[ B_i \right] \exp(-c' t (2^i)^{1-\alpha\beta} ) dt \leq \grandO{1} \alpha_1^{an}(R)
\end{equation}
and
\begin{equation}\label{e.fin_2}
\sum_{i \geq 1} \widehat{\Pro}^{an} \left[ B_i \right] \sqrt{\delta_2(i)} \leq \grandO{1} \alpha_1^{an}(R) \, .
\end{equation}
The quantity $\widehat{\Pro}^{an} \left[ B_i \right]$ can be estimated by using Theorem~\ref{t.Spec_sample_f_R} and the quantity
\[
\widehat{\Pro}^{an} \left[ B_i \right] \delta_2(i) = \Pro \left[ | \mathcal{S}_{f_R}^{an}| \in [2^i,2^{i+1}-1], \, \mathcal{S}_{f_R}^{an} \nsubseteq [-2^{\beta i},2^{\beta i}]^2 \right]
\]
can be estimated by using Corollary~\ref{c.f_R}.
\medskip

In Section~4 of~\cite{GV}, we have proved with Christophe Garban that if $\alpha$ is sufficiently small then there exists $\beta \in ]1,+\infty[$ such that the analogues of~\eqref{e.fin_1} and~\eqref{e.fin_2} hold for Bernoulli percolation. In the said article, we have used the analogues of Theorem~\ref{t.Spec_sample_f_R} and Corollary~\ref{c.f_R} and the following properties of the probabilities of arm events: i) they decay polynomially fast, ii) they satisfy the quasi-multiplicativity property. iii) $\Omega(1) \alpha_1(r,R)^{-1}(r/R)^{2-\Omega(1)} \leq \alpha_4(r,R) \leq \grandO{1} (r/R) \alpha_1(r,R)$. By the results of Subsection~\ref{ss.arm}, the probabilities of arm events in Voronoi percolation satisfy all these properties, except maybe $\alpha_4(r,R) \leq \grandO{1} (r/R) \alpha_1(r,R)$. However, Corollary~\ref{c.4} implies that $\alpha_4^{an}(r,R) \leq \grandO{1} (r/R)^{1-\varepsilon} \alpha_1^{an}(r,R)$ for any $\varepsilon>0$. Actually, this slightly weaker property would have been enough in~\cite{GV} (and would not have required any change in the proof). We refer to Section~4 of~\cite{GV} for more details (let us note that the reader does not have to read the proofs of Lemmas~4.1 and 4.4 of \cite{GV} since they are the analogues of Lemma~\ref{l.cov_moving} and of~\eqref{e.easypourlevy} respectively; moreover, the reader can stop just before the paragraph ``The constant $\alpha_0 = 217/816$'').
\end{proof}

\section{Proofs of the spectral estimates}\label{s.spectral}

\subsection{Preliminary results and pivotal sets}\label{ss.preli}

In this subsection, we state some preliminary results (these results illustrate the importance of Theorem~\ref{t.quenched_arm} in the study of the annealed spectral sample). We first state a result from~\cite{garban2010fourier}:
\begin{lem}[Consequence of Lemma~2.2 of~\cite{garban2010fourier}]\label{l.sample_leq_piv}
Let $E$ be a countable set and let $h \, : \, \Omega_E=\{-1,1\}^E \rightarrow \{0,1\}$ be a function that depends on finitely many coordinates. Then, for any $G \subseteq_f E$ we have
\[
\widehat{\Q}_h \left[ S \cap G \neq \emptyset \right] \leq 4 \Prob_{1/2}^E \left[ \Piv_G^E(h) \right] \text{ and } \widehat{\Q}_h \left[ \emptyset \neq S \subseteq G \right] \leq 4 \Prob_{1/2}^E \left[ \Piv^E_G(h)\right]^2 \, ,
\]
where
\[
\Piv_G^E(h) = \{ \omega_E \in \{-1,1\}^E  \, : \, \exists \omega_E' \in \{-1,1\}^E , \, (\omega_E')_{|G^c}=(\omega_E)_{|G^c} \text{ and } h(\omega'_E) \neq h(\omega_E) \} \, .
\]
\end{lem}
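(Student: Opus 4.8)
The plan is to reduce the claim to the original result of Garban, Pete and Schramm (Lemma~2.2 of~\cite{garban2010fourier}) by conditioning on the coordinates outside of the set $G$. First I would fix $\omega_E$ on $E \setminus G$ and consider the restriction of $h$ obtained by freezing those bits; call it $h_{|G}$, a function of the finitely many coordinates in $G$. The key observation is that the Fourier coefficients of $h$ indexed by subsets $S$ with $S \cap G \neq \emptyset$ (respectively $\emptyset \neq S \subseteq G$) can be organized according to the part of $S$ lying in $G$: writing $S = S_G \sqcup S_{G^c}$ with $S_G \subseteq G$ and $S_{G^c} \subseteq G^c$, one has $\widehat{h}(S) = \E^{G^c}_{1/2}\big[\widehat{h_{|G}}(S_G)\,\chi^{G^c}_{S_{G^c}}\big]$, where $\widehat{h_{|G}}(S_G)$ is itself a random variable depending on the frozen bits. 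Summing $\widehat{h}(S)^2$ over all $S$ with a fixed $S_G$ and using orthonormality of the $\chi^{G^c}_{S_{G^c}}$ (Parseval in the $G^c$-variables) gives $\sum_{S_{G^c} \subseteq_f G^c} \widehat{h}(S_G \sqcup S_{G^c})^2 = \E^{G^c}_{1/2}\big[\widehat{h_{|G}}(S_G)^2\big]$. Hence $\widehat{\Q}_h[S \cap G \neq \emptyset] = \E^{G^c}_{1/2}\big[\sum_{\emptyset \neq S_G \subseteq G}\widehat{h_{|G}}(S_G)^2\big] = \E^{G^c}_{1/2}\big[\widehat{\Q}_{h_{|G}}[S \neq \emptyset]\big]$, and similarly $\widehat{\Q}_h[\emptyset \neq S \subseteq G] = \E^{G^c}_{1/2}\big[\widehat{\Q}_{h_{|G}}[S \neq \emptyset]\big]$ as well — wait, that is the same quantity, so in fact both events on the left-hand side of the Lemma collapse to the same expression; I should double-check which of the two gets the square.

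Granting that bookkeeping, the next step is to invoke Lemma~2.2 of~\cite{garban2010fourier} applied to the function $h_{|G}$ of the variables in $G$: it states $\widehat{\Q}_{h_{|G}}[S \neq \emptyset] \leq 4\,\Prob_{1/2}^G[h_{|G} \text{ is non-constant}]$ and $\widehat{\Q}_{h_{|G}}[S \neq \emptyset] \leq 4\big(\text{something}\big)^2$; more precisely the two inequalities in that lemma control $\widehat{\Q}$ of non-emptiness in terms of the probability of the pivotal/influence-type event. Then observe that the frozen configuration $\omega_{E}$ on $G^c$ together with the event ``$h_{|G}$ is non-constant'' is exactly the event $\Piv^E_G(h)$: indeed $h_{|G}$ is non-constant in the $G$-bits iff there exist two completions agreeing off $G$ that give different values of $h$, which is the definition of $\Piv^E_G(h)$. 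Taking $\E^{G^c}_{1/2}$ of the pointwise bounds and using $\Prob^E_{1/2} = \Prob^{G^c}_{1/2} \otimes \Prob^G_{1/2}$ then yields $\widehat{\Q}_h[S \cap G \neq \emptyset] \leq 4\,\Prob^E_{1/2}[\Piv^E_G(h)]$; for the squared bound one uses Jensen's inequality to pull the square out of $\E^{G^c}_{1/2}$ (or rather: the pointwise bound is already by the square of a conditional probability, and one bounds that conditional probability crudely by $1$ to linearize, or keeps it and applies Cauchy--Schwarz — here the cleaner route is that $\widehat{\Q}_{h_{|G}}[\emptyset \neq S \subseteq G]$ for a function of only the $G$-variables is literally $\widehat{\Q}_{h_{|G}}[S \neq \emptyset]$, so the factor $\Prob[\Piv]^2$ on the right comes from squaring the whole thing via Jensen after noting the inner quantity is $4\,p(\omega_{G^c})^2$ with $p(\omega_{G^c}) = \Prob^G_{1/2}[h_{|G} \text{ non-constant} \mid \omega_{G^c}]$, and $\E_{G^c}[4 p^2] \geq 4 (\E_{G^c}[p])^2$ would go the wrong way — so one actually wants the bound with the square \emph{inside}, and the stated inequality must be $\widehat{\Q}_h[\emptyset \neq S \subseteq G] \leq 4\,\E^{G^c}_{1/2}[p(\omega_{G^c})^2]$, which one then relaxes).

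The main obstacle I anticipate is precisely this last point: getting the $\Prob[\Piv]^2$ on the right-hand side for the second inequality in the correct direction. Since conditioning on $G^c$ and then squaring the conditional probability $p(\omega_{G^c})$ gives $\E_{G^c}[p^2]$, and Jensen only gives $\E_{G^c}[p^2] \geq (\E_{G^c}[p])^2 = \Prob[\Piv]^2$, the naive argument produces an inequality pointing the wrong way. The resolution must be that in~\cite{garban2010fourier} Lemma~2.2 already outputs the quantity in the ``quenched on $G^c$'' form with the square inside, i.e. the second displayed inequality in the present Lemma should be read as $\widehat{\Q}_h[\emptyset \neq S \subseteq G] \leq 4\,\E^{G^c}_{1/2}\big[\Prob^G_{1/2}[h_{|G} \text{ non-constant}]^2\big]$ and then the authors bound one factor of $\Prob^G[\cdots]$ by its supremum over $\omega_{G^c}$ or simply state the weaker $\leq 4\Prob^E[\Piv^E_G(h)]^2$ only when $h_{|G}$'s non-constancy probability does not depend on $\omega_{G^c}$ — but in the general statement as written here it is $\Prob^E_{1/2}[\Piv^E_G(h)]^2$, so I would need to be careful and likely the honest statement uses $\E^{G^c}$ of the square; I would flag this and verify against the cited lemma rather than push a potentially wrong inequality. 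Modulo resolving that, the whole proof is a one-line conditioning argument plus Parseval plus the cited lemma.
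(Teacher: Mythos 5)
Your reduction by freezing the coordinates of $G^c$, together with the Parseval identity $\sum_{S_{G^c}\subseteq_f G^c}\widehat{h}(S_G\sqcup S_{G^c})^2=\E^{G^c}\big[\widehat{h_{|G}}(S_G)^2\big]$ (where $\E^{G^c}$ averages over the frozen bits), is correct and does yield the first inequality: summing over $\emptyset\neq S_G\subseteq G$ gives $\widehat{\Q}_h[S\cap G\neq\emptyset]=\E^{G^c}\big[\Var_{\omega_G}(h_{|G})\big]$, and since $h_{|G}$ is $\{0,1\}$-valued its variance in the $G$-bits is at most $1/4$ and vanishes unless $h_{|G}$ is non-constant, i.e.\ unless the $\omega_{G^c}$-measurable event $\Piv_G^E(h)$ holds; hence the bound $\tfrac14\Prob_{1/2}^E[\Piv_G^E(h)]\leq 4\Prob_{1/2}^E[\Piv_G^E(h)]$. (The paper itself gives no proof of this lemma, so there is nothing to compare against beyond the cited Lemma~2.2 of~\cite{garban2010fourier}.)

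The genuine gap is in the second inequality, and it is not the one you flagged. The event $\{\emptyset\neq S\subseteq G\}$ forces $S_{G^c}=\emptyset$, so $\widehat{\Q}_h[\emptyset\neq S\subseteq G]=\sum_{\emptyset\neq S_G\subseteq G}\widehat{h}(S_G)^2=\sum_{\emptyset\neq S_G\subseteq G}\big(\E^{G^c}[\widehat{h_{|G}}(S_G)]\big)^2$: the \emph{square of the expectation}, not the expectation of the square. The two left-hand sides of the lemma therefore do not ``collapse to the same expression'', and this discrepancy is precisely where the extra factor $\Prob[\Piv]$ comes from. Since $\widehat{h_{|G}}(S_G)$ vanishes for $S_G\neq\emptyset$ outside the $\omega_{G^c}$-measurable event $\Piv_G^E(h)$, Cauchy--Schwarz gives $\big(\E^{G^c}[\widehat{h_{|G}}(S_G)]\big)^2\leq\Prob_{1/2}^E[\Piv_G^E(h)]\,\E^{G^c}\big[\widehat{h_{|G}}(S_G)^2\big]$, and summing over $\emptyset\neq S_G$ and reusing the first bound yields $\leq\tfrac14\Prob_{1/2}^E[\Piv_G^E(h)]^2$. (Equivalently, the sum equals $\Var\big(\E[h\mid\mathcal{F}_G]\big)$, and writing $h(\omega_G,\omega_{G^c})-h(\omega_G^0,\omega_{G^c})$, which is bounded by $\un_{\Piv_G^E(h)}(\omega_{G^c})$, gives the same conclusion.) Your proposed resolution---replacing the right-hand side by $4\,\E^{G^c}[p(\omega_{G^c})^2]$---does not work: non-constancy of $h_{|G}$ is a deterministic property of the frozen configuration, so $p(\omega_{G^c})$ is the indicator $\un_{\Piv_G^E(h)}$ and $\E^{G^c}[p^2]=\Prob_{1/2}^E[\Piv_G^E(h)]$ to the \emph{first} power, which is in general much larger than its square. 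The statement as written, with the deterministic quantity $\Prob_{1/2}^E[\Piv_G^E(h)]^2$, is correct and needs no weakening.
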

In order to state a consequence of Lemma~\ref{l.sample_leq_piv} for the annealed spectral sample, we need the two following definitions of pivotal events (that come from~\cite{V1}):
\begin{defi}\label{d.piv}
Let $A$ be an event measurable with respect to the colored configuration $\omega$. Also, let $D$ be a bounded Borel subset of the plane.
\bi 
\item Let $\overline{\omega} \in \Omega$ and let $\overline{\eta} \in \Omega'$ be the underlying (non-colored) point configuration. The subset $D$ is said \textbf{quenched-pivotal} for $\overline{\omega}$ and $A$ if there exists $\overline{\omega}' \in \lbrace -1,1 \rbrace^{\overline{\eta}}$ such that $\overline{\omega}$ and $\overline{\omega}'$ coincide on $\overline{\eta} \cap D^c$ and $\un_A(\overline{\omega}') \neq \un_A(\overline{\omega})$. We write $\Piv^q_D(A)$ for the event that $D$ is quenched-pivotal for $A$. Moreover, if we work under the probability measure $\Prob_{1/2}^\eta$ and if $x \in \eta$, we let $\Piv_x^q(A)$ be the event that changing the color of $x$ modifies $\un_A$.
\item We define the event $\Piv_D(A)$ that $D$ is \textbf{annealed-pivotal} for $A$ as follows:
\[
\Piv_D(A) := \{ \Pro \left[ A \, | \, \omega \setminus D \right] \in ]0,1[ \} \, .
\]
\ei
\end{defi}
Note that we have $\Pro \left[ \Piv^q_D(A) \setminus \Piv_D(A) \right]=0$ for any $A$ and $D$ as above. If $h$ is a measurable function from $\Omega$ to $\{ 0,1\}$, we write $\Piv_D(h)=\Piv_D(h^{-1}(1))$ and $\Piv_D^q(h)=\Piv_D^q(h^{-1}(1))$.\\ 

Let $h$ be a measurable function from $\Omega$ to $\{ 0,1\}$ such that a.s. $h^\eta$ depends on finitely many points of $\eta$. Remember that the annealed non-normalized spectral measure is $\widehat{\Q}^{an}_h \left[ \cdot \right] = \E \left[ \widehat{\Q}_{h^\eta} \left[ \cdot \right] \right]$. Thus, Lemma~\ref{l.sample_leq_piv} implies that, for every $D$ bounded Borel subset of the plane,
\[
\widehat{\Q}_h^{an} \left[ S \cap D \neq \emptyset \right] \leq 4\Pro_{1/2} \left[ \Piv_D^q(h) \right]\leq 4 \Pro_{1/2}\left[ \Piv_D(h) \right]
\]
and
\[
\widehat{\Q}_h^{an} \left[ \emptyset \neq S \subseteq D \right] \leq 4 \E \left[ \Prob_{1/2}^\eta \left[ \Piv_D^q(h) \right]^2 \right] \leq 4 \E \left[ \Prob_{1/2}^\eta \left[ \Piv_D(h) \right]^2 \right] \, .
\]
In the case where $h=g_n$ (which is the crossing event of $[0,n]^2$), we have proved (see respectively\footnote{The results of Lemma~D.13 of~\cite{V1} are stated in the case $h$ is the $1$-arm event $f_R$ but the proof of the analogous results in the case $h=g_n$ is exactly the same.} Lemmas~4.5 and~see Lemma D.13~\cite{V1}) that, if $B$ is a $1 \times 1$ box included in $[0,n]^2$ and at distance at least $n/3$ from the sides of this square, then
\[
\Pro_{1/2} \left[ \Piv_B^q(g_n) \right] \asymp \alpha_4^{an}(n)
\]
and
\[
\E \left[ \Prob_{1/2}^\eta \left[ \Piv_B(g_n) \right]^2 \right] \leq \grandO{1} \widetilde{\alpha}_4(n)^2 \, .
\]
Theorem~\ref{t.quenched_arm} enables to compare the quantities $\alpha_4^{an}(n)$ and  $\widetilde{\alpha}_4(n)^2$ that appear naturally in the study of the annealed spectral sample as one can see in the estimates above. This will be crucial for us, for instance in the $1^{\text{st}}$ step of the proof written in Subsection~\ref{ss.proofs_spec}.\\

Before writing the proofs of the main results on the annealed spectral sample (i.e. Theorems~\ref{t.Spec_sample_g_n},~\ref{t.Spec_sample_f_R} and~\ref{t.Spec_sample_f_R_bis}), let us state another lemma that links the spectral sample to the pivotal sets. To this purpose, we need the following definition:
\begin{defi}
Let $E$ be a countable set and $h \, : \, \Omega_E \rightarrow \{0,1\}$ a function that depends on finitely many coordinates. If $n\in \N^*$ and $J_1,\cdots,J_n$ are mutually disjoint finite subsets of $E$, we say that $J_1,\cdots,J_n$ are jointly pivotal for $h$ and some $\omega_E \in \Omega_E$ if for every $j_0 \in \{1,\cdots,n \}$ there exists $\omega'_E \in \Omega_E$ such that $\omega_E$ and $\omega_E'$ coincide outside of $\cup_{j=1}^n J_j$ and $\omega_E' \in \Piv_{J_{j_0}}^E(h)$. We write $JP_{J_1,\cdots,J_n}^E(h)$ for the event that $J_1,\cdots,J_n$ are jointly pivotal.
\end{defi}
\begin{lem}[Lemma~2.2 of~\cite{garban2010fourier} for $n=1$, Lemma~5.7 of~\cite{GV} for the general case]\label{l.jp}
Let $E$ be a countable set and $h \, : \, \Omega_E \rightarrow \{0,1\}$ a function that depends on finitely many coordinates. Also, let $n\in \N^*$ and let $J_1,\cdots,J_n$ be mutually disjoint finite subsets of $E$. If $W  \subseteq E$ satisfies $W \cap J_i=\emptyset$ for every $i \in \{1,\cdots,n\}$, then
\[
\widehat{\Q}_h \left[ \forall i \in \{1,\cdots,n\}, \, S \cap J_i \neq \emptyset = S \cap W \right] \leq 4^n \Ex_{1/2}^E \left[ \Prob_{1/2}^E \left[ JP_{J_1,\cdots,J_n}^E(h) \cond (\omega_E)_{|W^c} \right]^2 \right] \, .
\]
\end{lem}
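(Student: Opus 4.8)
The plan is to reduce the general statement to the $n=1$ case (Lemma~2.2 of~\cite{garban2010fourier}) by an inductive / iterated-conditioning argument, following the structure of Lemma~5.7 of~\cite{GV}. First I would recall the precise content of the $n=1$ case: for a function $h \, : \, \Omega_E \to \{0,1\}$ and disjoint finite sets $J,W \subseteq E$ with $J \cap W = \emptyset$, one has $\widehat{\Q}_h[S \cap J \neq \emptyset = S \cap W] \leq 4 \, \Ex_{1/2}^E[\Prob_{1/2}^E[\Piv_J^E(h) \mid (\omega_E)_{|W^c}]^2]$. The idea is that the restricted function $h$ conditioned on the coordinates outside $J \cup W$ can be viewed (after freezing everything outside) as a function on $\Omega_{J \cup W}$, whose spectral mass is controlled by squared pivotality probabilities in the classical way; the extra coordinates in $W$ only help because we are asking $S$ to avoid $W$.

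The key steps, in order: (i) Fix $\omega_E$ and condition on the coordinates in $(J_1 \cup \cdots \cup J_n \cup W)^c$; by the tensorization/restriction property of the Fourier–spectral decomposition (the spectral measure of $h$ restricted to a ``fibre'' where some coordinates are frozen), reduce to a function on $\Omega_{J_1 \cup \cdots \cup J_n \cup W}$. (ii) Run the induction on $n$: peel off $J_n$ by applying the $n=1$-type bound with ``$J$'' $= J_n$ and ``$W$'' $= W$, but now the ambient ``$h$'' is the conditional restriction, and the conditioning on $(\omega_E)_{|W^c}$ must be carried along. This produces a factor $4$ and a squared conditional-pivotality term, and leaves a spectral-avoidance event for $J_1,\dots,J_{n-1},W$ to which the inductive hypothesis applies. (iii) At each stage one must be careful that ``pivotal for $J_i$ inside the frozen fibre'' matches the definition of jointly pivotal $JP^E_{J_1,\dots,J_n}(h)$ — i.e. that the event obtained by iterating single-coordinate-block pivotality, with all other $J_j$-blocks also allowed to vary, is exactly $JP^E_{J_1,\dots,J_n}(h)$; this is where the definition of joint pivotality (allowing $\omega_E'$ to differ on all of $\cup_j J_j$) is used. (iv) Finally reassemble: the product of the $n$ factors of $4$ gives $4^n$, and repeated applications of Cauchy–Schwarz / Jensen when passing from the fibre-wise squared terms back to the outer expectation $\Ex_{1/2}^E[\,\cdot\,]$ consolidate everything into the single term $\Ex_{1/2}^E[\Prob_{1/2}^E[JP_{J_1,\dots,J_n}^E(h) \mid (\omega_E)_{|W^c}]^2]$.

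The main obstacle I expect is step (iii), namely bookkeeping the conditioning correctly through the induction: when I peel off $J_n$, the remaining ``function'' depends on a random environment (the frozen coordinates outside $J_1 \cup \cdots \cup J_n \cup W$ plus, for the inductive step, the coordinates in $J_n$ itself), and I need the event ``$S$ hits each of $J_1,\dots,J_{n-1}$ and misses $W$ in this conditional spectral sample'' to have un-normalized mass bounded by $4^{n-1}$ times the appropriate conditional joint-pivotality probability squared — and crucially, after taking the outer expectation, to recover $JP^E_{J_1,\dots,J_n}$ rather than a smaller (or larger) event. The resolution is to note that being jointly pivotal for $J_1,\dots,J_{n-1}$ \emph{within the fibre where $J_n$ is also free to move} is precisely the condition needed, because $JP^E_{J_1,\dots,J_n}(h)$ already allows $\omega_E'$ to differ on all blocks simultaneously; so the inductive hypothesis is applied not to $h$ restricted to a fibre with $J_n$ frozen, but to $h$ on the fibre with $J_n$ un-frozen, viewing $J_n$-coordinates as part of the ``ambient'' randomness whose average is taken at the end. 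Modulo this care, the rest is the standard $\widehat{\Q}_h$-mass-versus-squared-pivotality estimate applied $n$ times, and I would only sketch, rather than carry out, the elementary Cauchy–Schwarz manipulations.
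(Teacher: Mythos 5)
The paper does not actually prove Lemma~\ref{l.jp} — it imports it from \cite{garban2010fourier} ($n=1$) and \cite{GV} (general $n$) — and your plan reconstructs essentially the argument of those references, so the approach is right. One concrete caveat: a \emph{literal} induction that "applies the $n=1$ bound to the conditional restriction" does not typecheck, because after peeling off one block the object left over (the projection of $h$ onto spectral sets meeting $J_n$ and avoiding $W$) is no longer $\{0,1\}$-valued, which is what the $n=1$ lemma assumes. The clean way to implement your steps (i)--(iv) in one stroke is the Parseval identity
\[
\widehat{\Q}_h \left[ \forall i, \, S \cap J_i \neq \emptyset = S \cap W \right] \; = \; \Ex_{1/2}^E \left[ \Big( \prod_{i=1}^n (I - E_{J_i})\, E_W\, h \Big)^2 \right] ,
\]
where $E_J := \Ex_{1/2}^E [\, \cdot \mid (\omega_E)_{|J^c}]$ (these projections commute, acting on the Fourier side as multiplication by indicators of $S\cap J=\emptyset$), together with the pointwise bound $\big| \prod_{i} (I - E_{J_i}) h \big| \leq 2^n \, \un_{JP^E_{J_1,\cdots,J_n}(h)}$. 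The latter holds because the inclusion--exclusion expansion has $2^n$ terms each bounded by $1$, and it vanishes off $JP$: if some $J_{j_0}$ cannot be made pivotal by \emph{any} reassignment of $\cup_j J_j$, then $h$ does not depend on the $J_{j_0}$-coordinates anywhere on the fibre over $(\cup_j J_j)^c$, so $(I - E_{J_{j_0}})h$ vanishes on that fibre and the remaining operators preserve it. This is exactly the observation you make in your step (iii), so your plan is sound once recast in this form; applying $E_W$, Jensen, and a single squaring then gives the factor $4^n$ — no iterated Cauchy--Schwarz is needed.
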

\begin{rem}
If $n=0$ we have the following (see~(2.9) of~\cite{garban2010fourier}):
\[
\widehat{\Q}_h \left[ S \cap W = \emptyset \right] = \Ex_{1/2}^E \left[ \Ex_{1/2}^E \left[ h(\omega_E) \cond (\omega_E)_{|W^c} \right]^2 \right] \, .
\]
\end{rem}

\subsection{Proof of the main estimates on the annealed spectral sample}\label{ss.proofs_spec}

In this subsection, we write the proofs of Theorems~\ref{t.Spec_sample_g_n},~\ref{t.Spec_sample_f_R} and~\ref{t.Spec_sample_f_R_bis}. The proof of these three theorems follows the general method in three steps from~\cite{garban2010fourier} (which is also used in~\cite{GV}). \textbf{In this section, we assume that the reader has read Sections~4,~5,~6 and~7 of~\cite{garban2010fourier} and Section~5 of~\cite{GV}}.\\

We start with a $0^{\text{th}}$ step in order to deal with the spectral mass of $g_n$ and $f_R$ which is outside of $[0,n]^2$ and $[-R,R]^2$ respectively.

\paragraph{Step 0.} In this step, we prove the following estimate:
\begin{lem}\label{l.0}
We have
\[
\Pro \left[ \mathcal{S}^{an}_{g_n} \setminus [0,n]^2 \neq \emptyset \right] \leq \grandO{1} \frac{1}{n} \text{  \textup{and}  }\Pro \left[ \mathcal{S}^{an}_{f_R} \setminus [-R,R]^2 \neq \emptyset \right] \leq \grandO{1}\frac{1}{R}\, .
\]
\end{lem}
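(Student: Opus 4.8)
The idea is that the spectral sample of $h$ can only charge a point $x \in \eta$ which is pivotal for $h$, in the sense that $\widehat{\Q}_h^{an}[S \cap D \neq \emptyset] \leq 4\Pro_{1/2}[\Piv_D^q(h)]$ by Lemma~\ref{l.sample_leq_piv}. So to bound $\Pro[\mathcal{S}^{an}_{g_n} \setminus [0,n]^2 \neq \emptyset]$ it suffices to show that the expected number of points of $\eta$ lying outside $[0,n]^2$ that are pivotal for $g_n$ is $\grandO{1}/n$ after renormalising by $\E_{1/2}[g_n] = 1/2$, and similarly for $f_R$ (renormalising by $\alpha_1^{an}(R)$, which by~\eqref{e.poly} decays only polynomially). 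The first step is therefore to cover the complement of $[0,n]^2$ (resp. of $[-R,R]^2$) by a union of dyadic annuli $A(0; 2^k, 2^{k+1})$ for $k \geq \log_2 n$ (resp. $k \geq \log_2 R$), together with their analogues centered appropriately, and to estimate, for a fixed such annulus, the $\widehat{\Q}_h^{an}$-mass of the event that $S$ meets that annulus.

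Second, I would fix an annulus far from the square, say at distance of order $2^k$ with $2^k \geq n$, and bound $\widehat{\Q}^{an}_{g_n}[S \cap A \neq \emptyset]$ where $A$ is a $1\times 1$ box at distance $\approx 2^k$ from $[0,n]^2$. If a box $B$ outside $[0,n]^2$ is quenched-pivotal for $g_n$, then in particular the colored configuration must realise some arm event connecting $B$ to the square $[0,n]^2$: indeed flipping the cells meeting $B$ changes whether $[0,n]^2$ is crossed, which forces (a suitable Voronoi cell reaching into $[0,n]^2$ or) a crossing-type/arm configuration from a neighbourhood of $B$ to $\partial[0,n]^2$ and back. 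More precisely, the event $\Piv^q_B(g_n)$ for $B$ at distance $d$ from the square forces a monochromatic arm (in fact, by duality, essentially a $1$-arm or a pair of arms) from $B$ to distance $\asymp d$, which has annealed probability $\leq \grandO{1}\alpha_1^{an}(d) \leq \grandO{1} d^{-\Omega(1)}$ by~\eqref{e.poly}; but actually one needs a Voronoi cell of $\eta$ to reach from near $B$ into $[0,n]^2$ (when $B$ is at distance $\gg 1$ this has super-exponentially small probability $\grandO{1}\exp(-\Omega(1)d^2)$ by basic Poisson estimates, exactly as in the proof of Corollary~\ref{c.f_R}), OR there is a genuine long arm. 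Summing the resulting bound over the $\grandO{1}d$ unit boxes in the annulus at distance $d$ and then over dyadic scales $d = 2^k \geq n$ gives a convergent geometric-type sum dominated by its first term, of order $n \cdot \alpha_1^{an}(n) \leq \grandO{1}/n$ after dividing by the renormalisation constant — here one uses that $\alpha_1^{an}$ decays polynomially fast enough. For $f_R$ the same argument with $[-R,R]^2$ in place of $[0,n]^2$ gives $\grandO{1}/R$ (now the renormalisation by $\alpha_1^{an}(R)$ cancels against the arm factor from $\partial B(0,R)$ out to the annulus).

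Third, I would assemble these pieces: write $\{\mathcal{S}^{an}_{g_n} \setminus [0,n]^2 \neq \emptyset\} \subseteq \bigcup_{k} \{\mathcal{S}^{an}_{g_n} \cap (\text{annulus}_k \setminus [0,n]^2) \neq \emptyset\}$, apply a union bound together with the bound $\widehat{\Pro}^{an}_{g_n}[S \cap A_k \neq \emptyset] = \widehat{\Q}^{an}_{g_n}[S \cap A_k \neq \emptyset]/\E_{1/2}[g_n]$ from the second step, and sum the geometric series. I expect the main obstacle to be the deterministic/combinatorial lemma in the second step: making precise that a box $B$ outside the square being quenched-pivotal for the crossing event forces either a long arm event emanating from $B$ (or from $\partial[0,n]^2$ towards $B$) or the rare event that a single Voronoi cell stretches from a neighbourhood of $B$ all the way into $[0,n]^2$, and then controlling the annealed probability of this event uniformly — this is where one must be a little careful about the geometry of Voronoi cells near the boundary of the square and about which arm exponent ($1$-arm suffices, but one could also use a half-plane $2$-arm estimate via Proposition~\ref{p.univ}) governs the decay. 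Everything else is a routine dyadic sum using the polynomial decay~\eqref{e.poly} and the Poisson tail estimate already invoked in the proof of Corollary~\ref{c.f_R}.
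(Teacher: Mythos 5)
Your reduction is the same as the paper's: by Lemma~\ref{l.sample_leq_piv}, $\Pro[\mathcal{S}^{an}_{h}\setminus K\neq\emptyset]$ is bounded by $\grandO{1}\sum_{B}\Pro_{1/2}[\Piv_{B}(h)]/\E_{1/2}[h]$, the sum running over the unit boxes $B$ not contained in $K$. The gap is in how you then estimate this sum of pivotal probabilities, and it is quantitative rather than cosmetic.

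The dominant contribution does not come from boxes far from $[0,n]^2$ (for those, as you say, a single Voronoi cell must stretch a macroscopic distance and the probability is super-exponentially small); it comes from the $\asymp n$ unit boxes at distance $\grandO{1}$ from $\partial[0,n]^2$. For such a box $B$ the event $\Piv_B(g_n)$ forces, not a $1$-arm or a half-plane $2$-arm event, but a \emph{$3$-arm event in the half-plane} from $B$ to the relevant scale (two primal arms to the left and right sides and a dual arm to the opposite side, all living on one side of the boundary line through $B$). By Proposition~\ref{p.univ} this has exponent $2$, i.e.\ probability $\asymp d^{-2}$ at scale $d$, and summing over the $\asymp n$ boundary boxes (with the corner contributions handled by the quarter-plane $2$-arm event) gives $\asymp n\cdot n^{-2}=1/n$ — this is precisely the estimate $\sum_B\Pro_{1/2}[\Piv_B(g_n)]\leq\grandO{1}n^{-1}$ from Section~4.3 of~\cite{V1} that the paper invokes. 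Your proposed bounds do not close: the inequality you assert, $n\cdot\alpha_1^{an}(n)\leq\grandO{1}/n$, is false, since $\alpha_1^{an}(n)\geq\Omega(1)\,\alpha_2^{an,+}(n)\asymp 1/n$ (Proposition~\ref{p.univ}), so $n\,\alpha_1^{an}(n)$ is bounded \emph{below} by a constant and in fact tends to infinity. Even the half-plane $2$-arm exponent you mention as an alternative only yields $n\cdot n^{-1}=\grandO{1}$, which is useless here. The same issue arises for $f_R$: the factor $\alpha_1^{an}(R)$ that cancels against the normalisation comes from the $1$-arm event from the origin to $\partial[-R,R]^2$, and the extra $1/R$ must still be produced by the half-plane $3$-arm computation along the boundary. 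So the missing ingredient is the identification of the half-plane $3$-arm event as the one governing boundary pivotality, together with its exponent $2$; without it the dyadic summation does not produce the $1/n$ (respectively $1/R$).
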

\begin{proof}
Let $(B_k)_{k \in \N}$ be an enumeration of the $1 \times 1$ boxes of the grid $\Z^2$ that are not included in $[0,n]^2$ (respectively $[-R,R]^2$). Then, by the first part of Lemma~\ref{l.sample_leq_piv},
\begin{eqnarray*}
\Pro \left[ \mathcal{S}^{an}_{g_n} \setminus [0,n]^2 \neq \emptyset \right] & \leq &  \sum_{k \in \N} \frac{\E  \left[ \widehat{\Q}_{g_n^\eta} \left[ S \cap B_k \neq \emptyset \right] \right] }{\E_{1/2}\left[ g_n \right]}\\
& \leq & \sum_{k \in \N} 4 \frac{ \Pro_{1/2} \left[ \Piv_{B_k}^q(g_n) \right]}{\Pro_{1/2} \left[ \cross(n,n) \right]}\\
& \leq & \sum_{k \in \N} 8 \Pro_{1/2} \left[ \Piv_{B_k}(g_n) \right] \, ,
\end{eqnarray*}
and similarly
\[
\Pro \left[ \mathcal{S}^{an}_{f_R} \setminus [-R,R]^2 \neq \emptyset \right] \leq \sum_{k \in \N} \frac{\Pro_{1/2} \left[ \Piv_{B_k}(f_R) \right]}{\alpha_1^{an}(R)} \, .
\]
In Section~4 of~\cite{V1}, we have proved estimates for these sums (by using the computation of the $3$-arm event in the half-plane). By following Section~4.3 of~\cite{V1}, one obtains that $\sum_{k \in \N} \Pro_{1/2} \left[ \Piv_{B_k}(g_n) \right] \leq \grandO{1} n^{-1}$ and $\sum_{k \in \N} \Pro_{1/2} \left[ \Piv_{B_k}(f_R) \right] \leq \grandO{1} \alpha_1^{an}(R)R^{-1}$.
\end{proof}

\paragraph{Step 1.} The first step is a combinatorial step where we estimate $\Pro \left[ |\mathcal{S}^{an}_h|=k \right]$ for $k$ sufficiently small (where $h$ is the crossing event $g_n$ or the $1$-arm event $f_R$). For every $S \subseteq \R^2$ and $r \in ]0,+\infty[$, we write $S(r)$ for the set of $r \times r$ boxes of the grid $r\Z^2$ that intersect $S$. We will use the three following estimates to prove Theorems~\ref{t.Spec_sample_g_n},~\ref{t.Spec_sample_f_R} and~\ref{t.Spec_sample_f_R_bis} respectively.
\begin{prop}\label{p.GPScomb1}
There exists $\theta<+\infty$ such that, for every $1 \leq r \leq n <+\infty$ and every $k \in \N^*$,
\[
\Pro \left[ |\mathcal{S}_{g_n}^{an}(r)|=k, \, \mathcal{S}_{g_n}^{an} \subseteq [0,n]^2 \right] \leq 2^{\theta \log^2(k+2)} \left( \frac{n}{r} \alpha_4^{an}(r,n) \right)^2 \, .
\]
\end{prop}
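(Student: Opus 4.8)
The plan is to follow the first combinatorial step of \cite{garban2010fourier} (Section~4), transposed to the annealed setting via the quenched estimates of Subsection~\ref{ss.arm}. Fix the scale $r$ and work with the "$r$-coarsened" spectral sample $\mathcal{S}_{g_n}^{an}(r)$, the collection of $r\times r$ boxes of $r\mathbb{Z}^2$ that $\mathcal{S}_{g_n}^{an}$ meets. For a set $\mathcal{B}$ of such boxes, let $\Lambda(\mathcal{B})=\bigcup_{B\in\mathcal{B}}B$. The basic bound is: summing over all configurations $\mathcal{B}$ of $k$ boxes inside $[0,n]^2$,
\[
\Pro\left[|\mathcal{S}_{g_n}^{an}(r)|=k,\ \mathcal{S}_{g_n}^{an}\subseteq[0,n]^2\right]\ \le\ \frac{1}{\alpha_1\cdot\tfrac12}\sum_{\mathcal{B}:\,|\mathcal{B}|=k}\widehat{\Q}_{g_n}^{an}\left[\mathcal{S}^{an}(r)=\mathcal{B}\right],
\]
(with the normalisation $\E_{1/2}[g_n]=1/2$) and for each fixed $\mathcal{B}$ one controls $\widehat{\Q}_{g_n}^{an}[\mathcal{S}^{an}(r)=\mathcal{B}]$ by $\widehat{\Q}_{g_n}^{an}[\mathcal{S}^{an}\cap B\neq\emptyset\ \forall B\in\mathcal{B},\ \mathcal{S}^{an}\cap W=\emptyset]$ for a suitable "witness" annulus-type set $W$ separating the boxes of $\mathcal{B}$ from each other and from the exterior. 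This last quantity is bounded through Lemma~\ref{l.jp} (the joint-pivotality lemma, $n=|\mathcal{B}|=k$) applied at the quenched level and then averaged: $\widehat{\Q}_{g_n}^{an}[\cdots]\le 4^k\,\E\big[\Prob_{1/2}^\eta[JP^\eta_{\mathcal{B}}(g_n)\mid \omega_{|W^c}]^2\big]$.

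Next I would run the deterministic geometric/combinatorial analysis of \cite{garban2010fourier}: given $k$ boxes, build an annulus structure around them (a hierarchy of disjoint dyadic annuli, either surrounding individual boxes or clusters of boxes) such that the event $JP_{\mathcal{B}}(g_n)$ forces $4$-arm events in each small annulus, a $1$-arm (or $4$-arm, whichever the clustering produces) event in each larger annulus, and $4$-arm events in the "corridors" between the boxes and across $[0,n]^2$. Plugging in the quenched arm estimates — here the key inputs are Theorem~\ref{t.quenched_arm}, which says $\sqrt{\E[\Prob_{1/2}^\eta[\arm_j(r,R)]^2]}\asymp\alpha_j^{an}(r,R)$, and the quasi-multiplicativity Proposition~\ref{p.QMulti} — one gets, for each fixed $\mathcal{B}$, a bound of the form $\big(\tfrac{n}{r}\alpha_4^{an}(r,n)\big)^2$ times a product of "cost factors" $\alpha_4^{an}$ / $\alpha_1^{an}$ over the annulus structure. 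The point is that, because $\alpha_4^{an}(r,R)\le C\alpha_1^{an}(r,R)(r/R)^{1-\varepsilon}$ (Corollary~\ref{c.4}) and $\alpha_1^{an}$ decays polynomially, the geometric factor $\big(\tfrac{n}{r}\alpha_4^{an}(r,n)\big)^2$ is essentially fixed and the whole dependence on $\mathcal{B}$ is absorbed. The only subtlety relative to \cite{garban2010fourier} is that we must run the joint-pivotality bound at the quenched level and use Theorem~\ref{t.quenched_arm} (rather than the annealed box-crossing estimates alone) each time we convert a quenched arm probability into an annealed one; this is exactly the "multiple passages from quenched to annealed" issue flagged in Subsection~\ref{ss.why}, and it costs only a multiplicative $O(1)$ per arm event, hence $O(1)^{O(k)}$ overall, which is harmless against the final $2^{\theta\log^2(k+2)}$.

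Finally I would carry out the entropy/counting estimate: the number of ways to place $k$ boxes and the combinatorial complexity of the associated annulus structures is bounded by $2^{O(k\log k)}$ in the crudest count, but — and this is the heart of the argument, lifted verbatim from Section~4 of \cite{garban2010fourier} — summing the per-configuration bound over all $\mathcal{B}$ and using that a cluster of $j$ boxes at scale comparable to its own diameter contributes a gain that beats the $\binom{\cdot}{j}$ entropy, one upgrades this to $2^{\theta\log^2(k+2)}$. The main obstacle is not any single step but the bookkeeping of the annulus structure combined with the quenched-to-annealed conversions: one has to check that Theorem~\ref{t.quenched_arm} (and its half-plane analogue, valid by the last remark of Subsection~\ref{ss.arm}) applies to every arm event appearing in the decomposition, including the ones in the half-plane near $\partial[0,n]^2$ and the ones "between" two boxes, and that the constants accumulated across the $O(k)$-many annuli stay within $2^{O(k)}$, hence negligible compared to $2^{\theta\log^2(k+2)}$. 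Modulo that verification, the argument is the one of \cite{garban2010fourier}, and I would explicitly cite Sections~4--5 there for the purely combinatorial parts rather than reproduce them.
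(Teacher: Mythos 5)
Your proposal takes essentially the same route as the paper: reduce to annulus structures, bound the compatibility event via the joint-pivotality Lemma~\ref{l.jp} applied at the quenched level, use spatial independence of the Poisson process across disjoint annuli (which the paper implements through the events $\Piv_{B_i}^{A_i}$ of Definition~\ref{d.piv_hat}, precisely because arm and pivotal events are not measurable with respect to the configuration inside an annulus), convert the resulting quenched second moments into $\alpha_4^{an}(\cdot,\cdot)^2$ via Theorem~\ref{t.quenched_arm}, and then cite Section~4 of \cite{garban2010fourier} for the counting; this is exactly Lemma~\ref{l.key_struct} followed by the paper's appeal to Proposition~4.1 of \cite{garban2010fourier}.

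One point in your write-up is stated backwards: the accumulated per-annulus constants, of order $2^{O(k)}$, are \emph{not} ``negligible compared to $2^{\theta\log^2(k+2)}$'' --- for large $k$ one has $2^{ck}\gg 2^{\theta\log^2(k+2)}$, so this comparison cannot be how the constants are disposed of. They are instead absorbed inside the GPS recursion itself: each annulus of the structure carries a factor $h(A_i)^2$ decaying polynomially in its aspect ratio (via Corollary~\ref{c.4} and \eqref{e.poly}), so a bounded multiplicative constant per annulus only perturbs the constants and exponents entering the cluster recursion, leaving the final combinatorial factor of the form $2^{\theta\log^2(k+2)}$ with a possibly larger $\theta$. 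With that correction (and noting that the normalisation for $g_n$ is simply $\E_{1/2}[g_n]=1/2$, not $\alpha_1\cdot\tfrac12$), your argument matches the paper's.
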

\begin{prop}\label{p.GPScomb2}
There exists $\theta<+\infty$ such that, for every $1 \leq r \leq R <+\infty$ and every $k \in \N^*$,
\[
\Pro \left[ |\mathcal{S}_{f_R}^{an}(r)|=k, \, \mathcal{S}_{f_R}^{an} \subseteq [-R,R]^2 \right] \leq  2^{\theta \log^2(k+2)} \alpha_1^{an}(r,R) \, .
\]
\end{prop}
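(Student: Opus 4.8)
The plan is to follow the first combinatorial step of the Garban--Pete--Schramm scheme (Section 4 of \cite{garban2010fourier}), adapted to the annealed setting via the quenched arm estimates from \cite{V1,V2}. Fix $1 \le r \le R$ and $k \in \N^*$. The idea is to sum over all possible ``shapes'' of $\mathcal{S}_{f_R}^{an}(r)$, that is, over all sets $\mathcal{W}$ of $k$ boxes of the grid $r\Z^2$ contained in $[-R,R]^2$, of the probability that $\mathcal{S}_{f_R}^{an}(r) = \mathcal{W}$. Writing $W = W(\mathcal{W})$ for the complement (within the relevant point configuration) of the union of the boxes in $\mathcal{W}$, and using Lemma~\ref{l.jp} at the annealed level (i.e. taking expectation over $\eta$ of the quenched bound), we get
\[
\Pro \left[ \mathcal{S}_{f_R}^{an}(r) = \mathcal{W} \right] \le \frac{4^k}{\alpha_1^{an}(R)} \, \E \left[ \Prob_{1/2}^\eta \left[ JP_{J_1,\dots,J_k}^\eta(f_R) \cond (\omega_\eta)_{|W^c} \right]^2 \right] ,
\]
where $J_1,\dots,J_k$ are the points of $\eta$ lying in the respective boxes of $\mathcal{W}$. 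So the task reduces to bounding this annealed joint-pivotality quantity and then showing that the resulting sum over all $\mathcal{W}$ of cardinality $k$ has the stated form $2^{\theta \log^2(k+2)} \alpha_1^{an}(r,R)$.

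The next step is the geometric heart of the argument: one bounds the joint-pivotality probability by a product of arm-event probabilities in annuli. For each box $Q$ of $\mathcal{W}$, being pivotal forces a $4$-arm event in the annulus around $Q$ up to the distance to the nearest other box (or to $\partial B(0,R)$); additionally one needs a $1$-arm event from the cluster of boxes out to radius $R$, and within each ``connected component at scale $r$'' of $\mathcal{W}$ one needs the appropriate multi-arm structure linking the boxes. This is exactly the combinatorial/geometric estimate of Section 4 of \cite{garban2010fourier}; the output is a bound of the form $\prod (\text{annulus arm factors})$, and then quasi-multiplicativity (Proposition~\ref{p.QMulti}) collapses the telescoping products of $\alpha_4^{an}$ and $\alpha_1^{an}$ factors. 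Summing over the (at most $2^{\grandO{1} \log^2 k}$ many, by the counting argument of \cite{garban2010fourier}) combinatorial types of configurations of $k$ boxes, and over their positions, reproduces $\alpha_1^{an}(r,R)$ with a prefactor subexponential in $k$.

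The main obstacle — and the reason this is not a verbatim transcription of \cite{garban2010fourier} — is the passage from quenched to annealed quantities: the bound from Lemma~\ref{l.jp} involves $\E[\Prob_{1/2}^\eta[\,\cdot\,]^2]$ rather than $\Pro_{1/2}[\,\cdot\,]$, so each arm factor that appears is really a second-moment-type quantity $\widetilde{\alpha}_j(\cdot,\cdot)$ (or the $\widehat{\arm}_j$-variant), and one must invoke Theorem~\ref{t.quenched_arm} and Proposition~\ref{p.hat} to replace these by $\alpha_j^{an}(\cdot,\cdot)$ up to constants, at each scale, without the accumulated constants exploding. Concretely one should first decouple the scales — using the spatial-independence events $\widehat{\arm}_j(r,R)$ of Definition~\ref{d.hat} so that the conditioning on $(\omega_\eta)_{|W^c}$ can be handled and the quenched probabilities over disjoint annuli made (conditionally) independent — and only then apply Theorem~\ref{t.quenched_arm} annulus by annulus; this is the step where the analogue of Proposition~\ref{p.hat} for the relevant coupled configurations (cf. Lemma~\ref{l.fin4}) does the real work. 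I would organize the write-up so that the purely combinatorial counting of box-configurations is quoted wholesale from \cite{garban2010fourier}, and the effort is concentrated on verifying that the arm-factor estimates survive the quenched-to-annealed transfer with only an $\grandO{1}^k$ loss, which is then absorbed into $2^{\theta \log^2(k+2)}$.
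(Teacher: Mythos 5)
Your proposal takes essentially the same route as the paper: the paper's proof reduces everything to the annulus-structure estimate of Lemma~\ref{l.key_struct} (proved via the joint-pivotality bound of Lemma~\ref{l.jp}, spatial independence through the events $\Piv_{B_i}^{A_i}$ and $\widehat{\arm}_1(1,r_{\mathcal{A}})$, and the quenched-to-annealed transfer of Theorem~\ref{t.quenched_arm} applied annulus by annulus), and then quotes the combinatorial counting from Propositions~4.1 and~4.7 of \cite{garban2010fourier} verbatim, exactly as you outline. The only cosmetic inaccuracy is your appeal to the coupled-configuration machinery of Lemma~\ref{l.fin4}, which is not needed for this step (it enters only in the first/second moment estimates of Step~2); here the single-configuration estimates $\E\bigl[\Prob_{1/2}^\eta[\Piv_{B_i}^{A_i}]^2\bigr]\leq \grandO{1}\,h(A_i)^2$ from \cite{V1} together with Theorem~\ref{t.quenched_arm} suffice.
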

\begin{prop}\label{p.GVcomb}
There exist $\varepsilon_1>0$ and $\theta<+\infty$ such that, for every $1 \leq r \leq r_0 \leq R/2 <+\infty$ and every $k \in \N^*$,
\[
\Pro \left[ |\mathcal{S}_{f_R}^{an}(r) \setminus [-r_0,r_0]^2|=k, \, \mathcal{S}_{f_R}^{an} \subseteq [-R,R]^2 \right] \leq 2^{\theta \log^2(k+2)} \alpha_1^{an}(r_0,R) \left( \frac{r_0}{r} \right)^{1-\varepsilon_1} \alpha_4^{an}(r,r_0) \, .
\]
\end{prop}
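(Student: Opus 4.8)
The plan is to follow the combinatorial strategy of Garban--Pete--Schramm, Section 4 of~\cite{garban2010fourier} (and its adaptation in Section 5 of~\cite{GV}), applied to the quenched spectral measure $\widehat{\Q}_{h^\eta}$, and then take the annealed expectation at the very end. The three statements are the three analogues of the GPS ``first moment of the number of small boxes'' bound. I would prove all three simultaneously (or in close parallel), since Proposition~\ref{p.GVcomb} contains the other two as special cases up to the extra localization factor. The starting point is the second bound of Lemma~\ref{l.jp}: for a fixed collection $B_1,\dots,B_k$ of disjoint $r\times r$ boxes and $W$ the complement of their union (inside the relevant domain), $\widehat{\Q}_{h^\eta}[\,\forall i,\ S\cap B_i\neq\emptyset=S\cap W\,]\le 4^k\,\Ex_{1/2}^\eta[\Prob_{1/2}^\eta[JP^{\eta}_{B_1,\dots,B_k}(h)\mid(\omega)_{|W^c}]^2]$. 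Summing over all choices of $k$ boxes gives $\Pro[|\mathcal{S}^{an}_h(r)|=k,\ \mathcal{S}^{an}_h\subseteq \text{domain}]\le \big(\E_{1/2}[h]\big)^{-1}\sum_{B_1,\dots,B_k}4^k\,\E\big[\Prob_{1/2}^\eta[JP^\eta_{B_1,\dots,B_k}(h)\mid(\omega)_{|W^c}]^2\big]$, and everything reduces to estimating the annealed second moment of the conditional joint-pivotality probability and then performing the entropy/counting bound over box configurations.

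\textbf{The key annealed-to-quenched input.} The crucial difference from the Bernoulli case is that $\Prob_{1/2}^\eta[JP^\eta_{B_1,\dots,B_k}(h)\mid(\omega)_{|W^c}]$ is a random quantity depending on $\eta$, and we must control its $L^2(\Pro)$ norm rather than just its mean. Here Theorem~\ref{t.quenched_arm} and Proposition~\ref{p.hat} do the work: joint pivotality forces, around each box $B_i$, a $4$-arm configuration from scale $r$ up to the distance to the nearest other box or the boundary, plus connecting arm events at larger scales realizing the ``color-switching skeleton''; by the quenched arm estimates, the quenched probability of each such event is $\grandO{1}$ times its annealed probability \emph{in $L^2$}, and by the FKG-type / quasi-multiplicativity machinery (Proposition~\ref{p.QMulti}, and crucially a coupled/quenched version of quasi-multiplicativity for these conditioned arm events --- the paper flags Subsection~\ref{ss.findelathese} and Lemma~\ref{l.fin4} as exactly this tool) the product of local contributions can be multiplied up. Concretely, conditioning on $(\omega)_{|W^c}$ is what lets us use the $\widehat{\arm}_j$ events, which are measurable with respect to the configuration inside the annuli and for which Proposition~\ref{p.hat} gives the $L^2$-comparison to $\alpha_j^{an}$. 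The net effect is that the annealed second moment factorizes, up to $\grandO{1}^k$ constants, into a product over the boxes of $\big(\alpha_4^{an}(r,\rho_i)\big)^2$ times arm factors for the coarser structure, exactly as in the discrete case but with $\alpha_j^{an}$ replacing $\alpha_j$.

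\textbf{The counting / entropy step.} Given the per-configuration bound, one sums over all ways to place $k$ disjoint $r$-boxes. This is the purely combinatorial heart of~\cite{garban2010fourier}: group the boxes into a hierarchical (dyadic annular) structure around a reference point, bound the number of admissible structures, and use quasi-multiplicativity (Proposition~\ref{p.QMulti}) together with the polynomial decay~\eqref{e.poly} and the $4$-arm estimates (Proposition~\ref{p.4}, Corollary~\ref{c.4}) to see that the sum is dominated, up to $2^{\theta\log^2(k+2)}$, by the ``all boxes clustered'' term, which evaluates to $\big(\tfrac{n}{r}\alpha_4^{an}(r,n)\big)^2$ for $g_n$ and to $\alpha_1^{an}(r,R)$ for $f_R$. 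For Proposition~\ref{p.GVcomb} one additionally separates the reference scale $r_0$: the boxes all lie outside $[-r_0,r_0]^2$, so the structure splits into a ``one-arm to distance $R$'' piece from scale $r_0$, a connection piece of size $(r_0/r)^{1-\varepsilon_1}$ coming from the discrepancy between the $4$-arm and the $1$-arm exponent (this is where Corollary~\ref{c.4} and the second bound of Proposition~\ref{p.4} are used, producing $\varepsilon_1$), and a $4$-arm piece $\alpha_4^{an}(r,r_0)$, reproducing the claimed bound.

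\textbf{Main obstacle.} The delicate point is not the counting (which is essentially quoted from~\cite{garban2010fourier,GV}) but obtaining the clean $L^2(\Pro)$ factorization of $\Prob_{1/2}^\eta[JP^\eta_{B_1,\dots,B_k}(h)\mid(\omega)_{|W^c}]$ into a product of annealed arm probabilities with only $\grandO{1}^k$ loss. Spatial dependencies in the Voronoi tiling mean the quenched probabilities in disjoint annuli are not independent, so one cannot just multiply quenched probabilities; instead one must run the whole argument through the $\widehat{\arm}_j$-type events and the coupled quasi-multiplicativity of Subsection~\ref{ss.findelathese}, checking at each stage that a passage from quenched to annealed costs only a constant by Theorem~\ref{t.quenched_arm} / Proposition~\ref{p.hat}. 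I expect this gluing --- and in particular establishing the $k$-fold quasi-multiplicativity uniformly, so that the constant is $C^k$ and not worse --- to be the bulk of the technical work, exactly as foreshadowed in the discussion around Lemma~\ref{l.fin4}.
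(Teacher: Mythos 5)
Your overall architecture (annulus structures in the style of GPS/GV, the joint-pivotality bound of Lemma~\ref{l.jp}, quenched-to-annealed comparison via Theorem~\ref{t.quenched_arm} and Proposition~\ref{p.hat}, then the quoted entropy count) matches the paper's, but two points need correction. First, the factorization of the annealed second moment of the conditional joint-pivotality probability does not go through the coupled quasi-multiplicativity of Subsection~\ref{ss.findelathese} (Lemma~\ref{l.fin4}); that machinery is only needed for the first-moment \emph{lower} bound of Step~2. For the present upper bound, the compatibility event forces $S$ to avoid the (mutually disjoint) annuli $A_i$, so one replaces each pivotal event by the localized event $\Piv_{B_i}^{A_i}(f_R)$ of Definition~\ref{d.piv_hat}, which is measurable with respect to $\omega\cap A_i$; since the Poisson process is independent over disjoint regions, the quantities $\Prob_{1/2}^\eta\left[\Piv_{B_i}^{A_i}(f_R)\right]$ are genuinely independent random variables and the expectation of the product factorizes exactly, after which Theorem~\ref{t.quenched_arm} converts each factor $\E\left[\Prob_{1/2}^\eta\left[\cdot\right]^2\right]$ into $\grandO{1}\,\alpha_j^{an}(\cdot)^2$. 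Your worry about non-independence of quenched probabilities in disjoint annuli is resolved precisely by this localization, with no loss beyond the $4^k$ from Lemma~\ref{l.jp}.

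Second, and more importantly, your proposal misses the one genuinely new ingredient that distinguishes Proposition~\ref{p.GVcomb} from Propositions~\ref{p.GPScomb1} and~\ref{p.GPScomb2}: the $r_0$-annuli, i.e.\ those centered on $\partial[-r_0,r_0]^2$. For these, $S$ is allowed to intersect $A_i\cap[-r_0,r_0]^2$, so that part of the annulus cannot be put into $W$; one must instead bound $\E\left[\Prob_{1/2}^\eta\left[\Piv_{B_i}^{A_i}(f_R)\cond\omega\cap H_i\right]^2\right]$ where $H_i$ is a half-plane containing $[-r_0,r_0]^2$. The required estimate is Lemma~\ref{l.half} (built on Lemma~\ref{l.half1}, the content of Appendix~D), which shows this conditional second moment is at most $\grandO{1}(\rho_1(i)/\rho_2(i))^{\varepsilon_2}\alpha_4^{an}(\rho_1(i),\rho_2(i))$; this is exactly why the weight $h^{\varepsilon_2}(A_i)$ of an $r_0$-annulus carries the extra factor $(\rho_1(i)/\rho_2(i))^{\varepsilon_2}$, and that polynomial gain is what makes the sum over positions of the cluster along $\partial[-r_0,r_0]^2$ converge and is the source of $\varepsilon_1$ in the statement. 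Your attribution of $\varepsilon_1$ to the gap between the $4$-arm and $1$-arm exponents (Proposition~\ref{p.4}, Corollary~\ref{c.4}) is not where it comes from, and without an estimate of the type of Lemma~\ref{l.half} the argument does not close: that lemma is itself a substantial quenched argument (conditioning on a half-plane typically forces $3$-arm half-plane events at a positive fraction of scales), not a consequence of the arm estimates of Subsection~\ref{ss.arm}.
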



Let us prove these three propositions. The proofs follow~\cite{garban2010fourier} and~\cite{GV}. We first need to prove some \textbf{annulus-structures} estimates. To this purpose, let us define three different notions of ``annulus-structures''. In these definitions, the $A_i$'s are annuli of the form $A_i=A(x_i;\rho_1(i),\rho_2(i)):=x_i+[-\rho_2(i),\rho_2(i)]^2 \setminus ]-\rho_1(i),\rho_1(i)[^2$ ($\rho_1(i)$ is called the inner radius of $A_i$ and $\rho_2(i)$ is called its outer radius).
\begin{itemize}
\item An annulus structure for $g_n$ is a collection of mutually disjoint annuli $\mathcal{A}=\{A_1,\cdots,A_l\}$. An annulus $A_i$ is called \textbf{interior} if it is contained in $[0,n]^2$, \textbf{side} if it is centered at a point of $\partial [0,n]^2$ and is at distance at least its outer radius from the other sides, and \textbf{corner} if it is centered at a corner of $[0,n]^2$ and its outer radius is at most $n/2$. We assume that each $A_i$ is an annulus of one of these kinds. For each $i \in \{1,\cdots,l \}$, we let $h(A_i)$ be the annealed probability of the $4$-arm event in $A_i$ if $A_i$ is interior, of the $3$-arm event in $A_i \setminus [0,n]^2$ if it is a side annulus, and of the $2$-arm event in $A_i \setminus [0,n]^2$ if it is a corner annulus.
\item An annulus structure for $f_R$ is a collection $\mathcal{A}=\{A_1,\cdots,A_l;r_\mathcal{A}\}$ where $r_\mathcal{A} \in \R_+$ and $A_1,\cdots,A_l$ are mutually disjoint annuli such that, for each $i \in \{1,\cdots,l\}$, $A_i$ does not intersect $[-r_\mathcal{A},r_\mathcal{A}]^2$. We define interior, side and corner annuli similarly as above except that the box $[0,n]^2$ is replaced by $[-R,R]^2$ and that we ask that the inner boxes of these annuli do not contain $0$. We also need the notion of centered annuli: an annulus $A_i$ is called \textbf{centered} if it is centered at $0$ and included in $[-R,R]^2$. We assume that each $A_i$ is an annulus of one of these kinds. For each $i \in \{1,\cdots,l \}$, if $A_i$ is either interior, side or corner, we define $h(A_i)$ as above; if $A_i$ is centered, we let $h(A_i)$ be the annealed probability of the $1$-arm event in $A_i$.
\item An $r_0$-decorated annulus structure for $f_R$ is a collection of mutually disjoint annuli $\mathcal{A}=\{A_1,\cdots,A_l,A_{l+1},\cdots,A_m \}$ such that $\{ A_1,\cdots,A_l;r_0 \}$ is an annulus structure for $f_R$ and $A_{l+1}, \cdots, A_m$ are centered at a point of $\partial [-r_0,r_0]^2$ and have outer radius less than $r_0/2$. The annuli $A_{l+1},\cdots,A_m$ are called \textbf{$r_0$-annuli}. For each $i \in \{1,\cdots,l \}$, we define $h(A_i)$ as above. For each $i \in \{l+1,\cdots,m\}$, we let $h^{\varepsilon_2}(A_i)$ be the annealed probability of the $4$-arm event in $A_i$ times $(\rho_1(i)/\rho_2(i))^{\varepsilon_2}$, where $\varepsilon_2$ is the constant of Lemma~\ref{l.half}.
\item If $\mathcal{A}$ is an annulus structure of $g_n$ or $f_R$ and if $S \subseteq \R^2$, we say that $S$ is \textbf{compatible} with $\mathcal{A}$ if $S$ intersects the inner square of all the \textbf{non-centered} annuli and if $S$ does not intersect any of the annuli.
\item If $\mathcal{A}$ is an $r_0$-decorated annulus structure and if $S \subseteq \R^2$, we say that $S$ is compatible with $\mathcal{A}$ if $S$ intersects the inner square of all the \textbf{non-centered} annuli and if, for every $i \in \{1,\cdots,m\}$, $S \cap (A_i \setminus [-r_0,r_0]^2)=\emptyset$ (note that $A_i \cap [-r_0,r_0]^2 \neq \emptyset$ if and only if $A_i$ is an $r_0$-annulus).
\end{itemize}
We have the following result:
\begin{lem}\label{l.key_struct}
In the case of an annulus structure of $g_n$, we have
\begin{equation}\label{e.comb1}
\widehat{\Q}^{an}_{g_n} \left[ S \text{ is compatible with } \mathcal{A} \right] \leq \prod_{i=1}^l \grandO{1} h(A_i)^2 \, .
\end{equation}
In the case of an annulus structure of $f_R$, we have
\begin{equation}\label{e.comb2}
\widehat{\Q}^{an}_{f_R} \left[ S \text{ is compatible with } \mathcal{A} \right] \leq \grandO{1} \alpha_1^{an}(r_\mathcal{A}) \prod_{i=1}^l \grandO{1} h(A_i)^2 \, .
\end{equation}
In the case of an $r_0$-decorated annulus structure of $f_R$, we have
\begin{equation}\label{e.comb3}
\widehat{\Q}^{an}_{f_R} \left[ S \text{ is compatible with } \mathcal{A} \right] \leq \grandO{1} \alpha_1^{an}(r_0) \prod_{i=1}^l \grandO{1} h(A_i)^2 \prod_{i=l+1}^m \grandO{1} h^{\varepsilon_2}(A_i) \, .
\end{equation}
\end{lem}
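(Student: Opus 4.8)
The plan is to follow the annulus-structure machinery of Sections~4--7 of \cite{garban2010fourier} (refined in Section~5 of \cite{GV}); the only genuinely new ingredient is the passage from the quenched to the annealed spectral measure, for which the spatial-independence events $\widehat{\arm}_j$ of Definition~\ref{d.hat} and the quenched arm estimate Theorem~\ref{t.quenched_arm} are the crucial tools. I would prove \eqref{e.comb1}, \eqref{e.comb2} and \eqref{e.comb3} in parallel. The starting point is $\widehat{\Q}^{an}_h[\,\cdot\,]=\E\big[\widehat{\Q}_{h^\eta}[\,\cdot\cap\Omega_\eta]\big]$, which reduces everything to a quenched statement inside an expectation over $\eta$. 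Fix $\eta$ and an annulus structure $\mathcal{A}=\{A_1,\dots,A_l\}$; write $\mathrm{in}(A_i)$ for the gap associated with the non-centered annulus $A_i$ (chosen as in \cite{garban2010fourier} so that any $S$ compatible with $\mathcal{A}$ meets every such gap), and set $J_i=\eta\cap\mathrm{in}(A_i)$ and $W=\eta\cap\bigcup_i A_i$. These sets are mutually disjoint, and for $S\subseteq\eta$ compatibility with $\mathcal{A}$ implies $\forall i:\ S\cap J_i\neq\emptyset=S\cap W$, so Lemma~\ref{l.jp} gives
\[
\widehat{\Q}_{h^\eta}\big[S\text{ compatible with }\mathcal{A}\big]\ \le\ 4^{l}\,\Ex_{1/2}^{\eta}\Big[\Prob_{1/2}^{\eta}\big[JP_{J_1,\dots,J_l}(h^\eta)\mid\omega_{|W^c}\big]^2\Big]\, .
\]

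Next I would control the jointly-pivotal probability by a product over the annuli. If a configuration makes $J_{i_0}$ pivotal after recolouring points of $\bigcup_j J_j$ --- none of which lie in $\bigcup_iA_i$, so $\omega\cap A_{i_0}$ is unchanged --- then that configuration has $j_{i_0}$ arms of alternating colours crossing $A_{i_0}$, where $j_{i_0}=4,3,2,1$ according as $A_{i_0}$ is interior, side, corner or centered; hence the event $\arm_{j_{i_0}}(A_{i_0})$, and a fortiori $\widehat{\arm}_{j_{i_0}}(A_{i_0})$, is determined by $\omega\cap A_{i_0}$ alone. Thus $JP_{J_1,\dots,J_l}(h^\eta)\subseteq\bigcap_i\widehat{\arm}_{j_i}(A_i)$ (using for side and corner annuli the half- and quarter-plane versions, and Proposition~\ref{p.QMulti} to pass from $\mathrm{in}(A_i)$ to the true inner square of $A_i$). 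Since the $A_i$ are disjoint and each $\widehat{\arm}_{j_i}(A_i)$ is measurable with respect to $\omega\cap A_i$ while the conditioning $\omega_{|W^c}$ involves only points outside $\bigcup_iA_i$, the events in distinct annuli are independent of one another and of the conditioning, whence
\[
\Prob_{1/2}^{\eta}\big[JP_{J_1,\dots,J_l}(h^\eta)\mid\omega_{|W^c}\big]\ \le\ \prod_{i=1}^{l}\grandO{1}\,\Prob_{1/2}^{\eta}\big[\widehat{\arm}_{j_i}(A_i)\big]\, .
\]

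Then I would square and average over $\eta$: since $\Prob_{1/2}^{\eta}[\widehat{\arm}_{j_i}(A_i)]$ depends only on $\eta\cap A_i$ and the annuli are disjoint, these factors are independent under the law of $\eta$, so
\[
\widehat{\Q}^{an}_h\big[S\text{ compatible with }\mathcal{A}\big]\ \le\ 4^{l}\prod_{i=1}^{l}\grandO{1}\,\E\!\left[\Prob_{1/2}^{\eta}\big[\widehat{\arm}_{j_i}(A_i)\big]^2\right]\ \le\ \prod_{i=1}^{l}\grandO{1}\,\alpha_{j_i}^{an}(A_i)^2\, ,
\]
the last inequality being Proposition~\ref{p.hat} (and its half/quarter-plane analogues), and the factors $4$ and $\grandO{1}$ being absorbed one per annulus. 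Since $h(A_i)=\alpha_{j_i}^{an}(A_i)$ (a half- or quarter-plane arm probability for side and corner annuli), this is \eqref{e.comb1}. For \eqref{e.comb2} one argues identically, the prefactor $\alpha_1^{an}(r_\mathcal{A})$ appearing as in the $1$-arm estimates of \cite{garban2010fourier}: one incorporates into the structure the innermost $1$-arm contribution inside $[-r_\mathcal{A},r_\mathcal{A}]^2$ and converts its quenched probability to the annealed one via Theorem~\ref{t.quenched_arm} (recall $\widehat{\Q}^{an}$ is un-normalised, so only a single power of $\alpha_1^{an}(r_\mathcal{A})$ is needed). For \eqref{e.comb3}, the $r_0$-annuli are handled by the same pivotal bound, except that compatibility only forces $S$ to avoid the part $A_i\setminus[-r_0,r_0]^2$; the four arms crossing such an annulus are then confined to a half-plane on the side away from $[-r_0,r_0]^2$, and Lemma~\ref{l.half} replaces the square $\alpha_4^{an}(A_i)^2$ by the single factor $h^{\varepsilon_2}(A_i)=\alpha_4^{an}(A_i)\,(\rho_1(i)/\rho_2(i))^{\varepsilon_2}$.

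Finally, the main obstacle is the second step: in Voronoi percolation a $j$-arm crossing of an annulus is \emph{not} a function of the colours of the points inside that annulus (a cell centred outside may reach in), so one cannot naively split $JP$ into independent single-annulus pieces. This is exactly what the events $\widehat{\arm}_j$ of Definition~\ref{d.hat} are built for, and the estimate $\E[\Prob_{1/2}^{\eta}[\widehat{\arm}_j(r,R)]^2]\le\grandO{1}\,\alpha_j^{an}(r,R)^2$ of Proposition~\ref{p.hat} --- itself resting on Theorem~\ref{t.quenched_arm} of \cite{V2} --- is what makes each passage from a quenched to an annealed arm probability cost only a bounded factor; some care is needed to check that the chain of inclusions ``$JP\subseteq$ intersection of $\widehat{\arm}$ events'' loses only $\grandO{1}$ per annulus, uniformly in $\mathcal{A}$. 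The genuinely harder coupled analogue of this step --- a quasi-multiplicativity statement for two jointly sampled Voronoi configurations --- is not needed here; it is the subject of Subsection~\ref{ss.findelathese}.
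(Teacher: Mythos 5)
Your proposal follows essentially the same route as the paper: reduce to the quenched measure, apply Lemma~\ref{l.jp} with $W$ the union of the annuli (minus $[-r_0,r_0]^2$ in the decorated case), dominate the jointly-pivotal event by an intersection of single-annulus events measurable with respect to $\omega\cap A_i$, factorize by spatial independence, and convert each quenched second moment to $h(A_i)^2$ via Theorem~\ref{t.quenched_arm}/Proposition~\ref{p.hat}. Two remarks. First, where you assert the inclusion $JP\subseteq\bigcap_i\widehat{\arm}_{j_i}(A_i)$ directly, the paper instead passes through the events $\Piv_{B_i}^{A_i}(h)$ of Definition~\ref{d.piv_hat} (which contain $JP$ essentially by definition) and then invokes Lemma~D.13 of~\cite{V1} for $\E[\Prob_{1/2}^\eta[\Piv_{B_i}^{A_i}(h)]^2]\leq\grandO{1}\,\E[\Prob_{1/2}^\eta[\arm_{j_i}(A_i)]^2]$; your one-line inclusion compresses exactly the content of that lemma (the continuum step ``pivotality of the inner box, witnessed by a recolouring outside $A_i$, implies positive conditional probability of the arm event given $\omega\cap A_i$''), so it is not free and should be attributed or proved. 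Second, your stated mechanism for the $r_0$-annuli is not right: the four arms are \emph{not} confined to a half-plane. What happens is that for these annuli the conditioning in Lemma~\ref{l.jp} retains $\omega\cap(A_i\cap[-r_0,r_0]^2)$, one dominates this by conditioning on the full half-plane $H_i$ containing $[-r_0,r_0]^2$, and Lemma~\ref{l.half} (via Lemma~\ref{l.half1}) shows that $\E[\Prob_{1/2}^\eta[\Piv_{B_i}^{A_i}(f_R)\mid\omega\cap H_i]^2]$ is only $\alpha_4^{an}(\rho_1(i),\rho_2(i))\,(\rho_1(i)/\rho_2(i))^{\varepsilon_2}$ rather than the square of the $4$-arm probability. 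You cite the right lemma and land on the right factor $h^{\varepsilon_2}(A_i)$, so the proof survives, but the justification as written is incorrect; similarly, the single power of $\alpha_1^{an}(r_\mathcal{A})$ in~\eqref{e.comb2} comes from the fact that the centered region is not part of $W$, so the $\widehat{\arm}_1(1,r_\mathcal{A})$ contribution enters as an indicator whose squared conditional expectation is just $\Pro[\widehat{\arm}_1(1,r_\mathcal{A})]\leq\grandO{1}\alpha_1^{an}(r_\mathcal{A})$, not from the un-normalization of $\widehat{\Q}^{an}$.
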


In the following proof and in other parts of the present paper, we use the following definition that will help us to deal with spatial independence properties (note that $\Piv_{D_1}^{D_2}(h)$ is measurable with respect to $\omega \cap D_2$).
\begin{defi}\label{d.piv_hat}
If $D_1,D_2$ are two bounded Borel subsets of $\R^2$ and if $h$ is a measurable function from $\Omega$ to $\{0,1\}$, we let
\[
\Piv_{D_1}^{D_2}(h)= \left\lbrace \Pro_{1/2} \left[ \Piv_{D_1}(h) \cond \omega \cap D_2 \right] > 0 \right\rbrace \, .
\]
\end{defi}

\begin{proof}[Proof of Lemma \ref{l.key_struct}]
Let us first deal with~\eqref{e.comb1}. Let $W = \cup_{i=1}^l A_i$, let $B_i$ denote the inner box of $A_i$, and let $B_1',\cdots,B_{l'}'$ be all the inner boxes that do not contain any other inner box. Note that
\[
\{ S \text{ is compatible with } \mathcal{A} \} = \{ B_1' \cap S \neq \emptyset, \cdots, B_{l'}' \cap S \neq \emptyset, W \cap S = \emptyset \} \, .
\]
Remember that $\widehat{\Q}^{an}_{g_n} \left[ \cdot \right] = \E \left[ \widehat{\Q}_{g_n^\eta} \left[ \cdot \right] \right]$. Lemma~\ref{l.jp} implies that
\begin{multline*}
\widehat{\Q}^{an}_{g_n} \left[ S \text{ is compatible with } \mathcal{A} \right] \leq 4^{l'} \E \left[ \Prob_{1/2}^\eta \left[ JP^\eta_{B_1',\cdots,B_{l'}'}(g_n) \cond \omega \setminus W \right]^2 \right]\\
\leq 4^{l} \E \left[ \Prob_{1/2}^\eta \left[ JP^\eta_{B_1',\cdots,B_{l'}'}(g_n) \cond \omega \setminus W \right]^2 \right] \, .
\end{multline*}
Note furthermore that
\[
JP^\eta_{B_1',\cdots,B_{l'}'}(g_n) \subseteq \cap_{i=1}^l \Piv^{A_i}_{B_i}(g_n) \, .
\]
Since $\Piv_{B_i}^{A_i}(g_n)$ is measurable with respect to $\omega \cap A_i$ and since the $A_i$'s are mutually disjoint, spatial independence of Poisson processes imply that
\[
\widehat{\Q}^{an}_{g_n} \left[ S \text{ is compatible with } \mathcal{A} \right] \leq \E \left[ \prod_{i=1}^l  4 \Prob_{1/2}^\eta \left[ \Piv_{B_i}^{A_i}(g_n) \right]^2 \right] = \prod_{i=1}^l 4 \E \left[ \Prob_{1/2}^\eta \left[ \Piv_{B_i}^{A_i}(g_n) \right]^2 \right] \, .
\]
Let $\rho_1(i)$ and $\rho_2(i)$ be the inner and outer radii of $A_i$. In~\cite{V1} (Lemma~D.13), we have proved that, if $A_i$ is an interior annulus, then\footnote{The results of Lemma~D.13 of \cite{V1} are actually stated for the function $f_R$ instead of $g_n$, but the proof of the analogous results for $g_n$ is exactly the same.}
\[
\E \left[ \Prob_{1/2}^\eta \left[ \Piv_{B_i}^{A_i}(g_n) \right]^2 \right] \leq \grandO{1} \E \left[ \Prob_{1/2}^\eta \left[ \arm_4(\rho_1(i),\rho_2(i)) \right]^2 \right].
\]
Moreover, Theorem~\ref{t.quenched_arm} implies that
\[
\E \left[ \Prob_{1/2}^\eta \left[ \arm_4(\rho_1(i),\rho_2(i)) \right]^2 \right] \asymp h(A_i)^2 \, .
\]
The case of side and corner annuli is treated exactly in the same way. This ends the proof of~\eqref{e.comb1}.\\

Let us now prove~\eqref{e.comb2}. We still write $W = \cup_{i=1}^l A_i$, and we let $B'_1,\cdots,B'_{l'}$ be the inner boxes of the non-centered annuli such that $B'_i$ does not contain any other inner box. Note that $JP^\eta_{B_1',\cdots,B_{l'}'}(f_R)$ is included in
\[
\widehat{\arm}_1(1,r_\mathcal{A}) \cap \bigcap_{i=1}^l \Piv^{A_i}_{B_i}(f_R) \, ,
\]
where $\widehat{\arm}_1(\cdot,\cdot)$ is the event defined in Definition~\ref{d.hat}. By Proposition~\ref{p.hat},
\[
\E \left[ \Prob_{1/2}^\eta \left[ \widehat{\arm}_1(1,r_{\mathcal{A}}) \cond \omega \setminus W \right]^2 \right] = \Pro \left[ \widehat{\arm}_1(1,r_{\mathcal{A}})  \right] \leq \grandO{1} \alpha_1^{an}(r_\mathcal{A}) \, .
\]
Now, the proof is the same as the proof of~\eqref{e.comb1} so we let the details to the reader.\\

Let us now prove~\eqref{e.comb3}. In this case, we let $W = \cup_{i=1}^m A_i \setminus [-r_0,r_0]^2$. Moreover, we let $B_1',\cdots,B_{m'}'$ be the inner boxes of the non-centered annuli such that $B'_i$ does not contain any other inner box. Note that
\[
\{ S \text{ is compatible with } \mathcal{A} \} = \{ B_1'\cap S \neq \emptyset , \cdots, B_{m'}' \cap S \neq \emptyset, W \cap S = \emptyset \}
\]
and
\[
JP^\eta_{B_1',\cdots,B_{m'}'}(f_R) \subseteq \widehat{\arm}_1(1,r_0/2) \cap \bigcap_{i=1}^m \Piv^{A_i}_{B_i}(f_R)  \, .
\]
Note also that $A_i \cap [-r_0/2,r_0/2]^2 = \emptyset$ for every $i \in \{1,\cdots,m\}$. The fact that $S$ may intersect the sets $A_i \cap [-r_0,r_0]^2$ adds a new difficulty. If we follow the proof of~\eqref{e.comb2}, it only remains to deal with the annuli that intersect $[-r_0,r_0]^2$ - i.e. the $r_0$-annuli - and prove that for these annuli we have
\[
\E \left[ \Prob_{1/2}^\eta \left[ \Piv_{B_i}^{A_i}(f_R) \cond \omega \setminus W \right]^2 \right] \leq \grandO{1} \alpha_4^{an}(\rho_1(i),\rho_2(i)) \left( \frac{\rho_1(i)}{\rho_2(i)} \right)^{\varepsilon_2} \, ,
\]
where $\varepsilon_2>0$ is the constant of Lemma~\ref{l.half}. To prove this, first note that, if $A_i$ is an $r_0$-annulus, then
\[
\E \left[ \Prob_{1/2}^\eta \left[ \Piv_{B_i}^{A_i}(f_R) \cond \omega \setminus W \right]^2 \right] = \E \left[ \Prob_{1/2}^\eta \left[ \Piv_{B_i}^{A_i}(f_R) \cond \omega \cap [-r_0,r_0]^2 \right]^2 \right]
\]
since $\Piv_{B_i}^{A_i}(f_R)$ is measurable with respect to $\omega \cap A_i$. Let $H_i$ be the half plane (or one of the half planes) that contains $[-r_0,r_0]^2$, whose boundary contains the center of $A_i$, and whose boundary is parallel to the $x$ or $y$ axis. Note that we have
\[
\E \left[ \Prob_{1/2}^\eta\left[ \Piv_{B_i}^{A_i}(f_R) \cond \omega \cap [-r_0,r_0]^2 \right]^2 \right] \leq \E \left[ \Prob_{1/2}^\eta \left[ \Piv_{B_i}^{A_i}(f_R) \cond \omega \cap H_i \right]^2 \right] \, .
\]
Therefore, we can end the proof by applying Lemma~\ref{l.half}.
\end{proof}

\begin{proof}[Proof of Propositions~\ref{p.GPScomb1},~\ref{p.GPScomb2} and~\ref{p.GVcomb}]
Once Lemma~\ref{l.key_struct} is proved, the proofs of Propositions~\ref{p.GPScomb1} and~\ref{p.GPScomb2} (respectively Proposition~\ref{p.GVcomb}) are exactly the same as the proofs of Propositions~4.1 and~4.7 of~\cite{garban2010fourier} (respectively Proposition~5.3 of~\cite{GV}) if one uses the estimates on arm events from Subsection~\ref{ss.arm}. Therefore, we refer to these papers for the proofs.
\end{proof}

\paragraph{Step 2.} We first need to define a random object: $\mathcal{Z}_r$ is a random union of $1 \times 1$ boxes of the grid $\Z^2$ defined as follows: each box of this grid is included in $\mathcal{Z}_r$ with probability $(r^2 \alpha_4^{an}(r))^{-1}$, independently of the other boxes.
\begin{prop}\label{p.QM_spec1}
Let $1 \leq r \leq n < +\infty$ and let $\mathcal{S}^{an}_{g_n}$ be a spectral sample of $g_n$ independent of $\mathcal{Z}_r$. Let $B$ be a box of radius $r$ and let $B'$ be the concentric box with radius $r/3$. Assume that $B' \subseteq [0,n]^2$. Also, let $W$ be a Borel subset of the plane such that $W \cap B = \emptyset$. There exist two absolute constants $\overline{r} <+\infty$ and $a > 0$ such that, if $r \geq \overline{r}$, then
\[
\Pro \left[ \mathcal{S}_{g_n}^{an} \cap B' \cap \mathcal{Z}_r \neq \emptyset \cond \mathcal{S}_{g_n}^{an} \cap B \neq \emptyset = \mathcal{S}_{g_n}^{an} \cap W \right] \geq a \, .
\]
\end{prop}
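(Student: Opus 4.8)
The plan is to follow the strategy of Proposition~4.3 of~\cite{garban2010fourier} (and its analogue Proposition~5.4 of~\cite{GV}), reducing everything to a second moment argument on the spectral mass inside $B'$. First I would note that, by the definition of $\mathcal{Z}_r$ and since $\mathcal{Z}_r$ is independent of $\mathcal{S}^{an}_{g_n}$, conditionally on $\mathcal{S}^{an}_{g_n}$ we have
\[
\Pro \left[ \mathcal{S}_{g_n}^{an} \cap B' \cap \mathcal{Z}_r \neq \emptyset \cond \mathcal{S}_{g_n}^{an} \right] \geq 1 - \left( 1 - \frac{1}{r^2 \alpha_4^{an}(r)} \right)^{|\mathcal{S}_{g_n}^{an}(r) \cap B'|} \, ,
\]
which is $\asymp \min\{1, |\mathcal{S}_{g_n}^{an}(r) \cap B'| / (r^2\alpha_4^{an}(r))\}$. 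Hence, setting $X = |\mathcal{S}_{g_n}^{an}(r) \cap B'|$, it suffices to show that under the conditional measure $\widehat{\Pro}_{g_n}^{an}[\,\cdot \mid \mathcal{S}_{g_n}^{an}\cap B \neq \emptyset = \mathcal{S}_{g_n}^{an}\cap W]$ one has, by a Paley--Zygmund type argument, $\Ex[\min\{1,X/(r^2\alpha_4^{an}(r))\}] \geq a$; this in turn follows from a first moment lower bound $\Ex[X] \geq \Omega(1) r^2\alpha_4^{an}(r)$ together with a matching second moment bound $\Ex[X^2] \leq \grandO{1}(r^2\alpha_4^{an}(r))^2$, both relative to the (un-normalized) weight $\widehat{\Q}^{an}_{g_n}[\,\cdot \cap \{S\cap B\neq\emptyset = S\cap W\}]$.

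The first moment is the analogue of the statement that the spectral sample, restricted to $B$, ``spreads out'' at scale $r$: conditionally on $S \cap B \neq \emptyset$, the expected number of $r$-boxes near $B$ that $S$ touches is $\Omega(1)$. For this I would use the one-dimensional marginal comparison between $\widehat{\Q}^{an}_{g_n}$ and the (annealed) pivotal set, i.e. the estimates from Subsection~\ref{ss.preli} giving $\widehat{\Q}^{an}_{g_n}[S\cap Q \neq \emptyset] \asymp \Pro_{1/2}[\Piv_Q(g_n)] \asymp \alpha_4^{an}(r,n)\cdot(\text{local }4\text{-arm weight})$ for an $r$-box $Q$, combined with quasi-multiplicativity (Proposition~\ref{p.QMulti}) to glue the $4$-arm event across the annulus from scale $r$ to scale $n$ around $Q$. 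Summing over the $\grandO{1}$ boxes $Q$ adjacent to $B$ and using that the contribution of $\{S\cap W = \emptyset\}$ can only be handled via a separate argument, one gets the first moment lower bound. The second moment bound is where the annulus-structure machinery of Step~1 enters: $\Ex[X^2]$ is a sum over pairs of $r$-boxes $Q_1, Q_2$ near $B$ of $\widehat{\Q}^{an}_{g_n}[S\cap Q_i \neq \emptyset\ (i=1,2),\ S\cap B \neq \emptyset = S\cap W]$, and Lemma~\ref{l.key_struct} (specifically~\eqref{e.comb1}) applied to the annulus structure consisting of the annulus around $Q_1\cup Q_2$ out to scale $r$, then $r$ to $n$, bounds each term by $\grandO{1}\alpha_4^{an}(r,n)^2 \cdot(\text{local weights})$, and summing gives $\grandO{1}(r^2\alpha_4^{an}(r))^2$.

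The main obstacle is dealing correctly with the conditioning on $\{S \cap W = \emptyset\}$ together with $\{S \cap B \neq \emptyset\}$: one must ensure that forcing the spectral sample to avoid $W$ does not destroy the first moment lower bound, and here the argument requires a quasi-multiplicativity statement \emph{for coupled/conditioned Voronoi configurations} — this is precisely the role of the events $\widehat{\arm}_j$ and $\Piv^{D_2}_{D_1}$ and of the ``most technical part of the paper'' advertised in Subsection~\ref{ss.findelathese}. Concretely, one decomposes according to where $S$ exits $B$ and uses that, given $S\cap B \neq \emptyset$, the spectral weight of configurations whose support near $B$ is ``reached'' by an arm that does not enter $W$ is comparable to the unconditioned weight, using that $W \cap B = \emptyset$ and that quasi-multiplicativity lets one re-glue a $4$-arm event from scale $r$ out to scale $n$ inside the $W$-free region. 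Once this conditioned quasi-multiplicativity is available, the proof is a routine transcription of Proposition~4.3 of~\cite{garban2010fourier} / Proposition~5.4 of~\cite{GV}, and I would simply refer to those proofs after establishing the Voronoi-specific inputs (Lemma~\ref{l.key_struct} and Theorem~\ref{t.quenched_arm}).
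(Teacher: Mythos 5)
Your overall architecture — a first and a second moment estimate for the spectral mass in $B'$, a Paley--Zygmund argument, then the $\mathcal{Z}_r$-thinning — is the paper's, and you correctly identify the coupled quasi-multiplicativity of Subsection~\ref{ss.findelathese} (via the events $\widehat{\arm}_j$ and $\Piv^{D_2}_{D_1}$) as the essential input for the first moment. The genuine gap is in the normalization of the three quantities entering the Paley--Zygmund ratio. The paper expresses the first moment (Lemma~\ref{l.first_moment_spec}), the second moment (Lemma~\ref{l.second_moment_spec}) and the probability of the conditioning event (Lemma~\ref{l.jp} with $n=1$) all in terms of the single $W$-dependent quantity $Q:=\E \left[ \Prob_{1/2}^\eta \left[ \Piv_B(g_n) \cond \omega \setminus W \right]^2 \right]$, which then cancels in the ratio. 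You instead propose to bound each pair term of the second moment by $\grandO{1}\,\alpha_4^{an}(r,n)^2\cdot(\text{local weights})$ via the annulus-structure Lemma~\ref{l.key_struct}. This cannot work for a general Borel set $W$ disjoint from $B$: Lemma~\ref{l.key_struct} extracts the factor $\alpha_4^{an}(r,n)^2$ from the requirement that $S$ avoid a full annulus from scale $r$ to scale $n$, but the event $\{S\cap W=\emptyset\}$ imposes no such constraint. Take $W=\emptyset$: the conditioning event is $\{S\cap B\neq\emptyset\}$, of probability $\asymp \alpha_4^{an}(r,n)$ (one factor, not two), the pair correlations carry a single factor $\alpha_4^{an}(r,n)$, and your claimed $\alpha_4^{an}(r,n)^2$ bound is simply false; when $W$ surrounds $B$ the correct factor is instead $\approx\alpha_4^{an}(r,n)^2$. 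The quantity $Q$ interpolates between these regimes, so the second-moment computation must retain the conditioning on $\omega\setminus W$ throughout: this is what Lemma~\ref{l.second_moment_spec} does, and it is not an application of Lemma~\ref{l.key_struct} but a separate joint-pivotality estimate in which $\Piv_B(g_n)$ conditioned on $\omega\setminus W$ is kept as one of the independent factors (combining Lemma~\ref{l.jp}, Definition~\ref{d.piv_hat} and Lemma~4.9 of~\cite{V1}).

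Two smaller points. First, $X=|\mathcal{S}_{g_n}^{an}(r)\cap B'|$ counts $r\times r$ boxes of $r\Z^2$, of which $B'$ (radius $r/3$) meets at most four, so $X\leq 4$ and the bounds $\E[X]\geq\Omega(1)r^2\alpha_4^{an}(r)$, $\E[X^2]\leq\grandO{1}(r^2\alpha_4^{an}(r))^2$ are impossible as written; the relevant count is at unit scale, $|\mathcal{S}_{g_n}^{an}(1)\cap B'|$, which is how the paper's proof treats both $Y$ and the thinning. Second, rather than running Paley--Zygmund on the conditional law of $X$ and thinning afterwards, the paper applies Cauchy--Schwarz directly to $Y=|\mathcal{S}^{an}_{g_n}(1)\cap B'\cap\mathcal{Z}_r|\,\un_{\mathcal{S}^{an}_{g_n}\cap W=\emptyset}$, for which $\E[Y]\geq\Omega(1)Q$ and $\E[Y^2]\leq\grandO{1}Q$, so that $\Pro[Y>0]\geq\Omega(1)Q\geq\Omega(1)\Pro[\mathcal{S}^{an}_{g_n}\cap B\neq\emptyset=\mathcal{S}^{an}_{g_n}\cap W]$; your route can be repaired along these lines once every moment carries the common factor $Q$, but as stated the second-moment step fails.
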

\begin{prop}\label{p.QM_spec2}
Let $1 \leq r \leq R < +\infty$ and let $\mathcal{S}^{an}_{f_R}$ be a spectral sample of $f_R$ independent of $\mathcal{Z}_r$. Let $B$ be a box of radius $r$ and let $B'$ be the concentric box with radius $r/3$. Assume that $B' \subseteq [-R,R]^2$ and $B \cap [-4r,4r]^2 =\emptyset$. Also, let $W$ be a Borel subset of the plane such that $W \cap B = \emptyset$. There exist two absolute constants $\overline{r} <+\infty$ and $a > 0$ such that, if $r \geq  \overline{r}$, then
\[
\Pro \left[ \mathcal{S}_{f_R}^{an} \cap B' \cap \mathcal{Z}_r \neq \emptyset \cond \mathcal{S}_{f_R}^{an} \cap B \neq \emptyset = \mathcal{S}_{f_R}^{an} \cap W \right] \geq a \, .
\]
\end{prop}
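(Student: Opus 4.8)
The plan is to follow the second-moment (``quasi-multiplicativity for the spectral sample'') argument of Section~6 of~\cite{garban2010fourier} and Section~5 of~\cite{GV}, replacing every discrete arm estimate by its annealed analogue from Subsection~\ref{ss.arm} and controlling the passages between quenched and annealed quantities with Theorem~\ref{t.quenched_arm} and Proposition~\ref{p.hat}. Abbreviate the conditioning event by $\mathcal{E}=\{\mathcal{S}^{an}_{f_R}\cap B\neq\emptyset=\mathcal{S}^{an}_{f_R}\cap W\}$, put $q=(r^2\alpha_4^{an}(r))^{-1}$ (the retention probability of $\mathcal{Z}_r$) and $N=|\mathcal{S}^{an}_{f_R}\cap B'\cap\mathcal{Z}_r|$. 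The heart of the matter is to establish the two conditional moment bounds
\[
\E\!\left[\bigl|\mathcal{S}^{an}_{f_R}\cap B'\bigr| \cond \mathcal{E}\right]\ \geq\ c\,r^2\alpha_4^{an}(r)
\qquad\text{and}\qquad
\E\!\left[\bigl|\mathcal{S}^{an}_{f_R}\cap B'\bigr|^2 \cond \mathcal{E}\right]\ \leq\ C\,\bigl(r^2\alpha_4^{an}(r)\bigr)^2
\]
for absolute constants $c>0$, $C<+\infty$; once this is done, the independence of $\mathcal{Z}_r$ from $\mathcal{S}^{an}_{f_R}$ and the identity $q\,r^2\alpha_4^{an}(r)=1$ yield $\E[N\mid\mathcal{E}]\geq c$ and $\E[N^2\mid\mathcal{E}]\leq C'$, and the Paley--Zygmund inequality gives $\Pro[N>0\mid\mathcal{E}]\geq c^2/C'=:a$.

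For the first moment I would combine the standard fact that the spectral sample has, up to constants, the same one-dimensional marginals as the pivotal set (cf. the discussion preceding Theorem~\ref{t.Spec_sample_g_n}, together with the arm estimates of Subsection~\ref{ss.arm}) with a localisation step: conditionally on $\mathcal{E}$, a uniformly positive fraction of the spectral mass of $f_R$ lies in the concentric sub-box $B'$. This localisation is a quasi-multiplicativity statement for the annealed spectral sample, proved along the lines of the Step~1 / annulus-structure estimates of Lemma~\ref{l.key_struct} by conditioning on $\omega\cap B$ and applying the arm bounds. The hypothesis $B\cap[-4r,4r]^2=\emptyset$ enters here: away from the origin the pivotal structure near $B$ is governed by the ordinary $4$-arm event (and not perturbed by the $1$-arm emanating from the origin), so that the local spectral mass in $B'$ is indeed of order $r^2\alpha_4^{an}(r)$, while $B'\subseteq[-R,R]^2$ leaves room for the $1$-arm to continue outward.

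For the second moment I would expand $\E[|\mathcal{S}^{an}_{f_R}\cap B'|^2\mid\mathcal{E}]$ over pairs of dyadic sub-boxes of $B'$ and, for a pair separated at scales $\rho_1\leq\rho_2\leq r$, bound its contribution by the joint-pivotal inequality of Lemma~\ref{l.jp}. Each such term factorises into annealed $4$-arm probabilities on the disjoint annuli (between the two small boxes, around each of them, and from $B$ outward) by the annealed quasi-multiplicativity Proposition~\ref{p.QMulti}; the quenched second moments $\E[\Prob_{1/2}^\eta(\,\cdot\,)^2]$ that Lemma~\ref{l.jp} produces are handled by inserting the spatial-independence events $\Piv^{D_2}_{D_1}$ and $\widehat{\arm}_j$ and then invoking Theorem~\ref{t.quenched_arm} and Proposition~\ref{p.hat} to replace each $\widetilde{\alpha}_j$ by $\alpha_j^{an}$ at a bounded multiplicative cost. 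Summing the resulting geometric sums over the separation scales reproduces the target bound $C(r^2\alpha_4^{an}(r))^2$. The annulus bookkeeping is identical to that of~\cite{garban2010fourier,GV}; I expect the main obstacle to be precisely these repeated quenched-to-annealed transfers — ensuring that only a multiplicative constant is lost at each of the many factorisations, which is exactly the role of Theorem~\ref{t.quenched_arm}, Proposition~\ref{p.hat}, and the coupled-configuration machinery around $\widehat{\arm}_j$ and $\Piv^{D_2}_{D_1}$ of Subsection~\ref{ss.findelathese}.
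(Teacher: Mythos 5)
Your overall architecture --- first and second moments of $|\mathcal{S}^{an}_{f_R}\cap B'\cap\mathcal{Z}_r|$, Paley--Zygmund, and deduction of the $f_R$ case from the $g_n$ case as in Subsection~5.7 of~\cite{garban2010fourier} --- is the paper's. But you have located the difficulty in the wrong moment. The second moment is the short half of the argument: Lemma~\ref{l.second_moment_spec} follows from Lemma~\ref{l.jp}, spatial independence of the disjoint annuli via the events $\Piv^{D_2}_{D_1}$, the pivotal estimates of~\cite{V1}, and the quasi-multiplicativity of $\alpha_4^{an}$; no delicate quenched-to-annealed transfer occurs there. The genuinely hard half is the \emph{first} moment, i.e.\ the lower bound $\Pro\left[\mathcal{S}^{an}\cap\square\neq\emptyset=\mathcal{S}^{an}\cap W\right]\geq\Omega(1)\,\alpha_4^{an}(r)\,\E\left[\Prob_{1/2}^\eta\left[\Piv_B\cond\omega\setminus W\right]^2\right]$, which you dispose of by citing ``the same one-dimensional marginals as the pivotal set'' plus a ``localisation'' proved ``along the lines of Lemma~\ref{l.key_struct}''. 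That cannot work: the annulus-structure estimates of Lemma~\ref{l.key_struct} only give \emph{upper} bounds on spectral probabilities, and the marginal identity does not survive the constraint $S\cap W=\emptyset$. The paper's route is the identity $\widehat{\Q}_{h^\eta}\left[x\in S,\,S\cap W=\emptyset\right]=\frac14\Ex_{1/2}^\eta\left[\Prob_{1/2}^\eta\left[\Piv^q_x\cond\omega\setminus W\right]^2\right]$, rewritten as $\Pro\left[\omega',\omega''\in\Piv^q_x\right]$ for two coupled configurations, followed by the quasi-multiplicativity estimate~\eqref{e.findelathese} for such pairs; this is the content of the whole of Subsection~\ref{ss.findelathese} (Lemmas~\ref{l.fin1}--\ref{l.fin4} and the two claims), and it is an arm-extension/interface-separation argument for coupled configurations, not an application of Theorem~\ref{t.quenched_arm}. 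You invoke that machinery only inside the second moment, where it is not needed.

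A secondary point: your conditional formulation of Paley--Zygmund is not directly justified. The bound $\E\left[N^2\cond\mathcal{E}\right]\leq C'$ amounts to $\E\left[N^2\un_{\mathcal{E}}\right]\leq C'\,\Pro\left[\mathcal{E}\right]$, and since the numerator is controlled by $\grandO{1}\,\E\left[\Prob_{1/2}^\eta\left[\Piv_B\cond\omega\setminus W\right]^2\right]$, this requires a matching \emph{lower} bound on $\Pro\left[\mathcal{E}\right]$ of the same order --- whereas Lemmas~\ref{l.sample_leq_piv} and~\ref{l.jp} only provide the upper bound $\Pro\left[\mathcal{E}\right]\leq\grandO{1}\,\E\left[\Prob_{1/2}^\eta\left[\Piv_B\cond\omega\setminus W\right]^2\right]$. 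The paper sidesteps this by working with $Y=N\,\un_{\mathcal{S}^{an}\cap W=\emptyset}$ and writing $\Pro\left[N>0\cond\mathcal{E}\right]\geq\Pro\left[Y>0\right]/\Pro\left[\mathcal{E}\right]\geq\E\left[Y\right]^2\big/\left(\E\left[Y^2\right]\Pro\left[\mathcal{E}\right]\right)$, so that only the upper bound on $\Pro\left[\mathcal{E}\right]$ is ever used. Your version can be repaired (the missing lower bound on $\Pro\left[\mathcal{E}\right]$ is itself a by-product of the unconditional computation), but as stated it presupposes a two-sided comparison that is not among your tools.
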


\begin{rem}
With exactly the same proofs, we can also obtain the analogues of Propositions~\ref{p.QM_spec1} and~\ref{p.QM_spec2} where $B$ and $B'$ are not squares but rectangles of shape not too degenerate. More precisely, these propositions hold with the same hypotheses except that we can ask for instance that $B$ is of the form $x+[0,\rho_1] \times [0,\rho_2]$ and $B' \subseteq B$ is of the form $x'+[0,\rho_1'] \times [0,\rho_2']$, where $\frac{r}{2} \leq \rho_1,\rho_2 \leq 2r$, $\frac{r}{6} \leq \rho_1',\rho_2' \leq r$ and $B'$ is at distance at least $\frac{r}{6}$ from the sides of $B$.
\end{rem}

Propositions~\ref{p.QM_spec1} and~\ref{p.QM_spec2} follow from a second and a first moment estimates. Let us fix two boxes $B$ and $B'$ like in these propositions. In Subsection~5.7 of~\cite{garban2010fourier}, the authors explain how one can adapt the proof of their estimate for the crossing event (of Bernoulli percolation on $\Z^2$ or on the triangular lattice) in order to obtain an estimate for the one-arm event. By the same observations, one can adapt the proof of our Proposition~\ref{p.QM_spec1} in order to obtain Proposition~\ref{p.QM_spec2}, so we only write the proof of Proposition~\ref{p.QM_spec1}.

Below, $r$, $n$, $B$, $B'$ and $W$ are the quantities and sets from Proposition~\ref{p.QM_spec1}.

\paragraph{A second moment estimate.}

\begin{lem}\label{l.second_moment_spec}
Let $\square_1$ and $\square_2$ be two $1 \times 1$ squares of the grid $\Z^2$ included in $B'$. We have
\begin{multline*}
\Pro \left[ \mathcal{S}^{an}_{g_n} \cap \square_1 \neq \emptyset , \, \mathcal{S}^{an}_{g_n} \cap \square_2 \neq \emptyset , \, \mathcal{S}^{an}_{g_n} \cap W = \emptyset \right]\\
\leq \grandO{1} \alpha_4^{an}\left(\text{\textup{dist}}(\square_1,\square_2) \right) \alpha_4^{an}(r) \E \left[ \Prob_{1/2}^\eta \left[ \Piv_B(g_n) \cond \omega \setminus W \right]^2 \right] \, ,
\end{multline*}
where $\text{\textup{dist}}(\square_1,\square_2)$ is the Euclidean distance between the two squares.
\end{lem}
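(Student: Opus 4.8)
The plan is to reduce the annealed spectral estimate to a quenched pivotal computation via Lemma~\ref{l.jp} with $n=2$, and then to bound the resulting quenched joint-pivotal probability by a product of arm events using spatial independence of the Poisson process together with the quenched arm estimates of Theorem~\ref{t.quenched_arm}. First I would apply Lemma~\ref{l.jp} with the two singleton sets $J_1=\square_1$, $J_2=\square_2$ (more precisely, with their traces on $\eta$) and with the ``forbidden'' set $W$: this gives
\[
\widehat{\Q}^{an}_{g_n} \left[ S \cap \square_1 \neq \emptyset , \, S \cap \square_2 \neq \emptyset = S \cap W \right] \leq 16 \, \E \left[ \Prob_{1/2}^\eta \left[ JP^\eta_{\square_1,\square_2}(g_n) \cond \omega \setminus W \right]^2 \right] \, ,
\]
and dividing by $\E_{1/2}[g_n]=\Pro_{1/2}[\cross(n,n)]=1/2$ turns $\widehat{\Q}^{an}$ into the probability in the statement. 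So it suffices to bound the right-hand side by $\grandO{1}\,\alpha_4^{an}(\dist(\square_1,\square_2))\,\alpha_4^{an}(r)\,\E[\Prob_{1/2}^\eta[\Piv_B(g_n)\mid \omega\setminus W]^2]$.

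The next step is a geometric decoupling of $JP^\eta_{\square_1,\square_2}(g_n)$ into three scales: a small annulus around each of $\square_1,\square_2$ realizing a $4$-arm event, a middle annulus between the pair and $\partial B'$ realizing a $4$-arm event out to radius $\asymp \dist(\square_1,\square_2)$... actually the cleaner split is: between the two squares there must be $4$ arms in the annulus of inner radius $O(1)$ and outer radius $\asymp \dist(\square_1,\square_2)$ around (say) the midpoint, then $4$ arms from $\partial[$that annulus$]$ out to $\partial B$ of radius $r$, and then, outside $B$, the pivotality of $B$ for $g_n$ conditioned on $\omega\setminus W$. Formally, using Definition~\ref{d.piv_hat}, one shows
\[
JP^\eta_{\square_1,\square_2}(g_n) \subseteq \widehat{\arm}_4^{D_1} \cap \widehat{\arm}_4^{D_2} \cap \Piv_B^{B^c}(g_n) \, ,
\]
where $D_1,D_2$ are disjoint annuli (one of scale $\dist(\square_1,\square_2)$ centered near the pair, one of scales $\dist(\square_1,\square_2)$ to $r$) on which the two $4$-arm events are measurable, and where I have separated the ``inside $B$'' part from the conditioning on $\omega\setminus W$. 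Since $D_1$, $D_2$ and $B^c$ are (up to harmless overlaps handled as in \cite{V1}) in mutually disjoint regions, conditioning on $\omega\setminus W$ and taking the second moment factorizes: spatial independence of the Poisson process gives
\[
\E \left[ \Prob_{1/2}^\eta \left[ JP^\eta_{\square_1,\square_2}(g_n) \cond \omega \setminus W \right]^2 \right] \leq \E \left[ \Prob_{1/2}^\eta[\widehat{\arm}_4^{D_1}]^2 \right] \E \left[ \Prob_{1/2}^\eta[\widehat{\arm}_4^{D_2}]^2 \right] \E \left[ \Prob_{1/2}^\eta \left[ \Piv_B(g_n) \cond \omega\setminus W \right]^2 \right] .
\]
Then Proposition~\ref{p.hat} (the quenched second-moment bound on $\widehat{\arm}_4$) together with the quasi-multiplicativity Proposition~\ref{p.QMulti} bounds the first two factors by $\grandO{1}\alpha_4^{an}(\dist(\square_1,\square_2))$ and $\grandO{1}\alpha_4^{an}(\dist(\square_1,\square_2),r)$ respectively, and a final application of quasi-multiplicativity, $\alpha_4^{an}(\dist(\square_1,\square_2))\,\alpha_4^{an}(\dist(\square_1,\square_2),r) \asymp \alpha_4^{an}(r)\,\alpha_4^{an}(\dist(\square_1,\square_2))$ up to reindexing, gives exactly the claimed form.

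The main obstacle will be making the spatial-independence step rigorous: the region ``inside $B$'' on which $\Piv_B(g_n)$ depends is not literally disjoint from the annuli $D_1,D_2$, and one must genuinely follow the technology of \cite{garban2010fourier} and \cite{V1} — using the hatted pivotal events $\Piv_{\cdot}^{\cdot}(g_n)$ of Definition~\ref{d.piv_hat}, which are measurable with respect to $\omega$ restricted to the relevant annulus — to insert buffer annuli and legitimately factor the second moment over disjoint Poisson regions. The conditioning on $\omega\setminus W$ adds a further wrinkle (one must argue, as in \cite{garban2010fourier}, that restricting to $\omega\setminus W$ does not destroy the independence of the inner arm events from the configuration outside $B$), but since $W\cap B=\emptyset$ this is handled exactly as in the Bernoulli case. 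Everything else is a bookkeeping application of Proposition~\ref{p.QMulti}, Proposition~\ref{p.hat} and Theorem~\ref{t.quenched_arm}, which is precisely the point where the quenched-to-annealed comparison is used.
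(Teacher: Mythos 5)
Your overall architecture is the same as the paper's (Lemma~\ref{l.jp} with $n=2$, a geometric inclusion of $JP^\eta_{\square_1,\square_2}(g_n)$ into events living on disjoint annuli, spatial independence of the Poisson process, then Proposition~\ref{p.hat}/Theorem~\ref{t.quenched_arm} and quasi-multiplicativity), but your ``cleaner split'' into two annuli is a genuine gap. Joint pivotality of $\square_1$ and $\square_2$ forces a $4$-arm event around \emph{each} square separately out to radius $\asymp \dist(\square_1,\square_2)/2$, \emph{and} a $4$-arm event from scale $\dist(\square_1,\square_2)$ out to scale $r$: three disjoint annuli, contributing $\alpha_4^{an}(\dist(\square_1,\square_2))^2\,\alpha_4^{an}(\dist(\square_1,\square_2),r) \asymp \alpha_4^{an}(\dist(\square_1,\square_2))\,\alpha_4^{an}(r)$, which is exactly the claimed bound. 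Your two annuli (one from $O(1)$ to $\dist(\square_1,\square_2)$ around the midpoint, one from $\dist(\square_1,\square_2)$ to $r$) only produce $\alpha_4^{an}(\dist(\square_1,\square_2))\,\alpha_4^{an}(\dist(\square_1,\square_2),r) \asymp \alpha_4^{an}(r)$, so you have lost precisely the pair-correlation factor $\alpha_4^{an}(\dist(\square_1,\square_2))$ that the lemma is about. The identity you invoke to recover it, $\alpha_4^{an}(\dist(\square_1,\square_2))\,\alpha_4^{an}(\dist(\square_1,\square_2),r) \asymp \alpha_4^{an}(r)\,\alpha_4^{an}(\dist(\square_1,\square_2))$, is false (it would force $\alpha_4^{an}(\dist(\square_1,\square_2),r)\asymp\alpha_4^{an}(r)$). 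Note also that an annulus centered at the midpoint with outer radius $\asymp\dist(\square_1,\square_2)$ contains both squares, so a $4$-arm event in it is not even implied by joint pivotality. Your first instinct — one small annulus around each square — was the correct one; the paper takes $A_i$ co-centered with $\square_i$ of radii $1$ and $\dist(\square_1,\square_2)/2$ for $i=1,2$, a third annulus of radii $\dist(\square_1,\square_2)$ and $r/3$, uses $JP^\eta_{\square_1,\square_2}(g_n)\subseteq \bigcap_{i}\Piv_{\square_i}^{A_i}(g_n)\cap\Piv_B(g_n)$, and bounds the product by Lemma~4.9 of~\cite{V1}.

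A secondary inaccuracy: the factorization does not produce second moments $\E[\Prob_{1/2}^\eta[\widehat{\arm}_4^{D_i}]^2]$ of the local events. Inside $\Prob_{1/2}^\eta[\,\cdot\cond\omega\setminus W]$, an event measurable with respect to $\omega\cap D_i$ with $D_i\cap W=\emptyset$ is an indicator, and the square of an indicator is itself; after taking $\Ex_{1/2}^\eta$ and then $\E$ over disjoint Poisson regions, the local factors therefore come out as annealed probabilities to the \emph{first} power, $\Pro_{1/2}[\Piv_{\square_i}^{A_i}(g_n)]$. The inequality you wrote with squared local factors is not what the independence argument yields (with the correct three annuli it would even assert a bound smaller than the truth). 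This does not affect your final arithmetic, since you then bound each second moment by a first power of $\alpha_4^{an}$ anyway, but the main error — the missing small-scale $4$-arm event around the second square — does, and it is exactly the factor the lemma needs.
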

\begin{proof}
All the annuli considered in this proof are of the form $A(x;\rho_1,\rho_2)$. Let $A_1$ (respectively $A_2$) be an annulus co-centered with $\square_1$ (respectively $\square_2$) of inner-radius $1$ and of outer radius $\text{\textup{dist}}(\square_1,\square_2)/4$. Also, let $A_3$ be the annulus centered at a point at distance $\text{\textup{dist}}(\square_1,\square_2)/2$ from both $\square_1$ and $\square_2$, of inner radius $2\text{\textup{dist}}(\square_1,\square_2)$ and of outer radius $r/3$. Note that $JP^\eta_{\square_1,\square_2}(g_n)$ is included in $\Piv_{\square_1}^{A_1}(g_n) \cap \Piv_{\square_2}^{A_2}(g_n) \cap \Piv_{\square_3}^{A_3}(g_n) \cap \Piv_B(g_n)$, where $\square_3$ is the inner square of $A_3$. By spatial independence, by using that $W$ does not intersect $B$, and by Lemma~\ref{l.jp}, we have
\begin{align*}
& \widehat{\Q}^{an}_{g_n} \left[ S \cap \square_1 \neq \emptyset , \, S \cap \square_2 \neq \emptyset , \, S \cap W = \emptyset \right]\\
& \leq 4^2 \, \E \left[ \Prob_{1/2}^\eta \left[ \Piv_B(g_n) \cond \omega \setminus W \right]^2 \right] \prod_{i=1}^3 \E \left[ \Prob_{1/2}^\eta \left[ \Piv_{\square_i}^{A_i}(g_n) \right] \right]\\
& = 4^2 \, \E \left[ \Prob_{1/2}^\eta \left[ \Piv_B(g_n) \cond \omega \setminus W \right]^2 \right] \prod_{i=1}^3 \Pro_{1/2} \left[ \Piv_{\square_i}^{A_i}(g_n) \right] \, .
\end{align*}
By Lemma~4.9 of~\cite{V1}, we have
\[
\prod_{i=1}^3 \Pro_{1/2} \left[ \Piv_{\square_i}^{A_i}(g_n) \right] \leq  \grandO{1} \alpha_4^{an}\left( \text{\textup{dist}}(\square_1,\square_2) \right)^2 \alpha_4^{an}(\text{\textup{dist}}(\square_1,\square_2),r) \, .
\]
By the quasi-multiplicativity property, we have $ \alpha_4^{an}\left( \text{\textup{dist}}(\square_1,\square_2) \right)^2 \alpha_4^{an}(\text{\textup{dist}}(\square_1,\square_2),r) \leq \grandO{1} \alpha_4^{an}(r) \alpha_4^{an}(\text{\textup{dist}}(\square_1,\square_2))$. This ends the proof since the distribution of $\mathcal{S}^{an}_{g_n}$ is
\[
\frac{\widehat{\Q}^{an}_{g_n} \left[ \cdot \right]}{\Pro_{1/2} \left[\cross(n,n) \right]}=2 \widehat{\Q}^{an}_{g_n} \left[ \cdot \right] \, .
\]
\end{proof}

\paragraph{A first moment estimate.}

\begin{lem}\label{l.first_moment_spec}
Let $\square$ be a $1 \times 1$ square of the grid $\Z^2$ included in $B'$. We have
\[
\Pro \left[ \mathcal{S}^{an}_{g_n} \cap \square \neq \emptyset = \mathcal{S}^{an}_{g_n} \cap W \right] \geq \Omega(1) \alpha_4^{an}(r)  \E \left[ \Prob_{1/2}^\eta \left[ \Piv_B(g_n) \cond \omega \setminus W \right]^2 \right]  \, .
\]
\end{lem}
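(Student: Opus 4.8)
The plan is to transfer the statement to a quenched identity and then to a pointwise geometric lower bound relating local pivotality of a single Voronoi cell to pivotality of the box $B$. Since the law of $\mathcal{S}^{an}_{g_n}$ is $2\,\widehat{\Q}^{an}_{g_n}$ (because $\Pro_{1/2}[\cross(n,n)]=1/2$) and $\widehat{\Q}^{an}_{g_n}[\,\cdot\,]=\E[\widehat{\Q}_{g_n^\eta}[\,\cdot\,]]$, it suffices to bound $\E[\widehat{\Q}_{g_n^\eta}[S\cap\square\neq\emptyset = S\cap W]]$ from below. Let $\square_0\subseteq\square$ be a fixed smaller concentric box and, on the event $\eta\cap\square_0\neq\emptyset$ (of probability $\Omega(1)$), choose $x_0=x_0(\eta)\in\eta\cap\square_0$ measurably. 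Since $\widehat{\Q}_{g_n^\eta}$ is a non-negative measure and $\{x_0\in S\}\subseteq\{S\cap\square\neq\emptyset\}$, we get $\widehat{\Q}_{g_n^\eta}[S\cap\square\neq\emptyset=S\cap W]\geq \un_{\eta\cap\square_0\neq\emptyset}\,\widehat{\Q}_{g_n^\eta}[x_0\in S,\ S\cap W=\emptyset]$. By the inclusion--exclusion $\widehat{\Q}_{g_n^\eta}[x_0\in S, S\cap W=\emptyset]=\widehat{\Q}_{g_n^\eta}[S\cap W=\emptyset]-\widehat{\Q}_{g_n^\eta}[S\cap(W\cup\{x_0\})=\emptyset]$, the identity $\widehat{\Q}_{g_n^\eta}[S\cap U=\emptyset]=\Ex^\eta_{1/2}[\Ex^\eta_{1/2}[g_n\mid\omega\setminus U]^2]$ (the case $n=0$ recorded after Lemma~\ref{l.jp}), the tower property and the monotonicity of $g_n$, one obtains the exact formula
\[
\widehat{\Q}_{g_n^\eta}[x_0\in S,\ S\cap W=\emptyset]=\tfrac14\,\Ex^\eta_{1/2}\!\left[\Prob^\eta_{1/2}\!\left[\Piv^q_{x_0}(g_n)\mid\omega\setminus(W\cup\{x_0\})\right]^2\right].
\]
Thus it remains to show that $\E\left[\un_{\eta\cap\square_0\neq\emptyset}\,\Ex^\eta_{1/2}\left[\Prob^\eta_{1/2}\left[\Piv^q_{x_0}(g_n)\mid\omega\setminus(W\cup\{x_0\})\right]^2\right]\right]\gtrsim\alpha_4^{an}(r)\,\E\left[\Prob^\eta_{1/2}\left[\Piv_B(g_n)\mid\omega\setminus W\right]^2\right]$.

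The heart of the proof is the pointwise inequality
\[
\Prob^\eta_{1/2}\!\left[\Piv^q_{x_0}(g_n)\mid\omega\setminus(W\cup\{x_0\})\right]\ \gtrsim\ \un_{E_{x_0}}\,\Prob^\eta_{1/2}\!\left[\Piv_B(g_n)\mid\omega\setminus W\right],
\]
where $E_{x_0}$ is an appropriate \emph{local} four--arm event, measurable with respect to $\eta$ and the colours of cells centred in $B$ (but not the colour of $x_0$): namely, that there are four arms of alternating colour from $\partial C(x_0)$ out to $\partial B$ lying in $B$, landing at well--separated prime ends of $\partial B$, with the cells near $x_0$ of bounded diameter. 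Conditionally on $\omega\setminus(W\cup\{x_0\})$ the colours of all cells centred in $B\setminus\{x_0\}$ are fixed (as $B\cap W=\emptyset$); given $E_{x_0}$, flipping the colour of $C(x_0)$ concatenates either the two black arms (yielding a black crossing of $B$) or the two white arms (yielding a white blockage of $B$), so $C(x_0)$ is pivotal as soon as the remaining random colours of cells in $W$ realise the four--arm event in $B^c$ that certifies $\Piv_B(g_n)$ with arms anchored at those same prime ends. Since $\Piv_B(g_n)$ is $\sigma(\omega\setminus B)$-measurable, $\Prob^\eta_{1/2}[\Piv_B(g_n)\mid\omega\setminus W]=\Prob^\eta_{1/2}[\Piv_B(g_n)\mid\omega\cap(B^c\setminus W)]$, and a Voronoi arm--separation and gluing argument in the spirit of~\cite{garban2010fourier,V1} (using the Voronoi FKG inequality of~\cite{bollobas2006critical}) gives that, conditionally on the colours of cells centred in $B^c\setminus W$, the probability of that anchored four--arm event is $\gtrsim\Prob^\eta_{1/2}[\Piv_B(g_n)\mid\omega\cap(B^c\setminus W)]$, uniformly over well--separated anchorings; combining these estimates yields the displayed inequality.

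To conclude, note that $E_{x_0}$ is measurable with respect to $\eta$ and the colours of cells centred in $B$, while $\Prob^\eta_{1/2}[\Piv_B(g_n)\mid\omega\setminus W]$ is measurable with respect to the colours of cells centred in $B^c\setminus W$; these two families of colours are independent given $\eta$, so, squaring the previous display and taking $\Ex^\eta_{1/2}$,
\[
\Ex^\eta_{1/2}\!\left[\Prob^\eta_{1/2}\!\left[\Piv^q_{x_0}(g_n)\mid\omega\setminus(W\cup\{x_0\})\right]^2\right]\ \gtrsim\ \Prob^\eta_{1/2}\!\left[E_{x_0}\right]\ \Ex^\eta_{1/2}\!\left[\Prob^\eta_{1/2}\!\left[\Piv_B(g_n)\mid\omega\setminus W\right]^2\right].
\]
Now I would integrate over $\eta$. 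The event $E_{x_0}$ is, up to constants and by quasi-multiplicativity (Proposition~\ref{p.QMulti}), comparable to an $\widehat{\arm}_4$-type event at scales $1$ to $r$ around $x_0$, so by Proposition~\ref{p.hat} and Theorem~\ref{t.quenched_arm} its quenched probability contributes, in the $\eta$-average, a factor $\asymp\alpha_4^{an}(r)$; moreover the local structure near $x_0$ that governs $E_{x_0}$ and $\{\eta\cap\square_0\neq\emptyset\}$ is decoupled up to constants from $\Ex^\eta_{1/2}[\Prob^\eta_{1/2}[\Piv_B(g_n)\mid\omega\setminus W]^2]$ (which is determined by $\eta$ near $\partial B$ and outside $B$, up to negligible Poisson-regularity events), so both the indicator $\un_{\eta\cap\square_0\neq\emptyset}$ and the $\eta$-dependence of the four--arm factor can be absorbed, producing $\gtrsim\alpha_4^{an}(r)\,\E[\Prob^\eta_{1/2}[\Piv_B(g_n)\mid\omega\setminus W]^2]$, which (together with Step~1) is the claim.

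The main obstacle is the pointwise geometric lower bound of the second paragraph: making precise the ``matching prime ends'' construction, so that a local four--arm configuration inside $B$ can be hooked onto an \emph{arbitrary} pivotal configuration of $B$ uniformly in the anchoring, requires the Voronoi arm--separation and quasi-multiplicativity technology of~\cite{V1}; and the subsequent passage from the \emph{quenched} four--arm probability of $E_{x_0}$ to the annealed quantity $\alpha_4^{an}(r)$ is exactly where the quenched arm estimate Theorem~\ref{t.quenched_arm} (and the $\widehat{\arm}$-event machinery, Proposition~\ref{p.hat}) is indispensable, since one must commute a quenched first--moment estimate with the $\eta$-expectation while losing only a constant.
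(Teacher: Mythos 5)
Your opening reduction is exactly the paper's: writing the spectral weight as $\tfrac14\,\Ex^\eta_{1/2}\bigl[\Prob^\eta_{1/2}[\Piv^q_{x}(g_n)\cond\omega\setminus(W\cup\{x\})]^2\bigr]$ via Lemma~2.1 of \cite{garban2010fourier} and monotonicity of $g_n$, and then aiming at a quasi-multiplicativity statement comparing this with $\alpha_4^{an}(r)\,\E\bigl[\Prob^\eta_{1/2}[\Piv_B(g_n)\cond\omega\setminus W]^2\bigr]$ (the paper's \eqref{e.findelathese}). The gap is in your second paragraph, which is where all the work lies. The pointwise inequality
\[
\Prob^\eta_{1/2}\bigl[\Piv^q_{x_0}(g_n)\cond\omega\setminus(W\cup\{x_0\})\bigr]\ \geq\ \Omega(1)\,\un_{E_{x_0}}\,\Prob^\eta_{1/2}\bigl[\Piv_B(g_n)\cond\omega\setminus W\bigr]
\]
cannot be established by the gluing argument you sketch, and is false as stated. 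Under the conditioning, \emph{every} colour outside $W$ is deterministic; since $W\cap B=\emptyset$ and $W$ may lie far from $B$, there is in general no residual randomness in any neighbourhood of $\partial B$ with which to re-anchor the outer arms at the prime ends prescribed by $E_{x_0}$. Your intermediate claim that, conditionally on $\omega\cap(B^c\setminus W)$, the \emph{anchored} four-arm event in $B^c$ has probability $\gtrsim\Prob^\eta_{1/2}[\Piv_B(g_n)\cond\omega\cap(B^c\setminus W)]$ ``uniformly over well-separated anchorings'' fails for the same reason: if the fixed colours near $\partial B$ force the arms to land away from the prescribed anchors, the anchored event has conditional probability $0$ while $\Piv_B(g_n)$ retains positive conditional probability. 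This is precisely the obstruction the paper flags in Subsection~\ref{ss.why} (no usable quenched quasi-multiplicativity) and which forces it to work instead with the coupled quantities $\beta^{an,W}_4(r,R)=\Pro[\omega',\omega''\in\arm_4(r,R)]$ and $\widehat{\beta}^{an,W}_4$, using the monotonicity Lemma~\ref{l.fin0} to \emph{introduce fresh independence} (of colours and of the point process) in chosen annuli before any separation/gluing step; the actual proof occupies Subsection~\ref{ss.findelathese} (Lemmas~\ref{l.fin1}--\ref{l.fin4} and Claims~\ref{cl.1!!}--\ref{cl.2!!}).

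Two further points. First, your argument ignores the position of $B$ relative to $\partial[0,n]^2$: the paper must factor $\Pro[\omega',\omega''\in\Piv_B(g_n)]$ into a four-arm piece at $y$ up to scale $d_0$, a three-arm half-plane piece at $y_0$ up to $d_1$, and a two-arm quarter-plane piece at the corner $y_1$ (Claim~\ref{cl.1!!}), and then reassemble them together with the local four-arm event in $B$ (Claim~\ref{cl.2!!}); a single ``anchored four-arm event in $B^c$'' does not capture this. Second, even granting your pointwise bound, the final step $\E\bigl[\Prob^\eta_{1/2}[E_{x_0}]\cdot\Ex^\eta_{1/2}[\Prob^\eta_{1/2}[\Piv_B(g_n)\cond\omega\setminus W]^2]\bigr]\geq\Omega(1)\,\alpha_4^{an}(r)\,\E\bigl[\Prob^\eta_{1/2}[\Piv_B(g_n)\cond\omega\setminus W]^2\bigr]$ requires decorrelating two $\eta$-dependent factors; asserting that they are ``decoupled up to constants'' is not a proof — this is again handled in the paper through the $\widehat{\arm}$ machinery, the events $\Piv^{D_2}_{D_1}$, and Theorem~\ref{t.quenched_arm}.
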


Before proving Lemma~\ref{l.first_moment_spec}, let us explain how to deduce Proposition~\ref{p.QM_spec1} from Lemmas~\ref{l.second_moment_spec} and~\ref{l.first_moment_spec}.
\begin{proof}[Proof of Proposition~\ref{p.QM_spec1}]
Remember that, if $S \subseteq \R^2$ and $r \in \R_+$, $S(r)$ denotes the set of all $r \times r$ squares of the grid $r\Z^2$ that intersect $S$. Let $Y= |\mathcal{S}^{an}_{g_n}(1) \cap B' \cap \mathcal{Z}_r | \un_{\mathcal{S}^{an}_{g_n} \cap W = \emptyset }$. Lemma~\ref{l.first_moment_spec} implies that (by definition of $\mathcal{Z}_r$ and since it is independent of the annealed spectral sample)
\[
\E \left[ Y \right] \geq \Omega(1) \E \left[ \Prob_{1/2}^\eta \left[ \Piv_B(g_n) \cond \omega \setminus W \right]^2 \right] \, .
\]
Moreover, Lemma~\ref{l.second_moment_spec} implies that (by distinguishing between diagonal and non-diagonal terms)
\begin{multline*}
\E \left[ Y^2 \right] \leq \grandO{1} \E \left[ \Prob_{1/2}^\eta \left[ \Piv_B (g_n) \cond \omega \setminus W \right]^2 \right]\\
+ \E \left[ \Prob_{1/2}^\eta \left[ \Piv_B(g_n) \cond \omega \setminus W \right]^2 \right] \frac{1}{\alpha_4^{an}(r)^2r^4} \sum \alpha_4^{an}\left( \text{\textup{dist}}(\square_1,\square_2) \right) \alpha_4^{an}(r)  \, ,
\end{multline*}
where the sum is over every pair of $1 \times 1$ squares $\square_1 \neq \square_2$ of the grid $\Z^2$ that are included in $B'$. By the quasi-multiplicativity property, we have
\begin{multline*}
\sum \alpha_4^{an}\left( \text{\textup{dist}}(\square_1,\square_2) \right) \alpha_4^{an}(r)   \leq \grandO{1} r^2 \sum_{k=0}^{\log_2(r)} 2^{2k} \alpha_4^{an}(2^k) \alpha_4^{an}(r)\\
\leq  \grandO{1} r^2 \alpha_4^{an}(r)^2 \sum_{k=0}^{\log_2(r)} 2^{2k} \alpha_4^{an}(2^k,r)^{-1} \, .
\end{multline*}
By using the fact that (for all $r' \leq r$) $\alpha_4^{an}(r',r) \geq \grandO{1} (r'/r)^{2-\Omega(1)}$, it is not difficult to see that the above is at most
\[
r^4 \alpha_4^{an}(r)^2 \, .
\]
As a result,
\[
\E \left[ Y^2 \right] \leq \grandO{1} \E \left[ \Prob_{1/2}^\eta \left[ \Piv_B(g_n) \cond \omega \setminus W \right]^2 \right] \, .
\]
Next, note that (by Lemma~\ref{l.jp} with $n=1$)
\begin{multline*}
\Pro \left[ \mathcal{S}^{an}_{g_n} \cap B \neq \emptyset = \mathcal{S}_{g_n}^{an} \cap W \right] = \frac{\E \left[\widehat{\Q}_{g_n^\eta} \left[ S \cap B \neq \emptyset = S \cap W \right] \right]}{\Pro_{1/2}\left[ \cross(n,n) \right]} \\
\hspace{5em} \leq  8 \E \left[ \Prob_{1/2}^\eta \left[ \Piv_B^q(g_n) \cond \omega \setminus W \right]^2 \right]\\
\leq 8 \E \left[ \Prob_{1/2}^\eta \left[ \Piv_B(g_n) \cond \omega \setminus W \right]^2 \right] \, .
\end{multline*}
As a result,
\[
\Pro \left[ \mathcal{S}_{g_n}^{an} \cap B' \cap \mathcal{Z}_r \neq \emptyset \cond \mathcal{S}_{g_n}^{an} \cap B \neq \emptyset = \mathcal{S}_{g_n}^{an} \cap W \right] \geq \frac{\Pro \left[ Y > 0 \right]}{8 \E \left[ \Prob_{1/2}^\eta \left[ \Piv_B(g_n) \cond \omega \setminus W \right]^2 \right]} \, .
\]
By the Cauchy-Schwarz inequality, we have
\[
\Pro \left[ Y > 0 \right] \geq \frac{\E \left[Y \right]^2}{\E \left[Y^2\right]} \geq \Omega(1) \E \left[ \Prob_{1/2}^\eta \left[ \Piv_B(g_n) \cond \omega \setminus W \right]^2 \right] \, .
\]
This ends the proof.
\end{proof}

Let us start the proof of Lemma~\ref{l.first_moment_spec}. Let $\square$ be a $1 \times 1$ square of the grid $\Z^2$ included in $B'$. When $|\eta \cap \square | = 1$, we let $x$ be the only element of this set. We have
\begin{eqnarray*}
\Pro \left[ \mathcal{S}^{an}_{g_n} \cap \square \neq \emptyset = \mathcal{S} \cap W \right] & =  & \frac{1}{\Pro_{1/2} \left[ \cross(n,n) \right] } \E \left[ \widehat{\Q}_{g_n^\eta} \left[ S \cap \square \neq \emptyset = S \cap W \right] \right]\\
& \geq &  2 \E \left[ \un_{|\square \cap \eta| =1} \widehat{\Q}_{g_n^\eta} \left[ x \in S, \, S \cap W = \emptyset \right] \right] \, .
\end{eqnarray*}
By Lemma~2.1 of~\cite{garban2010fourier}, on the event $\{ |\eta \cap \square | =1 \}$, we have
\[
\widehat{\Q}_{g_n^\eta} \left[ x \in S, \, S \cap W = \emptyset \right] = \Ex_{1/2}^\eta \left[ \Ex_{1/2}^\eta \left[ \chi^\eta_{\{x\}} g_n^\eta \cond \omega \setminus \left( W \cup \{x \} \right) \right]^2 \right]  \, .
\]
Since $g_n^\eta$ is increasing and only takes values $0$ and $1$, we have
\[
\Ex_{1/2}^\eta \left[ \chi^\eta_{\{x\}} g_n^\eta \cond \omega \setminus \{x \} \right] = \frac{1}{2} \un_{\Piv_x^q(g_n)} \, .
\]
As a result,
\begin{eqnarray*}
\Ex_{1/2}^\eta \left[ \chi^\eta_{\{x\}} g_n^\eta \cond \omega \setminus \left( W \cup \{x \} \right) \right]  & = & \Ex_{1/2}^\eta \left[ \Ex_{1/2}^\eta \left[ \chi^\eta_{\{x\}} g_n^\eta \cond \omega \setminus \{x \} \right] \cond \omega \setminus W \right]\\
& = & \frac{1}{2} \Prob^\eta_{1/2} \left[ \Piv_x^q(g_n) \cond \omega \setminus W \right] \, .
\end{eqnarray*}
As observed at the beginning of Subsection~5.3 of~\cite{garban2010fourier}, we have the following: let $\omega',\omega'' \sim \Pro_{1/2}$ be two configurations that have the same underlying point process $\eta$ and satisfy i) $\omega'_{|\eta \setminus W} = \omega''_{|\eta \setminus W}$ and ii) conditionally on $\eta$, $\omega'_{|\eta \cap W}$ is independent of $\omega''$ and $\omega''_{|\eta \cap W}$ is independent of $\omega'$. Then,
\[
\Ex^\eta_{1/2} \left[ \Prob^\eta_{1/2} \left[ \Piv_x^q(g_n) \cond \omega \setminus W \right]^2 \right] = \Pro \left[ \omega', \omega'' \in \Piv_x^q(g_n) \cond \eta \right] \, .
\]
As a result,
\[
\Pro \left[ \mathcal{S}^{an}_{g_n} \cap \square \neq \emptyset = \mathcal{S} \cap W \right] \geq \E \left[ \un_{|\eta \cap \square|=1} \Pro \left[ \omega', \omega'' \in \Piv_x^q(g_n) \cond \eta \right] \right] \, .
\]
Therefore, in order to prove Lemma~\ref{l.first_moment_spec}, it is sufficient to show that
\[
\E \left[ \un_{|\eta \cap \square|=1} \Pro \left[ \omega', \omega'' \in \Piv_x^q(g_n) \cond \eta \right] \right] \geq \Omega(1) \alpha_4^{an}(r) \E \left[\Prob^\eta_{1/2} \left[ \Piv_B(g_n) \cond \omega \setminus W \right]^2 \right] \, .
\]
If we use the observation at the beginning of Subsection~5.3 of~\cite{garban2010fourier} once again, we obtain that
\[
\E \left[ \Prob^\eta_{1/2} \left[ \Piv_B(g_n) \cond \omega \setminus W \right]^2 \right] = \Pro \left[ \omega', \omega'' \in \Piv_B(g_n) \right] \, .
\]
As a result, it is enough to prove that
\begin{equation}\label{e.findelathese}
\E \left[ \un_{|\eta \cap \square|=1} \Pro \left[ \omega', \omega'' \in \Piv_x^q(g_n) \cond \eta \right] \right] \geq \Omega(1) \alpha_4^{an}(r) \Pro \left[ \omega', \omega'' \in \Piv_B(g_n) \right] \, ,
\end{equation}
which is a quasi-multiplicativity type estimate. The proof of~\eqref{e.findelathese} (so the end of the proof of Lemma \ref{l.first_moment_spec}) is written in Subsection~\ref{ss.findelathese} by relying on quasi-multiplicativity arguments from~\cite{schramm2010quantitative},~\cite{garban2010fourier} and~\cite{V1} and by studying precisely events of the form
\[
\{ \omega', \omega'' \in \widehat{\arm}_4(r,R) \} \, ,
\]
where $\widehat{\arm}_4(r,R)$ is the event from Definition \ref{d.hat}; the study of such events will be crucial in order to overcome problems resulting from spatial dependencies.

\paragraph{Step 3.} \blue{The last step in~\cite{garban2010fourier} is the proof of a large deviation result, which is \cite[Proposition~6.1]{garban2010fourier}. We refer to Section 6 of \cite{garban2010fourier} for the statement of this result.}
\medskip

Let us end this section by explaining how to combine the Steps $0,\cdots,3$ in order to obtain Theorems~\ref{t.Spec_sample_g_n},~\ref{t.Spec_sample_f_R} and~\ref{t.Spec_sample_f_R_bis}.
\begin{proof}[Proof of Theorems~\ref{t.Spec_sample_g_n},~\ref{t.Spec_sample_f_R} and~\ref{t.Spec_sample_f_R_bis}]
Remember that, if $S \subseteq \R^2$ and $r \in \R_+$, we write\footnote{\blue{In the present proof, we only use the notation $S(r)$ with $r=1$. However, it is hidden in the reference to \cite{garban2010fourier} and \cite{GV} that we need to tile with $r \times r$ boxes and use Steps 1 and 2 with any $r$. The ``$r=1$'' from the present proof comes from the fact that, for any $r$, the random variable $\mathcal{Z}_r$ from Step 2 is a random subset of $1 \times 1$ boxes.}} $S(r)$ for the set of all $r \times r$ squares of the grid $r\Z^2$ that intersect $S$. If we use the results of Steps $1,\cdots,3$ above and if we follow Section~7 of~\cite{garban2010fourier} and Subsection~5.2.2 of~\cite{GV} (where analogues of Theorems~\ref{t.Spec_sample_g_n},~\ref{t.Spec_sample_f_R} and~\ref{t.Spec_sample_f_R_bis} are proved), we obtain that
\[
\Pro \left[ 0  < |\mathcal{S}^{an}_{g_n}(1)| < r^2\alpha_4^{an}(r), \, \mathcal{S}^{an}_{g_n} \subseteq [0,n]^2 \right] \leq \grandO{1} \left( \frac{n}{r} \alpha_4^{an}(r,n) \right)^2 \, ;
\]
\[
\Pro \left[ 0  < |\mathcal{S}^{an}_{f_R}(1)| < r^2\alpha_4^{an}(r), \, \mathcal{S}^{an}_{f_R} \subseteq [-R,R]^2 \right] \leq \grandO{1} \alpha_1^{an}(r,R) \, ;
\]
and
\begin{multline*}
\Pro \left[ 0  < |\mathcal{S}^{an}_{f_R}(1) \setminus [-r_0,r_0]^2| < r^2\alpha_4^{an}(r), \, \mathcal{S}^{an}_{f_R} \subseteq [-R,R]^2 \right]\\
\leq \grandO{1} \alpha_1^{an}(r_0,R) \left( \frac{r_0}{r} \right)^{1-\varepsilon_1} \alpha_4^{an}(r,r_0) \, .
\end{multline*}
Note that (for any $h$) $|\mathcal{S}_h^{an}|\geq |\mathcal{S}_h^{an}(1)|$, so these three results also hold with $|\mathcal{S}_h^{an}|$ instead of $|\mathcal{S}_h^{an}(1)|$. As a result, it only remains to deal with the events $\{ \mathcal{S}_{g_n}^{an} \nsubseteq [0,n]^2 \}$ and $\{ \mathcal{S}_{f_R}^{an} \nsubseteq [-R,R]^2 \}$. Lemma~\ref{l.0} implies that
\[
\Pro \left[ \mathcal{S}^{an}_{g_n} \nsubseteq [0,n]^2 \right] \leq \grandO{1}/n \, ,
\]
which ends the proof of Theorem~\ref{t.Spec_sample_g_n} (i.e. the result concerning $g_n$). Concerning the function $f_R$, Lemma~\ref{l.0} implies that
\[
\Pro \left[ \mathcal{S}^{an}_{f_R} \nsubseteq [-R,R]^2 \right] \leq  \grandO{1} \frac{1}{R} \, .
\]
Therefore, in order to prove Theorems~\ref{t.Spec_sample_f_R} and~\ref{t.Spec_sample_f_R_bis} it only remains to prove that, if $\varepsilon_0>0$ is sufficiently small, then $1/R$ is
\bi 
\item[i)] less than $\grandO{1} \alpha_1^{an}(r,R)\,$;
\item[ii)] less than $\grandO{1} \alpha_1^{an}(r_0,R) \left( \frac{r_0}{r} \right)^{1-\varepsilon_0} \alpha_4^{an}(r,r_0)\,$.
\ei
Item~i) comes for instance from the fact that the probability of the $1$-arm event is greater than the probability of the $2$-arm event in the half-plane which has exponent $1$ (see Proposition~\ref{p.univ}). Let us end the proof by showing Item~ii). By Proposition~\ref{p.4}, there exists $\varepsilon > 0$ such that
\[
\alpha_4^{an}(r,r_0) \geq \varepsilon \left( \frac{r}{r_0} \right)^{2-\varepsilon} \, .
\]
We assume that $\varepsilon_0 \leq \varepsilon$. We obtain that
\[
\alpha_1^{an}(r_0,R) \left( \frac{r_0}{r} \right)^{1-\varepsilon_0} \alpha_4^{an}(r,r_0) \geq \varepsilon \alpha_1^{an}(r_0,R) \frac{r}{r_0}\geq \Omega(1) \frac{r_0}{R}\cdot \frac{r}{r_0} \geq \Omega(1) \frac{1}{R} \, .
\]
This ends the proof of Item~ii).
\end{proof}

\subsection{Quasi-multiplicativity properties for the annealed spectral sample}\label{ss.findelathese}

In this subsection, we prove~\eqref{e.findelathese}, which implies Lemma~\ref{l.first_moment_spec}. We refer to Step~2 of Subsection~\ref{ss.proofs_spec} for the notations used in the present subsection. The proof of~\eqref{e.findelathese} relies on quasi-multiplicativity estimates. \textbf{In this section, we assume that the reader has read Section~5 of~\cite{garban2010fourier}} (where the authors prove the analogue of~\eqref{e.findelathese} for Bernoulli percolation) \textbf{and Section~7.1 of~\cite{V1}} (where the quasi-multiplicativity property is proved for Voronoi percolation). Let us first define some events introduced in~\cite{V1} (as an important step of the proof, we will show that, if one conditions on the fact that $\omega$ and $\omega'$ satisfy a $j$-arm event between scales $r$ and $R$, then the events defined in the three definitions below are typically satisfied at scales $r$ and $R$; remember that $\omega$ and $\omega'$ are defined in Subsection~\ref{ss.proofs_spec}; we will recall their definition below).
\begin{defi}\label{d.dense}
If $\delta \in ]0,1[$ and $D$ is a bounded Borel subset of the plane, we write $\dense_\delta(D)$ for the event that, for every $u \in D$, there exists $x \in \eta \cap D$ such that $||x-u||_2 \leq \delta \cdot \text{diam}(D)$.
\end{defi}
\begin{defi}\label{d.qbc}
In this definition, we introduce the event $\qbc_\delta^\gamma(D)$. This event is roughly the event that the quenched crossing probabilities are not degenerate for all quads $Q \subseteq D$ satisfying: i) the diameter of $Q$ is larger than $\delta \cdot \text{diam}(D)$ and ii) two opposite sides of $Q$ are at distance larger than $\delta \cdot \text{diam}(D)$.
\begin{itemize}
\item A \textbf{quad} is a topological rectangle, i.e. a subset of the plane homeomorphic to a closed disc with two disjoint non empty segments on its boundary. If $Q$ is a quad, the event $\cross(Q)$ is the event that there is a black path included in $Q$ that connects one segment to the other.
\item Let $D$ be a subset of the plane and let $\delta \in ]0,1[$. We denote by $\mathcal{Q}'_\delta(D)$ the set of all quads $Q \subseteq D$ which are drawn on the grid $(\delta \, \diam(D)) \cdot \Z^2$ (i.e. whose sides are included in the edges of $(\delta \, \diam(D)) \cdot \Z^2$ and whose corners are vertices of $(\delta \, \diam(D)) \cdot \Z^2$). Also, we denote by $\mathcal{Q}_\delta(D)$ the set of all quads $Q \subseteq D$ such that there exists a quad $Q' \in \mathcal{Q}'_\delta(D)$ satisfying $\cross(Q') \subseteq \cross(Q)$.
\item By~\cite{V1} (Proposition~2.13), there exists a constant $C<+\infty$ such that the following holds: For every $\delta \in ]0,1[$ and every $\gamma \in ]0,+\infty[$, there exists $c=c(\delta,\gamma) \in ]0,1[$ such that, for every $D$ Borel subset of the plane that satisfies $\text{diam}(D) \geq \delta^{-2}/100$, we have
\[
\Pro \left[ \forall Q \in \mathcal{Q}_\delta(D) , \, \Prob_{1/2}^\eta \left[ \cross(Q) \right] \geq c \right] \geq 1 - C\text{diam}(D)^{-\gamma} \, .
\]
We define the event $\qbc_\delta^\gamma(D)$ (for ``Quenched Box Crossings'') as follows:
\[
\qbc_\delta^\gamma(D) = \{ \forall Q \in \mathcal{Q}_\delta(D) , \, \Prob_{1/2}^\eta \left[ \cross(Q) \right] \geq c(\delta,\gamma)  \} \, .
\]
\end{itemize}
\end{defi}
Let us now define events that the interfaces at scale $r$ or $R$ are well separated.
\begin{defi}\label{d.gi}
Let $\delta \in ]0,1/100[$ and $1 \leq r \leq R < +\infty$. Also, let $\beta_1,\cdots,\beta_k$ denote the interfaces between black and white from $\partial [-r,r]^2$ to $\partial [-R,R]^2$ (which are drawn on the edges of the Voronoi cells) and let $z_i^{ext}$ (respectively $z_i^{int}$) denote the end point of $\beta_i$ on $\partial [-R,R]^2$ (respectively on $\partial [-r,r]^2$). We write $s^{ext}(r,R)$ for the infimum of $||z_i^{ext}-z_j^{ext}||_2$ (for $i\neq j$) and $s^{int}(r,R)$ for the infimum of $||z_i^{int}-z_j^{int}||_2$ (for $i \neq j$). We define the two following events (where $\gi$ means ``Good Interfaces''):
\[
\gi^{ext}_\delta(R)= \left\lbrace s^{ext}(3R/4,R) \geq 10\delta R \right\rbrace 
\]
and
\[
\gi^{int}_\delta(r)= \left\lbrace s^{int}(r,3r/2) \geq 10\delta r \right\rbrace \, .
\]
(In~\cite{V1}, these events are denoted by $\widetilde{\gi}^{ext}_\delta(R)$ and $\widetilde{\gi}^{int}_\delta(r)$.)
\end{defi}
\bigskip

We need several lemmas to prove~\eqref{e.findelathese}.
In these lemmas, we use the following notations:
\bi
\item[(a)] $W$ is a Borel subset of the plane,
\item[(b)] $\omega',\omega'' \sim \Pro_{1/2}$ are two configurations that have the same underlying point process $\eta$ and satisfy i) $\omega'_{|\eta \setminus W} = \omega''_{|\eta \setminus W}$ and ii) conditionally on $\eta$, $\omega'_{|\eta \cap W}$ is independent of $\omega''$ and $\omega''_{|\eta \cap W}$ is independent of $\omega'$.
\ei
Moreover, for any $1 \leq r \leq R < +\infty$, we write
\begin{equation}
\beta^{an,W}_4(r,R) = \Pro \left[ \omega',\omega'' \in \arm_4(r,R) \right]
\end{equation}
and
\begin{equation}\label{e.lanotationbetahat}
\widehat{\beta}^{an,W}_4(r,R) = \Pro \left[ \omega',\omega'' \in \widehat{\arm}_4(r,R) \right]  \, ,
\end{equation}
where the event $\widehat{\arm}_4(r,R)$ is the event from Definition~\ref{d.hat} (remember that $\widehat{\arm}_4(r,R)$ is measurable with respect to the configuration restricted to the annulus $B(0,R) \setminus B(0,r)$). We also define the following events analogous to $\widehat{\arm}_4(r,R)$:
\[
\widehat{\arm}_4^{ext}(r,R) = \left\lbrace \Pro_{1/2} \left[ \arm_4(r,R) \cond \omega \cap B(0,R) \right] > 0 \right\rbrace \, ,
\]
\[
\widehat{\arm}^{int}_4(r,R) = \left\lbrace \Pro_{1/2} \left[ \arm_4(r,R) \cond \omega \setminus B(0,r) \right] > 0 \right\rbrace 
\]
and we let
\[
\widehat{\beta}^{an,ext,W}_4(r,R) = \Pro \left[ \omega',\omega'' \in \widehat{\arm}^{ext}_4(r,R) \right]  \, ,\; \widehat{\beta}^{an,int,W}_4(r,R) = \Pro \left[ \omega',\omega'' \in \widehat{\arm}^{int}_4(r,R) \right]  \, .
\]
We refer to the proof of Lemma \ref{l.fin3} for an illustration of how we will combine estimates on $\widehat{\beta}^{an,ext,W}_4(r,R)$ and $\widehat{\beta}^{an,int,W}_4(r,R)$ to obtain estimates on the quantity $\widehat{\beta}^{an,W}_4(r,R)$.

Note that we have
\begin{multline*}
\beta^{an,W}_4(r,R) \leq \beta_4^{an,ext,W}(r,R)  \leq \widehat{\beta}^{an,W}_4(r,R) \text{ and similarly }\\ \beta^{an,W}_4(r,R) \leq  \beta_4^{an,int,W}(r,R) \leq \widehat{\beta}^{an,W}_4(r,R) \, .
\end{multline*}
A very important step in the proof of the quasi-multiplicativity type result~\eqref{e.findelathese} is that the above inequalities are actually equalities up to constants. Thanks to this - which roughly implies that the event $\{ \omega',\omega'' \in \arm_4(r,R) \}$ depends essentially on the configuration in $B(0,R) \setminus B(0,r)$ - we will overcome most of the problems resulting from spatial dependencies.

In order to prove \eqref{e.findelathese}, we first need to show four lemmas. Before stating them, let us say that, in his or her reading of Lemmas \ref{l.fin0} to \ref{l.fin4}, the reader can keep in mind that the goal of the present subsection is to prove a quasi-multiplicativity property of the kind
\[
\forall 1 \leq r_1 \leq r_2 \leq r_3 <+\infty, \, \widehat{\beta}^{an,W}_4(r_1,r_3) \asymp \beta^{an,W}_4(r_1,r_3) \asymp \beta^{an,W}_4(r_1,r_2)\beta^{an,W}_4(r_2,r_3) \, .
\]
We will not write exactly the proof of such an inequality since we are only interested in proving \eqref{e.findelathese}, but it is easier to follow the general ideas by keeping this in mind rather than \eqref{e.findelathese}.

We begin with the following lemma.
\begin{lem}\label{l.fin0}
Let $E$ and $F$ be two Borel subsets of the plane and consider two configurations $\widetilde{\omega}',\widetilde{\omega}'' \sim \Pro_{1/2}$ such that: i) $\widetilde{\omega}'=\widetilde{\omega}''$ outside of $E \cup F$, ii) $\widetilde{\omega}'$ and $\widetilde{\omega}''$ are independent of each other in $E$ and iii) in $F \setminus E$, $\widetilde{\omega}'$ and $\widetilde{\omega}''$ have the same underlying point process $\eta$ but are independent conditionally on this point process. Then, for every event $A$, the quantity
\[
\Pro \left[ \widetilde{\omega}',\widetilde{\omega}'' \in A \right]
\]
is non-increasing in $E$ and $F$.
\end{lem}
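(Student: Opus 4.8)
The plan is to rewrite $\Pro[\widetilde{\omega}',\widetilde{\omega}''\in A]$ as a ``second moment of a conditional expectation'' and to exploit that this quantity is monotone in the conditioning $\sigma$-algebra. Concretely, I would introduce the sub-$\sigma$-algebra $\mathcal{G}=\mathcal{G}(E,F)$ generated by the point process restricted to $E^c$ together with the colours of the points lying in $(E\cup F)^c$; heuristically this is exactly the ``shared randomness'' of the coupling. Assumptions i), ii) and iii) say precisely that, conditionally on $\mathcal{G}$, the configuration $\widetilde{\omega}'$ is obtained from the $\mathcal{G}$-data by adding fresh colours to the (already revealed) points of $F\setminus E$ and a fresh Poisson process with fresh colours on $E$, and likewise for $\widetilde{\omega}''$, these two completions being performed independently of each other. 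Hence, conditionally on $\mathcal{G}$, the variables $\widetilde{\omega}'$ and $\widetilde{\omega}''$ are independent and have the same law, so $\Pro[\widetilde{\omega}'\in A\mid\mathcal{G}]=\Pro[\widetilde{\omega}''\in A\mid\mathcal{G}]$ a.s.; moreover this common conditional law coincides with the conditional law of a single configuration $\widetilde{\omega}\sim\Pro_{1/2}$ given the analogous sub-$\sigma$-algebra of $\sigma(\widetilde{\omega})$. Putting these facts together,
\[
\Pro\left[\widetilde{\omega}',\widetilde{\omega}''\in A\right]=\E\left[\Pro\left[\widetilde{\omega}'\in A\mid\mathcal{G}\right]^2\right]=\E\left[\Pro\left[\widetilde{\omega}\in A\mid\mathcal{G}(E,F)\right]^2\right],
\]
the last expectation being computed under $\Pro_{1/2}$ on the single space $\Omega$.

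It then remains to record two elementary monotonicity facts. First, if $E_1\subseteq E_2$ and $F_1\subseteq F_2$ then $E_2^c\subseteq E_1^c$ and $(E_2\cup F_2)^c\subseteq(E_1\cup F_1)^c$, hence $\mathcal{G}(E_2,F_2)\subseteq\mathcal{G}(E_1,F_1)$: enlarging $E$ and $F$ makes the conditioning $\sigma$-algebra \emph{coarser}. Second, for any two sub-$\sigma$-algebras $\mathcal{H}_2\subseteq\mathcal{H}_1$ the tower property gives $\Pro[A\mid\mathcal{H}_2]=\E[\Pro[A\mid\mathcal{H}_1]\mid\mathcal{H}_2]$, so conditional Jensen applied to $x\mapsto x^2$ yields $\E[\Pro[A\mid\mathcal{H}_2]^2]\le\E[\Pro[A\mid\mathcal{H}_1]^2]$. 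Combining these two observations with the displayed identity shows that $\Pro[\widetilde{\omega}',\widetilde{\omega}''\in A]$ is non-increasing in $E$ and in $F$, which is the assertion.

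The one step requiring care is the first one: one must spell out the coupling precisely enough (the three regimes: equality outside $E\cup F$, shared point process with independent colours on $F\setminus E$, full independence on $E$) to justify that, conditionally on $\mathcal{G}(E,F)$, the two configurations are conditionally i.i.d., and one must make sure that the resulting identity is ultimately read on a \emph{single} probability space so that the comparison of $\sigma$-algebras makes sense. Once this bookkeeping is done, the conclusion is exactly the one-line Jensen argument above, and there is no further difficulty.
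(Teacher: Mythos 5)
Your proof is correct and is essentially identical to the paper's: the paper also rewrites $\Pro[\widetilde{\omega}',\widetilde{\omega}''\in A]$ as $\E\left[\Pro_{1/2}\left[A \mid \eta\setminus E,\ \omega\setminus(E\cup F)\right]^2\right]$ (your $\mathcal{G}(E,F)$ is exactly $\sigma(\eta\setminus E,\ \omega\setminus(E\cup F))$) and concludes by noting this $\sigma$-algebra is non-increasing in $E$ and $F$, i.e.\ the same conditional Jensen argument. Your write-up just makes explicit the conditional-i.i.d.\ verification and the single-space reading that the paper leaves implicit.
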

\begin{proof}
This comes from the fact that
\[
\Pro \left[ \widetilde{\omega}',\widetilde{\omega}'' \in A \right]=\E \left[ \Pro_{1/2} \left[ A \cond \eta \setminus E, \, \omega \setminus (E \cup F) \right]^2 \right]
\]
and that the $\sigma$-algebra $\sigma \left(\eta \setminus E, \omega \setminus (E \cup F) \right)$ is non-increasing in $E$ and $F$.
\end{proof}

The following lemma is the analogue of Lemma~5.7 of~\cite{garban2010fourier}. 
\begin{lem}\label{l.fin1}
For any $\varepsilon > 0$, there exist $\overline{r}=\overline{r}(\varepsilon) < +\infty$, $c=c(\varepsilon)>0$ and an absolute constant $d \in ]0,+\infty[$ (in particular, $\overline{r}$, $c$ and $d$ are independent of $W$) such that the following holds:
\begin{itemize}
\item For any $r_0 \geq 1$ and $r \geq r_0 \vee \overline{r}$ that satisfy
\begin{equation}\label{e.fin11}
\widehat{\beta}^{an,ext,W}_4(r_0,4r) \geq \varepsilon \widehat{\beta}^{an,ext,W}_4(r_0,r)  \, ,
\end{equation}
we have
\begin{equation}\label{e.concl1}
\forall r' \geq r, \, \beta^{an,W}_4(r_0,r') \geq c \widehat{\beta}^{an,ext,W}_4(r_0,r) \left( \frac{r}{r'} \right)^d \,.
\end{equation}
\item For any $r_0 \geq \overline{r}$ and any $r \in [\overline{r},r_0]$ that satisfy
\begin{equation}\label{e.fin12}
\widehat{\beta}^{an,int,W}_4(r/4,r_0) \geq \varepsilon \widehat{\beta}^{an,int,W}_4(r,r_0)  \, ,
\end{equation}
we have
\begin{equation}\label{e.concl2}
\forall r' \in [\overline{r},r] , \, \beta^{an,W}_4(r',r_0) \geq c \widehat{\beta}^{an,int,W}_4(r,r_0) \left( \frac{r'}{r} \right)^d \,.
\end{equation}
\end{itemize} 
\end{lem}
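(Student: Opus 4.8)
The plan is to mimic the proof of Lemma~5.7 of~\cite{garban2010fourier}, but carrying the coupled pair $(\omega',\omega'')$ (equivalently, working with the squared quenched probabilities as in Lemma~\ref{l.fin0}) and replacing every use of a quasi-multiplicativity / gluing statement for a single configuration by its analogue for the coupled configuration and for the ``hat'' events $\widehat{\arm}_4$. By the symmetry between the two bullets (interior vs.\ exterior, which amounts to reflecting the annulus in the map $z\mapsto r_0 r/z$ and swapping inner and outer radii — exactly the passage described in the remark following Definition~\ref{d.hat} and used throughout~\cite{V1}), it suffices to prove the first bullet; the second follows by the same argument with $\widehat{\arm}_4^{ext}$ replaced by $\widehat{\arm}_4^{int}$ and the roles of the radii exchanged. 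So fix $\varepsilon>0$, and let $r_0\ge 1$ and $r\ge r_0\vee\overline r$ satisfy~\eqref{e.fin11}.

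First I would reformulate~\eqref{e.fin11}. Since $\widehat\beta^{an,ext,W}_4(r_0,\cdot)=\E\big[\Prob^\eta_{1/2}[\widehat\arm_4^{ext}(r_0,\cdot)\mid\omega\setminus B(0,\cdot)]^2\big]$ is, up to constants, comparable to $\widehat\beta^{an,ext,W}_4(r_0,r_0')\cdot\widehat\beta^{an,ext,W}_4(r_0',\cdot)$ (this is the coupled quasi-multiplicativity for the hat events that must be established in this subsection — for the present lemma I only need the trivial one-sided inequality, which follows from spatial independence of Poisson processes and Lemma~\ref{l.fin0} exactly as in the single-configuration case), inequality~\eqref{e.fin11} forces $\widehat\beta^{an,ext,W}_4(r,4r)\ge\Omega_\varepsilon(1)$, i.e. a \emph{uniform} lower bound on the coupled four-arm probability in a bounded-modulus annulus. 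This is the ``non-degenerate window'' that makes the interfaces behave well at scale $r$. From here the strategy is: (i) condition on a well-separated four-arm configuration for the coupled pair in $B(0,r)\setminus B(0,r_0)$, so that with probability $\Omega_\varepsilon(1)$ the four arms of both $\omega'$ and $\omega''$ reach $\partial B(0,r)$ with endpoints that are $\Omega(1)\cdot r$-separated — this is where the events $\dense_\delta$, $\qbc_\delta^\gamma$ and $\gi^{ext}_\delta$ of Definitions~\ref{d.dense}, \ref{d.qbc}, \ref{d.gi} enter, exactly as the analogous separation-of-arms events enter Section~5 of~\cite{garban2010fourier}; (ii) having good, separated arms at $\partial B(0,r)$, extend them from scale $r$ to scale $r'$ by a deterministic construction inside $B(0,r')\setminus B(0,r)$: divide this region into $O(\log(r'/r))$ dyadic sub-annuli and, in each one, ask for a bounded number of quenched crossings of quads that link the four arriving arms to four separated arms at the next scale. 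Each such crossing event has quenched probability bounded below on $\qbc_\delta^\gamma$, hence the coupled (squared) probability is bounded below by an absolute constant $c_0>0$ per sub-annulus, by spatial independence; multiplying over the $O(\log(r'/r))$ sub-annuli gives a factor $c_0^{O(\log(r'/r))}=(r/r')^d$ for an absolute $d$, which is precisely the right-hand side of~\eqref{e.concl1}. Combining (i) and (ii) and using spatial independence (the annulus $B(0,r)\setminus B(0,r_0)$ and the annulus $B(0,r')\setminus B(0,r)$ are disjoint) yields $\beta^{an,W}_4(r_0,r')\ge c(\varepsilon)\,\widehat\beta^{an,ext,W}_4(r_0,r)\,(r/r')^d$.

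The main obstacle, and the reason this is not a verbatim copy of~\cite{garban2010fourier}, is step (i): producing genuinely well-separated arms \emph{for both configurations simultaneously}, and controlling the event that the conditional quenched four-arm probability is positive (the ``hat'' condition) rather than the four-arm event itself. The point is that $\widehat\arm_4^{ext}(r_0,r)$ is measurable with respect to $\omega\cap B(0,r)$ but $\arm_4(r_0,r')$ is not, so one must argue that, conditionally on a good separated configuration in $B(0,r)$ realizing $\widehat\arm_4^{ext}$ for both $\omega'$ and $\omega''$, the probability (over the rest of the environment, including the points outside $B(0,r)$ which determine the Voronoi cells straddling $\partial B(0,r)$) that the arms genuinely continue and close up is $\Omega(1)$ — uniformly in $W$. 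This is handled by exactly the $\dense_\delta$, $\qbc_\delta^\gamma$, $\gi^{ext}_\delta$ machinery of~\cite{V1}: on these events the interfaces arriving at $\partial B(0,r)$ are $\Omega(1)r$-separated and the environment near $\partial B(0,r)$ is dense enough that no single cell can bridge a macroscopic gap, so the quenched probabilities of the gluing crossings are bounded below. One also needs Proposition~\ref{p.hat} (in its coupled form, whose single-configuration statement is quoted in the excerpt and whose coupled analogue — Lemma~\ref{l.fin4} — is established later in this subsection) to pass between $\Pro[\widehat\arm_4^{ext}\mid\omega\setminus B(0,r)]>0$ being ``typical'' and the separated-arms event having comparable probability. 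Once the separation is in hand, step (ii) is routine: it is the standard ``extend well-separated arms one scale at a time'' argument, and the quantitative loss $(r/r')^d$ with absolute $d$ is automatic because each dyadic scale costs only an absolute constant. I would write (i) carefully, citing the relevant separation lemmas of~\cite{V1}, and dispatch (ii) by reference to Section~5 of~\cite{garban2010fourier}.
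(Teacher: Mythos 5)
Your overall architecture is the right one and matches the paper's: reduce to the first bullet, produce well-separated arms at scale $r$, extend them scale by scale at a cost of an absolute constant per scale (giving the factor $(r/r')^d$), and compare the resulting decoupled pair to $\beta^{an,W}_4$ via the monotonicity of Lemma~\ref{l.fin0}. The paper does all of this, with the good events $G^{ext}_\delta$, $\widetilde G^{ext}_\delta$ built from $\gi^{ext}_\delta$, $\dense_\delta$ and $\qbc_\delta^1$, and with the extension estimates imported pointwise from Section~7.1 of~\cite{V1}.

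However, there is a genuine gap in the way you use the hypothesis~\eqref{e.fin11}, and it is precisely at the step you yourself identify as the main obstacle. You reformulate~\eqref{e.fin11} as the unconditional bound $\widehat{\beta}^{an,ext,W}_4(r,4r)\geq\Omega_\varepsilon(1)$. But this bound is vacuous: by the Cauchy--Schwarz inequality and~\eqref{e.poly} one always has $\widehat{\beta}^{an,ext,W}_4(r,4r)\geq\alpha_4^{an}(r,4r)^2\geq c>0$, with or without~\eqref{e.fin11}. If the rest of your argument only consumed that, the lemma would hold without any hypothesis, which is too strong (it would short-circuit the bootstrap of Lemma~\ref{l.fin2}). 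The content of~\eqref{e.fin11} is \emph{conditional}: it must be converted into the statement that, \emph{given} $\{\omega',\omega''\in\widehat{\arm}^{ext}_4(r_0,r)\}$, the arms extend one more scale with conditional probability at least $\varepsilon$. This is what forces separation; your claim that "with probability $\Omega_\varepsilon(1)$ the four arms of both configurations reach $\partial B(0,r)$ well separated" has no proof mechanism behind it once the hypothesis has been discarded, since the separation events $\gi^{ext}_\delta$ are only known to have high \emph{unconditional} probability, not high probability conditionally on a possibly degenerate arm event. The paper's route is: set $X'=\Pro[\omega'\in\widehat{\arm}^{ext}_4(r_0,4r)\mid\omega'\cap B(0,r)]$, $X''$ likewise, $\widetilde X=X'\wedge X''$; then $\E[\widetilde X]\geq\varepsilon\,\widehat{\beta}^{an,ext,W}_4(r_0,r)$ and, crucially, $\{\widetilde X>0\}\subseteq\{\omega',\omega''\in\widehat{\arm}^{ext}_4(r_0,r)\}$ (this inclusion is exactly why the lemma is stated for the hat events and not for $\arm_4$ itself, see the paper's footnote), whence $\E[\widetilde X\mid\omega',\omega''\in\widehat{\arm}^{ext}_4(r_0,r)]\geq\varepsilon$. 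One then subtracts the failure probability of $\widetilde G^{ext}_\delta(4r)$ from $\widetilde X$, squares by conditional independence of the decoupled pair outside $B(0,r)$, and applies Jensen to recover the factor $\widehat{\beta}^{an,ext,W}_4(r_0,r)$ in~\eqref{e.concl1}. Without this conditional bookkeeping (or an equivalent conditional separation lemma), step (i) of your plan does not close.
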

It seems much harder to prove the analogue of Lemma \ref{l.fin1} with all the $\widehat{\beta}^{an,ext,W}_4$'s and $\widehat{\beta}^{an,int,W}_4$'s replaced by $\beta^{an,W}_4$ (see for instance the footnote \ref{footnote}).
\begin{proof}[Proof of Lemma \ref{l.fin1}]
We only prove the first item since the proof of the second one is the same. Let $\varepsilon>0$, $1 \leq r_0 \leq r <+\infty$, and assume that \eqref{e.fin11} holds. Also, let
\[
X' = \Pro \left[ \omega' \in \widehat{\arm}^{ext}_4(r_0,4r) \cond \omega' \cap B(0,r) \right]
\]
and
\[
X'' = \Pro \left[ \omega'' \in \widehat{\arm}^{ext}_4(r_0,4r) \cond \omega'' \cap B(0,r) \right] \, .
\]
We have
\begin{multline}\label{e.avant14}
\Pro \left[  \omega',\omega'' \in \widehat{\arm}^{ext}_4(r_0,4r) \cond \omega' \cap B(0,r), \, \omega'' \cap B(0,r)  \right]\\
\leq \Pro \left[  \omega' \in \widehat{\arm}^{ext}_4(r_0,4r) \cond \omega' \cap B(0,r), \, \omega'' \cap B(0,r)  \right]\\
\hspace{6em}\wedge \Pro \left[  \omega'' \in \widehat{\arm}^{ext}_4(r_0,4r) \cond \omega' \cap B(0,r), \, \omega'' \cap B(0,r)  \right] \\
 = X' \wedge X'' =:\widetilde{X}  \, .
\end{multline}
As a result,
\[
\E \left[ \widetilde{X} \right] \geq \widehat{\beta}^{an,ext,W}_4(r_0,4r) \geq \varepsilon \widehat{\beta}^{an,ext,W}_4(r_0,r) \hspace{1em}\text{   (by \eqref{e.fin11}).}
\]
Now, note that\footnote{\label{footnote}Here, it is important to consider the event $\widehat{\arm}^{ext}_4(r_0,4r)$ and not the event $\arm_4(r_0,4r)$.} $\{ \widetilde{X} > 0 \} \subseteq \{ \omega',\omega'' \in \widehat{\arm}^{ext}_4(r_0,r) \}$. As a result,
\begin{equation}\label{e.14}
\E \left[ \widetilde{X} \cond \omega',\omega'' \in \widehat{\arm}_4^{ext}(r_0,r) \right] \geq \varepsilon \, .
\end{equation}
Now, as in Section~7.1 of~\cite{V1}, we define the following event that will help us to extend the arms to a larger scale (see Definitions~\ref{d.dense},~\ref{d.qbc} and~\ref{d.gi} for the notations; remember that, if $1 \leq \rho_1 \leq \rho_2$, we let $A(\rho_1,\rho_2) = [-\rho_2,\rho_2]^2 \setminus ]-\rho_1,\rho_1[^2$):
\begin{multline}\label{e.Gext}
G^{ext}_\delta(\rho) = \gi^{ext}_\delta(\rho) \cap \dense_\delta(A(\rho/2,2\rho)) \cap \qbc_\delta^1(A(3\rho/4,3\rho/2)) \\
\cap \left\lbrace \Pro \left[ \qbc^1_{1/100}(A(\rho,4\rho)) \cap \dense_{1/100}(A(\rho,4\rho))  \cond \eta \cap A(\rho/2,2\rho) \right] \geq 3/4 \right\rbrace
\end{multline}
(the last event in the intersection is defined this way so that the event $G^{ext}_\delta(\rho)$ is measurable with respect to $\omega \cap A(\rho/2,2\rho)$). For some technical reasons we will also need the following event where we rather control the interfaces at scale $9\rho/10$:
\begin{multline*}
\widetilde{G}^{ext}_\delta(\rho) = \gi^{ext}_\delta(9\rho/10) \cap \dense_\delta(A(\rho/2,2\rho)) \cap \qbc_\delta^1(A(3\rho/4,3\rho/2)) \\
\cap \left\lbrace \Pro \left[ \qbc^1_{1/100}(A(\rho,4\rho)) \cap \dense_{1/100}(A(\rho,4\rho))  \cond \eta \cap A(\rho/2,2\rho) \right] \geq 3/4 \right\rbrace \, .
\end{multline*}
By Lemma~7.4 of~\cite{V1}, there exists $a > 0$ such that, for every $\delta  \in ]0,1/1000[$, there exists $\overline{R}=\overline{R}(\delta)$ such that, if $\rho \geq \overline{R}(\delta)$, then
\begin{equation}\label{e.cacontinue3}
\Pro_{1/2} \left[ G^{ext}_\delta(\rho) \right], \, \Pro \left[ \widetilde{G}^{ext}_\delta(\rho) \right] \geq 1 - \frac{1}{a} \delta^a \, .
\end{equation}

By Lemma~7.6 of~\cite{V1}, there exists an absolute constant $\delta_0 \in ]0,1/1000[$ such that, for every $\delta \in ]0,1/1000[$, there exist two constants $c'=c'(\delta)>0$ and $\overline{r}=\overline{r}(\delta)<+\infty$ such that, for every $r_1 \geq \overline{r} \vee 4r_0$, we have
\[
\Pro_{1/2} \left[ \arm_4(r_0,4r_1) \cap G^{ext}_{\delta_0}(4r_1) \right] \geq c' \Pro_{1/2} \left[ \arm_4(r_0,r_1) \cap G^{ext}_{\delta}(r_1) \right] \, .
\]
With exactly the same proof, if we assume furthermore that $r_1 \geq 4r$, we even have the following point-wise inequality:
\begin{multline}\label{e.cacontinue1}
\Pro_{1/2} \left[ \arm_4(r_0,4r_1) \cap G^{ext}_{\delta_0}(4r_1) \cond \omega \cap B(0,r) \right]\\
\geq c' \Pro \left[ \arm_4(r_0,r_1) \cap G^{ext}_{\delta}(r_1) \cond \omega \cap B(0,r) \right]  \, .
\end{multline}
Now, note that $\widehat{\arm}^{ext}_4(r_0,r_1) \cap \dense_\delta(A(r_1/2,2r_1)) \subseteq \arm_4(r_0,9r_1/10)$ and remember that we have $\dense_\delta(A(r_1/2,2r_1)) \subseteq \widetilde{G}^{ext}_\delta(r_1)$. As a result, still with exactly the same proof as Lemma~7.6 of~\cite{V1} (and maybe by decreasing $\delta_0$ and $c'$ and increasing $\overline{r}$), we obtain that furthermore, for every $r_1 \geq 4r \vee \overline{r}$,\footnote{Note that it seems much harder to prove the analogue of \eqref{e.cacontinue2} with  $\widetilde{G}^{ext}_\delta(r_1)$ replaced by $G^{ext}_\delta(r_1)$. Indeed, $\widehat{\arm}^{ext}_4(r_0,r_1) \cap \widetilde{G}^{ext}_\delta(r_1)$ implies that $\arm_4(r_0,9r_1/5)$ holds but not that $\arm_4(r_0,r_1)$ does, so having information about the well-separateness of interfaces that end on $\partial B(0,r_1)$ does not help us, but having information about those that end on $\partial B(0,9r_1/10)$ does.}
\begin{multline}\label{e.cacontinue2}
\Pro_{1/2} \left[ \arm_4(r_0,4r_1) , \, G^{ext}_{\delta_0}(4r_1) \cond \omega \cap B(0,r) \right]\\
\geq c' \Pro \left[ \widehat{\arm}^{ext}_4(r_0,r_1), \, \widetilde{G}^{ext}_\delta(r_1) \cond \omega \cap B(0,r) \right] \, .
\end{multline}

As in~\cite{garban2010fourier}, we let $\widetilde{\omega}'$ and $\widetilde{\omega}''$ be two configurations with distribution $\Pro_{1/2}$ that satisfy i) $\widetilde{\omega}' \cap B(0,r) = \omega' \cap B(0,r)$ and $\widetilde{\omega}'' \cap B(0,r) = \omega'' \cap B(0,r)$; ii) $\widetilde{\omega}'$ is independent of $\widetilde{\omega}''$ outside of $B(0,r)$. In particular, the underlying point processes $\widetilde{\eta}'$ and $\widetilde{\eta}''$ (of $\widetilde{\omega}'$ and $\widetilde{\omega}''$) are independent of each other outside of $B(0,r)$.
\medskip

By \eqref{e.cacontinue3}, \eqref{e.cacontinue1} and \eqref{e.cacontinue2} respectively, we can fix four constants $\delta_0 \in ]0,1/1000[$, $c'>0$, $\delta \in ]0,1/1000[$ and $\overline{r}<+\infty$ such that the three following estimates hold as soon as $r \geq \overline{r}$ and $r_1 \geq 4r$:
\begin{equation}\label{e.15}
\Pro_{1/2} \left[ \widetilde{G}^{ext}_\delta(4r) \right] \geq 1 - \frac{\varepsilon}{2} \, ;
\end{equation}
\begin{multline}\label{e.17}
\Pro_{1/2} \left[ \widetilde{\omega}',\widetilde{\omega}'' \in \arm_4(r_0,4r_1) \cap G^{ext}_{\delta_0}(4r_1) \right]\\
\geq (c')^2 \Pro_{1/2} \left[ \widetilde{\omega}',\widetilde{\omega}'' \in \arm_4(r_0,r_1) \cap G^{ext}_{\delta_0}(r_1) \right] \, ;
\end{multline}
\begin{multline}\label{e.16}
\Pro_{1/2} \left[ \arm_4(r_0,16r) \cap G^{ext}_{\delta_0}(16r) \cond \ \omega \cap B(0,r) \right]\\
\geq c' \Pro \left[ \widehat{\arm}^{ext}_4(r_0,4r) \cap \widetilde{G}^{ext}_{\delta}(4r) \cond \omega \cap B(0,r) \right] \, .
\end{multline}

Let us now combine all these estimate to prove the lemma. Note that
\begin{multline*}
\Pro \left[ \widetilde{\omega}' \in \widehat{\arm}^{ext}_4(r_0,4r) \cap \widetilde{G}^{ext}_\delta(4r) \cond \omega' \cap B(0,r) \right]\\
\geq \Pro \left[  \widetilde{\omega}' \in \widehat{\arm}^{ext}_4(r_0,4r) \cond \omega' \cap B(0,r)  \right]\\
\hspace{10em}- \Pro \left[ \widetilde{\omega}' \notin \widetilde{G}^{ext}_\delta(4r) \cond \omega' \cap B(0,r) \right]
\\
\geq \widetilde{X} - \frac{\varepsilon}{2} \text{ (by }\eqref{e.15}).
\end{multline*}
As a result,~\eqref{e.16} implies that
\[
\Pro \left[ \widetilde{\omega}' \in \arm_4(r_0,16r) \cap G^{ext}_{\delta_0}(16r) \cond \omega' \cap B(0,r) \right]\\
\geq c'\left(\widetilde{X} - \frac{\varepsilon}{2} \right)_+ \, .
\]
Since $\widetilde{\omega}'$ and $\widetilde{\omega}''$ are independent conditionally on $\omega' \cap B(0,r)$ and $\omega'' \cap B(0,r)$, the above implies that
\[
\Pro \left[ \widetilde{\omega}', \widetilde{\omega}'' \in \arm_4(r_0,16r) \cap G^{ext}_{\delta_0}(16r) \cond \omega' \cap B(0,r), \, \omega'' \cap B(0,r) \right]\\
\geq \left(c'\left(\widetilde{X} - \frac{\varepsilon}{2} \right)_+\right)^2 \, .
\]
This implies that
\begin{align}
& \Pro \left[ \widetilde{\omega}', \widetilde{\omega}'' \in \arm_4(r_0,16r) \cap G^{ext}_{\delta_0}(16r)  \right] \nonumber\\
& = \Pro \left[ \widetilde{\omega}', \widetilde{\omega}'' \in \arm_4(r_0,16r) \cap G^{ext}_{\delta_0}(16r) , \, \omega',\omega'' \in \widehat{\arm}^{ext}_4(r_0,r) \right]\nonumber\\
& \geq \E \left[ \un_{\omega',\omega'' \in \widehat{\arm}^{ext}_4(r_0,r)} \left( c' \left(\widetilde{X}-\frac{\varepsilon}{2} \right)_+ \right)^2 \right]\nonumber\\
& \geq \Pro \left[ \omega',\omega'' \in \widehat{\arm}^{ext}_4(r_0,r) \right] \frac{(c' \varepsilon)^2}{4} \text{ (by }\eqref{e.14} \text{ and Jensen's inequality).} \label{e.oui}
\end{align}
By the above and~\eqref{e.17},
\[
\Pro \left[ \widetilde{\omega}', \widetilde{\omega}'' \in \arm_4(r_0,r')   \right] \geq \frac{(c'\varepsilon)^2}{4} (c')^{2\lceil \log_4(r'/16r) \rceil} \Pro \left[ \omega',\omega'' \in \widehat{\arm}^{ext}_4(r_0,r) \right] \, .
\]
This ends the proof since $\Pro \left[ \omega',\omega'' \in \widehat{\arm}^{ext}_4(r_0,r) \right] = \widehat{\beta}^{an,ext,W}_4(r_0,r)$ and since, by Lemma~\ref{l.fin0},
\begin{eqnarray*}
\Pro \left[ \widetilde{\omega}', \widetilde{\omega}'' \in \arm_4(r_0,r')  \right] & \leq & \beta^{an}_4(r_0,r') \, .
\end{eqnarray*}
\end{proof}

\begin{rem}\label{r.lextensionavecwidetilde}
In this remark, we use the notations from the proof and we note that \eqref{e.cacontinue1} implies that, if $\delta \in ]0,1/1000[$, $1 \leq r_0 \leq r < +\infty$, and $r_1 \geq \overline{r}(\delta) \vee 4r$, then
\begin{multline*}
\Pro \left[ \widetilde{\omega}',\widetilde{\omega}'' \in \arm_4(r_0,4r_1) \cond \widetilde{\omega}' \cap B(0,r),\widetilde{\omega}'' \cap B(0,r)  \right]\\
\geq (c'(\delta))^2 \Pro \left[ \widetilde{\omega}',\widetilde{\omega}'' \in \arm_4(r_0,r_1) \cap G^{ext}_{\delta}(r_1) \cond \widetilde{\omega}' \cap B(0,r), \widetilde{\omega}'' \cap B(0,r) \right] \, .
\end{multline*}
This comes from the fact that $\widetilde{\omega}'$ and $\widetilde{\omega}''$ are independent conditionally on $\omega' \cap B(0,r)$ and $\omega'' \cap B(0,r)$ and that $\widetilde{\omega}'$ and $\widetilde{\omega}''$ are equal to $\omega'$ and $\omega''$ respectively in $B(0,r)$. We will use analogous estimates at the end of the present subsection.
\end{rem}
Let us now prove that the hypotheses of Lemma \ref{l.fin1} hold for some $\varepsilon>0$.
\begin{lem}\label{l.fin2}
There exist two absolute constants $\overline{\varepsilon}>0$ and $\overline{R}<+\infty$ such that:
\begin{itemize}
\item For every $r_0 \geq \overline{R}$ and every $\rho \geq r_0$, we have\footnote{Note that in the analogous result Lemma 5.8 of \cite{garban2010fourier}, the hypotheses are $r_0 > 0$ and $\rho \geq r_0 \vee \overline{R}$. However, we don't need this stronger result and the hypothesis $r_0 \geq \overline{R}$ simplifies the proof. More precisely, thanks to this hypothesis, we don't need to extend the arms to scale $1$, which is very technical for Voronoi percolation (see \cite{V1}).}
\begin{equation}\label{e.fin2}
\widehat{\beta}^{an,ext,W}_4(r_0,4\rho) \geq \overline{\varepsilon} \widehat{\beta}^{an,ext,W}_4(r_0,\rho)  \, .
\end{equation}
\item For every $r_0 \geq \overline{R}$ and every $\rho \in [\overline{R},r_0]$, we have
\begin{equation}
\widehat{\beta}^{an,int,W}_4(\rho/4,r_0) \geq \overline{\varepsilon} \widehat{\beta}^{an,int,W}_4(\rho,r_0)  \, .
\end{equation}
\end{itemize}
\end{lem}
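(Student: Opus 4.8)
The plan is to prove the first item; the second is entirely analogous (with inner radii replacing outer ones, using $\widehat{\arm}^{int}_4$ and the events $\gi^{int}_\delta$, and extending arms inward instead of outward). The key point is that $\widehat{\beta}^{an,ext,W}_4(r_0,\cdot)$ does not drop by more than a constant factor when we pass from scale $\rho$ to scale $4\rho$, \emph{uniformly in $W$}. The natural strategy, following Lemma~5.8 of~\cite{garban2010fourier} and the quasi-multiplicativity machinery of Section~7.1 of~\cite{V1}, is to exhibit, conditionally on the pair $(\omega'\cap B(0,\rho),\omega''\cap B(0,\rho))$ restricted to the event $\{\omega',\omega''\in\widehat{\arm}^{ext}_4(r_0,\rho)\}$, a way to extend both configurations to genuine $4$-arm events up to scale $4\rho$ with probability bounded below by an absolute constant. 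Since $W$ only affects the coupling inside its own region and the extension procedure takes place in an annulus near scale $\rho$ where we have freedom to resample, the constant will not depend on $W$.

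First I would set up the conditioning: let $\widetilde X'=\Pro[\omega'\in\widehat{\arm}^{ext}_4(r_0,\rho)\mid \omega'\cap B(0,\rho)]$ and similarly $\widetilde X''$, and observe (as in the proof of Lemma~\ref{l.fin1}) that $\widehat{\beta}^{an,ext,W}_4(r_0,\rho)=\E[\widetilde X'\widetilde X'']$ — actually one should be slightly careful and work with the event $\{\omega',\omega''\in\widehat{\arm}^{ext}_4(r_0,\rho)\}$ directly, writing the conditional expectations over the appropriate $\sigma$-algebra $\sigma(\omega'\cap B(0,\rho),\omega''\cap B(0,\rho))$, which by the definition of the coupling equals $\sigma(\eta\cap B(0,\rho),\,\omega\setminus(\text{resampled part}))$. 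Then I would introduce, exactly as in~\eqref{e.Gext}, the good event $G^{ext}_{\delta}(\rho)$ (or rather its variant $\widetilde G^{ext}_\delta(\rho)$ controlling interfaces at scale $9\rho/10$, which is the one compatible with conditioning on $\widehat{\arm}^{ext}_4(r_0,\rho)$, cf.\ footnote~\ref{footnote}), and use Lemma~7.4 of~\cite{V1} (our~\eqref{e.cacontinue3}) to say that for $\delta$ small and $\rho\geq\overline R(\delta)$ this event has probability $\geq 1-\frac1a\delta^a$, uniformly in $W$ since it is a pure-percolation event measurable in an annulus. Next, using the extension-of-arms lemma from Section~7.1 of~\cite{V1} in the pointwise form~\eqref{e.cacontinue1}--\eqref{e.cacontinue2}, I would show: conditionally on $\omega'\cap B(0,\rho)$, on the event $\{\omega'\in\widehat{\arm}^{ext}_4(r_0,\rho)\}$, a resampling $\widetilde\omega'$ of $\omega'$ outside $B(0,\rho)$ satisfies $\widetilde\omega'\in\arm_4(r_0,4\rho)$ with conditional probability $\geq c'(\delta)$ provided $\delta$ is small enough that the good event occurs with probability close to $1$. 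Doing the same for $\widetilde\omega''$, and using that $\widetilde\omega',\widetilde\omega''$ are conditionally independent given $(\omega'\cap B(0,\rho),\omega''\cap B(0,\rho))$, I get $\Pro[\widetilde\omega',\widetilde\omega''\in\arm_4(r_0,4\rho)\mid\cdot]\geq c'(\delta)^2$ on $\{\omega',\omega''\in\widehat{\arm}^{ext}_4(r_0,\rho)\}$. Taking expectations and using $\beta^{an,ext,W}_4(r_0,4\rho)\geq\Pro[\widetilde\omega',\widetilde\omega''\in\arm_4(r_0,4\rho)]$ (monotonicity, Lemma~\ref{l.fin0}) together with $\beta^{an,ext,W}_4(r_0,4\rho)\le \widehat{\beta}^{an,ext,W}_4(r_0,4\rho)$ — wait, the inequality goes the right way: $\widehat{\beta}^{an,ext,W}_4(r_0,4\rho)\geq\beta^{an,ext,W}_4(r_0,4\rho)$, so we in fact get $\widehat{\beta}^{an,ext,W}_4(r_0,4\rho)\geq c'(\delta)^2\,\Pro[\omega',\omega''\in\widehat{\arm}^{ext}_4(r_0,\rho)]=c'(\delta)^2\,\widehat{\beta}^{an,ext,W}_4(r_0,\rho)$, giving~\eqref{e.fin2} with $\overline\varepsilon=c'(\delta)^2$ and $\overline R=\overline R(\delta)$.

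The main obstacle — and the reason this is not just a copy of~\cite{garban2010fourier} — is the spatial dependence of Voronoi cells: one cannot simply ``resample outside $B(0,\rho)$'' because a cell straddling $\partial B(0,\rho)$ links the inside and outside. This is precisely why the hatted events $\widehat{\arm}^{ext}_4$ and the density/good-interface events $\dense_\delta$, $\qbc^\gamma_\delta$, $\gi^{ext}_\delta$ were introduced in~\cite{V1}; the content of the lemma is that these events let the extension argument go through uniformly. Concretely, the delicate step is to justify the pointwise inequality~\eqref{e.cacontinue1}/\eqref{e.cacontinue2} — that conditioning on $\omega\cap B(0,\rho)$ and on $\widehat{\arm}^{ext}_4(r_0,\rho)$, the arms typically continue to a well-separated configuration at scale $9\rho/10$ which can then be glued to crossings in the annulus $A(\rho,4\rho)$; this requires that the good event be defined so as to be measurable in an annulus disjoint from $B(0,r_0)$ and yet force enough regularity (density of points, non-degenerate quenched crossings, separated interface endpoints) to perform the gluing. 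All of this is already carried out at the level of single configurations in Section~7.1 of~\cite{V1}; the only genuinely new ingredient here is to run it for the pair $(\omega',\omega'')$ simultaneously, which works because the good event is a pure-percolation annulus event (hence its probability is unaffected by the $W$-coupling) and the two configurations become independent once we resample outside $B(0,\rho)$. I would therefore spend most of the write-up carefully stating the pointwise version of the~\cite{V1} extension lemma and checking the independence/measurability bookkeeping, and dispatch the arm-gluing geometry by citation to~\cite{V1}.
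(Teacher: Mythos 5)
Your plan attempts a direct, one-step extension: condition on $\{\omega',\omega''\in\widehat{\arm}^{ext}_4(r_0,\rho)\}$, resample outside $B(0,\rho)$, and glue the arms out to scale $4\rho$ with a constant that is uniform in $\rho$, $r_0$ and $W$. The gap is in the sentence claiming that $\widetilde\omega'\in\arm_4(r_0,4\rho)$ with conditional probability $\geq c'(\delta)$ ``provided $\delta$ is small enough that the good event occurs with probability close to $1$.'' The gluing needs the good event \emph{at scale $\rho$} — in particular the separation of the interface endpoints near $\partial B(0,\rho)$, i.e. an event of type $\gi^{ext}_\delta(9\rho/10)$ — and that event is measurable with respect to $\omega\cap B(0,\rho)$, the very same data as the conditioning. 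Its unconditional probability being $\geq 1-\delta^a/a$ says nothing about its probability conditionally on the rare event $\{\omega',\omega''\in\widehat{\arm}^{ext}_4(r_0,\rho)\}$: the trivial bound is $(\delta^a/a)/\widehat{\beta}^{an,ext,W}_4(r_0,\rho)$, which forces $\delta$ (hence $c'(\delta)$) to degenerate with $\rho/r_0$. What you would actually need is an arm-separation statement for the coupled pair $(\omega',\omega'')$, and proving that by the usual multi-scale argument requires quasi-multiplicativity for the pair — essentially the statement you are trying to prove. This circularity is precisely why Lemma~\ref{l.fin1} carries the hypothesis~\eqref{e.fin11}: there the good event is placed at scale $4r$, in an annulus disjoint from $B(0,r)$, so that after resampling it is independent of the conditioning, and the hypothesis is what guarantees that the hatted arm event at the larger scale still has conditional probability $\geq\varepsilon$ given the arm event at the smaller scale.

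The paper's proof is structurally different and extends no arms at all. It (i) invokes the \emph{conclusion} of Lemma~\ref{l.fin1} (the ratio condition at one scale $r$ implies polynomial decay of $\beta^{an,W}_4(r_0,\cdot)$ above $r$ with an absolute exponent $d$), (ii) runs the self-improvement argument from the beginning of the proof of Lemma~5.8 of~\cite{garban2010fourier} to upgrade this into the ratio condition at every scale $\rho\geq r$ with a uniform $\overline{\varepsilon}$, and (iii) supplies the base case at scales $r_1\in[r_0,4r_0]$, where $\widehat{\beta}^{an,ext,W}_4(r_0,r_1)\geq\alpha_4^{an}(r_0,r_1)^2\geq c'$ by~\eqref{e.poly}, so the ratio condition there is trivial (this is where the hypothesis $r_0\geq\overline{R}$ is used, avoiding any extension down to scale $1$). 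You should restructure your argument around that bootstrap; the direct route is not available with the tools at hand.
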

\begin{proof}
We only prove the first item since the proof of the second one is the same. First note that by Lemma~\ref{l.fin1} we have the following for any $\varepsilon >0$: If $r_0\geq 1$, if $r \geq r_0 \vee \overline{r}$ (where $\overline{r}=\overline{r}(\varepsilon)$ is the constant from Lemma~\ref{l.fin1}), and if
\begin{equation}\label{e.debut_rec}
\widehat{\beta}^{an,ext,W}_4(r_0,4r) \geq \varepsilon \widehat{\beta}^{an,ext,W}_4(r_0,r) \, ,
\end{equation}
then for any $r' \geq r$ we have
\[
\widehat{\beta}^{an,ext,W}_4(r_0,r') \geq \beta^{an,W}_4(r_0,r') \geq c \left( \frac{r}{r'} \right)^d  \widehat{\beta}^{an,ext,W}_4(r_0,r)
\]
(where $c=c(\varepsilon)$ and $d$ are the constants from Lemma~\ref{l.fin1}). As explained in the beginning of the proof of Lemma~5.8 of~\cite{garban2010fourier}, one can deduce from this that there exists $h > 0$ such that, if~\eqref{e.debut_rec} holds for some $\varepsilon \in ]0,h[$, some $r_0 \geq 1$ and some $r \geq r_0 \vee \overline{r}(\varepsilon)$, then
\[
\widehat{\beta}^{an,ext,W}_4(r_0,4\rho) \geq \overline{\varepsilon} \widehat{\beta}^{an,ext,W}_4(r_0,\rho) 
\]
holds for some $\overline{\varepsilon} = \overline{\varepsilon}(\varepsilon)$ and for every $\rho \geq r$.

As a result, it is sufficient to prove that there exists $\varepsilon > 0$ such that, for every $r_0 \geq 1$, there exists $r \geq r_0$ satisfying both i) \eqref{e.fin2} for any $\rho \in [r_0,r]$ and ii) \eqref{e.debut_rec}. This is actually a direct consequence of the fact that (by~\eqref{e.poly} and Lemma \ref{l.fin0}) there exists an absolute constant $c' \in ]0,1[$ such that, for any $r_0 \geq 1$ and any $r_1 \in [r_0,4r_0]$,
\[
c' \leq \alpha_4^{an}(r_0,r_1)^2 \leq \widehat{\beta}^{an,ext,W}_4(r_0,r_1) \leq 1 \, . 
\]
This ends the proof.
\end{proof}
\begin{lem}\label{l.fin3}
There exist two absolute constants $\varepsilon>0$ and $\overline{R}<+\infty$ such that, for every $\overline{R} \leq r \leq R < +\infty$, we have
\[
\widehat{\beta}^{an,W}_4(r,R) \leq \varepsilon \widehat{\beta}^{an,W}_4(r/4,4R)  \, .
\]
\end{lem}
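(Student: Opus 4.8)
The plan is to interpolate between the interior and exterior $\widehat\beta$-quantities at an intermediate scale and then invoke Lemma \ref{l.fin1} and Lemma \ref{l.fin2}. First I would pick an intermediate scale, say $\sqrt{rR}$ (or more safely $2^k$ with $r \le 2^k \le R$ chosen so that both $2^k/r$ and $R/2^k$ are at least $\overline R$), and split the four-arm event for the pair $(\omega',\omega'')$ across $A(r,2^k)$ and $A(2^k,R)$. The key geometric observation — the analogue of Lemma 5.9 of \cite{garban2010fourier} — is that $\widehat\arm_4(r,R) \subseteq \widehat\arm_4^{int}(r,2^k) \cap \widehat\arm_4^{ext}(2^k,R)$ up to the trivial inclusion, because the inner event depends only on the configuration outside $B(0,r)$ and the outer one only on the configuration inside $B(0,R)$; hence by Lemma \ref{l.fin0} (monotonicity in $E$ and $F$) and spatial independence of the Poisson process between the two annuli,
\[
\widehat\beta^{an,W}_4(r,R) \le \widehat\beta^{an,int,W}_4(r,2^k)\,\widehat\beta^{an,ext,W}_4(2^k,R)\cdot\grandO{1}\,,
\]
after conditioning on $\omega \cap B(0,2^k)$ for the exterior factor and on $\omega \setminus B(0,2^k)$ for the interior factor and using the (quenched) FKG / spatial-independence bookkeeping exactly as in \cite{V1}, Section 7.1.

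Next I would apply the two items of Lemma \ref{l.fin2}: since the hypotheses \eqref{e.fin11} and \eqref{e.fin12} hold with the absolute constant $\overline\varepsilon$ for all scales above $\overline R$, Lemma \ref{l.fin1} gives, for the interior piece with $r' = r/4$ and the exterior piece with $r' = 4R$,
\[
\beta^{an,W}_4(r/4,2^k) \ge c\, \widehat\beta^{an,int,W}_4(r,2^k)\left(\tfrac{r/4}{r}\right)^d = \Omega(1)\,\widehat\beta^{an,int,W}_4(r,2^k)
\]
and symmetrically $\beta^{an,W}_4(2^k,4R) \ge \Omega(1)\,\widehat\beta^{an,ext,W}_4(2^k,R)$, where $c,d$ are absolute because $\overline\varepsilon$ is. Finally I would glue these two lower bounds together at scale $2^k$ — again via Lemma \ref{l.fin0} to pass from a product of two $\beta$'s on disjoint annuli to $\beta^{an,W}_4(r/4,4R)$, and then from $\beta$ up to $\widehat\beta$ using the trivial inequality $\beta^{an,W}_4 \le \widehat\beta^{an,W}_4$ recorded before the statement — to obtain
\[
\widehat\beta^{an,W}_4(r/4,4R) \ge \Omega(1)\,\widehat\beta^{an,int,W}_4(r,2^k)\,\widehat\beta^{an,ext,W}_4(2^k,R) \ge \Omega(1)\,\widehat\beta^{an,W}_4(r,R)\,,
\]
which is the claim with $\varepsilon = \grandO{1}$ an absolute constant, provided $r$ (hence $\overline R$) is large enough that all the intermediate applications are legal.

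The main obstacle I anticipate is the first inequality, namely the clean factorization $\widehat\beta^{an,W}_4(r,R) \lesssim \widehat\beta^{an,int,W}_4(r,2^k)\,\widehat\beta^{an,ext,W}_4(2^k,R)$: one has to be careful that the conditionings defining $X'$, $X''$ and their interior analogues in the two annuli are compatible, that the two $\sigma$-algebras $\sigma(\eta\setminus B(0,2^k),\,\omega\setminus(B(0,2^k)\cup W))$ and the inner counterpart are genuinely independent so that the square in Lemma \ref{l.fin0}'s identity splits as a product, and that the perturbation $W$ (which may straddle scale $2^k$) does not break this — this is precisely the kind of ``multiple passages from quenched to annealed'' difficulty flagged in Subsection \ref{ss.why}, and it is handled, as elsewhere in Section \ref{ss.findelathese}, by working with the hatted events $\widehat\arm_4^{int}$, $\widehat\arm_4^{ext}$ rather than $\arm_4$ itself (see the footnote \ref{footnote}). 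Everything else is a routine transcription of the argument behind Lemma 5.9 of \cite{garban2010fourier}, with quasi-multiplicativity (Proposition \ref{p.QMulti}) and the polynomial bounds \eqref{e.poly} supplying the constants.
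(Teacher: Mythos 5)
Your proposal takes a factorize-and-reglue route that the paper deliberately avoids, and the re-gluing step is a genuine gap. After extending the two pieces you are left with the product $\beta^{an,W}_4(r/4,2^k)\,\beta^{an,W}_4(2^k,4R)$, which you must bound above by $\grandO{1}\,\beta^{an,W}_4(r/4,4R)$. That is the lower-bound half of a quasi-multiplicativity property for the \emph{coupled} quantities $\beta^{an,W}_4$ --- exactly the hard statement this subsection is circling around and which the paper explicitly declines to prove in full. Lemma~\ref{l.fin0}, which you invoke for this step, is only a monotonicity statement in the decoupling sets $E,F$; it cannot combine two arm events living in disjoint annuli into a single arm event spanning both scales. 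Proposition~\ref{p.QMulti} concerns the single-configuration probabilities $\alpha_4^{an}$ and says nothing about $\beta^{an,W}_4$. A secondary problem is your opening factorization: $\widehat{\arm}_4^{int}(r,2^k)$ is measurable with respect to $\omega\setminus B(0,r)$ and $\widehat{\arm}_4^{ext}(2^k,R)$ with respect to $\omega\cap B(0,R)$, so both events depend on all of $\omega\cap A(r,R)$ and are far from independent, which blocks the product upper bound; moreover the inclusion $\widehat{\arm}_4(r,R)\subseteq\widehat{\arm}_4^{int}(r,2^k)$ is not automatic (positivity of the conditional probability given $\omega\cap A(r,R)$ does not pass pointwise to the finer conditioning on $\omega\setminus B(0,r)$, since the actual configuration outside $B(0,R)$ need not be among the favourable completions); one must insert a $\dense$ event and shrink the outer radius.

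The paper's proof never splits at an intermediate scale, so no product ever has to be recombined. It first shows $\widehat{\beta}^{an,W}_4(r,R)\le\Pro\left[\neg\dense_{1/100}(A(R/2,2R))\right]+\widehat{\beta}^{an,int,W}_4(r,R/2)$ (the density event legitimises the passage to the interior hatted event at the cost of halving the outer radius, and the error term is super-polynomially small hence absorbed), then applies Lemmas~\ref{l.fin1}--\ref{l.fin2} \emph{inwards} to get $\grandO{1}\,\beta^{an,W}_4(r/4,R/2)$, uses the trivial inequality $\beta^{an,W}_4\le\widehat{\beta}^{an,ext,W}_4$ at the \emph{same} pair of radii, and finally applies Lemmas~\ref{l.fin1}--\ref{l.fin2} \emph{outwards} to reach $\grandO{1}\,\beta^{an,W}_4(r/4,4R)\le\grandO{1}\,\widehat{\beta}^{an,W}_4(r/4,4R)$. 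Each step enlarges the single event already in hand by a bounded factor in scale. Your two applications of Lemma~\ref{l.fin1} to the separate pieces are correct as far as they go, but the architecture surrounding them does not close.
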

\begin{proof}
Note that the event $\widehat{\arm}_4(r,R) \cap \dense_{1/100}(A(R/2,2R))$ is included in $\widehat{\arm}_4^{int}(r,R/2)$ where the event $\dense_{1/100}(A(R/2,2R))$ is the event from Definition~\ref{d.dense}. As a result,
\begin{equation}\label{e.finoupas}
\widehat{\beta}^{an,W}_4(r,R) \leq \Pro \left[ \neg \dense_{1/100}(A(R/2,2R)) \right] + \widehat{\beta}^{an,int,W}_4(r,R/2) \, .
\end{equation}
Moreover, $\widehat{\beta}^{an,int,W}_4(r,R/2) \geq \alpha_4^{an}(r,R)^2$, so $\widehat{\beta}^{an,int,W}_4(r,R/2)$ is greater than some polynomial of $r/R$. Since the probability of $\neg \dense_{1/100}(A(R/2,2R))$ decays to $0$ super-polynomially fast,~\eqref{e.finoupas} implies that $\widehat{\beta}^{an,W}_4(r,R) \leq \frac{1}{2}\widehat{\beta}^{an,int,W}(r,R/2)$ if $R$ is sufficiently large.\\

Now, note that, by Lemmas~\ref{l.fin1} and~\ref{l.fin2}, we have: i) for any $r_0$ sufficiently large and any $\rho \geq r_0$, the quantities $\widehat{\beta}^{an,ext,W}_4(r_0,\rho)$ and $\beta^{an,W}_4(r_0,8\rho)$ are of the same order; ii) for any $\rho$ sufficiently large and any $r_0 \geq \rho$, the quantities $\widehat{\beta}^{an,int,W}_4(\rho/8,r_0)$ and $\beta^{an,W}_4(\rho,r_0)$ are of the same order. As a result, if $r$ is sufficiently large and if $R \geq r$, then
\begin{eqnarray*}
\widehat{\beta}^{an,W}_4(r,R) & \leq & \frac{1}{2} \widehat{\beta}^{an,int,W}_4(r,R/2)\\
& \leq & \grandO{1} \beta^{an,W}_4(r/4,R/2)\\
& \leq & \grandO{1} \widehat{\beta}^{an,ext,W}_4(r/4,R/2)\\
& \leq & \grandO{1} \beta^{an,W}_4(r/4,4R)\\
& \leq & \grandO{1} \widehat{\beta}^{an,W}_4(r/4,4R) \, ,
\end{eqnarray*}
which ends the proof.
\end{proof}

\begin{lem}\label{l.fin4}
There exist two absolute constants $C \in ]0, +\infty[$ and $\overline{R}<+\infty$ such that:
\begin{itemize}
\item For any $\overline{R} \leq r' \leq r \leq R \leq R'<+\infty$, we have
\[
\beta^{an,W}_4(r',R') \geq \frac{1}{C} \widehat{\beta}^{an,W}_4(r,R) \left( \frac{r'R}{rR'} \right)^C \, .
\]
\item If $\overline{R} \leq r \leq R <+\infty$, if $G$ is an event measurable with respect to the configuration outside of $A(r,R)$ satisfying $\Pro \left[ G \right] \geq 1-\frac{1}{C}$, and if $D_{r,R}= B(0,r) \cup B(0,R)^c$, then
\begin{equation}\label{e.dgksdofsjojrf}
\widehat{\beta}_4^{an,W}(r,R) \leq C \Pro \left[ \widetilde{\omega}',\widetilde{\omega}'' \in \arm_4(r/16,16R) \cap G \right] \, ,
\end{equation}
where $\widetilde{\omega}'$ and $\widetilde{\omega}''$ are two configurations with distribution $\Pro_{1/2}$ that satisfy i) $\widetilde{\omega}'= \widetilde{\omega}''$ outside of $D_{r,R} \cup W$; ii) $\widetilde{\omega}'$ and $\widetilde{\omega}''$ are independent of each other in $D_{r,R}$ and iii) in $W \setminus D_{r,R}$, $\widetilde{\omega}'$ and $\widetilde{\omega}''$ have the same underlying point process $\eta$ but are independent conditionally on this point process.
\end{itemize}
\end{lem}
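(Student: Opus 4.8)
The plan is to combine the one-sided extension estimates of Lemmas~\ref{l.fin1} and~\ref{l.fin2} with the ``locality'' estimate of Lemma~\ref{l.fin3} in order to reduce everything to statements about the $\widehat{\beta}^{an,ext,W}_4$'s and $\widehat{\beta}^{an,int,W}_4$'s, for which we have good control. For the first item, the idea is to interpolate: by Lemma~\ref{l.fin2}, the hypotheses~\eqref{e.fin11} and~\eqref{e.fin12} of Lemma~\ref{l.fin1} are satisfied for the absolute constant $\overline{\varepsilon}$, hence Lemma~\ref{l.fin1} gives, for $r$ sufficiently large, polynomial lower bounds of the form $\beta^{an,W}_4(r_0,r') \geq c \,\widehat{\beta}^{an,ext,W}_4(r_0,r)(r/r')^d$ for $r' \geq r$ and $\beta^{an,W}_4(r',r_0) \geq c\, \widehat{\beta}^{an,int,W}_4(r,r_0)(r'/r)^d$ for $r' \leq r$. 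I would first use the ``int'' estimate to pass from $\widehat{\beta}_4^{an,W}(r,R)$ (which by Lemma~\ref{l.fin3}, up to enlarging the annulus by a bounded factor and up to $\dense$ events, is comparable to $\widehat{\beta}^{an,int,W}_4(\cdot,\cdot)$) down to scale $r'$, paying a factor $(r'/r)^{O(1)}$; and then, on the resulting ``ext''-type quantity anchored at the inner scale $r'$, use the ``ext'' estimate to extend out to scale $R'$, paying $(R/R')^{O(1)}$. Concatenating via Lemma~\ref{l.fin3} (whose proof already shows $\widehat{\beta}_4^{an,W}(r,R) \leq \grandO{1}\,\beta_4^{an,W}(r/4,4R) \leq \grandO{1}\,\widehat{\beta}_4^{an,W}(r/4,4R)$, i.e.\ the various quantities at comparable scales are all of the same order) yields $\beta^{an,W}_4(r',R') \geq \tfrac1C \widehat{\beta}^{an,W}_4(r,R)(r'R/rR')^C$, which is the claim. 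Throughout I would absorb the boundedly-many bad $\dense$-event corrections using that $\neg\dense$ has super-polynomially small probability while $\widehat\beta_4$ is polynomially large, exactly as in Lemma~\ref{l.fin3}.

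For the second item, the key observation is that the event $\widehat{\arm}_4(r,R)$ is measurable with respect to $\omega \cap A(r,R)$, so intersecting with the configuration-outside-$A(r,R)$ event $G$ with $\Pro[G] \geq 1-1/C$ costs at most a constant factor in the two-configuration probability: writing $\widehat\beta_4^{an,W}(r,R) = \E[\Pro_{1/2}[\widehat{\arm}_4(r,R)\mid \eta\setminus W,\omega\setminus(W\cup A(r,R))]^2]$ and using that $G$ is independent of $\widehat{\arm}_4(r,R)$ given that $\sigma$-field (or more simply bounding $\un_{\widehat{\arm}_4} \leq \un_{\widehat{\arm}_4\cap G} + \un_{\neg G}$ inside the square and using $\Pro[\neg G]^2 \leq (1/C)\widehat\beta_4^{an,W}(r,R)$ once $C$ is large, since $\widehat\beta_4^{an,W}(r,R)$ is bounded below by $\alpha_4^{an}(r,R)^2$ which is $\geq$ a small power of $r/R$ — wait, that is not bounded below uniformly; instead one uses that for our application $r,R$ are at comparable scales or argues directly). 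The cleaner route: since $\widehat{\arm}_4(r,R) \in \sigma(\omega\cap A(r,R))$ and $G \in \sigma(\omega \setminus A(r,R))$, conditionally on $\eta$ (hence on $\eta\setminus W$) and on $\omega \setminus(W\cup A(r,R))$ the events $\{\omega' \in \widehat{\arm}_4(r,R)\}$ and $\{\omega'\in G\}$ are... not independent in general, so I would instead write $\Pro[\widetilde\omega',\widetilde\omega''\in\widehat{\arm}_4(r,R)\cap G] \geq \Pro[\widetilde\omega',\widetilde\omega''\in\widehat{\arm}_4(r,R)] - 2\Pro[\neg G] \geq \widehat\beta_4^{an,W}(r,R) - 2/C$ after a Cauchy--Schwarz-type step, and since the left scales and $\widehat\beta_4^{an,W}(r,R)$ itself is comparable to $\widehat\beta_4^{an,W}(r/4,4R) \leq 1$, for $C$ large this is $\geq \tfrac12\widehat\beta_4^{an,W}(r,R)$ — but this needs $\widehat\beta_4^{an,W}(r,R)$ bounded below, which fails. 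The correct argument, which I would actually carry out, is the measurability-based one: decompose on whether $G$ holds and bound $\widehat\beta_4^{an,W}(r,R)\le \Pro[\widetilde\omega',\widetilde\omega''\in\widehat{\arm}_4(r,R),G]+\Pro[\neg G \text{ for }\widetilde\omega'\text{ or }\widetilde\omega'']$, then note the second term is $\le 2\Pro[\neg G]$ but compare it against $\widehat\beta_4^{an,W}(r,R)$ via the fact that $\widehat{\arm}_4(r,R)$ and $G$ live in disjoint regions so, conditioning on the exterior first, $\Pro[\neg G]$ enters multiplicatively — giving $\Pro[\widetilde\omega',\widetilde\omega''\in\widehat{\arm}_4(r,R)\cap G]\ge (1-2/C)\,\widehat\beta_4^{an,W}(r,R)$ directly. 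Finally, to replace $\widehat{\arm}_4(r,R)$ by the genuine arm event $\arm_4(r/16,16R)$ I would reuse the extension machinery from the proof of Lemma~\ref{l.fin1} (see Remark~\ref{r.lextensionavecwidetilde}): one extends the ``hat'' arms inward and outward by bounded factors using the $G^{ext}$, $\widetilde G^{ext}$ and analogous interior good events, each extension succeeding with positive conditional probability, which converts $\widehat{\arm}_4(r,R)$ into $\arm_4(r/16,16R)$ at the cost of the absolute constant $C$.

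The main obstacle I anticipate is the second item: making the replacement of $\widehat{\arm}_4(r,R)$ by $\arm_4(r/16,16R)$ \emph{while} keeping the two configurations $\widetilde\omega',\widetilde\omega''$ coupled correctly (equal outside $D_{r,R}\cup W$, independent in $D_{r,R}$, conditionally-independent-given-$\eta$ in $W\setminus D_{r,R}$) and \emph{while} dragging along the exterior event $G$. The extension arguments from~\cite{V1} that I want to invoke are stated for the single-configuration measure $\Pro_{1/2}$; lifting them to the coupled setting is exactly what Lemma~\ref{l.fin1} does for the pure arm events, and I expect the present situation to require a careful but essentially parallel argument — in particular the extensions must take place in the annuli $A(r/16,r)$ and $A(R,16R)$ which are inside $D_{r,R}$, so the independence of $\widetilde\omega'$ and $\widetilde\omega''$ there is available, and $G$ being measurable outside $A(r,R)\supseteq$ these annuli is compatible. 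I would structure the proof so that the extension is performed conditionally on $\widetilde\omega'\cap B(0,r)$, $\widetilde\omega''\cap B(0,r)$ (for the outward extension) and conditionally on the configurations outside $B(0,R)$ (for the inward extension), using that good events have conditional probability close to $1$, mirroring~\eqref{e.cacontinue1}--\eqref{e.oui} and Remark~\ref{r.lextensionavecwidetilde}.
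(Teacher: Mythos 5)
Your first item is essentially sound, though organized differently from the paper: you chain the two one-sided extensions of Lemma~\ref{l.fin1} (inward, then outward), using the comparisons from the proof of Lemma~\ref{l.fin3} to pass between $\widehat{\beta}^{an,W}_4$ and the quantities $\widehat{\beta}^{an,int,W}_4$, $\widehat{\beta}^{an,ext,W}_4$ at each junction, whereas the paper reruns the proof of Lemma~\ref{l.fin1} once, conditioning on $\widetilde{\omega}'\cap A(r,R)$ and $\widetilde{\omega}''\cap A(r,R)$ and extending inwards and outwards simultaneously (Lemma~\ref{l.fin3} supplying the two-sided analogue of the hypotheses~\eqref{e.fin11}--\eqref{e.fin12}). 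The simultaneous version has a payoff your sequential one loses: it directly yields the stronger estimate $\Pro \left[ \widetilde{\omega}',\widetilde{\omega}''\in\arm_4(r',R') \right]\geq\frac{1}{C}(r'/r)^C(R/R')^C\,\widehat{\beta}^{an,W}_4(r,R)$ for the configurations $\widetilde{\omega}',\widetilde{\omega}''$ that are independent in $D_{r,R}$, which is exactly what the second item requires. Chaining Lemma~\ref{l.fin1} as stated only controls $\beta^{an,W}_4(r',R')=\Pro\left[\omega',\omega''\in\arm_4(r',R')\right]$, which by Lemma~\ref{l.fin0} is the \emph{larger} quantity, so you would still have to redo the extension for the $\widetilde{\omega}$'s anyway.

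The genuine gap is in the second item, and you flag it yourself: you attempt to carry the exterior event $G$ through the extension, which is awkward precisely because $G$ lives in $D_{r,R}$, the region where the extension is performed, and you leave this as an anticipated obstacle rather than resolving it. The resolution is to reverse the order of operations. First prove the extension estimate above with $r'=r/16$ and $R'=16R$, with no $G$ in sight. Then use the a.s. inclusion $\arm_4(r/16,16R)\subseteq\widehat{\arm}_4(r,R)$ to write
\[
\Pro\left[\widetilde{\omega}',\widetilde{\omega}''\in\arm_4(r/16,16R)\setminus G\right]\leq\Pro\left[\widetilde{\omega}',\widetilde{\omega}''\in\widehat{\arm}_4(r,R)\right]\Pro\left[\neg G\right]\leq\widehat{\beta}^{an,W}_4(r,R)\,\Pro\left[\neg G\right]\, ,
\]
where the factorization is the disjoint-support independence you correctly identified, and the last step is Lemma~\ref{l.fin0}. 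Subtracting this from the extension lower bound and choosing $C$ large enough that $\Pro[\neg G]\leq 1/C$ is small compared with the extension constant gives~\eqref{e.dgksdofsjojrf}. Your multiplicative bound $\Pro[\widetilde{\omega}',\widetilde{\omega}''\in\widehat{\arm}_4(r,R)\cap G]\geq(1-2/C)\,\widehat{\beta}^{an,W}_4(r,R)$ is correct but applied at the wrong stage: starting from it you would still need to extend the arms while conditioned on $G$, which is exactly the step you could not close.
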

\begin{proof}
If one uses the results of Lemma~\ref{l.fin3} (which gives an ``inwards and outwards'' analogue of the hypotheses~\eqref{e.fin11} and~\eqref{e.fin12} of Lemma~\ref{l.fin1}) and if one follows the proof of Lemma~\ref{l.fin1} (by studying $X' = \Pro \left[ \widetilde{\omega}' \in \widehat{\arm}_4(r/4,4R) \cond \widetilde{\omega}' \cap A(r,R) \right]$ and the analogous variable $X''$ and by extending the arms inwards and outwards at the same time) then one obtains that there exists $C' \in ]0,+\infty[$ such that, if $r'$ is sufficiently large and if $r' \leq r \leq R \leq R' < +\infty$, then
\begin{equation}\label{e.avecretr'}
\Pro \left[ \widetilde{\omega}', \widetilde{\omega}'' \in \arm_4(r',R')   \right] \geq \frac{1}{C} (r'/r)^C (R/R')^C \Pro \left[ \omega',\omega'' \in \widehat{\arm}_4(r,R) \right]  \, .
\end{equation}
Since (by Lemma \ref{l.fin0}) the left hand side is at most $\Pro \left[ \omega',\omega'' \in\arm_4(r',R')   \right]= \beta^{an,W}_4(r',R')$ and since the right hand side is $\frac{1}{C} \widehat{\beta}^{an,W}_4(r,R) \left( \frac{r'R}{rR'} \right)^C$, this gives the first result. Let us now prove the second result of the lemma. To this purpose, first note that
\begin{eqnarray*}
\Pro \left[ \widetilde{\omega}', \widetilde{\omega}'' \in \arm_4(r/16,16R) \setminus G \right] & \leq & \Pro \left[ \widetilde{\omega}', \widetilde{\omega}'' \in \widehat{\arm}_4(r,R) \setminus G \right]\\
& = & \Pro \left[ \widetilde{\omega}', \widetilde{\omega}'' \in \widehat{\arm}_4(r,R) \right] \Pro \left[ \neg G \right]\\
& \leq & \Pro \left[ \omega', \omega'' \in \widehat{\arm}_4(r,R) \right] \Pro \left[ \neg G \right] \text{ (by Lemma }\ref{l.fin0}\text{)} \, .
\end{eqnarray*}
The second result of the lemma is a direct consequence of the above estimate and of \eqref{e.avecretr'} with $r'=r/16$ and $R'=16R$ (and maybe by increasing the constant $C$).
\end{proof}

We are now in shape to prove~\eqref{e.findelathese} i.e. Lemma~\ref{l.first_moment_spec}.
\begin{proof}[Proof of Lemma~\ref{l.first_moment_spec}]
Let $1 \leq r \leq n < +\infty$, let $B$ be a box of radius $r$, let $B'$ the concentric box with radius $r/3$, let $W$ be a Borel subset of the plane such that $W \cap B = \emptyset$, and let $\square$ be a $1 \times 1$ square of the grid $\Z^2$ which is included in $B'$. We assume that $B' \subseteq [0,n]^2$. We recall that $\omega',\omega'' \sim \Pro_{1/2}$ are two configurations that have the same underlying point process $\eta$ and satisfy i) $\omega'_{|\eta \setminus W} = \omega''_{|\eta \setminus W}$ and ii) conditionally on $\eta$, $\omega'_{|\eta \cap W}$ is independent of $\omega''$ and $\omega''_{|\eta \cap W}$ is independent of $\omega'$. In Step 2 of Subsection \ref{ss.proofs_spec} we have seen that, in order to prove Lemma~\ref{l.first_moment_spec}, it is sufficient to prove~\eqref{e.findelathese} i.e. that the following holds if $r$ is sufficiently large:
\[
\E \left[ \un_{|\eta \cap \square|=1} \Pro \left[ \omega', \omega'' \in \Piv_x^q(g_n) \cond \eta \right] \right] \geq \Omega(1) \alpha_4^{an}(r) \Pro \left[ \omega', \omega'' \in \Piv_B(g_n) \right] \, ,
\]
where $x$ is the (random) point such that $\eta \cap \square =\{x\}$ when $|\eta \cap \square |=1$. 

We assume that $r \geq \overline{R}$ where $\overline{R}$ is the constant of Lemma \ref{l.fin4}. We use the following notations: i) $y$ is the center of $B$, ii) $d_0$ is the distance between $y$ and the closest side of $[0,n]^2$, iii) $y_0$ is the projection of $y$ on this side, iv) $d_1$ is the distance between $y_0$ and the closest corner of $[0,n]^2$, v) $y_1$ is this closest corner. We assume that $r \leq d_0/100$, $d_0 \leq d_1/100$ and $d_1 \leq n/100$. The other cases are treated similarly. Without loss of generality, we also assume that $y_0$ lies on the bottom side and that $y_1$ is the left-bottom corner. See Figure \ref{fig:sets}.
\medskip

Below, we use the notation $\widehat{\beta}_4^{an,W}(\cdot,\cdot)$ from \eqref{e.lanotationbetahat} and we use the analogous notations $\widehat{\beta}_3^{an,+,W}(\cdot,\cdot)$ and $\widehat{\beta}_2^{an,++,W}(\cdot,\cdot)$ for the $3$-arm event in the half-plane and the $2$-arm event in the quarter plane. Lemmas~\ref{l.fin1},~\ref{l.fin2},~\ref{l.fin3} and~\ref{l.fin4} are also true for these quantities (and the proofs are exactly the same). We also use the notations $\widehat{\beta}_4^{an,W}(z;\cdot,\cdot)$, $\widehat{\beta}_3^{an,+,W}(z;\cdot,\cdot)$ and $\widehat{\beta}_2^{an,++,W}(z;\cdot,\cdot)$ for the probabilities of the same events but translated by $z$ (but with $B$ and $W$ kept fixed, so these quantities do depend on $z$). Finally, we use the notation $\arm_4(z;\rho_1,\rho_2)$ (respectively $\widehat{\arm}_4(z;\rho_1,\rho_2)$) to denote the event $\arm_4(\rho_1,\rho_2)$ (respectively $\widehat{\arm}_4(\rho_1,\rho_2)$) translated by some $z \in \R^2$. We use the analogous notations for the other arm events.

\begin{figure}[h!]
\centering
\includegraphics[scale=0.8]{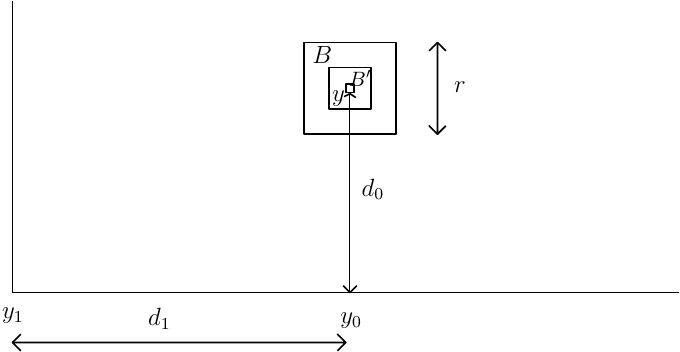}
\caption{The points $y,y_0,y_1$ and the lengths $r,d_0,d_1$.\label{fig:sets}}
\end{figure}

The desired result is a direct consequence of the two following claims. 
\begin{claim}\label{cl.1!!}
We have
\[
\Pro \left[ \omega', \omega'' \in \Piv_B(g_n) \right] \leq \grandO{1} \widehat{\beta}_4^{an,W}(y;r,d_0) \widehat{\beta}_3^{an,+,W}(y_0;d_0,d_1) \widehat{\beta}_2^{an,++,W}(y_1;d_1,n) \, . 
\]
\end{claim}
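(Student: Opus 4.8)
The plan is to decompose the event $\{\omega',\omega''\in\Piv_B(g_n)\}$ according to the ``arm structure'' it forces in three dyadic scales: the $4$-arm event around $B$ up to the scale $d_0$ at which one first hits a side of $[0,n]^2$, then the $3$-arm event in the half-plane from $d_0$ to the scale $d_1$ at which one first hits a corner, and finally the $2$-arm event in the quarter-plane from $d_1$ to the scale $n$. This is exactly the arm-structure underlying the estimate $\Pro_{1/2}[\Piv_B(g_n)]\asymp\alpha_4^{an}(r,d_0)\,\alpha_3^{an,+}(d_0,d_1)\,\alpha_2^{an,++}(d_1,n)$ that is used in \cite{V1}; here I just need the analogous bound for the two coupled configurations $\omega',\omega''$, which is why the relevant quantities are the $\widehat\beta$'s rather than the $\alpha^{an}$'s.

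First I would observe that if $B$ is quenched-pivotal for $g_n$ in a configuration $\sigma$, then there are (at least) four arms of alternating color emanating from $B$ — more precisely, two black arms and two white arms to $\partial B(y,\rho)$ for every $\rho$ with $B(y,\rho)\subseteq[0,n]^2$ — and analogous $3$-arm (resp.\ $2$-arm) configurations in the half-plane near $y_0$ (resp.\ quarter-plane near $y_1$). Applying this to both $\omega'$ and $\omega''$, one gets
\[
\{\omega',\omega''\in\Piv_B(g_n)\}\subseteq\{\omega',\omega''\in\arm_4(y;r,d_0)\}\cap\{\omega',\omega''\in\arm_3^+(y_0;d_0,d_1)\}\cap\{\omega',\omega''\in\arm_2^{++}(y_1;d_1,n)\},
\]
where the three events on the right live in (essentially) disjoint regions: the annulus $A(y;r,d_0)$, the half-annulus near $y_0$ between radii $d_0$ and $d_1$, and the quarter-annulus near $y_1$ between radii $d_1$ and $n$. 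The key point — and the reason the $\widehat\beta$ quantities with a hat are the right objects — is that, by the argument behind Lemma~\ref{l.fin4} (the ``$\widehat{\arm}$'' versions of the arm events are measurable with respect to the configuration restricted to the relevant annulus), the event $\{\omega',\omega''\in\arm_4(y;r,d_0)\}$ is controlled by $\{\omega',\omega''\in\widehat{\arm}_4(y;r,d_0)\}$, which is measurable with respect to $(\omega',\omega'')\cap A(y;r,d_0)$, and similarly for the other two. Since these three annular regions are disjoint, and since conditionally on $\eta$ the colors in disjoint regions are independent (and the coupling $(\omega',\omega'')$ respects this: the pair restricted to a region is a measurable function of $\eta$ restricted to that region together with the two color-fields there, which are independent across regions), the spatial independence of the Poisson process plus the product structure of the coloring give
\[
\Pro\left[\omega',\omega''\in\Piv_B(g_n)\right]\le\Pro\left[\omega',\omega''\in\widehat{\arm}_4(y;r,d_0)\right]\Pro\left[\omega',\omega''\in\widehat{\arm}_3^{+}(y_0;d_0,d_1)\right]\Pro\left[\omega',\omega''\in\widehat{\arm}_2^{++}(y_1;d_1,n)\right],
\]
which is precisely $\widehat\beta_4^{an,W}(y;r,d_0)\,\widehat\beta_3^{an,+,W}(y_0;d_0,d_1)\,\widehat\beta_2^{an,++,W}(y_1;d_1,n)$ up to the $\grandO{1}$ coming from the finitely many passages $\arm\subseteq\widehat{\arm}$ and from possible small overlaps of the regions (handled by shrinking the radii by a bounded factor and absorbing the cost into $\grandO{1}$ via quasi-multiplicativity, exactly as in \cite{V1}).

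The main obstacle, as usual in this setting, is the spatial dependence: unlike on $\Z^2$ or the triangular lattice, a single Voronoi cell can straddle an annulus boundary, so $\arm_4(y;r,d_0)$ is genuinely not measurable with respect to the configuration inside $A(y;r,d_0)$, and one cannot directly multiply probabilities. This is exactly why I route everything through the hatted events $\widehat{\arm}$: these \emph{are} measurable with respect to the configuration in the closed annulus, by Definition~\ref{d.hat}, so the factorization above is legitimate, and the price for replacing $\arm$ by $\widehat{\arm}$ is only a bounded factor by Proposition~\ref{p.hat} together with Theorem~\ref{t.quenched_arm}. The remaining care needed is bookkeeping: making sure the three annular regions can be taken disjoint after adjusting radii by bounded factors (using $r\le d_0/100\le d_1/10^4\le n/10^6$), and checking that the coupled configuration $(\omega',\omega'')$ indeed inherits the conditional independence across disjoint regions from the definition of $\omega',\omega''$ in Subsection~\ref{ss.proofs_spec} — both of which are routine given the setup already established.
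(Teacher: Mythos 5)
Your overall strategy (a three-scale decomposition into a $4$-arm, a half-plane $3$-arm and a quarter-plane $2$-arm contribution, factorized through hatted events that are measurable with respect to disjoint annuli) is the same as the paper's, but there is a genuine gap at the very first step. The inclusion
\[
\{\omega',\omega''\in\Piv_B(g_n)\}\subseteq\{\omega',\omega''\in\arm_4(y;r,d_0)\}\cap\{\omega',\omega''\in\arm_3^+(y_0;d_0,d_1)\}\cap\{\omega',\omega''\in\arm_2^{++}(y_1;d_1,n)\}
\]
is false as stated: pivotality of $B$ only produces alternating arms emanating from the \emph{union of the Voronoi cells of the points of $B$}, and in a configuration with an abnormally large cell (say one covering $B(y,d_0/2)$) the event $\Piv_B(g_n)$ can hold while $\arm_4(y;r,d_0)$ fails, since the annulus is then essentially monochromatic near its inner boundary. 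This is precisely the Voronoi-specific difficulty the paper spends its effort on: the actual proof first writes $\Piv_B(g_n)\subseteq \Piv_B^{A(y;r,d_0/10)}(g_n)\cap\Piv_{B(y_0,10d_0)}^{A(y_0;10d_0,d_1/10)}(g_n)\cap\Piv_{B(y_1,10d_1)}^{A(y_1;10d_1,n/10)}(g_n)$ (these three events, in the sense of Definition~\ref{d.piv_hat}, are measurable with respect to disjoint regions, so the factorization is immediate and does not require passing to arm events first), and then, \emph{within each factor}, converts the pivotal event into a hatted arm event by intersecting with $\dense$ events at every dyadic scale $\rho=r,2r,\dots$ and summing: the complement of $\dense(\rho,\cdot)$ has probability $\grandO{1}\exp(-\Omega(1)\rho^2)$, which beats the polynomial cost $\grandO{1}(\rho/r)^{C}$ of comparing $\widehat{\beta}_4^{an,W}(y;2\rho,d_0/20)$ to $\widehat{\beta}_4^{an,W}(y;r,d_0)$. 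Your write-up contains neither the density decomposition nor the scale summation, and these are not absorbable into a bounded-factor radius adjustment.

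A second, smaller problem is the tool you invoke for the radius comparison. Passing from $\arm$ to $\widehat{\arm}$ in the upper-bound direction is free (it is an inclusion), but comparing $\widehat{\beta}_4^{an,W}$ over a smaller annulus to $\widehat{\beta}_4^{an,W}$ over a larger one goes against monotonicity and requires a quasi-multiplicativity statement \emph{for the coupled quantities}. Proposition~\ref{p.hat} and Theorem~\ref{t.quenched_arm} only control the single-configuration quantities $\alpha_j^{an}$ and $\widetilde{\alpha}_j$; they say nothing about $\widehat{\beta}^{an,W}_4$ versus $\beta^{an,W}_4$ or about changing radii in $\widehat{\beta}^{an,W}_4$. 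The correct reference is the first part of Lemma~\ref{l.fin4} (itself resting on Lemmas~\ref{l.fin1}--\ref{l.fin3}), which is exactly what the paper uses to re-sum the dyadic decomposition. With the $\dense$/scale argument inserted and Lemma~\ref{l.fin4} cited in place of Proposition~\ref{p.hat}, your argument becomes the paper's.
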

\begin{claim}\label{cl.2!!}
We have
\begin{multline*}
\alpha_4^{an}(r) \widehat{\beta}_4^{an,W}(y;r,d_0) \widehat{\beta}_3^{an,+,W}(y_0;d_0,d_1) \widehat{\beta}_2^{an,++,W}(y_1;d_1,n) \\
\leq \grandO{1} \E \left[ \un_{|\eta \cap \square|=1} \Pro \left[ \omega', \omega'' \in \Piv_x^q(g_n) \cond \eta \right] \right] \, .
\end{multline*}
\end{claim}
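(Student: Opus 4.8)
The two claims split the desired inequality \eqref{e.findelathese} into an upper bound on $\Pro \left[ \omega', \omega'' \in \Piv_B(g_n) \right]$ (Claim~\ref{cl.1!!}) and a matching lower bound on $\E \left[ \un_{|\eta \cap \square|=1} \Pro \left[ \omega', \omega'' \in \Piv_x^q(g_n) \cond \eta \right] \right]$ (Claim~\ref{cl.2!!}), both expressed in terms of the same product of ``coupled'' arm quantities $\widehat{\beta}_4^{an,W}(y;r,d_0)$, $\widehat{\beta}_3^{an,+,W}(y_0;d_0,d_1)$ and $\widehat{\beta}_2^{an,++,W}(y_1;d_1,n)$. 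For Claim~\ref{cl.1!!} the plan is to follow the localisation-of-the-pivotal-event argument from Section~5 of~\cite{garban2010fourier} and Section~7.1 of~\cite{V1}: if $B$ is pivotal for $g_n$ in $\omega'$ (and likewise in $\omega''$), then there are four arms of alternating colour from $\partial B$ out to the scale $d_0$ where the box ``sees'' the side of $[0,n]^2$, then three arms in the half-plane from scale $d_0$ to scale $d_1$, then two arms in the quarter-plane from $d_1$ to $n$. Since $W \cap B = \emptyset$, I can choose the annuli in the definitions of $\widehat{\beta}_\bullet$ so that these arm events are measurable with respect to the configuration in the corresponding (disjoint) annuli; spatial independence of the Poisson process then factorises the probability, and each factor is bounded by the corresponding $\widehat{\beta}$ using the monotonicity Lemma~\ref{l.fin0} (to pass from $\arm$ to $\widehat{\arm}$) together with Proposition~\ref{p.hat}-type estimates. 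One must be slightly careful that the three ``stages'' of arms overlap only on $O(1)$-aspect-ratio annuli, which only costs a multiplicative constant by the quasi-multiplicativity property.

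\textbf{For Claim~\ref{cl.2!!}}, the idea is the reverse construction: glue a four-arm coupled configuration inside $B$ (at the scale from $1$ to $r$) to the coupled multi-scale configuration outside $B$, so as to produce a configuration in which the single point $x \in \square$ is quenched-pivotal for $g_n$ in both $\omega'$ and $\omega''$. Concretely, I would first use the already-proved inequality \eqref{e.avecretr'} (equivalently, the first item of Lemma~\ref{l.fin4}) to say that $\widehat{\beta}_4^{an,W}(y;r,d_0)$ is comparable, up to constants, to $\beta_4^{an,W}(y;r/16,16 d_0)$ — i.e. that coupled four-arm events ``extend'' inwards and outwards — and similarly for the $3$-arm and $2$-arm pieces; then one combines these using the coupled quasi-multiplicativity estimates of Lemmas~\ref{l.fin1}--\ref{l.fin4} to produce a single coupled $4$-$3$-$2$ arm event from scale $r/16$ out to $n$, which forces $B$ itself (in fact any $1\times 1$ box inside $B'$) to be quenched-pivotal in both configurations. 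Finally, conditioning on $\{|\eta \cap \square| = 1\}$ (a positive-probability event depending only on $\eta$ in $\square$, hence independent of everything else) and performing the last short four-arm gluing from scale $1$ to scale $r$ around the point $x$ — this is precisely the ``$\alpha_4^{an}(r)$'' factor — yields the claimed lower bound. The separation-of-interfaces events $G^{ext}_\delta$, $\widetilde G^{ext}_\delta$ and their interior analogues from Definitions~\ref{d.dense}--\ref{d.gi} are what make these gluings legitimate, exactly as in the proof of Lemma~\ref{l.fin1}; one inserts them into the construction using \eqref{e.cacontinue3} and the pointwise extension estimates \eqref{e.cacontinue1}--\eqref{e.cacontinue2} and Remark~\ref{r.lextensionavecwidetilde}.

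\textbf{The main obstacle} will be carrying the coupled quasi-multiplicativity argument through all three spatial regimes (interior $4$-arm, boundary $3$-arm, corner $2$-arm) simultaneously while keeping track of which events are measurable with respect to which annuli, so that the spatial-independence factorisation of Poisson processes actually applies. In the Bernoulli setting of~\cite{garban2010fourier} this bookkeeping is routine because conditioning on a half-plane's worth of bits is unproblematic; here, as the footnote to Lemma~\ref{l.fin1} stresses, one genuinely needs to work with the ``hatted'' events $\widehat{\arm}_4$, $\widehat{\arm}_4^{ext}$, $\widehat{\arm}_4^{int}$ and their coupled versions, because conditioning on $\omega \cap H$ for a half-plane $H$ can spoil the arm event, and the separation structures must be inserted at the right scales (controlling interfaces at $9\rho/10$ rather than $\rho$, as in \eqref{e.cacontinue2}). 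Once this is organised — and given that we are allowed to lose arbitrary multiplicative constants and even a harmless $(r'/r)^C$-type polynomial slack — the estimates reduce to repeated application of Lemmas~\ref{l.fin1}--\ref{l.fin4}, Proposition~\ref{p.QMulti}, Theorem~\ref{t.quenched_arm} and the universal-arm-exponent identities of Proposition~\ref{p.univ} (the exponents $1$ for the half-plane $3$-arm and $2$ for the quarter-plane $2$-arm being what make the contributions of the boundary and corner converge). I would treat the ``generic'' geometric case $r \le d_0/100 \le d_1/10000 \le n/10^6$ in detail and dispatch the degenerate cases (box near a side, near a corner, or $n$, $d_0$, $d_1$ comparable) by the same argument with fewer stages, exactly as indicated after the statement of the claims.
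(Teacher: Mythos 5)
Your plan for Claim~\ref{cl.2!!} is correct and follows the paper's proof essentially step for step: replace $(\omega',\omega'')$ by configurations made independent in the gluing annuli, factorize the product of the three hatted $\beta$'s as probabilities of arm events at separated scales intersected with separation/good events, glue from the outermost scale inward so that the arms reach the actual sides of $[0,n]^2$, perform the final scale-$1$-to-$r$ four-arm construction around the unique point of $\square$ (Lemma~4.6 of \cite{V1}, giving the $\alpha_4^{an}(r)$ factor), and return to the original coupling via Lemma~\ref{l.fin0}. The one imprecision is your appeal to the \emph{first} item of Lemma~\ref{l.fin4} / \eqref{e.avecretr'}: the step actually needed is the \emph{second} item \eqref{e.dgksdofsjojrf}, which both allows intersecting with high-probability events measurable in the gap annuli and, crucially, replaces the pair by one that is independent in those gaps --- without that independence the three factors would not multiply to the probability of their intersection (your extended annuli $[r/16,16d_0]$, etc., would overlap), and the successive gluings for the coupled pair would not reduce to squaring a one-configuration estimate as in the proof of Lemma~\ref{l.fin1}.
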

\begin{proof}[Proof of Claim~\ref{cl.1!!}]
To prove this claim, we use the notations for pivotal events from Definition~\ref{d.piv_hat}. Note that we have
\[
\Piv_B(g_n) \subseteq \Piv_B^{A(y;r,d_0/10)}(g_n) \cap \Piv_{B(y_0,10d_0)}^{A(y_0;10d_0,d_1/10)}(g_n) \cap \Piv_{B(y_1,10d_1)}^{A(y_1;10d_1,n/10)}(g_n)
\]
and that the three events on the right-hand side are independent. Hence, it is sufficient to prove the three following estimates
\begin{equation}\label{e.4braspourpivooot}
\Pro \left[ \omega',\omega'' \in \Piv_B^{A(y;r,d_0/10)}(g_n) \right] \leq \grandO{1} \widehat{\beta}_4^{an,W}(y;r,d_0) \, ;
\end{equation}
\[
\Pro \left[ \omega',\omega'' \in \Piv_{B(y_0,10d_0)}^{A(y_0;10d_0,d_1/10)}(g_n) \right] \leq \grandO{1} \widehat{\beta}_3^{an,+,W}(y_0;d_0,d_1) \, ;
\]
\[
\Pro \left[ \omega',\omega'' \in \Piv_{B(y_1,10d_1)}^{A(y_1;10d_1,n/10)}(g_n) \right] \leq \grandO{1} \widehat{\beta}_2^{an,++,W}(y_1;d_1,n) \, .
\]
We only write the proof of \eqref{e.4braspourpivooot} since the other proofs are the same. We use the following notation: for any $\rho \in [r,d_0/10]$ we let $\dense(\rho,d_0/10) = \dense_{1/100}(A(y;\rho/2,2\rho)) \cap \dense_{1/100}( A(y;d_0/20,d_0/5))$. Note that, for every $\rho \in [r,d_0/10]$,
\begin{equation}\label{e.decomposition_piv_dense_arm}
\Piv_B^{A(y;r,d_0/10)}(g_n) \subseteq \widehat{\arm}_4(y;2\rho,d_0/20) \cup \left( \Piv_B^{A(y;r,d_0/10)}(g_n) \setminus \dense(\rho,d_0/10) \right) \, .
\end{equation}
Note furthermore that $\widehat{\arm}_4(y;2\rho,d_0/20)$ is measurable with respect to $\omega \cap A(y;2\rho,d_0/20)$ while $\dense(\rho,d_0/10)$ is measurable with respect to $\eta \cap \left( A(y;\rho/2,2\rho) \cup A(y;d_0/20,d_0/5) \right)$. Note also that $\Pro \left[ \neg \dense(\rho,d_0/10) \right] \leq \grandO{1} \exp(-\Omega(1)\rho^2)$. As a result, \eqref{e.decomposition_piv_dense_arm} applied to $\rho = r,2r,\cdots$ implies that $\Pro \left[ \omega',\omega'' \in \Piv_B^{A(y;r,d_0/10)}(g_n) \right]$ is at most
\begin{multline*}
\grandO{1} \widehat{\beta}^{an,W}_4(y;2r,d_0/20) \\
+\sum_{k=1}^{\lfloor \log_2(d_0/20r) \rfloor} \grandO{1} \exp\left(-\Omega(1)(2^kr)^2 \right) \widehat{\beta}^{an,W}_4(y;2^{k+1} r,d_0/20) + \grandO{1} \exp\left(-\Omega(1)d_0^2 \right)\, .
\end{multline*}
The first part of Lemma~\ref{l.fin4} implies that the above is at most $\grandO{1} \widehat{\beta}_4^{an,W}(y;r,d_0)$. This ends the proof of \eqref{e.4braspourpivooot}.
\end{proof}

\begin{proof}[Proof of Claim~\ref{cl.2!!}]
Let $D = \big(  B(y_0,8d_0) \setminus B(y,d_0/8) \big) \cup \big(  B(y_1,8d_1) \setminus B(y_0,d_1/8) \big) \cup B(y_1,n/8)^c$ and let $\widehat{\omega}'$ and $\widehat{\omega}''$ be two configurations 
with law $\Pro_{1/2}$ such that
\begin{itemize}
\item[i)] $\widehat{\omega}'=\widehat{\omega}''$ outside of $W \cup D$;
\item[ii)] $\widehat{\omega}'$ and $\widehat{\omega}''$ are independent of each other in $D$;
\item[iii)] in $W \setminus D$, $\widehat{\omega}'$ and $\widehat{\omega}''$ have the same underlying point process $\eta$ but are independent conditionally on this point process.
\end{itemize}
Remember that $B$ is the box of radius $r$ centered at $y$ and that $W \cap B=\emptyset$. See Figure \ref{fig:grey}.

\begin{figure}[h!]
\centering
\includegraphics[scale=0.7]{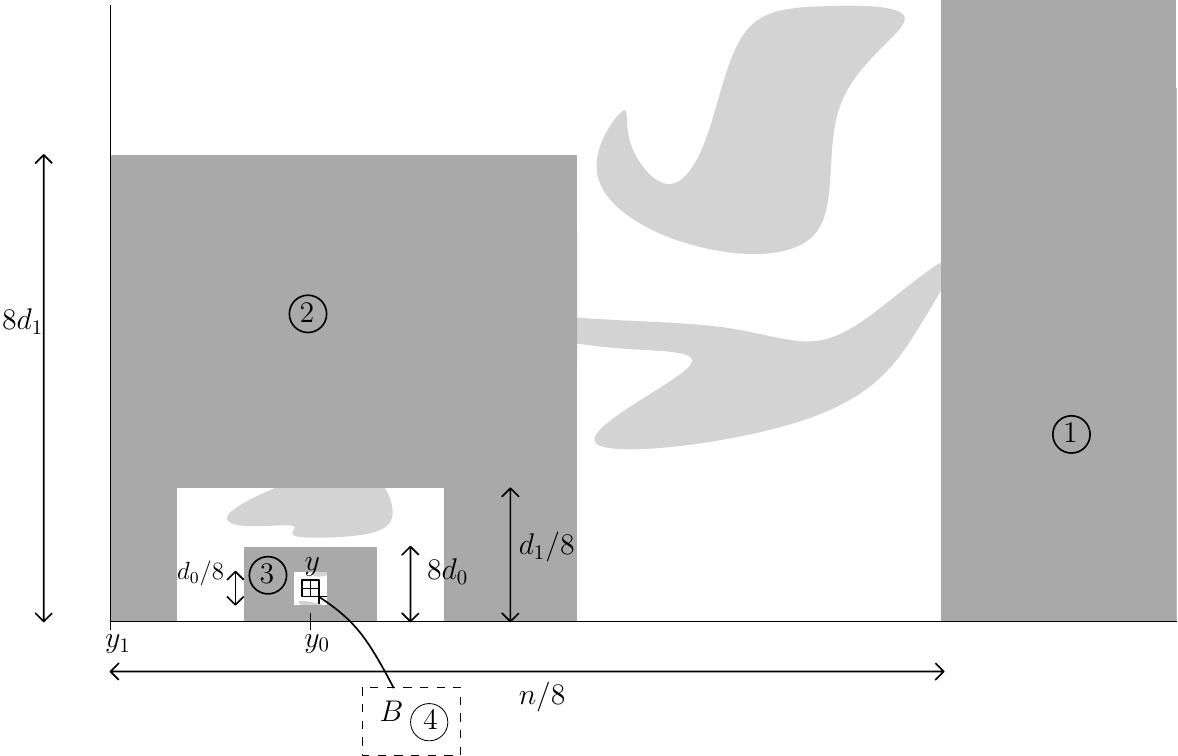}
\caption{\blue{In darkgray: the set $D$. In lightgray: the set $W \setminus D$. Steps (1)--(4) below ``take place'' in the four regions $\textcircled{\scriptsize{1}}$--$\textcircled{\scriptsize{4}}$ respectively. The configurations $\widehat{\omega}'$ and $\widehat{\omega}''$ are independent in the three darkgray regions $\textcircled{\scriptsize{1}}$--$\textcircled{\scriptsize{3}}$. This property will help us to glue paths, exactly like in the proof of Lemma \ref{l.fin1}. In the region $\textcircled{\scriptsize{4}}$ (i.e.\ in the box $B$ and its neighbourhood), we will rather use that these two configurations coincide. The reader can have in mind that the four darkgrey regions $\textcircled{\scriptsize{1}}$--$\textcircled{\scriptsize{4}}$ roughly are annuli (or annuli intersected with the half- or quarter-plane) and that in each of these annuli, the inner radius is of the same order as the outer radius. As a result, the gluing operations in these regions will not modify the order of the arm probabilities.}\label{fig:grey}}
\end{figure}

By the second part of Lemma~\ref{l.fin4} (and Lemma \ref{l.fin0}), there exists $h > 0$ such that, if $r$ is sufficiently large and if $G(y,r)$ is an event measurable with respect to $\omega \cap B(y,r) = \omega \cap B$, $G(y,d_0)$ is an event measurable with respect to $\omega \cap A(y;d_0/8,d_0/2)$, $G(y_0,d_0)$ is an event measurable with respect to $\omega \cap A(y_0;2d_0,8d_0)$, $G(y_0,d_1)$ is an event measurable with respect to $\omega \cap A(y_0;d_1/8,d_1/2)$, $G(y_1,d_1)$ is an event measurable with respect to $\omega \cap A(y_1;2d_1,8d_1)$ and $G(y_1,n)$ is an event measurable with respect to $\omega \setminus B(y_1,n/8)$ that are all of probability at least $1-h$, then we have
\begin{multline}\label{e.cons_de_hat}
\widehat{\beta}_4^{an,W}(y;r,d_0) \widehat{\beta}_3^{an,+,W}(y_0;d_0,d_1) \widehat{\beta}_2^{an,++,W}(y_1;d_1,n)\\
\leq \grandO{1} \Pro \left[ \widehat{\omega}',\widehat{\omega}'' \in \arm_4(y;r/2,d_0/4) \cap G(y,r)  \cap G(y,d_0) \right]\\
\hspace{1cm} \times \Pro \left[ \widehat{\omega}',\widehat{\omega}'' \in \arm_3^+(y_0;4d_0,d_1/4) \cap G(y_0,d_0) \cap G(y_0,d_1) \right]\\
\times \Pro \left[ \widehat{\omega}',\widehat{\omega}'' \in \arm_2^{++}(y_1;4d_1,n/4) \cap G(y_1,d_1) \cap G(y_1,n) \right] \, ,
\end{multline}
Let us now prove that, if the events $G(y,r)$, $G(y,d_0)$, $G(y_0,d_0)$, $G(y_0,d_1)$, $G(y_1,d_1)$ and $G(y_1,n)$ are suitably chosen, then
\begin{equation}\label{e.FIIIIIN}
\text{Right-hand side of } \eqref{e.cons_de_hat} \leq \grandO{1} \frac{ \Pro \left[ |\widehat{\eta}'\cap \square| = 1, \, \widehat{\omega}', \widehat{\omega}'' \in \Piv_x^q(g_n) \right] }{\alpha_4^{an}(r)} \, ,
\end{equation}
where $\widehat{\eta}'$ is the non-colored underlying point process of $\widehat{\omega}'$ and $x$ is the (random) point such that $\widehat{\eta}' \cap \square = \{ x \}$ on the event $\{ |\widehat{\eta}'\cap \square| = 1 \}$. Note that, by definition of $\widehat{\omega}'$ and $\widehat{\omega}''$ (and since $W$ does not intersect $B$), we have $\widehat{\omega}' \cap \square = \widehat{\omega}'' \cap \square$ (so $\widehat{\eta}' \cap \square = \widehat{\eta}'' \cap \square$). Before proving \eqref{e.FIIIIIN}, let us note that, by Lemma~\ref{l.fin0}, we have
\begin{multline*}
\Pro \left[ |\widehat{\eta}'\cap \square| = 1, \, \widehat{\omega}', \widehat{\omega}'' \in \Piv_x^q(g_n) \right]\\
\leq \Pro \left[ |\eta \cap \square| = 1, \, \omega', \omega'' \in \Piv_x^q(g_n) \right] = \E \left[ \un_{|\eta \cap \square|=1} \Pro \left[ \omega', \omega'' \in \Piv_x^q(g_n) \cond \eta \right] \right] \, .
\end{multline*}
Hence, \eqref{e.FIIIIIN} implies the desired result. The proof of~\eqref{e.FIIIIIN} follows four steps. Let $E_{r,d_0}=E_{r,d_0}(\delta)$, $E_{d_0,d_1}=E_{d_0,d_1}(\delta)$ and $E_{d_1,n}=E_{d_1,n}(\delta)$ the three events from the right-hand side of~\eqref{e.cons_de_hat}. So~\eqref{e.cons_de_hat} is
\[
\widehat{\beta}_4^{an,W}(y;r,d_0) \widehat{\beta}_3^{an,+,W}(y_0;d_0,d_1) \widehat{\beta}_2^{an,++,W}(y_1;d_1,n) \leq \grandO{1} \Pro \left[ E_{r,d_0} \right] \Pro \left[ E_{d_0,d_1} \right] \Pro\left[ E_{d_1,n} \right] \, .
\]
\begin{itemize}
\item[1)] Let $F_{d_1,n}$ denote the event that, in both $\widehat{\omega}'$ and $\widehat{\omega}''$, there exist one white arm and one black arm from $\partial B(y_1,4d_1) \cap [0,n]^2$ to the top and right sides of $[0,n]^2$ respectively (remember that we have assumed that $y_1$ is the left-bottom corner and that $y_0$ belongs to the bottom side). We choose $G(y_1,n)=G_\delta^{ext}(y_1,n)$, where $G^{ext}_\delta(z,\rho)$ is $G^{ext}_\delta(\rho)$ translated by $\rho$ (see \eqref{e.Gext} for the notation $G^{ext}_\delta(\rho)$) and with $\delta$ sufficiently small so that $\Pro \left[ G^{ext}_\delta(y_1,n) \right] \geq 1- h$. Since $\widehat{\omega}'$ and $\widehat{\omega}''$ are independent of each other outside of $B(y_1,n/8)$, by exactly the same arguments as in Remark~\ref{r.lextensionavecwidetilde} (but for a $2$-arm event in the quarter-plane instead of the $4$-arm event in the whole plane), we have $\Pro \left[ F_{d_1,n}, \, \widehat{\omega}', \widehat{\omega}'' \in G(y_1,d_1) \right] \geq \Omega(1) \Pro \left[ E_{d_1,n} \right]$ (for any choice of event $G(y_1,d_1)$).
\item[2)] Let $F_{d_0,n} \subseteq F_{d_1,n}$ be the event that, in both $\widehat{\omega}'$ and $\widehat{\omega}''$, there exist one black arm, one white arm and one black arm from $\partial  B(y_0,4d_0) \cap [0,n]^2$ to the left, top and right sides of $[0,n]^2$ respectively. In this step, we prove that we can choose $G(y_0,d_1)$ and $G(y_1,d_1)$ such that
\begin{equation}\label{e.FIIIIN}
\Pro \left[ F_{d_0,n}, \, \widehat{\omega}', \widehat{\omega}'' \in G(y_0,d_0) \right]
\geq \Omega(1) \Pro \left[ E_{d_0,d_1} \right] \Pro \left[ F_{d_1,n}, \, \widehat{\omega}', \widehat{\omega}'' \in G(y_1,d_1) \right] \, .
\end{equation}
This estimate is a little harder than the estimate of Step~1) above because of the interactions between the event $E_{d_0,d_1}$ and the event $F_{d_1,n} \cap \{ \widehat{\omega}', \widehat{\omega}'' \in G(y_1,d_1) \}$ at scale $d_1$. In Subsection~7.1 of~\cite{V1} (see Lemma~7.9 and just below this lemma), we have proved the following for every $r_1$ sufficiently large, $r_2 \geq 6r_1$ and $r_3 \geq 6r_2$:
\begin{multline*}
\Pro_{1/2} \left[ \arm_4(r_1,r_3) \right]\\
\geq \Omega(1) \Pro_{1/2} \left[ \arm_4(r_1,r_2/3) \cap G^{ext}_\delta(r_2/3)  \right] \cdot \Prob_{1/2} \left[ \arm_4(3r_2,r_3) \cap G^{int}_\delta(3r_2) \right] \, ,
\end{multline*}
where $G^{int}_\delta(\rho)$ is an ``interior'' analogue of $G^{ext}_\delta(\rho)$. With exactly the same proof we even have the following pointwise inequality:
\begin{multline*}
\Pro_{1/2} \left[ \arm_4(r_1,r_3) \cond \omega \cap \left( B(0,r_2/6) \cup B(0,6r_2)^c \right) \right]\\
\geq \Omega(1) \Pro_{1/2} \left[ \arm_4(r_1,r_2/3) \cap G^{ext}_\delta(r_2/3) \cond \omega \cap \left( B(0,r_2/6) \cup B(0,6r_2)^c \right) \right]\\
\times \Prob_{1/2} \left[ \arm_4(3r_2,r_3) \cap G^{int}_\delta(3r_2)  \cond \omega \cap \left( B(0,r_2/6) \cup B(0,6r_2)^c \right) \right] \, .
\end{multline*}
Since $\widehat{\omega}'$ and $\widehat{\omega}''$ are independent of each other in $A(y_0;d_1/8,8d_1)$, the proof of the above adapts readily to our case by letting $G(y_0,d_1)$ be an analogue of $G^{ext}_\delta(y_0,d_1/4)$ and by letting $G(y_1,d_1)$ be an analogue of $G^{int}_\delta(y_1,4d_1)$. Since the proof of \eqref{e.FIIIIN} does not require any new idea, we leave the details to the reader and we refer to \cite{V1} for more details.
\item[3)] The third step consists in proving that we can choose $G(y,d_0)$ and $G(y_0,d_0)$ such that
\[
\Pro \left[  F_{r,n} , \, \widehat{\omega}',\widehat{\omega}'' \in G(y,r) \right] \geq \Omega(1) \Pro \left[ E_{r,d_0} \right] \Pro \left[ F_{d_0,n}, \, \widehat{\omega}', \widehat{\omega}'' \in G(y_0,d_0) \right] \, ,
\]
where $F_{r,n} \subseteq F_{d_0,n}$ is the event that, in both $\widehat{\omega}'$ and $\widehat{\omega}''$, there are two black arms from $\partial B(y,r/2)$ to the left and right sides of $[0,n]^2$ and two white arms from $\partial B(y,r/2)$ to the top and bottom sides of $[0,n]^2$. The proof is exactly the same as in Step~2).
\item[4)]
Let $F_{1,n}$ denote the event that in both $\widehat{\omega}'$ and $\widehat{\omega}''$, there is only one point in $\square$ and there are two black arms from the cell $C$ of this point to the left and right sides of $[0,n]^2$ and two white arms from $C$ to the top and bottom sides of $[0,n]^2$ (and that $C$ is included in $[0,n]^2$). The fourth step consists in proving that we can choose $G(y,r)$ such that
\begin{equation}\label{e.edgqegffgd}
\Pro \left[ F_{1,n} \right] \geq \Omega(1)\alpha_4^{an}(r) \Pro \left[  F_{r,n} , \, \widehat{\omega}',\widehat{\omega}'' \in G(y,r) \right] \, .
\end{equation}
In \cite{V1} (see Lemma 4.6), we have proved the following: if $\square' \subseteq [0,r]^2$ is a $1 \times 1$ square at distance at least $r/3$ from the sides of $[0,r]^2$, let $F'_{1,r}$ denote the event that there is only one point in $\square'$ and there are two black arms from the cell $C'$ of this point to the left and right sides of $[0,r]^2$ and two white arms from $C'$ to the top and bottom sides of $[0,r]^2$ (and that $C'$ is included in $[0,r]^2$). Then, $\Pro [ F_{1,r}' ] \geq \Omega(1) \alpha^{an}_4(r)$. Since the configurations $\widehat{\omega}'$ and $\widehat{\omega}''$ coincide in $B=B(y,r)$, the proof of the desired result \eqref{e.edgqegffgd} is the same as in \cite{V1}, so we leave once again the details to the reader and refer to \cite{V1}.
\end{itemize}
The four steps imply~\eqref{e.FIIIIIN} since $F_{1,n} \subseteq \{ |\widehat{\eta}'\cap \square| = 1 \} \cap \{ \widehat{\omega}', \widehat{\omega}'' \in \Piv_x^q(g_n) \}$, which ends the proof of the claim.
\end{proof}
This also ends the proof of Lemma~\ref{l.first_moment_spec}.
\end{proof}

\appendix

\section{Simple properties of the $\mu$-dynamical processes}\label{s.simple}

\paragraph{A metric on $\Omega$.} As explained in Appendix~A.2.6 of \cite{daley2003introduction}, we can equip the set $\mathcal{M}_{\R^2 \times \{-1,1\}}^{B}$ of all (locally finite) Borel measures on $\R^2 \times \{-1,1\}$ with a metric $d$ such that i) $(\mathcal{M}_{\R^2 \times \{-1,1\}}^{B},d)$ is a Polish space, ii) the restriction\footnote{The set $\Omega$ is included in $\mathcal{M}_{\R^2 \times \{-1,1\}}^{B}$ if we see an element $\overline{\omega}$ of $\Omega$ as the corresponding measure $\sum_{(x,\eps) \in \overline{\omega}} \delta_{(x,\eps)}$.} of $d$ to $\Omega$ generates the (classical) $\sigma$-algebra that we have defined in Subsection~\ref{ss.main} and iii) the restriction of $d$ to $\Omega$ is given by the following expression, where $\overline{\omega}_1,\overline{\omega}_2 \in \Omega$ and where $\overline{\eta}_1$ and $\overline{\eta}_2$ are such that $\overline{\omega}_1 \in \{-1,1\}^{\overline{\eta}_1}$ and $\overline{\omega}_2 \in \{-1,1\}^{\overline{\eta}_2}$:
\begin{equation}\label{e.detd'}
d(\overline{\omega}_1,\overline{\omega}_2) = \int_0^{+\infty} e^{-r} \frac{d'_r(\overline{\omega}_1,\overline{\omega}_2)}{1+d'_r(\overline{\omega}_1,\overline{\omega}_2)} dr \, .
\end{equation}
The quantity $d'_r(\overline{\omega}_1,\overline{\omega}_2)$ of~\eqref{e.detd'} is defined by
\[
d'_r(\overline{\omega}_1,\overline{\omega}_2) = \inf_\phi \sup_x \, ||x-\phi(x)||_2 \, ,
\]
where the infimum above is over every bijection $\phi$ between $\overline{\eta}_1 \cap [-r,r]^2$ and $\overline{\eta}_2 \cap [-r,r]^2$ such that
\[
\forall x \in \overline{\eta}_1 \cap [-r,r]^2, \, \overline{\omega}_1(x) = 1 \Leftrightarrow \overline{\omega}_2(\phi(x)) = 1 \, ,
\]
and the supremum is over every $x \in \overline{\eta}_1 \cap [-r,r]^2$. When such a bijection does not exist, we let $d'_r(\overline{\omega}_1,\overline{\omega}_2)=1$.
\medskip

With this definition, it is clear that the frozen dynamical Voronoi percolation process is a càdlàg process with values in $(\Omega,d_{|\Omega})$ and that $\Pro_{1/2}$ is an invariant measure of this process. Now, let $\mu$ be the law of a Lévy process in the plane starting from $0$ \blue{and let us show that the same property holds for the $\mu$-dynamical Voronoi percolation process.} Let $\omega^\mu(0) \sim \Pro_{1/2}$ and let each point of the underlying configuration $\eta^{\mu}(0)$ evolve conditionally independently according to a process of law $\mu$. For each $t$, let $\omega^\mu(t)$ be the \blue{configuration} \blue{at time $t$. (1) Let us first prove that, for each $t$, a.s.\ $\omega^\mu(t)$ is locally finite. To this purpose, for each point $x \in \Z^2$, let $C_x=x+[-1/2,1/2[^2$ (note that $(C_x)_x$ is a partition of the plane). The expectation of $|\omega^{\mu}(t) \cap C_0|$ equals the sum on $x \in \Z^2$ of the expected number of points that were in $C_x$ at time $0$ and are in $C_0$ at time $t$. By translation invariance of the point process, this also equals the sum on $x \in \Z^2$ of the expected number of points that were in $C_0$ at time $0$ and are in $C_{-x}$ at time $t$. As a result, $\E [ |\omega^{\mu}(t) \cap C_0| ] = \E [ |\omega^{\mu}(0) \cap C_0| ] = 1$. This proves that, for each fixed $t$, a.s. $\omega^\mu(t)$ is locally finite.} (2) By using the Markov property of Lévy processes, it is not difficult to deduce from this that a.s., for every $M,T<+\infty$, there are only finitely many points that intersect $[-M,M]^2$ at some time $t \in [0,T]$. In particular, a.s.\ for every $t$, $\omega^\mu(t)$ is locally finite (i.e. we can replace ``for every $t$ a.s.'' by ``a.s. for every $t$''). (3) Note furthermore that classical  potential theory results on Lévy processes (see~\cite{bertoin1998levy}) imply that a.s. there is no collusion of points. By using that the time reversal process of a Lévy process is a càglàd Lévy process, we even have that, for each $T < +\infty$ and each $M<+\infty$, the infimum on $t \in [0,T]$ and on $x \neq y \in \eta^\mu(t) \cap [-M,M]^2$ of $||x-y||_2$ is positive. All these properties imply that the process $(\omega^\mu(t))_{t \in \R_+}$ is a well-defined càdlàg process with values in $(\Omega,d_{|\Omega})$. \blue{(4) Let us finally explain why $\Pro_{1/2}$ is an invariant measure of this process. Consider the torus $\T_R$ of side $2R$ (that is, the set obtained from $[-R,R]^2$ by identifying the top and bottom sides and the left and right sides), let $\omega^\mu_R(0)$ be a set of $(2R)^2$ independent uniform points in $\T_R \times \{-1,1\}$, define $(\omega^\mu_R(t))_{t \geq 0}$ by letting each point evolve in the torus independently and according to a process of law $\mu$ and note that the law of $\omega^\mu_R(t)$ is invariant in time. By the arguments from point (1) above, for each $t \ge 0$ and each $M<+\infty$, the probability that some points in $\omega^\mu(t) \cap [-M,M]^2$ were outside of $[-\rho,\rho]^2$ at time $0$ goes to $0$ as $\rho$ goes to $+\infty$. This (together with a similar property for $\omega^\mu_R(t)$) implies that, for each measurable set $A \subset \Omega$ that depends on the configuration restricted to a compact subset of the plane and each $t \ge 0$, we have $\Pro [ \omega^\mu_R(t) \in A ] \rightarrow \Pro [  \omega^\mu(t) \in A ]$ as $R \rightarrow +\infty$. This proves that the law of $\omega^\mu(t)$ is invariant in time.}
\medskip

Let us end this short section with a remark concerning giant cells.
\begin{rem}\label{r.inf_cell}
In this remark we observe that a.s. there is no time with an unbounded Voronoi cell. For the frozen dynamical process this is obvious since the point process $\eta$ does not evolve in time. Concerning the $\mu$-dynamical processes, one can do the following reasoning: first, fix some $T<+\infty$ and for each $1 \times 1$ box $B$ of the grid $\Z^2$ let $Z_B$ be the random variable that equals $1$ if there exists $x \in \eta^\mu(0) \cap B$ that stays in $B$ at least until time $T$. Note that $(Z_B)_B$ is a family of i.i.d. Bernoulli variables of some parameter $p(T) \in ]0,1[$. It is sufficient to prove the following claim:
\begin{claim}
A.s., for every configuration $\overline{\eta}$ that contains at least one point in each box $B$ such that $Z_B=1$, the Voronoi tiling of $\overline{\eta}$ has no unbounded cell.
\end{claim}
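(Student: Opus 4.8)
The plan is to reduce the claim to a deterministic percolation-type statement about the "thinned" point configuration $\{B : Z_B = 1\}$, and then exploit the fact that this configuration, viewed as a subset of $\Z^2$, is a supercritical site percolation configuration (for $T$ fixed, $p(T) \in ]0,1[$, so we may choose $T$ small enough, or rather work at the density $p(T)$ directly since the claim only concerns boundedness of cells, which is monotone in the richness of the configuration). First I would recall the elementary geometric fact: if $\overline{\eta}$ is a point configuration such that every box of side $L$ in some grid $L\Z^2$ contains at least one point of $\overline{\eta}$, then every Voronoi cell of $\overline{\eta}$ has diameter at most $O(L)$; indeed, a point $u$ whose cell $C(x)$ contains it satisfies $\|x-u\|_2 \leq \|x'-u\|_2$ for all $x' \in \overline{\eta}$, and taking $x'$ to be a point in the box of the grid containing $u$ gives $\|x-u\|_2 \leq \sqrt{2}L$, hence $\diam(C(x)) \leq 2\sqrt{2}L$. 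So unbounded cells can only occur "because of" large regions devoid of points.

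The key step is therefore to show that a.s. there is no infinite connected region of boxes with $Z_B = 0$ surrounding a growing family of boxes, or more precisely: a.s., for every point $u \in \R^2$, there are points of any admissible $\overline{\eta}$ arbitrarily "around" $u$ at bounded distance in enough directions to keep the cell containing $u$ bounded. The clean way to phrase this: a Voronoi cell $C(x)$ of $\overline{\eta}$ is unbounded if and only if $x$ lies on the boundary of the convex hull of $\overline{\eta}$ "at infinity", i.e. there is a half-plane through $x$ (or a direction) containing no other point of $\overline{\eta}$ arbitrarily far; more usefully, $C(x)$ is bounded as soon as $x$ is in the interior of the convex hull of $\overline{\eta} \cap B(x,\rho)$ for some finite $\rho$. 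Now if $Z_B = 1$ for boxes $B$ whose centers, around any given scale, percolate — i.e. a.s. there is no infinite $*$-connected cluster of boxes with $Z_B = 0$ — then every box $B_0$ with $Z_{B_0} = 0$ is surrounded by a circuit of boxes with $Z_B = 1$, and these furnish points of $\overline{\eta}$ in all "directions" around $B_0$ at bounded distance, forcing all cells meeting $B_0$ to be bounded; cells meeting boxes with $Z_B = 1$ are bounded by the previous paragraph's estimate applied locally. The point is that $p(T) \to 1$ as $T \to 0^+$ (a point started in the interior of $B$ stays in $B$ with probability tending to $1$ as $T \to 0$, by stochastic continuity of the Lévy process and a union bound over the four sides), so we may take $T$ small enough that $1 - p(T)$ is below the critical threshold of $*$-connected site percolation on $\Z^2$; then standard Peierls-type arguments give that a.s. every box with $Z_B = 0$ is enclosed by infinitely many disjoint circuits of boxes with $Z_B = 1$. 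Since the original statement only asks for $T$ fixed but arbitrary, and since adding more points to $\overline{\eta}$ can only break up unbounded cells, it suffices to prove the claim for one small value of $T$: if the Voronoi tiling of any configuration containing a point in each $Z_B = 1$ box is locally finite with bounded cells for the sub-configuration corresponding to a small $T' \leq T$, then it is a fortiori so for the larger configuration associated to $T$.

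I would then assemble the proof: fix $T' > 0$ with $1 - p(T')$ strictly below $p_c^{site,*}(\Z^2)$; by the Borel–Cantelli / Peierls argument, a.s. there is no infinite $*$-cluster of $\{Z_B = 0\}$, and moreover a.s. for every $n$, all but finitely many boxes are contained in a $Z=1$-circuit around $[-n,n]^2$; deduce via the geometric lemma that the Voronoi tiling of any $\overline{\eta}$ with a point in each $Z_B = 1$ box has all cells bounded (the cell of a point $x$ is trapped inside the finite region bounded by the nearest surrounding $Z=1$-circuit, whose points lie within bounded distance and surround $x$ on all sides — formally, $x$ is in the interior of the convex hull of $\overline{\eta} \cap B(x, O(1))$). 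This yields the Claim, hence Remark~\ref{r.inf_cell}, since by the first paragraph of the remark $(Z_B)_B$ are i.i.d.\ Bernoulli and the $\mu$-dynamical configuration $\eta^\mu(0)$ a.s.\ contains a point in each box $B$ with $Z_B = 1$ (by the very definition of $Z_B$), and we can apply the claim at time $T' \leq T$ and then use monotonicity in $T$ together with a countable union over $T \in \N$.

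The main obstacle I expect is the clean passage from "$Z=1$-boxes percolate / surround every box" to "all Voronoi cells bounded": one must be careful that having points merely \emph{near} $x$ in all directions is genuinely enough to confine $C(x)$, and that the confining region depends only on the $Z=1$-circuit and not on the (arbitrary) locations of the extra points of $\overline{\eta}$ inside the $Z=1$ boxes — this requires the elementary but slightly fiddly convexity/geometry argument that a point surrounded by a circuit of other points at bounded distance has a bounded Voronoi cell, uniformly in how one adds further points. A secondary, more routine obstacle is verifying $p(T) \to 1$ as $T \to 0$ using only stochastic continuity of Lévy processes (not assuming continuity of paths), which follows from $\Pro[\sup_{t \leq T}\|X_t\|_2 \geq \varepsilon] \to 0$ as $T \to 0$, itself a consequence of the maximal inequality / the fact that Lévy processes have no fixed discontinuities together with càdlàg paths; this is standard and I would cite it rather than prove it.
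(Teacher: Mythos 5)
Your geometric reduction (a circuit of points of $\overline{\eta}$ surrounding $x$ at bounded distance forces the cell $C(x)$ to be bounded, since $x$ then lies in the interior of the convex hull of $\overline{\eta}$) is sound, but the percolation step on which the whole proposal rests has two genuine gaps. First, $p(T)$ does \emph{not} tend to $1$ as $T \to 0$: the event $\{Z_B=1\}$ requires in particular that $B$ contain at least one point of $\eta^\mu(0)$, so $p(T) \leq 1-e^{-1}$ for every $T$, and in fact $p(T) \to 1-e^{-1} \approx 0.632$. Hence you cannot arrange $1-p(T')$ to be below the $*$-connected site percolation threshold of $\Z^2$ (nor below $1/3$ for a Peierls argument) by shrinking $T'$; with the known rigorous bounds on $p_c^{site}(\Z^2)$ one cannot even certify that density $1-e^{-1}$ of good unit boxes suffices. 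Second, and independently, your transfer from small $T'$ back to the given $T$ is reversed: decreasing the time parameter enlarges the set $\{B : Z_B=1\}$, which \emph{strengthens} the hypothesis imposed on $\overline{\eta}$ in the Claim. A configuration containing a point in each box that is good for $T$ need not contain a point in each box that is good for $T'<T$, so the Claim for $T'$ does not imply the Claim for $T$. (One could instead apply a small-$T'$ statement to $\eta^\mu(t)$ for $t\in[0,T']$ and use stationarity to cover $[0,T]$, but that is a different argument from the one you wrote and still founders on the first gap.)

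Both defects are repairable by coarse-graining: declare an $L\times L$ block good if it contains at least one unit box with $Z_B=1$; for fixed $T$ the good-block density $1-(1-p(T))^{L^2}$ tends to $1$ as $L\to\infty$, and your circuit argument then goes through at scale $L=L(T)$. The paper avoids percolation thresholds altogether: it covers the plane by the dyadic annuli $A(2^k,2^{k+1})$ and lets the tolerated gap between $Z=1$ boxes grow linearly in $2^k$; a sub-square of side $\asymp 2^k$ devoid of $Z=1$ boxes has probability $\exp\left(-\Omega(1)p(T)2^{2k}\right)$, which is summable in $k$ for every fixed $p(T)\in\,]0,1[$, so by Borel--Cantelli all but finitely many annuli are dense enough that no Voronoi cell of an admissible $\overline{\eta}$ can cross them, while an unbounded cell would have to cross every large annulus. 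I would adopt that route, or at least insert the coarse-graining step explicitly.
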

\begin{proof}
If $k \in \N$, we let $\dense(k)$ denote the event that, for every $u \in A(2^k,2^{k+1}) = [-2^{k+1},2^{k+1}]^2 \setminus ]-2^k,2^k[^2$, there is a box $B \subseteq A(2^k,2^{k+1})$ at distance less than $2^k/100$ from $u$ such that $Z_B =1$. Note that, if this event holds, then for any $\overline{\eta}$ that contains at least one point in each box $B$ such that $Z_B=1$, there is no Voronoi cell of $\overline{\eta}$ that intersects both $[-2^k,2^k]^2$ and $\partial[-2^{k+1},2^{k+1}]^2$. As a result, it is sufficient to prove that, a.s., infinitely many events $\dense(k)$ hold. To this purpose, we just note that the events $\dense(k)$ are independent and that
\[
\Pro \left[ \dense(k) \right] \geq 1 - \grandO{1} \exp \left( -\Omega(1) p(T)2^{2k} \right) \, .
\]
\end{proof}
\end{rem}

\section{Measurability issues about the annealed spectral sample}\label{a.meas}

\blue{In this appendix, we prove that, for any $A \in \mathcal{F}'$ and any bounded measurable function $h \, : \, \Omega \rightarrow \R$, the function
\begin{equation}\label{eq:meas}
\sum_{S \subseteq_f \eta, \, S \in A} \widehat{h^\eta}(S)^2
\end{equation}
is measurable. We actually prove the following more general result. Let $h_1, h_2 \, : \, \Omega \rightarrow \R$ be two bounded measurable functions and let $A_1,A_2 \in \mathcal{F}'$. Also, let $\mu$ be the law of a planar Lévy process starting from $0$ and define $(\eta(t))_{t \geq 0}=(\eta^{\mu}(t))_{t \geq 0}$ by letting each point of $\eta$ move independently and according to a process of law $\mu$. For every $S \subseteq \eta(0)$, let $S_t$ be the corresponding subset at time $t$ (these sets are a.s.\ well defined because (1) a.s.\ there is no collusion of points\footnote{We even have: for each $T < +\infty$ and each $M<+\infty$, the infimum on $t \in [0,T]$ and on $x \neq y \in \eta(t) \cap [-M,M]^2$ of $||x-y||_2$ is positive, see Appendix \ref{s.simple}.} and (2) a.s.\ no two particles jump at the same time\footnote{To prove this, note that a Lévy process jumps countably many times and that for every fixed time $t$, a.s. a Lévy process does not jump at time $t$.}).
\begin{lemma}\label{lem:meas}
The function
\[
\sum_{S \subseteq_f \eta(0)} \widehat{h_1^{\eta(0)}}(S)\widehat{h_2^{\eta(t)}}(S_t) 1_{S \in A_1, \, S_t \in A_2}
\]
is measurable.
\end{lemma}
Lemma \ref{lem:meas} implies \eqref{eq:meas} by choosing $A_1=A_2=A$, $h_1=h_2=h$ and $t=0$.  
\begin{proof}[Proof of Lemma \ref{lem:meas}]
We first note that the $\sigma$-algebra $\mathcal{F}'$ is generated by the sets $\{ \overline{\eta} \in \Omega' \, : \, \overline{\eta} \cap D = \emptyset \}$, where $D$ spans the Borel subsets of the plane. As a result, by the monotone class theorem, it is sufficient to prove the result for two such sets $A_1,A_2$. To this purpose, it is sufficient to prove the following general formula: Let $D_1,D_2$ be two Borel subsets of the plane. Then,
\begin{multline*}
\sum_{S \subseteq_f \eta(0)} \widehat{h_1^{\eta(0)}}(S)\widehat{h_2^{\eta(t)}}(S_t) 1_{S \cap D_1 = \emptyset, \, S_t \cap D_2 = \emptyset}\\
= \E \left[ \E \left[ h_1(\omega(0)) \cond \eta(0), \, \omega(0) \setminus D_1 \right] \E \left[ h_2(\omega(t)) \cond \eta(t), \, \omega(t) \setminus D_2 \right] \cond (\eta(s))_{0 \leq s \leq t} \right].
\end{multline*}
To show this formula we observe that
\[
\E \left[ h_1(\omega(0)) \cond \eta(0), \, \omega(0) \setminus D_1 \right] = \sum_{S \subseteq_f \eta(0), \, S \cap D_1 = \emptyset} \widehat{h_1^{\eta(0)}}(S) \chi_S(\omega(0))
\]
and similarly
\[
\E \left[ h_2(\omega(t)) \cond \eta(t), \, \omega(t) \setminus D_2 \right] = \sum_{S' \subseteq_f \eta(t), \, S' \cap D_2 = \emptyset} \widehat{h_2^{\eta(t)}}(S') \chi_{S'}(\omega(t)).
\]
Next, we observe that
\begin{align*}
& \E \left[ \left( \sum_{S \subseteq_f \eta(0), \, S \cap D_1 = \emptyset} \widehat{h_1^{\eta(0)}}(S) \chi_S(\omega(0)) \right) \left( \sum_{S' \subseteq_f \eta(t), \, S' \cap D_2 = \emptyset} \widehat{h_2^{\eta(t)}}(S') \chi_{S'}(\omega(t)) \right) \cond (\eta(s))_{0 \leq s \leq t} \right]\\
& = \sum_{S \subseteq_f \eta(0), \, S \cap D_1 = \emptyset \atop S' \subseteq_f \eta(t), \, S' \cap D_2 = \emptyset} \widehat{h_1^{\eta(0)}}(S)\widehat{h_2^{\eta(t)}}(S') \E \left[ \chi_S(\omega(0)) \chi_{S'}(\omega(t)) \cond (\eta(s))_{0 \leq s \leq t} \right].
\end{align*}
Finally,
\[
\E \left[ \chi_S(\omega(0)) \chi_{S'}(\omega(t)) \cond (\eta(s))_{0 \leq s \leq t} \right] = \un_{S'=S_t}.
\]
\end{proof}}

\section{The $2^{nd}$ moment method}\label{a.2nd}

In this appendix, we prove Lemma~\ref{l.secondmoment}. To this purpose, we follow the proof of the analogous result from~\cite{olle1997dynamical}. Recall that $f_R$ is the $1$-arm event and that
\[
X_R = \int_0^1 f_R \left( \omega(t) \right) dt \, .
\]
Assume that there exists a constant $C$ as in Lemma~\ref{l.secondmoment} and consider the following (random) set:
\[
T_R = \left\lbrace t \in [0,1] \, : \, 0 \overset{\omega(t)}{\longleftrightarrow} R \right\rbrace \, .
\]
By using the fact that $\left( X_R \right)_{R > 0}$ is decreasing and the Cauchy-Schwarz inequality, we obtain that
\begin{eqnarray*}
\Pro \left[ \forall R > 0, \, T_R \neq \emptyset \right] & \geq & \Pro \left[ \forall R > 0, \, X_R \neq 0 \right] \nonumber\\
& = & \underset{R \rightarrow + \infty}{\text{lim}} \Pro \left[ X_R > 0 \right] \nonumber\\
& \geq & \underset{R \rightarrow + \infty}{\text{liminf}} \, \frac{\E \left[ X_R \right]^2}{\E \left[ X_R^2 \right]} \nonumber\\
& \geq & 1/C > 0 \, .
\end{eqnarray*}
By Kolmogorov $0$-$1$ law, a.s. there are exceptional times or a.s. there is no exceptional time. As a result, it is sufficient to prove the following lemma.
\begin{lem}\label{l.for_every_R_TR}
We have the following a.s.: if for every $R > 0$ the set $T_R$ is non-empty, then there exists a time $t \in [0,1]$ at which there is an unbounded black component.
\end{lem}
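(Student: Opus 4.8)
\textbf{Proof proposal for Lemma~\ref{l.for_every_R_TR}.}

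The plan is to exhibit a single exceptional time via a compactness argument. Assume that $T_R \neq \emptyset$ for every $R > 0$. Since $R \mapsto T_R$ is decreasing (a black path from $0$ to $\partial B(0,R)$ contains a black path from $0$ to $\partial B(0,R')$ for $R' \leq R$) and each $T_R$ is a nonempty subset of the compact interval $[0,1]$, I would first argue that each $T_R$ is closed, so that $\bigcap_{R > 0} \overline{T_R} = \bigcap_{n \in \N^*} T_n$ is a decreasing intersection of nonempty compact sets, hence nonempty. Pick $t_0$ in this intersection. The claim is then that at time $t_0$ there is an unbounded black component. The subtlety is that $t_0 \in T_n$ only asserts, for each $n$, the existence of a black path from $0$ to $\partial B(0,n)$ at time $t_0$; a priori these paths for different $n$ need not be nested or compatible, and one must rule out the degenerate scenario where $0$ is connected arbitrarily far but lies in no \emph{unbounded} component (which cannot happen in a fixed configuration, but one must be careful that $t_0$ really is a \emph{single} fixed configuration).

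The key point making this work is the càdlàg regularity of $(\omega(t))_{t \geq 0}$ with values in $(\Omega, d_{|\Omega})$ established in Appendix~\ref{s.simple}: at the fixed time $t_0$, the configuration $\omega(t_0)$ is a well-defined element of $\Omega$, i.e.\ a genuine locally finite colored point configuration. For such a fixed configuration, ``$0$ is connected to $\partial B(0,n)$ by a black path for every $n$'' is precisely the statement that the black component of $0$ is unbounded, by a standard König-type argument: in a locally finite Voronoi configuration, only finitely many cells meet any $B(0,n)$, so an infinite sequence of black paths escaping to infinity has a subsequence stabilizing on each annulus, yielding an infinite black path. The only care needed here is that $T_R$ is defined through $\{0 \leftrightarrow \partial B(0,R)\}$ evaluated along the process, and I must check closedness of $T_R$ in $t$: if $t_k \to t$ with black crossings of the annulus at each $t_k$, then using right-continuity (and the fact, also noted in Appendix~\ref{s.simple}, that points do not collide and, locally in space and time, the configuration involves only finitely many points bounded away from each other) the crossing persists at $t$. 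Actually it is cleaner to replace $T_R$ by $\widetilde T_R = \{ t : 0 \overset{\omega(t)}{\longleftrightarrow} \partial B(0,R)\}$ and observe $\widetilde T_R \supseteq \widetilde T_{R'}$ for $R \le R'$, that each $\widetilde T_R$ contains $T_R$ hence is nonempty, and that each $\widetilde T_R$ is closed; this avoids any issue with the definition of $T_R$ using $R$ versus $\partial B(0,R)$.

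Here is the order of steps I would carry out. First, record that by Appendix~\ref{s.simple} the process $(\omega(t))_{t \in [0,1]}$ is a.s.\ càdlàg into $(\Omega, d_{|\Omega})$, and that a.s.\ for every $M, T < \infty$ only finitely many points visit $[-M,M]^2$ before time $T$, with pairwise distances bounded below. Second, on this almost sure event, show that for each fixed $R$ the set $\{ t \in [0,1] : 0 \overset{\omega(t)}{\longleftrightarrow} \partial B(0,R)\}$ is closed: using the local finiteness/separation above, the restriction of the Voronoi tiling to a fixed ball is a ``finitely-determined'' object that varies in a controlled way, so the indicator of a black crossing is right-continuous, and left limits of crossing configurations are crossing configurations (openness of the crossing event under small perturbations that do not merge points). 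Third, invoke the finite intersection property: the decreasing family of nonempty closed subsets $\widetilde T_1 \supseteq \widetilde T_2 \supseteq \cdots$ of the compact set $[0,1]$ has nonempty intersection; pick $t_0$ in it. Fourth, at the fixed configuration $\omega(t_0) \in \Omega$, apply the König argument to the nested-by-subsequence black paths to extract an unbounded black path, hence an unbounded black component at time $t_0$, which is therefore exceptional.

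The main obstacle, and the only place requiring genuine care, is the second step — the topological regularity, namely that $\{t : 0 \leftrightarrow \partial B(0,R) \text{ in } \omega(t)\}$ is closed in $t$. For the frozen dynamics this is essentially immediate (the tiling is fixed and the colors are piecewise constant càdlàg in $t$, and the crossing event depends on only finitely many colors). For the $\mu$-dynamical processes one must use that, restricted to a bounded window over a bounded time interval, the moving configuration consists of finitely many continuous (càdlàg) trajectories that stay separated, so the combinatorial type of the Voronoi tiling in that window changes only at countably many times and, crucially, a black crossing present just before such a time is still present at the time (since, in the limit, cells only degenerate continuously and the limiting configuration still has a black path — one may need to thicken the crossing slightly, or argue that the set of $t$ with a crossing is a countable union of closed intervals). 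I would handle this by working with $\widetilde T_R$ and proving it is a finite union of closed intervals using the càdlàg structure, which suffices for the finite intersection argument.
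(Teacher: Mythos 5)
Your overall skeleton (monotone nonempty sets of times, compactness/finite intersection, then a K\"onig-type argument at a single time $t_0$) is the same as the paper's, and for the frozen dynamics your argument is fine, since there the tiling is fixed and only finitely many colors are relevant. The gap is in your second step for the $\mu$-dynamical processes: you assert that $\widetilde T_R$ is closed (or a finite union of closed intervals) because ``a black crossing present just before such a time is still present at the time.'' This is exactly the point the paper flags as the difficulty --- it explicitly says that \emph{if} the $T_R$ were closed the result would be obvious --- and your justification is false at jump times. The process is c\`adl\`ag, not continuous: if a point of $\eta$ jumps at time $t$, the adjacency structure of the Voronoi tiling changes discontinuously at $t$, so a crossing that exists for $t_n\uparrow t$ survives only in the left limit $\omega(t^-)$, not necessarily in $\omega(t)$. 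Worse, for a general L\'evy process (e.g.\ a stable one) the jump times are dense in every interval, so ``the combinatorial type changes only at countably many times, each a closed endpoint of a crossing interval'' is not something the c\`adl\`ag structure gives you.

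The paper's way around this is the content you are missing. It does not prove closedness; it works with the closures $\overline{T_R}$ and introduces an auxiliary graph $\overline{G}(t)$ on $\eta(t)$ whose edges are \emph{limits} of adjacencies along sequences $t_n\to t$, so that $\bigcap_R\overline{T_R}$ is automatically contained in the set of times at which $\overline{G}(t)$ has an infinite black path (colors do not evolve in the $\mu$-dynamics, so only adjacency is at issue). It then proves (Lemma~\ref{l.jump}) that a.s.\ for \emph{every} $t$, $\overline{G}(t)$ and $G(t)$ have infinite black components simultaneously: first, $\overline{G}(t)=G(t)$ at every $t$ where all L\'evy trajectories are continuous (via the empty-disc characterization of cell adjacency); second, the discontinuity times are covered by countably many times $s_k$, each determined by a single coordinate's path, and a resampling argument (replace that one point's trajectory by an independent copy, note $\omega'(s_k)\sim\Pro_{1/2}$ has no unbounded cluster, and that $\overline{G}(s_k)$ differs from the resampled graph by finitely many edges) rules out an infinite black component at each $s_k$. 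Without this step --- or some substitute handling jump times --- your argument does not close.
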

\begin{proof}
\blue{The proof for frozen dynamical Voronoi percolation is exactly the same as that for Bernoulli percolation, see Section~5 of~\cite{schramm2010quantitative} (where the authors rely on Lemma~3.2 of~\cite{olle1997dynamical}). Let $\mu$ denote the law of a planar Lévy process starting from $0$ and consider a $\mu$-dynamical Voronoi percolation process. If $M < +\infty$, we let $\eta_M \subseteq \eta(0)$ be the set of points whose Voronoi cell intersects $[-M,M]^2$ at some time $t \in [0,1]$. If for every $M$ a.s. the adjacency matrix of $\eta_M$ changes its values only finitely many times between time $0$ and time $1$, then the proof is also exactly the same as in \cite{schramm2010quantitative}. This properties holds for instance for compound Poisson processes\footnote{Let us note that there exist compound Poisson processes that satisfy the hypotheses of Theorem \ref{t.main_levy}.}.}

Let us also note that, in the above cases, the fact that we have chosen that the color of the points at the boundary of both a black and a white cell are colored in black and white is not important.
\medskip

Let us write the proof in the general case of a $\mu$-dynamical Voronoi process. First note that if the sets $T_R$ were a.s. closed (hence compact), then the proof would have been easy. The first step of the proof consists in modifying a little the processes so that, for each $t \in \cap_{R > 0} \overline{T_R} \subseteq [0,1]$, there is an unbounded black component at time $t$ (where $\overline{T_R}$ is the closure of $T_R$).
\medskip

If $y(0) \in \eta(0)$, we write $y_t$ for the corresponding point of $\eta(t)$. For every $t \geq 0$, we define two graphs $G(t)$ and $\overline{G}(t)$ as follows:
\begin{enumerate}
\item The vertex set of both $G(t)$ and $\overline{G}(t)$ is $\eta(t)$.
\item Two points of $\eta(t)$ are adjacent in $G(t)$ if their Voronoi cells are adjacent (i.e. if the intersection of the two Voronoi cells is non-empty; remember that with our definition the Voronoi cells are closed sets).
\item The edge set of $\overline{G}(t)$ is defined using $(G(s))_{s \geq 0}$ as follows: two points $y_t,z_t \in \eta(t)$ are adjacent in $\overline{G}(t)$ if there exists $t_n \rightarrow t$ such that for each $n$, $\{y_{t_n},z_{t_n}\}$ is an edge of $G(t_n)$.
\end{enumerate}
Remember that in Appendix~\ref{s.simple} we have observed that a.s. there is no time with an unbounded Voronoi cell. As a result, a.s. the set of times $t \in [0,1]$ such that there is an infinite path of black vertices in $\overline{G}(t)$ contains $\cap_{R > 0} \overline{T_R}$. Hence, if for every $R > 0$ we have $T_R \neq \emptyset$, then such an infinite black path exists. Note also that a.s. for every $t \in [0,1]$ if there exists an infinite component in $G(t)$ made of black vertices then there exists an unbounded black component in the coloring of the plane induced by $\omega(t)$. This ends the proof provided that we show the following lemma.
\end{proof}

\begin{lem}\label{l.jump}
Let $\mu$ be the distribution of a planar Lévy process and let $(\omega(t))_{t \geq 0}$ be a $\mu$-dynamical Voronoi process. We use the same notations as in the proof of Lemma~\ref{l.for_every_R_TR}. A.s. for every $t \in \R_+$ there exists an infinite component in $G(t)$ made of black vertices if and only if there exists an infinite component made of black vertices in $\overline{G}(t)$.
\end{lem}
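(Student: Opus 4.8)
The plan is to prove the only non-trivial implication: if there is an infinite black component in \(\overline G(t)\), then there is one in \(G(t)\) (the reverse inclusion \(G(t)\subseteq\overline G(t)\) is immediate from the definition, taking \(t_n=t\)). The key point is that an edge \(\{y(t),z(t)\}\) of \(\overline G(t)\setminus G(t)\) must appear as a "limit of edges"; I would like to show that such limit-edges are rare enough that an infinite path using them cannot occur. The heuristic is that \(\{y(s),z(s)\}\) being an edge of \(G(s)\) for a sequence \(s=t_n\to t\) but not at \(s=t\) forces the two cells \(C(y(s))\) and \(C(z(s))\) to be tangent (touch at a single point, or more degenerately) at time \(t\); by potential theory for Lévy processes (as already invoked in Appendix~\ref{s.simple}, citing \cite{bertoin1998levy}) such configurations are non-generic, and in fact for a fixed pair the set of times at which the cells are tangent-but-not-overlapping should be a.s. a nowhere-dense (indeed measure-zero, even finite on compacts) set, while the total number of relevant pairs near a fixed spatial window is a.s. finite on \([0,1]\). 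So first I would fix \(M,T<\infty\) and let \(\eta_M\) be the (a.s. finite) set of points whose cell meets \([-M,M]^2\) at some time in \([0,T]\), exactly as in the proof of Lemma~\ref{l.for_every_R_TR}; it suffices to work with this finite point set.

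Next, the core step: show that for each pair \(y(0),z(0)\in\eta_M\), the set \(N_{y,z}=\{t\in[0,T]:\{y(t),z(t)\}\in\overline G(t)\setminus G(t)\}\) is a.s. finite, or at least that a.s. it has no accumulation point at which an infinite path can be sustained. The mechanism is: if \(t\in N_{y,z}\), then arbitrarily close to \(t\) the cells of \(y\) and \(z\) are adjacent, so at time \(t\) the closed cells \(C(y(t))\) and \(C(z(t))\) have a common point but their interiors are disjoint — the cells are tangent. Since the relative position of \(y(\cdot)\) and \(z(\cdot)\) evolves as a Lévy process \(y(t)-z(t)\), and the whole finite configuration \((w(t))_{w\in\eta_M}\) evolves as a finite-dimensional Lévy process, the event that this configuration produces a tangency between two prescribed cells should be an event that the Lévy path hits only on a nowhere-dense random time set — one argues this using the fact that the perpendicular bisector geometry depends continuously (indeed in a piecewise-analytic way) on the finitely many point positions, plus the absence of "pinning" for Lévy processes at lower-dimensional sets (the process does not spend positive time, and more to the point does not accumulate tangency times, against the analytic hypersurfaces in configuration space defining tangency). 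A clean way to package this: let \(D_M\) be the (random, but deterministic-in-\(t\)) closed set of times \(t\in[0,T]\) at which \emph{some} two cells among \(\eta_M\) are tangent; I would show \(D_M\) is a.s. nowhere dense. Then on \([0,T]\setminus D_M\) the graph \(G(\cdot)\) is locally constant, so \(\overline G(t)=G(t)\) there, and at the isolated-in-a-sense times of \(D_M\), edges of \(\overline G(t)\) are added only between cells that are touching at time \(t\), hence still geometrically adjacent — so in fact \(\overline G(t)\) and \(G(t)\) have the same connected components even on \(D_M\), because a touching of closed cells \emph{is} an edge of \(G(t)\) by our convention that cells are closed sets and adjacency means non-empty intersection of the (closed) cells.

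That last observation is actually the crux and may make the whole thing shorter than feared: by the definition of \(G(t)\) in the paper ("two points are adjacent in \(G(t)\) if the intersection of the two Voronoi cells is non-empty; remember that with our definition the Voronoi cells are closed sets"), a limiting edge \(\{y(t),z(t)\}\) obtained from \(\{y(t_n),z(t_n)\}\in G(t_n)\) with \(t_n\to t\) automatically satisfies \(C(y(t))\cap C(z(t))\neq\emptyset\), \emph{provided} the cells converge (in Hausdorff distance, say) as \(t_n\to t\); so the real work is only to justify this continuity of the cells \(C(w(t))\) along sequences. Continuity fails only where points collide or where a cell becomes unbounded, and both were ruled out a.s. in Appendix~\ref{s.simple} (no collisions by potential theory, and the infimum of pairwise distances in \([-M,M]^2\) over \(t\in[0,T]\) is positive; no unbounded cells by Remark~\ref{r.inf_cell}). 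Hence I would: (i) use Appendix~\ref{s.simple} to get, a.s., uniform non-degeneracy of the configuration of \(\eta_M\) on \([0,T]\) (no collisions, positive minimal spacing, bounded cells of bounded diameter); (ii) deduce that for \(w\in\eta_M\), \(t\mapsto C(w(t))\) is continuous in Hausdorff distance on \([0,T]\) at every \(t\) — here "continuity of Voronoi cells under perturbation of a locally finite, non-degenerate point set" is a standard fact, but since \(\eta\) is infinite one must be slightly careful that far-away points don't suddenly cut the cell; this is handled by bounding cell diameters as in Remark~\ref{r.inf_cell}; (iii) conclude that \(\{y(t),z(t)\}\in\overline G(t)\Rightarrow C(y(t))\cap C(z(t))\neq\emptyset\Rightarrow\{y(t),z(t)\}\in G(t)\), so \(\overline G(t)=G(t)\) and the two graphs trivially have the same infinite components. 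The main obstacle is step (ii): proving the Hausdorff-continuity of Voronoi cells for the \emph{infinite} point process uniformly in \(t\in[0,T]\), i.e. controlling that no distant point ever approaches fast enough to clip the cell near \([-M,M]^2\); I expect to dispatch this with the same super-exponential Poisson estimates and Markov-property argument already used for \(\eta_M\) and in Remark~\ref{r.inf_cell}, together with the càglàd time-reversal trick quoted in Appendix~\ref{s.simple} to control approach from the left as well as the right.
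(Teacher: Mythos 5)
Your argument has a genuine gap: it ignores the jumps of the Lévy process. Your ``crux'' step is that $t\mapsto C(w(t))$ is Hausdorff-continuous at \emph{every} $t$, whence a limiting edge of $\overline G(t)$ forces $C(y(t))\cap C(z(t))\neq\emptyset$ and so $\overline G(t)=G(t)$ identically. But the points move by general planar Lévy processes, and the processes relevant to Theorem~\ref{t.main_levy} are $\alpha$-stable, i.e.\ pure-jump: the trajectories are only càdlàg, not continuous. If $t$ is a jump time of some point $w$, and $t_n\uparrow t$, then $w(t_n)\to w(t-)\neq w(t)$, so the cells along the sequence converge to the cells of the \emph{pre-jump} configuration; those may well intersect while the post-jump cells $C(y(t)),C(z(t))$ do not. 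Hence $\overline G(t)$ can strictly contain $G(t)$ at jump times, your conclusion $\overline G(t)=G(t)$ is false in general, and your list ``continuity fails only at collisions or unbounded cells'' omits the main obstruction. Since the set of jump times is typically dense in $[0,1]$, you cannot simply discard these times either.

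The paper's proof splits exactly along this line. At times where all the relevant Lévy paths are continuous, it proves $\overline G(t)=G(t)$ via the empty-disc characterization of cell adjacency together with a compactness argument on the discs (this is in the spirit of your step, and your tangency worries are indeed vacuous there because cells are closed). The missing ingredient in your proposal is the treatment of jump times: for each point $y(0)$ and each $\varepsilon>0$ one enumerates the jump times $s_1<s_2<\cdots$ of size $\geq\varepsilon$ of its Lévy path, and for each fixed $k$ one resamples the trajectory of $y(0)$ by an independent copy to see that the configuration at time $s_k$ is distributed as $\Pro_{1/2}$, hence a.s.\ has no unbounded black component; since $\overline G(s_k)$ differs from the resampled graph by only finitely many edges, it has no infinite black component either. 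A $\sigma$-additivity argument over $k$, $\varepsilon$ and the points then covers all jump times. Without some argument of this type (or another mechanism dealing with the discontinuities), your proof does not go through.
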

\begin{proof}
Let us first prove the following claim:
\begin{claim}\label{cl.jump}
We have the following a.s.: for every $t \geq 0$, if $\overline{G}(t) \neq G(t)$, then one of the Lévy processes is not continuous at time $t$.
\end{claim}
\begin{proof}
Let us first make the following deterministic observation: let $\overline{\eta}$ be an infinite locally finite set of points of the plane, consider the Voronoi tiling induced by $\overline{\eta}$, and let $y,z \in \overline{\eta}$. Then, the cells of $y$ and $z$ intersect each other if and only if $y$ and $z$ belong to the boundary of a disc $D$ whose interior does not contain any point of $\overline{\eta}$.
\medskip

Now, let $t \geq 0$ such that all the Lévy processes are continuous at time $t$ and let $y_t$ and $z_t$ be two points of $\eta(t)$ such that $\{ y_t,z_t \} \in \overline{G}(t)$. We want to prove that, except on a set of probability $0$ that does not depend on $t$, $\lbrace y_t,z_t \rbrace \in G(t)$. Since $\{ y_t,z_t \} \in \overline{G}(t)$, there exists $t_n \rightarrow t$ such that, for each $n$, $\lbrace y_{t_n},z_{t_n} \rbrace \in G(t_n)$. By the deterministic observation above, we deduce that there exist discs $D_n$ (with centers $a_n$ and radii $r_n$, say) such that $y_{t_n}$ and $z_{t_n}$ belong to the boundary of $D_n$ and $\eta(t_n)$ does not intersect the interior of $D_n$. By using the arguments from Remark~\ref{r.inf_cell}, one can easily show that, except on a set of probability $0$ that does not depend on $t$, $(r_n)_n$ and $(a_n)_n$ are bounded sequences (indeed, otherwise there would exist an open half-plane $H$ such that there is no point $y_0 \in \eta(0) \cap H$ such that for every $s \in [0,t+1]$, $y_s \in H$). Therefore, we can assume that $(r_n)_n$ converges to some $r \in \R_+$ and $(a_n)_n$ converges to some $a \in \R^2$. By continuity of the Lévy processes at time $t$ we have the two following properties: (a) $y_t$ and $z_t$ belong to the boundary of the disc centered at $x$ of radius $r$, and (b) this disc does not contain any point of $\eta(t)$ in its interior. We can now conclude by using once again the above deterministic observation.
\end{proof}
We are now ready to prove Lemma~\ref{l.jump}. Let $y_0 \in \eta(0)$ and, for every $\varepsilon > 0$, let $0 \leq s_1=s_1(\varepsilon) < s_2 = s_2(\varepsilon) < \cdots$ be the times at which the Lévy process attached to $y_0$ is discontinuous with a jump of size at least $\varepsilon$. Thanks to the claim and by $\sigma$-additivity, it is sufficient to prove that, for every $k \in \N_+$, a.s. there is no infinite black component in $\overline{G}(s_k)$. Fix such $k \in \N_+$ and consider the process $(\omega'(t))_{t \geq 0}$ which is defined using $(\omega(t))_{t \geq 0}$ as follows: (a) $\omega'(0)=\omega(0)$, (b) every point of $\eta(0) \setminus \{ y_0 \}$ evolves in $(\omega'(t))_{t \geq 0}$ exactly like in $(\omega(t))_{t \geq 0}$ and (c) the point $y_0$ evolves according to a Lévy process of distribution $\mu$ independent of everything else. Note that $\omega'(s_k) \sim \Pro_{1/2}$. In particular, a.s. there is no unbounded black component in $\omega'(s_k)$. As a result, a.s. there is no infinite component made of black vertices in $G'(s_k)$ (where $G'(s_k)$ is the obvious analogue of $G(s_k)$). Moreover, $\overline{G}(s_k)$ and $G'(s_k)$ only differ by finitely many edges, which cannot affect the existence of an infinite component. Hence, a.s. there is no infinite black component in $\overline{G}(s_k)$. This ends the proof.
\end{proof}

\section{Absence of noise sensitivity for $t_n$ sufficiently small}\label{a.t_n}

In this section, we prove Theorem~\ref{t.quant_NS_frozen} in the case $t_n n^2 \alpha_4^{an}(n) \underset{n\rightarrow+\infty}{\longrightarrow} 0$.  First note that it is sufficient to prove that
\[
\E \left[ h_n(\omega^{froz}(0)) h_n(\omega^{froz}(t_n)) \right] \underset{n \rightarrow +\infty}{\longrightarrow} 1 \, ,
\]
where $h_n$ is the $\pm 1$ indicator function of $\cross(n,n)$.

Consider the annealed pivotal event $\Piv_D(g_n)$ (where $D$ is a bounded Borel subset of the plane) and the quenched pivotal event $\Piv_x^q(g_n)$ from Definition~\ref{d.piv}. Note that $\Piv_D(g_n)$ is independent of $\eta \cap D$. As a result,
\begin{eqnarray}
\E_{1/2} \left[ \sum_{x \in \eta} \Prob^\eta_{1/2} \left[ \Piv_x^q(g_n) \right] \right] & \leq & \sum_{B \text{ box of } \Z^2} \E_{1/2} \left[ |\eta \cap B| \un_{\Piv_B(g_n)} \right] \nonumber\\
& = & \sum_{B \text{ box of } \Z^2} \Pro_{1/2} \left[ \Piv_B(g_n)\right] \, .\label{e.aazz}
\end{eqnarray}
In~\cite{V1} (Proposition~4.1), we have proved that
\begin{equation}\label{e.dkjghJIDGLMDjgdgf}
\sum_{B \text{ box of } \Z^2} \Pro_{1/2} \left[ \Piv_B(g_n)\right] \asymp n^2 \alpha_4^{an}(n) \, .
\end{equation}
Now, note that:
\begin{eqnarray*}
\E \left[ h_n(\omega^{froz}(0)) h_n(\omega^{froz}(t_n)) \right]
& = & \E \left[ \E \left[ h_n(\omega^{froz}(0)) h_n(\omega^{froz}(t_n)) \cond \eta \right] \right]\\
& = & \E \left[  \sum_{S \subseteq_f \eta} \widehat{h_n^{\eta}}(S)^2 e^{-t_n|S|} \right]\\
& \geq & \E \left[ \exp\left( -t_n \sum_{x \in \eta} \Prob_{1/2}^\eta \left[ \Piv_x^q(g_n) \right] \right) \right] \, , 
\end{eqnarray*}
where the last inequality is (6.10) of~\cite{garban2014noise}. The result now follows from the fact that, by \eqref{e.aazz} and \eqref{e.dkjghJIDGLMDjgdgf}, $\sum_{x \in \eta} \Prob_{1/2}^\eta \left[ \Piv_x^q(g_n) \right]\ll t_n^{-1}$ with high probability.

\section{The probability of a $4$-arm event conditioned on the configuration in a half-plane}\label{s.4hp}

In this section, we prove the following estimate on the $4$-arm event conditioned on the configuration in a half-plane.
\begin{lem}\label{l.half1}
Let $H$ be the lower half-plane $\{ (x_1,x_2) \in \R^2 \, : \, x_2 \leq 0 \}$. There exists $\varepsilon>0$ such that, for every $1 \leq r \leq R < +\infty$,
\[
\E \left[ \Prob_{1/2}^\eta \left[ \arm_4(r,R) \cond \omega \cap H \right]^2 \right] \leq \frac{1}{\varepsilon} \left( \frac{r}{R} \right)^{\varepsilon} \alpha_4^{an}(r,R) \, .
\]
\end{lem}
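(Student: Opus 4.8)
\textbf{Proof plan for Lemma~\ref{l.half1}.}

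The plan is to exploit the fact that conditioning on the configuration in a half-plane $H$ cannot help the $4$-arm event too much near the boundary of $H$. Write $D_H = B(0,R) \cap H$ for the part of the annulus we are conditioning on. The key point is that the quenched probability $\Prob_{1/2}^\eta \left[ \arm_4(r,R) \cond \omega \cap H \right]$ is a (random) function that is measurable with respect to $\omega \cap H$, and when we take its second moment we may use the observation from Subsection~\ref{ss.findelathese} (beginning of Subsection~5.3 of~\cite{garban2010fourier}): if $\omega', \omega'' \sim \Pro_{1/2}$ agree on $\eta \setminus H$, are conditionally independent in $H$ given the point process, and have the same underlying $\eta$, then
\[
\E \left[ \Prob_{1/2}^\eta \left[ \arm_4(r,R) \cond \omega \cap H \right]^2 \right] = \Pro \left[ \omega', \omega'' \in \arm_4(r,R) \right] \, .
\]
So it suffices to bound the probability that two configurations which coincide \emph{outside} the upper half-plane $H^c$ (equivalently, are resampled only in $H^c$) both have a $4$-arm crossing of $A(r,R)$. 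This is essentially a ``two configurations sharing a half-space'' arm estimate.

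First I would reduce to the case where the annulus is localized near the boundary line $\partial H = \{x_2 = 0\}$, since away from that line the two configurations are either identical (in $H$) or independent (in $H^c$), and in the independent region the event $\{\omega',\omega'' \in \arm_4\}$ factorizes and is controlled by $\widetilde{\alpha}_4^2 \asymp (\alpha_4^{an})^2$ via Theorem~\ref{t.quenched_arm}, which beats $\alpha_4^{an}$ by a polynomial factor. The genuine work is in the neighborhood of $\partial H$. There I would decompose a $4$-arm crossing from $\partial B(0,r)$ to $\partial B(0,R)$ into two half-plane pieces: the arms must cross a half-annulus on the $H$ side and a half-annulus on the $H^c$ side. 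On the $H^c$ side the two configurations are fully independent, so the probability that both realize the relevant arm event is at most $\widetilde{\alpha}_j^+(\cdot,\cdot)^2 \asymp (\alpha_j^{+,an})^2$ for the appropriate half-plane arm count $j$. Now Proposition~\ref{p.univ} gives the crucial input: $\alpha_3^{an,+}(r,R) \asymp (r/R)^2$ and $\alpha_2^{+,an}(r,R) \asymp r/R$; since a $4$-arm event in the full annulus forces at least $3$ arms of alternating colour in each half-plane (one cannot fit $4$ disjoint alternating arms on one side of a line without $3$ of them living strictly in the half-plane when two consecutive arms have the same colour — this is the standard ``three arms in the half-plane'' mechanism used for $\alpha_3^+$), the $H^c$ contribution is squared-small, of order $(r/R)^{4}$ or at worst $(r/R)^2$ times an extra gain. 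Combining with the trivial bound $\Pro[\omega',\omega'' \in \arm_4(r,R)] \leq \alpha_4^{an}(r,R)$ on the $H$ side, and using quasi-multiplicativity (Proposition~\ref{p.QMulti}) to glue the scales, I obtain a bound of the form $\alpha_4^{an}(r,R)$ times a polynomially small factor $(r/R)^\varepsilon$, which is exactly the claim.

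The main obstacle I expect is making the ``at least three arms in each half-plane'' argument rigorous and uniform in the \emph{quenched/annealed} setting with two coupled configurations: one must carefully choose the intermediate scales and the events $\widehat{\arm}_3^+$ (in the spirit of Definition~\ref{d.hat}) so that the localization of the half-plane arm events is genuinely measurable with respect to the configuration in the right region, and so that Proposition~\ref{p.hat} (and its half-plane analogue, which the excerpt notes holds with the same proof) can be invoked to pass from $\Prob_{1/2}^\eta[\widehat{\arm}_3^+]^2$ back to $(\alpha_3^{+,an})^2$. A secondary technical point is handling the geometry near the corners where $\partial B(0,r)$ and $\partial B(0,R)$ meet the line $\partial H$, where one should pass to quarter-plane arm events; but since $\alpha_2^{an,++} \leq \alpha_2^{an,+} \asymp r/R$ these only improve the estimate, so a crude bound suffices there. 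Once the half-plane decomposition is set up, the rest is a routine quasi-multiplicativity computation of the same type carried out in Subsection~\ref{ss.findelathese}, and one reads off $\varepsilon$ from the gap between the exponent $2$ of $\alpha_3^{+,an}$ (appearing squared) and the polynomial upper bound on $\alpha_4^{an}$ from~\eqref{e.poly}.
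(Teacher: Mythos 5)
Your starting point matches the paper's: the identity $\E \bigl[ \Prob_{1/2}^\eta \left[ \arm_4(r,R) \mid \omega \cap H \right]^2 \bigr] = \Pro \left[ \omega',\omega'' \in \arm_4(r,R) \right]$ for two copies with the same $\eta$ that coincide in $H$ and are independently resampled in $H^c$ (note that your first display states the coupling backwards --- the copies agree in the conditioned-on region $H$ and are independent in $H^c$ --- though your subsequent reasoning uses the correct convention). The key numerical input you identify, $\alpha_3^{an,+}(r,R) \asymp (r/R)^2 \ll \alpha_4^{an}(r,R)$, is also the one the paper exploits.

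The gap is in the central step: the claimed deterministic decomposition is false. A $4$-arm event in $A(r,R)$ does not force three alternating arms in each half-plane, and it does not even force the arms to touch the upper half-annulus: all four arms can live in the closed lower half-annulus (leaving $\omega \cap H^c$ completely unconstrained), or one black and one white arm can sit essentially in each half (giving at best a $2$-arm half-plane event per side), and an arm whose endpoints lie in one half-annulus can weave across the line without ever crossing the other half-annulus. So $\arm_4(r,R)$ is not contained in any product of half-plane arm events in the two half-annuli, and the ``squared smallness'' of the $H^c$ contribution has no event behind it. This is precisely why the paper's proof (following Lemma~C.1 of \cite{GV}) replaces the deterministic decomposition by a probabilistic one: at each of $\asymp \log(R/r)$ intermediate scales one builds an event $E(k)$, measurable with respect to $\omega'$ restricted to the upper half-annulus and of quenched probability bounded below, consisting of black ``shield'' paths which force the $4$-arm event of $\omega'$ across that scale to produce a $3$-arm event in the \emph{lower} half-annulus; Claim~\ref{cl.3and4} converts this into a gain of a fixed factor $\delta$ at a positive fraction of scales, and the quenched quasi-multiplicativity property (Proposition~\ref{p.quen_QM}, an input from \cite{V2} that your sketch never invokes) glues the scales together. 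Two further points your plan misses: the paper stresses that conditioning on the shield paths leaves the configuration below them untouched only under the quenched measure $\Prob_{1/2}^\eta$ --- the argument cannot be run at the annealed level with $\alpha_j^{+,an}$ as you propose --- and the whole scheme has to be carried out for the spatially localized events $\widehat{\arm}_4$ rather than $\arm_4$ so that the final product over the macroscopic scales $\rho^i$ can be taken by independence.
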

\begin{proof}
Let us prove the following stronger result, where $\widehat{\arm}_4(r,R) \supseteq \arm_4(r,R)$ is the event from Definition~\ref{d.hat}: there exists $\varepsilon >0$ such that, for every $1 \leq r \leq R < +\infty$,
\begin{equation}\label{e.strong_half}
\E \left[ \Prob_{1/2}^\eta \left[ \widehat{\arm}_4(r,R) \cond \omega \cap H \right]^2 \right] \leq \frac{1}{\varepsilon} \left( \frac{r}{R} \right)^{\varepsilon} \alpha_4^{an}(r,R) \, .
\end{equation}
To this purpose, we follow the proof of Lemma~C.1 of~\cite{GV} which is the analogous result for Bernoulli percolation on $\Z^2$. Let $\omega',\omega'' \sim \Pro_{1/2}$ be two configurations that have the same underlying point process $\eta$, that coincide on $\eta \cap H$ and that are conditionally independent on $\eta \setminus H$. As observed in the beginning of Subsection~5.3 of~\cite{garban2010fourier}, we have
\begin{equation}\label{e.observ_gps}
\Ex_{1/2}^\eta \left[ \Prob_{1/2}^\eta \left[ \widehat{\arm}_4(r,R) \cond \omega \cap H \right]^2 \right]= \Pro \left[ \omega',\omega'' \in \widehat{\arm}_4(r,R) \cond \eta  \right] \, .
\end{equation}
The general idea of the proof is that, even if we condition on the event $\{ \omega'' \in \arm_4(r,R)\}$, then with high probability there are $\Omega(1) \log(R/r)$ scales at which the following holds in $\omega'$: there are so many black connections in $H^c$ that, if $\omega' \in \arm_4(r,R)$, then there must be a $3$-arm event in $H$. Since (by the results of Subsection~\ref{ss.arm}), the probability of the $3$-arm event in the half-plane is much less than the probability of the $4$-arm event in the full plane, this together with quasi-multiplicativity estimates would imply the desired result. The technical difficulty is that (for some reasons that will become clear below), it seems hard to make this reasoning work at the annealed level so we have to work at the quenched level.

Fix some $M \in [100,+\infty[$ to be chosen later and let $\rho \in [100M^2,+\infty[$. The organization of the proof is as follows. Paragraphs A, B and C are devoted to preliminary results. In Paragraph D, we follow the above ideas in order to prove a quenched analogue of \eqref{e.strong_half} in the special case where $r=\rho^i$ and $R=\rho^{i+1}$ for some $i \in \N^*$. To this purpose, we study $4$-arm and $3$-arm events at the following $\log_M(\rho)$ scales: $\rho^i,\rho^iM,\rho^iM^2,\cdots,\rho^{i+1}$ (i.e., in this paragraph, the constant $M$ is the constant used to decide how the scales that we study evolve). In Paragraph E, we conclude the proof. In this paragraph, we will study the models at the scales $\rho,\rho^2,\cdots$ i.e. the constant $\rho$ will be the constant used to decide how the scales that we study evolve.

To simplify the notations in Paragraph D, for any $i \in \N^*$ and $k \in \N$ we let
\[
\rho^i_k := \rho^i M^k
\]
which has to be thought as the ``$k^{th}$ intermediate scale at the global scale $\rho^i$''.

\paragraph{A. The box-crossing and spatial independence properties.} We will use the following events, where the ``$\dense$'' events and the ``$\qbc$'' events are defined in Definitions~\ref{d.dense} and~\ref{d.qbc}:
\[
\widetilde{\gp}^i(\rho,M) := \bigcap_{k=0}^{+\infty} \dense_{1/(100M)} \left( A(\rho^i_k,\rho^i_{k+1}) \right) \cap \qbc_{1/(100M)}^2 \left( A(\rho^i_{k},\rho^i_{k+1}) \right)
\]
(where $\gp$ means ``Good Point process''). We recall that the events $\qbc_{1/(100M)}^2 \left( A(\rho^i_{k},\rho^i_{k+1}) \right)$ provide box-crossing properties for all the rectangles that are included in $A(\rho^i_{k},\rho^i_{k+1})$ and are drawn on the grid $a\Z^2$ for some $a$ of the order of $\rho_{k+1}^i/(100M) = \rho_k^i/100$. However, the box-crossing constant depends on $M$ (see Definition~\ref{d.qbc}). The events $\dense_{1/(100M)} \left( A(\rho^i_{k},\rho^i_{k+1}) \right)$ provide spatial independence properties. Below, we will use obvious independence properties without mentioning them explicitly when we work under the probability measure $\Prob_{1/2}^\eta$ for some $\eta \in \widetilde{\gp}^i(\rho,M)$.

It is easy to see that for each $k \in \N$ and each $i \in \N^*$, we have
\[
\Pro \left[ \dense_{1/(100M)}(A(\rho^i_{k},\rho^i_{k+1})) \right] \geq 1 - \exp(-\Omega(1)(\rho_k^i)^2) \, . 
\]
Moreover, Definition~\ref{d.qbc} implies that for each $k \in \N$ and each $i \in \N^*$, we have
\[
\Pro \left[ \qbc^2_{1/(100M)}(A(\rho^i_k,\rho^i_{k+1})) \right] \geq 1 - \grandO{1}\frac{1}{(\rho_{k+1}^i)^2} \, . 
\]
As a result, for every $i \in \N^*$ we have
\begin{equation}\label{e.le1}
\Pro \left[ \widetilde{\gp}^i(\rho,M) \right] \geq 1 - \grandO{1}\frac{1}{(\rho^i)^2} \geq 1 - \grandO{1}\frac{1}{\rho^2} \, .
\end{equation}

\paragraph{B. The quenched quasi-multiplicativity properties.} We will use the following result which is a direct consequence of Proposition~4.16 of~\cite{V2}.
\begin{prop}\label{p.quen_QM}
There exists $C \in [1,+\infty[$ such that for every $\rho' \geq 1$ the following holds with probability larger than $1-C(\rho')^{-2}$: for every $r_1,r_2,r_3 \in [\rho',+\infty[$ that satisfy $r_1 \leq r_2 \leq r_3$, we have:
\[
\frac{1}{C} \, \Prob_{1/2}^\eta \left[ \arm_4(r_1,r_3) \right] \leq \Prob_{1/2}^\eta \left[ \arm_4(r_1,r_2) \right]\,  \Prob_{1/2}^\eta \left[ \arm_4(r_2,r_3) \right] \leq C  \, \Prob_{1/2}^\eta \left[ \arm_4(r_1,r_3) \right] \, .
\]
\end{prop}
Let $i \in \N^*$ and assume that $\eta$ belongs to the following event:
\[
\overline{\gp}^i(\rho) := \bigcap_{k=0}^{+\infty} \dense_{1/100} \left( A(2^{k-2}\rho^i,2^{k+2}\rho^i) \right) \cap \qbc_{1/100}^2 \left( A(2^{k-2}\rho^i,2^{k+2}\rho^i) \right) \, .
\]
Then, $\eta$ satisfies sufficiently many box-crossing properties so that for any $r_1 \geq \rho^i$ we have
\[
\Prob^\eta_{1/2} \left[ \arm_4(r_1,2r_1) \right] \geq \Omega(1) \, .
\]
Moreover, $\eta$ satisfies sufficiently many spatial independence properties so that
\[
\widehat{\arm}_4(r_1,r_2) \subseteq \arm_4(2r_1,r_2/2) 
\]
for any $r_1,r_2 \in [\rho^i,+\infty[$ such that $r_1 \leq r_2/4$. As a result, if the event of Proposition~\ref{p.quen_QM} holds for $\rho'=\rho^i$ and if $\eta \in \overline{\gp}^i(\rho)$ then the quasi-multiplicativity property is also true for the quantities $\Prob_{1/2}^\eta \left[ \widehat{\arm}_4(r,R) \right]$ i.e. there exists a constant $C' \in [1,+\infty[$ such that, for every $r_1,r_2,r_3 \in [\rho^i,+\infty[$ that satisfy $r_1 \leq r_2 \leq r_3$, we have
\begin{equation}\label{e.quenched_QM_hat}
\frac{1}{C'} \, \Prob_{1/2}^\eta \left[ \widehat{\arm}_4(r_1,r_3) \right] \leq \Prob_{1/2}^\eta \left[ \widehat{\arm}_4(r_1,r_2) \right]\,  \Prob_{1/2}^\eta \left[ \widehat{\arm}_4(r_2,r_3) \right] \leq C'  \, \Prob_{1/2}^\eta \left[ \widehat{\arm}_4(r_1,r_3) \right] \, .
\end{equation}
Let us write $\text{\textup{QQM}}^i(\rho)$ (for Quenched Quasi-Multiplicativity) for the event that the above holds. By the same calculations as in paragraph~A above, $\overline{\gp}^i(\rho)$ holds with probability larger than $1- \grandO{1} \frac{1}{\rho^2}$. Together with Proposition~\ref{p.quen_QM}, this implies that
\begin{equation}\label{e.le1bisbis}
\Pro \left[ \text{\textup{QQM}}^i(\rho) \right] \geq  1- \grandO{1} \frac{1}{(\rho^i)^2} \geq 1 -\grandO{1} \frac{1}{\rho^2} \, .
\end{equation}
\paragraph{C. The quenched probabilities of the $3$-arm event in the half-plane and of the $4$-arm event.} In this paragraph, we consider the annuli $A_k^i=A(\rho_k^i+3\rho_k^i/10,\rho_{k+1}^i-3\rho_k^i/10)$ and the half-annuli $A_k^{i,-} = \{ (x_1,x_2) \in A_k^i \, : \, x_2 \leq 2\rho_k^i/10 \}$. We write $\arm_3(A_k^{i,-})$ for the $3$-arm event in $A_k^{i,-}$ and we let
\[
\widehat{\arm}_3(A_k^{i,-}) = \left\lbrace \Pro_{1/2} \left[ \widehat{\arm}_3(A_k^{i,-}) \cond \omega \cap A_k^i \right] > 0 \right\rbrace \, .
\]
We first prove the following claim.
\begin{claim}\label{cl.3and4}
There exists an absolute constant $c>0$ such that the following holds. For every $\delta \in ]0,1[$ there exists a constant $M_0=M_0(\delta)<+\infty$ such that, if $M \geq M_0$ then for every $k,i \in \N$ we have
\[
\Pro \left[ \Prob_{1/2}^\eta \left[ \widehat{\arm}_3(A_k^{i,-}) \right] \leq \delta \Prob_{1/2}^\eta \left[ \widehat{\arm}_4(\rho_k^i,\rho_{k+1}^i) \right] \right] \geq c \, .
\]
\end{claim}
\begin{proof}
Fix some $\delta \in ]0,1[$. If $M$ is sufficiently large then, by Propositions~\ref{p.4},~\ref{p.univ} and~\ref{p.hat},
\[
\Pro_{1/2} \left[ \widehat{\arm}_3(A_k^{i,-}) \right] \leq \frac{\delta}{2} \Pro_{1/2} \left[ \widehat{\arm}_4(\rho_k^i,\rho^i_{k+1}) \right] \, .
\]
Moreover, by Proposition~\ref{p.hat}, there exists a constant $C'' \in [1,+\infty[$ such that
\[
\sqrt{\E \left[ \Prob_{1/2}^\eta \left[  \widehat{\arm}_4(\rho_k^i,\rho_{k+1}^i) \right]^2 \right]} \leq C''  \Pro_{1/2} \left[ \widehat{\arm}_4(\rho_k^i,\rho_{k+1}^i) \right] \, .
\]
As a result (by applying the Cauchy-Schwarz inequality at the third line),
\begin{align*}
&  \Pro_{1/2} \left[ \widehat{\arm}_4(\rho_k^i,\rho^i_{k+1}) \right]\\
& \leq \frac{1}{\delta} \, \Pro_{1/2} \left[ \widehat{\arm}_3(A_k^{i,-}) \right] +  \E \left[ \Prob_{1/2}^\eta \left[ \widehat{\arm}_4(\rho_k^i,\rho_{k+1}^i) \right] \un_{\Prob_{1/2}^\eta \left[ \widehat{\arm}_3(A_k^{i,-}) \right] \leq \delta \Prob_{1/2}^\eta \left[ \widehat{\arm}_4(\rho_k^i,\rho_{k+1}^i) \right]} \right]\\
& \leq \frac{\delta}{2\delta}  \, \Pro_{1/2} \left[ \widehat{\arm}_4(\rho_k^i,\rho^i_{k+1}) \right]\\
& \hspace{1em}+  \sqrt{\E \left[ \Prob_{1/2}^\eta \left[ \widehat{\arm}_4(\rho^i_k,\rho^i_{k+1}) \right]^2 \right] } \sqrt{\Pro \left[ \Prob_{1/2}^\eta \left[ \widehat{\arm}_3(A_k^{i,-}) \right] \leq \delta \Prob_{1/2}^\eta \left[ \widehat{\arm}_4(\rho_k^i,\rho_{k+1}^i) \right]  \right]}\\
& \leq \frac{1}{2}  \, \Pro_{1/2} \left[ \widehat{\arm}_4(\rho_k^i,\rho_{k+1}^i) \right]\\
& \hspace{1em} + C'' \, \Pro_{1/2} \left[ \widehat{\arm}_4(\rho_k^i,\rho_{k+1}^i) \right] \sqrt{\Pro \left[ \Prob_{1/2}^\eta \left[ \widehat{\arm}_3(A_k^{i,-}) \right] \leq \delta \Prob_{1/2}^\eta \left[ \widehat{\arm}_4(\rho_k^i,\rho_{k+1}^i) \right]  \right]} \, .
\end{align*}
This implies the result with $c=1/(2C'')^2$.
\end{proof}

\paragraph{D. A quenched analogue of \eqref{e.strong_half}.} Remember that the configurations $\omega'$ and $\omega''$ have the same underlying point process $\eta$. In this paragraph, we restrict ourselves to the cases where there exists $i \in \N^*$ such that $r=\rho^i$ and $R=\rho r = \rho^{i+1}$ and we use the events from Paragraphs~A, B and C to study the quantity
\[
\Pro \left[ \omega',\omega'' \in \widehat{\arm}_4(r,R) \cond \eta  \right] \, .
\]
To simplify the calculations below, we assume that $\log_M(\rho)=\log_M(R/r)$ is an integer (the proof in the general case is the same).
\medskip

Fix some $\delta \in ]0,1[$ to be chosen later and assume that $M\geq M_0$ where $M_0=M_0(\delta)$ is the constant from Claim~\ref{cl.3and4}. We let $N_i= |\mathcal{N}_i|$ where $\mathcal{N}_i$ is the set of all the integers $k \in \{ 0, \cdots,\log_M(\rho)-1 \}$ such that
\[
\Prob_{1/2}^\eta \left[ \widehat{\arm}_3(A_k^{i,-}) \right] \leq \delta \Prob^\eta_{1/2} \left[ \widehat{\arm}_4(\rho^i_k,\rho^i_{k+1}) \right] \, .
\]
Note that $N_i$ stochastically dominates a binomial random variable of parameters $\log_M(\rho)$ and $c$ where $c$ is the constant from Claim~\ref{cl.3and4}. As a result, there exists an absolute constant $a \in ]0,1[$ such that
\begin{equation}\label{e.a_et_N_i}
\Pro \left[ N_i \leq a\log_M(\rho) \right] \leq \frac{1}{a}e^{-a\log_M(\rho)} \, .
\end{equation}
We consider the following event
\[
\gp^i(\rho,M) = \widetilde{\gp}^i(\rho,M) \cap \text{\textup{QQM}}^i(\rho) \cap \left\lbrace  N_i \geq a \log_M(\rho)  \right\rbrace  \, .
\]
We now prove the following claim.
\begin{claim}\label{cl.app_quen}
Assume that there exists $i \in \N^*$ such that $r=\rho^i$ and $R=\rho r$. If $M$ is sufficiently large, then there exists a constant $d=d(M) > 0$ (that does not depend on $i$ and $\rho$) such that for every $\eta \in \gp^i(\rho,M)$ we have
\[
\Pro \left[ \omega',\omega'' \in \widehat{\arm}_4(r,R) \cond \eta  \right] \leq \rho^{-d}  \Prob_{1/2}^\eta \left[ \widehat{\arm}_4(r,R) \right] \, .
\]
\end{claim}

\begin{proof}
Let $i$ and $\rho$ as in the statement of the claim and let $\eta \in \gp^i(\rho,M)$. For each $k \in \{0,\cdots,\log_M(\rho)-1\}$ let $E(k)=E^i(k)$ denote the event that there are black paths as in Figure~\ref{f.pleindanneaux}. Since $\eta \in \widetilde{\gp}^i(\rho,M)$, there exists $c'=c'(M) > 0$ such that, for each $k$, $\Prob_{1/2}^\eta \left[ E(k) \right] \geq c'(M)$.  Let $\widetilde{N}_i \leq N_i$ be the number of integers $k \in \mathcal{N}_i$ such that $\omega' \in E(k)$. Let $k_1 < \cdots < k_{\widetilde{N}_i}$ denote these integers and let $l_1<\cdots<l_m$ be a possible occurrence of these. By spatial independence, we have
\begin{multline}
\Pro \left[ \omega', \, \omega'' \in \widehat{\arm}_4(r,R)) \cond \eta, \, \widetilde{N}_i=m, k_1 = l_1, \cdots, k_m = l_m \right] \leq \Pro \left[ \omega'' \in \widehat{\arm}_4(r,\rho_{l_1}^i) \cond \eta \right]\\
\times \prod_{q = 1}^{m-1} \left( \Pro \left[ \omega' \in \widehat{\arm}_4(\rho_{l_q}^i,\rho_{l_q+1}^i) \cond \eta, \, \omega' \in E(l_q) \right] \Pro \left[ \omega'' \in \widehat{\arm}_4(\rho_{l_q+1}^i,\rho^i_{l_{q+1}}) \cond \eta \right] \right)\\
\times \Pro \left[ \omega' \in \widehat{\arm}_4(\rho_{l_m}^i,\rho_{l_m+1}^i) \cond \eta, \, \omega' \in E(l_m) \right] \Pro \left[ \omega'' \in \widehat{\arm}_4(\rho_{l_m+1}^i,R) \cond \eta \right] \, .\label{e.FromGV}
\end{multline}
Let us study the quantities $\Pro \left[ \omega' \in \widehat{\arm}_4(\rho_{l_q}^i,\rho^i_{l_q+1}) \cond \eta, \, \omega' \in E(l_q) \right]$ for $q \in \{ 1, \cdots, m\}$. To this purpose, note that we have (see Figure~\ref{f.pleindanneauxetchemins}):
\[
\Prob_{1/2}^\eta \left[ \arm_4(\rho_k^i+3\rho^i_k/10,\rho^i_{k+1}-3\rho^i_k/10) \cond E(k) \right] \leq \Prob_{1/2}^\eta \left[ \arm_3(A_k^{i,-}) \right] \, .
\]
Actually, in order to obtain this estimate, we have to condition on the upper paths that cross the rectangles $[\rho_k^i+2\rho_k^i/10,\rho^i_{k+1}-2\rho^i_k/10] \times [\rho^i_k/10,2\rho^i_k/10]$ and $[-(\rho^i_k+2\rho^i_k/10),-(\rho^i_{k+1}-2\rho^i_k/10)] \times [\rho_k^i/10,2\rho^i_k/10]$ and observe that (since we work under the quenched measure) this conditioning does not affect the configuration below these paths. \textbf{We could not apply this argument at the annealed level}.
\medskip

\begin{figure}[h!]
\centering
\includegraphics[scale=0.42]{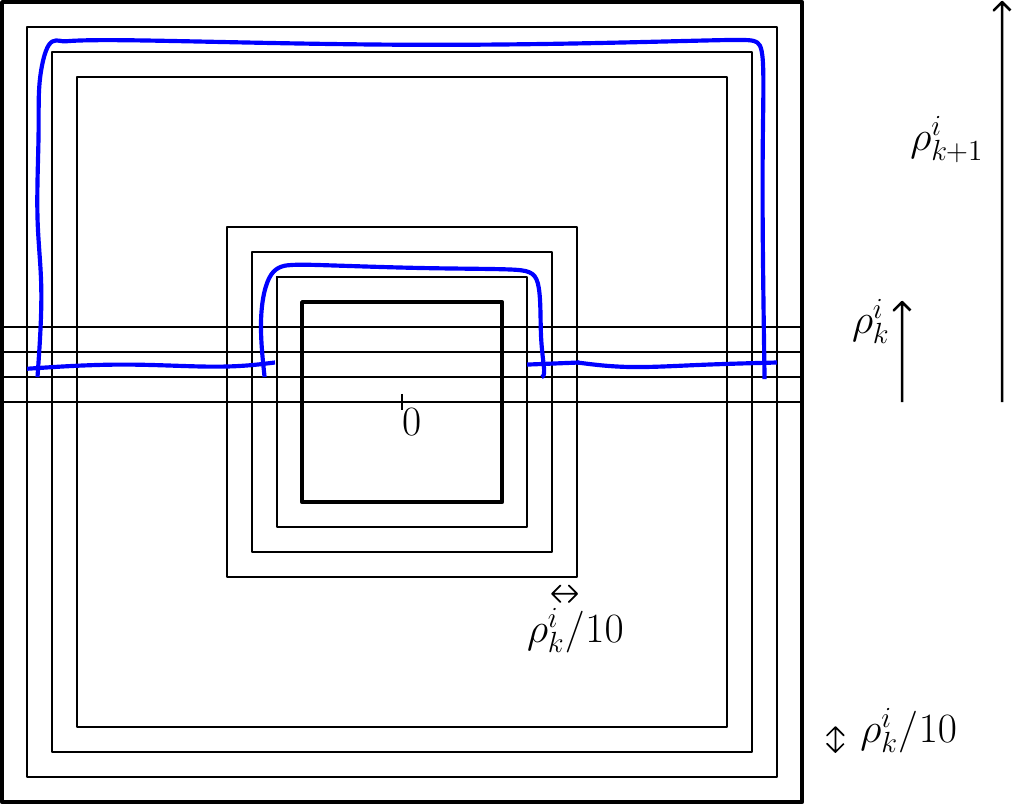}
\caption{A realization of the event $E(k)=E^i(k)$.\label{f.pleindanneaux}}
\end{figure}

\begin{figure}[h!]
\centering
\includegraphics[scale=0.42]{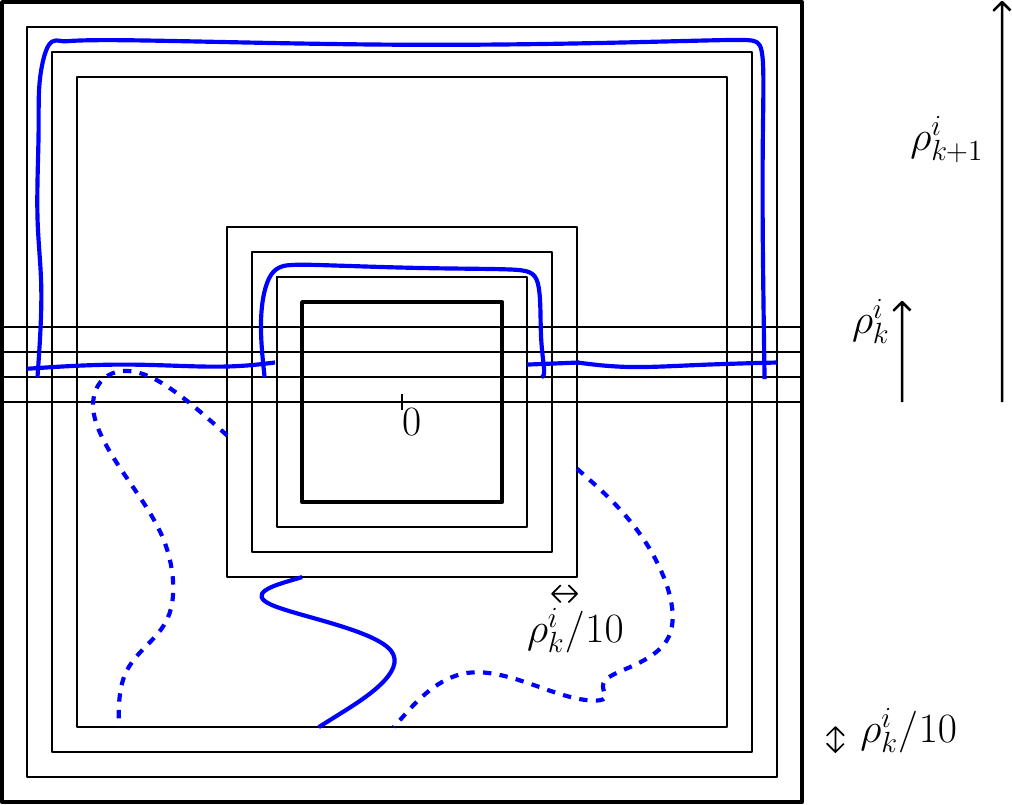}
\caption{A realization of the event $E(k)$ and of the $4$-arm event implies the realization of a $3$-arm event in a half-plane.\label{f.pleindanneauxetchemins}}
\end{figure}

Since $\eta \in \widetilde{\gp}^i(\rho,M)$ we have enough spatial independence properties so that $\widehat{\arm}_4(\rho_k^i,\rho^i_{k+1}) \subseteq \arm_4(\rho^i_k+3\rho^i_k/10,\rho^i_{k+1}-3\rho^i_k/10)$ for each $k \in \N$. As a result for each $k \in \{0,\cdots,\log_M(\rho)-1 \}$ we have
\[
\Prob_{1/2}^\eta \left[ \widehat{\arm}_4(\rho^i_k,\rho^i_{k+1}) \cond E(k) \right] \leq \Prob_{1/2}^\eta \left[ \arm_3(A_k^{i,-}) \right]  \leq  \Prob_{1/2}^\eta \left[ \widehat{\arm}_3(A_k^{i,-}) \right]  \, .
\]
Since $k_1,\cdots, k_{\widetilde{N}_i} \in \mathcal{N}_i$,~\eqref{e.FromGV} and the above imply that
\begin{multline*}
\Pro \left[ \omega', \, \omega'' \in \widehat{\arm}_4(r,R)) \cond \eta, \, \widetilde{N}_i=m, k_1 = l_1, \cdots, k_m = l_m \right] \leq \delta^m \times \Pro \left[ \omega'' \in \widehat{\arm}_4(r,\rho_{l_1}^i) \cond \eta \right]\\
\times \prod_{q = 1}^{m-1} \left( \Pro \left[ \omega' \in \widehat{\arm}_4(\rho_{l_q}^i,\rho_{l_q+1}^i) \cond \eta\right] \Pro \left[ \omega'' \in \widehat{\arm}_4(\rho_{l_q+1}^i,\rho^i_{l_{q+1}}) \cond \eta \right] \right)\\
\times \Pro \left[ \omega' \in \widehat{\arm}_4(\rho^i_{l_m},\rho^i_{l_m+1}) \cond \eta \right] \Pro \left[ \omega'' \in \widehat{\arm}_4(\rho^i_{l_m+1},R) \cond \eta \right] \, .\label{e.FromGV}
\end{multline*}
Next, the quenched quasi-multiplicativity property~\eqref{e.quenched_QM_hat} implies that the above is at most
\[
\delta^m (C')^{2m} \Prob_{1/2}^\eta \left[ \widehat{\arm}_4(r,R) \right] \, .
\]
Since $\eta \in \gp^i(\rho,M)$, we have $N_i \geq a \log_M(\rho)$. Moreover, since (conditionally on $\eta$) $\widetilde{N}_i$ stochastically dominates a binomial random variable of parameters $N_i$ and $c'(M)$, there exists $b=b(M)>0$ such that
\[
\Pro \left[ \widetilde{N}_i \leq b \log_M(\rho) \cond \eta \right] \leq \frac{1}{b}\exp(-b\log_M(\rho)) \, .
\]
Note furthermore that, conditionally on $\eta$, $\widetilde{N}_i$ is independent of $\omega''$. We finally obtain that
\[
\Pro \left[ \omega', \, \omega'' \in \widehat{\arm}_4(r,R)) \cond \eta \right] \leq \left( \frac{1}{b}\exp(-b\log_M(\rho)) + \left( \delta (C')^2 \right)^{b\log_M(\rho)} \right) \Prob_{1/2}^\eta \left[ \widehat{\arm}_4(r,R) \right] \, .
\]
This ends the proof (if we choose for instance $\delta = 1/(2(C')^2)$).
\end{proof}

\paragraph{E. Integretion on $\eta$.} Let us conclude the proof of~\eqref{e.strong_half}. By Claim~\ref{cl.app_quen}, we have the following if $M$ is sufficiently large and if there exists $i \in \N^*$ such that $r=\rho^i$ and $R=\rho r$:
\[
\Pro \left[ \omega', \omega'' \in \widehat{\arm}_4(r,R) \right]
\leq \Pro \left[ \eta \notin \gp^i(\rho,M), \, \omega', \omega'' \in \widehat{\arm}_4(r,R) \right] + \rho^{-d} \Pro_{1/2} \left[\widehat{\arm}_4(r,R) \right] \, .
\]
By Proposition~\ref{p.hat}, the above is less than or equal to
\[
\Pro \left[ \eta \notin \gp^i(\rho,M), \, \omega', \omega'' \in \widehat{\arm}_4(r,R) \right] + \grandO{1} \rho^{-d} \alpha^{an}_4(r,R) \, .
\]
If we combine the above with~\eqref{e.le1} and~\eqref{e.le1bisbis}, we obtain that
\begin{multline*}
\Pro \left[ \omega', \omega'' \in \widehat{\arm}_4(r,R) \right] \leq \grandO{1} \frac{1}{\rho^2}\\
+ \Pro \left[ N_i \leq a \log_M(\rho), \,  \omega', \omega'' \in \widehat{\arm}_4(r,R) \right] +  \grandO{1} \rho^{-d} \alpha^{an}_4(r,R) \, .
\end{multline*}
By the Cauchy-Schwarz inequality, we have
\begin{multline*}
\Pro \left[ N_i \leq a  \log_M(\rho), \,  \omega', \omega'' \in \widehat{\arm}_4(r,R) \right]\\
\hspace{-8em}= \E \left[ \un_{N_i \leq a  \log_M(\rho)} \Pro \left[ \omega', \omega'' \in \widehat{\arm}_4(r,R) \cond \eta \right]  \right]\\
\leq \sqrt{\Pro \left[ N_i \leq a  \log_M(\rho) \right]} \sqrt{\E \left[\Pro \left[ \omega', \omega'' \in \widehat{\arm}_4(r,R) \cond \eta \right]^2 \right]}\\
\leq \sqrt{\Pro \left[ N_i \leq a  \log_M(\rho) \right]} \sqrt{\E \left[\Prob_{1/2}^\eta \left[ \widehat{\arm}_4(r,R) \right]^2 \right]} \, .
\end{multline*}
Proposition~\ref{p.hat} implies that $\sqrt{\E \left[\Prob_{1/2}^\eta \left[ \widehat{\arm}_4(r,R) \right]^2 \right]} \leq \grandO{1} \alpha^{an}_4(r,R)$. Hence, if we combine this with~\eqref{e.a_et_N_i}, we obtain that the above is less than or equal to
\[
\grandO{1}   \sqrt{\frac{1}{a}e^{-a \log_M(\rho)}}\alpha_4^{an}(r,R) \, .
\]
As a result, if there exists $i \in \N^*$ such that $r=\rho^i$ and $R=\rho r$, then
\begin{multline*}
\E \left[ \Prob_{1/2}^\eta \left[ \widehat{\arm}_4(r,R) \cond \omega \cap H \right]^2 \right] = \Pro \left[ \omega', \omega'' \in \widehat{\arm}_4(r,R) \right]\\
\leq \grandO{1} \left( \frac{1}{\rho^2} + \sqrt{\frac{1}{a} e^{-a \log_M(\rho)}}\alpha_4^{an}(r,R) +  \rho^{-d} \alpha_4^{an}(r,R) \right) \, .
\end{multline*}
By Proposition~\ref{p.4}, there exists $d'>0$ such that $\frac{1}{\rho^2} \leq \grandO{1} \rho^{-d'} \alpha_4^{an}(r,R)$. As a result (still if $M$ is sufficiently large and if there exists $i \in \N^*$ such that $r=\rho^i$ and $R=\rho r$), there exists $d''=d''(M)>0$ such that the above is at most
\[
\rho^{-d''} \alpha_4^{an}(r,R) = \left( \frac{r}{R} \right)^{d''} \alpha_4^{an}(r,R) \, .
\]
Fix a constant $M$ sufficiently large so that the above holds. Finally, we have obtained the desired result (i.e.~\eqref{e.strong_half}) for any $r,R$ satisfying $r=\rho^i$ and $R=\rho r$ for some $i \in \N^*$ and some $\rho \in [100M^2,+\infty[$. Fix some $\rho \in [100M^2,+\infty[$ to be chosen later. Let us conclude that~\eqref{e.strong_half} holds in general. To this purpose, first note that, by the quasi-multiplicativity property for the quantities $\alpha_4^{an}(\cdot,\cdot)$, it is enough to prove the result in the cases where there exist $i < j \in \N^*$ such that $r=\rho^i$ and $R=\rho^j$. Therefore, let $i < j \in \N^*$, let $r=\rho^i$ and $R=\rho^j$, and let us study the quantity $\Pro \left[ \omega', \omega'' \in \widehat{\arm}_4(r,R) \right]$. By spatial independence and by using the result in the cases already proved, we have
\[
\Pro \left[ \omega', \omega'' \in \widehat{\arm}_4(r,R) \right] \leq \prod_{l=i}^{j-1} \Pro \left[ \omega', \omega'' \in \widehat{\arm}_4(\rho^l,\rho^{l+1}) \right]
\leq \prod_{l=i}^{j-1} \rho^{-d''} \alpha_4^{an}(\rho^l,\rho^{l+1}) \, ,
\]
By the quasi-multiplicativity property of the quantities $\alpha_4^{an}(\cdot,\cdot)$, there exists an absolute constant $C''' \in [1,+\infty[$ such that the above is less than or equal to
\[
(C''' \rho^{-d''/2})^{j-i} \, \rho^{-d''(j-i)/2}  \, \alpha_4^{an}(r,R) = (C''' \rho^{-d''/2})^{j-i} \, \left( \frac{r}{R} \right)^{d''/2}  \, \alpha_4^{an}(r,R)  \, .
\]
This ends the proof (if we choose $\rho$ such that $C''' \rho^{-d''/2}\leq 1$).
\end{proof}

In Subsection~\ref{ss.proofs_spec}, we need the following consequence of Lemma~\ref{l.half1}. Remember that $f_R$ is the $1$-arm event.
\begin{lem}\label{l.half}
Let $R \in [1,+\infty[$ and $1 \leq \rho_1 < \rho_2 < +\infty$. Also, let $A$ be an annulus included in $[-R,R]^2$ of the form $A(x;\rho_1,\rho_2)=x+[-\rho_2,\rho_2]^2 \setminus ]-\rho_1,\rho_1[^2$ and let $B$ be its inner square. Assume that $A$ is at distance at least $\rho_2$ from $0$ (in particular, neither $A$ nor $B$ contains $0$). Let $H$ be a half-plane whose boundary contains the center of $A$ and is parallel to the $x$ or $y$ axis. Then, there exists an absolute constant $\varepsilon_2>0$ such that the following holds:
\[
\E \left[ \Prob^\eta_{1/2} \left[ \Piv_{B}^A(f_R) \cond \omega \cap H \right]^2 \right] \leq \grandO{1} (\rho_1/\rho_2)^{\varepsilon_2} \alpha_4^{an}(\rho_1,\rho_2) \, ,
\]
where $\Piv_{B}^A(f_R)$ is the pivotal event from Definition~\ref{d.piv_hat}.
\end{lem}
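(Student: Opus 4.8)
The plan is to deduce Lemma~\ref{l.half} from Lemma~\ref{l.half1} by the same kind of ``localization of the pivotal event to the annulus'' argument that is used repeatedly in the paper (e.g. in the proof of Lemma~\ref{l.key_struct}). First I would recall the key spatial-independence fact: $\Piv_B^A(f_R)$ is measurable with respect to $\omega \cap A$ (this is noted just after Definition~\ref{d.piv_hat}), and moreover, since $A$ is at distance at least $\rho_2$ from $0$ and the inner square $B$ does not contain $0$, the event $\Piv_B^A(f_R)$ is contained in $\widehat{\arm}_4(x;\rho_1',\rho_2')$ for suitable radii $\rho_1' \asymp \rho_1$, $\rho_2' \asymp \rho_2$ co-centered at $x$ (an arm of each of the four types must cross $A$ around $B$ for $B$ to be pivotal for a connection event; this is exactly the reasoning behind Lemma~D.13 of~\cite{V1}, which gives $\E[\Prob_{1/2}^\eta[\Piv_B^A(f_R)]^2] \leq \grandO{1}\,\E[\Prob_{1/2}^\eta[\arm_4(\rho_1,\rho_2)]^2]$, and the same proof works pointwise after conditioning on $\omega \cap H$).

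Next I would translate the half-plane $H$. The half-plane $H$ has its boundary through the center $x$ of $A$, parallel to an axis; after translating by $-x$ so that $A$ is centered at the origin, $H$ becomes one of the four coordinate half-planes. Lemma~\ref{l.half1} is stated for the lower half-plane $\{x_2 \leq 0\}$, but by the obvious symmetries of Voronoi percolation under rotations by $\pi/2$ and reflections (which preserve $\Pro_{1/2}$ and the annulus $A(\rho_1,\rho_2)$), the estimate of Lemma~\ref{l.half1} — in fact its strengthening~\eqref{e.strong_half} for $\widehat{\arm}_4(\rho_1,\rho_2)$ — holds for any of the four coordinate half-planes, and hence for $H$ after undoing the translation. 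So it remains to combine this with the localization step.

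Concretely, using that $\Piv_B^A(f_R)$ is $\sigma(\omega\cap A)$-measurable and is contained in $\widehat{\arm}_4(x;\rho_1,\rho_2)$ together with an application of the observation from the beginning of Subsection~5.3 of~\cite{garban2010fourier} (the representation $\E[\Prob_{1/2}^\eta[\,\cdot\,|\,\omega\cap H]^2] = \Pro[\omega',\omega''\in\,\cdot\,|\,\eta]$ for a suitable pair of coupled configurations), I would write
\[
\E \left[ \Prob^\eta_{1/2} \left[ \Piv_{B}^A(f_R) \cond \omega \cap H \right]^2 \right] \leq \grandO{1}\, \E \left[ \Prob^\eta_{1/2} \left[ \widehat{\arm}_4(x;\rho_1,\rho_2) \cond \omega \cap H \right]^2 \right] \, ,
\]
where the $\grandO{1}$ factor comes from the Lemma~D.13-type step (this step needs only a constant-factor rescaling of radii, absorbed via quasi-multiplicativity, Proposition~\ref{p.QMulti}); then I would apply the translated version of~\eqref{e.strong_half} to bound the right-hand side by $\grandO{1}(\rho_1/\rho_2)^{\varepsilon}\alpha_4^{an}(\rho_1,\rho_2)$, and set $\varepsilon_2 = \varepsilon$ (shrinking it if necessary to absorb the constant-factor changes of radii). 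The main obstacle — the only genuinely non-routine point — is checking carefully that the pivotal event $\Piv_B^A(f_R)$ really does force a full four-arm event across the annulus $A$ around $B$ (this requires knowing $B$ and $A$ avoid $0$ and lie inside $[-R,R]^2$, which is exactly what is assumed), so that the quenched-probability comparison $\E[\Prob_{1/2}^\eta[\Piv_B^A(f_R)\mid\omega\cap H]^2]\leq \grandO{1}\,\E[\Prob_{1/2}^\eta[\widehat{\arm}_4(x;\rho_1,\rho_2)\mid\omega\cap H]^2]$ is legitimate; everything after that is a direct appeal to Lemma~\ref{l.half1} and the symmetries of the model.
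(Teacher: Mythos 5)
Your overall shape is right (localize the pivotal event to a four\-arm\-type event in the annulus, then invoke Lemma~\ref{l.half1}, translated and reflected so that $H$ becomes the lower half-plane through the centre of $A$), but the step you flag as ``the only genuinely non-routine point'' is exactly where the argument breaks, and the fix forces extra structure that your plan does not contain. The containment $\Piv_B^A(f_R)\subseteq\widehat{\arm}_4(x;\rho_1',\rho_2')$ is not a deterministic inclusion in the Voronoi setting: if $\eta$ is sparse in $A$, a single huge Voronoi cell can make $B$ pivotal without any four-arm event crossing the annulus. The paper's proof therefore intersects with the event $\dense(\rho_1,\rho_2)=\dense_{1/100}(A(x;\rho_1,2\rho_1))\cap\dense_{1/100}(A(x;\rho_2/2,\rho_2))$, on which $\Piv_B^A(f_R)$ really does force $\arm_4(x;2\rho_1,\rho_2/2)$, and pays an additive error $\Pro\left[\neg\dense(\rho_1,\rho_2)\right]\leq \grandO{1}\exp(-\Omega(1)\rho_1^2)$. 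Your appeal to ``the same proof as Lemma~D.13 of \cite{V1}, pointwise after conditioning on $\omega\cap H$'' is an assertion, not an argument; making it precise would essentially reproduce this density step.

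The second, consequential gap: the error term $\exp(-\Omega(1)\rho_1^2)$ is \emph{not} negligible compared with $(\rho_1/\rho_2)^{\varepsilon}\alpha_4^{an}(\rho_1,\rho_2)$ in general, since the latter is polynomially small in $\rho_1/\rho_2$ and $\rho_2$ may be arbitrarily large relative to $\rho_1$. This is why the paper first proves the bound only when $\rho_2\leq\rho_1^2$, and then treats the general case by a multi-scale argument: the events $\Piv_{B_k}^{A_k}(f_R)$ for the independent annuli $A_k=A(x;\rho_1M^k,\rho_1M^{k+1})$ each contain $\Piv_B^A(f_R)$, one multiplies the single-scale bounds, and the resulting factor $C^{\log_M(\rho_2/\rho_1)}$ is absorbed by choosing $M$ with $\log(C)/\log(M)\leq\varepsilon/2$ and using quasi-multiplicativity. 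Your proposal has no counterpart to this case distinction or to the product-over-scales step, so as written it does not yield the claimed $(\rho_1/\rho_2)^{\varepsilon_2}$ gain uniformly in $\rho_1\leq\rho_2$.
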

\begin{proof}
We first write the proof in the case where $\rho_2 \leq \rho_1^2$. Let 
\[
\dense(\rho_1,\rho_2) = \dense_{1/100}(A(x;\rho_1,2\rho_1)) \cap \dense_{1/100}(A(x;\rho_2/2,\rho_2)) \, ,
\]
where the events ``$\dense$'' are the events from Definition~\ref{d.dense}. Note that, if $\omega \in \dense(\rho_1,\rho_2) \cap \Piv_{B}^A(f_R)$, then the $4$-arm event in $A(x;2\rho_1,\rho_2/2)$ (denoted by $\arm_4(x;2\rho_1,\rho_2/2)$) holds. As a result,
\[
\E \left[ \Prob^\eta_{1/2} \left[ \Piv_{B}^A(f_R) \cond \omega \cap H \right]^2 \right]
 \leq \E \left[ \Prob^\eta_{1/2} \left[ \arm_4(x;2\rho_1,\rho_2/2) \cond \omega \cap H \right]^2 \right] + \Pro \left[ \neg \dense(\rho_1,\rho_2) \right] \, .
\]
By using that $\Pro \left[ \neg \dense(\rho_1,\rho_2) \right]$ decays to $0$ super-polynomially fast in $\rho_1$, by using that $\rho_2 \leq \rho_1^2$, and by using Lemma~\ref{l.half1}, we obtain that there exists $\varepsilon>0$ such that
\[
\E \left[ \Prob^\eta_{1/2} \left[ \Piv_{B}^A(f_R) \cond \omega \cap H \right]^2 \right]  \leq \grandO{1} (\rho_1/\rho_2)^{\varepsilon} \alpha_4^{an}(2\rho_1,\rho_2/2) \leq \grandO{1} (\rho_1/\rho_2)^{\varepsilon} \alpha_4^{an}(\rho_1,\rho_2) \, .
\]
(The last inequality comes from the quasi-multiplicativity property (and~\eqref{e.poly}).)
\medskip

Let us now prove the result in the general case. To this purpose, let $M$ be sufficiently large to be chosen later. Also, let $A_k=A(x;\rho_1 M^k,\rho_1 M^{k+1})$  and let $B_k$ be the inner square of $A_k$. The events $\Piv_{B_k}^{A_k}(f_R)$, $k = 0, \cdots, \log_M(\rho_2/\rho_1)-1$, are independent and for every $k \in \{0,\cdots, \log_M(\rho_2/\rho_1)-1 \}$, $\Piv_{B_k}^{A_k}(f_R)$ contains $\Piv_{B}^{A}(f_R)$. Therefore, the result in the case $\rho_2  \leq \rho_1^2$ and the quasi-multiplicativity property for the quantities $\alpha_4^{an}(\cdot,\cdot)$ imply that there exists $C \in [1,+\infty[$ such that, in the general case,
\[
\E \left[ \Prob^\eta_{1/2} \left[ \Piv_{B}^A(f_R) \cond \omega \cap H \right]^2 \right] \leq C^{\log_M(\rho_2/\rho_1)} (\rho_1/\rho_2)^{\varepsilon} \alpha_4^{an}(\rho_1,\rho_2) \, .
\]
Choosing $M$ such that $\log(C)/\log(M) \leq \varepsilon/2$ ends the proof. 
\end{proof}

\bibliographystyle{alpha}
\bibliography{ref_perco}

\end{document}